\title{Groups acting on hyperbolic $\Lambda$-metric spaces}
\author{\textsf{Andrei-Paul Grecianu} \and
\textsf{Alexei Kvaschuk} \and
\textsf{Alexei Myasnikov} \and
\textsf{Denis Serbin}}
\date{}
\def\N{{\mathbb{N}}}
\def\Z{{\mathbb{Z}}}
\def\R{{\mathbb{R}}}
\def\Q{{\mathbb{Q}}}
\newcommand{\ssm}{\smallsetminus}
\newtheorem{example}{Example}
\newtheorem{corollary}{Corollary}
\newtheorem{prop}{Proposition}
\newtheorem{theorem}{Theorem}
\newtheorem{lemma}{Lemma}
\newtheorem{defn}{Definition}
\newtheorem{remark}{Remark}
\begin{document}
\maketitle

\begin{abstract}
In this paper we study group actions on hyperbolic $\Lambda$-metric spaces, where $\Lambda$ is an 
ordered abelian group. $\Lambda$-metric spaces were first introduced by Morgan and Shalen in their 
study of hyperbolic structures and then Chiswell, following Gromov's ideas, introduced the notion of 
hyperbolicty for such spaces. Only the case of $0$-hyperbolic $\Lambda$-metric spaces (that is, 
$\Lambda$-trees) was systematically studied, while the theory of general hyperbolic $\Lambda$-metric 
spaces was not developed at all. Hence, one of the goals of the present paper was to fill this gap and 
translate basic notions and results from the theory of group actions on hyperbolic (in the usual sense)
spaces to the case of $\Lambda$-metric spaces for an arbitrary $\Lambda$. The other goal was to 
show some principal difficulties which arise in this generalization and the ways to deal with them.
\end{abstract}

\section{Introduction}

In this paper we introduce and  study group actions on hyperbolic $\Lambda$-metric spaces. This is 
a natural development of the theory of groups acting on $\Lambda$-trees. We extend some ideas of 
Morgan, Shalen, Bass, Chiswell, and Gromov to hyperbolic metric spaces, where the metric takes values 
in an arbitrary ordered abelian group $\Lambda$. 

\medskip

{\bf Motivation.} This research stems from several areas. Firstly, it is a very natural generalization 
of the theory of groups acting on $\Lambda$-trees. It turned out that in the study of group actions 
on $\Lambda$-trees is convenient sometimes to take a wider look and consider actions on 
hyperbolic $\Lambda$-metric spaces. This makes results much more general, but also more elegant 
and sometimes shorter. Secondly, this gives a new approach to general hyperbolicity and a new 
framework to study groups acting on hyperbolic $\Lambda$-spaces. Thus, Gromov hyperbolic groups 
can be viewed as $\Z$-hyperbolic, Fuchsian groups as well as Kleinean groups, as $\R$-hyperbolic 
etc. New interesting classes of $\Lambda$-hyperbolic groups appear as a result of various ``limit'' 
constructions. Recall, that limit groups (which are limits of free groups in Gromov-Hausdorff metric) 
are $\Z^n$-free, that is, they act freely on $\Z^n$-trees \cite{Kharlampovich_Myasnikov:1998(1), Myasnikov_Remeslennikov_Serbin:2005}, which is one of the crucial properties of these groups. 
Similarly, limits of torsion-free Gromov hyperbolic groups are $\Z^n$-hyperbolic 
\cite{Myasnikov_Serbin:2013}. Moreover, various non-standard versions of hyperbolic groups 
(ultrapowers of hyperbolic groups and their subgroups) also act nicely on hyperbolic $\tilde{\Z}$-spaces, 
where $\tilde{\Z}$ is the group of non-standard integers ($\tilde{\Z}$ is an ultrapower of $\Z$). 
Thirdly, we believe that this framework gives a unified approach to several open problems related to model 
theory of hyperbolic groups, questions on algebraic structure of subgroups of hyperbolic groups 
and relatively hyperbolic groups, constructions of effective versions of asymptotic cones of 
hyperbolic-like groups and some others.  

\medskip     

{\bf Results.} We lay down foundations of the theory in Sections \ref{se:metric-spaces} and 
\ref{subs:group_actions}.

Let $\Lambda$ be an ordered abelian group. In Section \ref{se:metric-spaces} we discuss hyperbolic 
$\Lambda$-metric spaces. In fact, this notion is not new, in \cite{Morgan_Shalen:1984} Morgan and 
Shalen defined $\Lambda$-metric space for an arbitrary $\Lambda$, while in \cite{Chiswell:2001} 
Chiswell, following Gromov's ideas, gave a definition of a hyperbolic $\Lambda$-metric space. We 
show that most of the classical definitions of hyperbolicity remain valid and equivalent in the general 
case, which gives the base for the whole study. We introduce the notion of a boundary of a hyperbolic 
$\Lambda$-metric space and establish some of its basic properties which we use throughout the paper.  
In Section \ref{subs:isometries} we study isometries of hyperbolic $\Lambda$-metric spaces. The results are 
more technical and proofs are more involved than both in the case of isometries of $\Lambda$-trees 
and the classical $\R$-hyperbolic spaces, since in the general case one has to accommodate the both 
of these. The following result (Theorem \ref{th:Lambda_isom} in Section \ref{subs:isometries}) is a 
crucial result here which gives classification of isometries in the general setting. Let $(X,d)$ be 
a geodesic $\delta$-hyperbolic $\Lambda$-metric space. Then every minimal isometry of $X$ is either 
elliptic, or parabolic, or hyperbolic in the case when $\Lambda = 2 \Lambda$, and is either elliptic, 
or parabolic, or hyperbolic, or an inversion when $\Lambda \neq 2 \Lambda$. We have (see Section 
\ref{subs:isometries}) a more detailed description of isometries and their properties in two principle 
cases, when $\Lambda$ is equal to either $\R^n$, or $\Z^n$ (both with the right lexicographic order). 
We conclude Section \ref{se:metric-spaces}  with examples of hyperbolic $\Lambda$-metric spaces.  

In Section \ref{subs:group_actions} we, following ideas of Lyndon and Gromov we introduce group based 
hyperbolic length functions with values in $\Lambda$. From this view-point Lyndon's length functions 
are $0$-hyperbolic, and our general hyperbolic length functions occur when the Lyndon's 
$0$-hyperbolicity axiom is replaced by a general one that corresponds to the hyperbolicity condition 
on the Gromov's products (which can be easily expressed in terms of the length functions). Chiswell 
in \cite{Chiswell:2001} showed that groups with Lyndon length functions $l : G \to \R$ (and an extra 
axiom) are precisely those ones that act freely on $\R$-trees, and later Morgan and Shalen generalized 
his construction in \cite{Morgan_Shalen:1984} to arbitrary $\Lambda$ (we refer the reader to the 
book \cite{Chiswell:2001} for details). In Section \ref{subsec:actions-length} we show how an action 
of a group $G$ by isometries on a (hyperbolic) $\Lambda$-metric space naturally induces a (hyperbolic) 
length function on $G$ with values in $\Lambda$. And in Section \ref{subsec:length-actions} we prove 
the converse, thus establishing equivalence of these two approaches. In the end of Section 
\ref{subs:group_actions} we give examples of groups acting on hyperbolic $\Lambda$-metric spaces.  
This gives, as in the classical Bass-Serre theory of groups acting on trees,  an equivalent approach 
to study group actions on hyperbolic $\Lambda$-metric spaces.

In Section \ref{subs:delta} we consider {\em kernels} of hyperbolic length functions. Let  $G$ be a 
group with a length function $l : G \to \Lambda$. For a fixed convex subgroup $\Lambda_0 \leqslant 
\Lambda$ one can define the $\Lambda_0$-kernel of $G$ by $G_{\Lambda_0} = \{g \in \mid l(g) \in 
\Lambda_0\}$, which is a subgroup of $G$. If the hyperbolicity constant $\delta$ is greater then any 
element in $\Lambda_0$ (that is, $\delta \not\in \Lambda_0$) then the restriction of the function $l$ 
to $G_{\Lambda_0}$ becomes $\delta$-hyperbolic, in other words, $l$ does not say much about the 
$\Lambda_0$-kernel. This shows that if $\Lambda$ is not archimedean then all elements in $G$ of 
length ``infinitely smaller'' than $\delta$ become invisible for the function $l$, so the 
$\delta$-hyperbolicity axiom does not impose any restrictions on them. To deal with this on the group 
level we use the idea of a group which is hyperbolic relative to a subgroup (see below).

It turns out that for a non-Archimedean $\Lambda$ group actions on hyperbolic $\Lambda$-metric 
spaces can be quite cumbersome, they might have rather strange properties that do not occur in the 
classical situations. In Section \ref{sec:hyp_length_func} with introduce several natural types of 
group actions and the corresponding length functions: {\em regular, complete, free,} and {\em proper}. 
The axioms on length functions associated with these action types shed some light on the algebraic 
structure of the underlying groups. In particular, in Section \ref{subs:delta} we consider actions 
of a finitely generated group $G$ on a geodesic $\delta$-hyperbolic $\R$-metric space $(X, d)$ and 
show (Theorem \ref{th:rel_hyp_1}) that if the action is ``nice'' (regular and proper) then $G$ is 
weakly hyperbolic (in the sense of Farb, and Osin \cite{Farb:1998, Osin:2006}) relative to the kernel 
of the associated length function. This is an analog of the classical result on hyperbolicity of groups 
acting ``nicely'' on hyperbolic metric spaces. We refer the reader to Section \ref{sec:kernel} for 
some interesting applications of this result.

In Section \ref{sec:geod_spaces}  we investigate how one can ``complete''  a given non-geodesic 
hyperbolic $\Z$-metric space $X$ to a geodesic one, that is, how one can construct a geodesic 
hyperbolic $\Z$-metric space $\overline{X}$ which $X$ (quasi-)isometrically embeds into. According 
to Bonk and Schramm, any $\delta$-hyperbolic $\Z$-metric space embeds isometrically into a complete 
geodesic $\delta$-hyperbolic $\R$-metric space (see \cite{Bonk_Schramm:2000}), but unfortunately 
this completion does not have to be a $\Z$-metric space. For a given hyperbolic $\Z$-metric space 
$X$ we introduce two $\Z$-completions of $X$ which we call $\Gamma_1(X)$ and $\Gamma_2(X)$. Our 
constructions will have, compared to Bonk and Schramm's, the disadvantage that the hyperbolicity 
constant will increase. However, they will have the advantage that isometries, embeddings and 
quasi-isometries of $X$ extend easily and that boundaries are easy to work with.

\section{Hyperbolic $\Lambda$-metric spaces}
\label{se:metric-spaces}

\subsection{Ordered abelian groups}
\label{subs:ordered}

In this section   we only mention some  definitions  and facts that are crucial for understanding of 
the main concepts of the paper. For details on ordered abelian groups we refer to books 
\cite{Fuchs:1963, Kopytov_Medvedev:1996, Glass:1999}.

An {\em ordered} abelian group is an abelian group $\Lambda$ (with addition denoted by ``$+$'') 
equipped with a linear order ``$\leqslant$''  such that the following axiom holds:
\begin{enumerate}
\item[(OA)] for all $\alpha, \beta, \gamma \in \Lambda$, $\alpha \leqslant \beta$ implies $\alpha +
\gamma \leqslant \beta + \gamma$.
\end{enumerate}

An abelian group $\Lambda$ is called {\em orderable} if there exists a linear order ``$\leqslant$'' 
on $\Lambda$, satisfying the condition (OA) above. In general, $\Lambda$ can be ordered in many 
different ways. In what follows $\Lambda$ always denotes an ordered abelian group.

If $A$ and $B$ are ordered abelian group then their direct sum $A \oplus B$  can be ordered with 
the {\em right lexicographic order}, where  one compares first the right components of two pairs 
and if they are equal than the left ones, that is, $(a,b) \leqslant (c,d)$ if and only if either $b 
< d$, or $b = d$ and $a \leqslant c$. Similarly, one can define the {\em left lexicographic} order 
on $A \oplus B$. Throughout the paper we consider only the right lexicographic order. Furthermore, 
the direct powers $\Z^n$ and $\R^n$, if not said otherwise, are always considered in the right 
lexicographic order.

An ordered abelian group $\Lambda$ is called {\em discretely ordered} or {\em discrete} if $\Lambda$
has a minimal positive element, which we denote by $1$. It will be always  clear from the context 
whether $1$ represents a natural number, or the minimal positive element of $\Lambda$. If $\Lambda$ 
is discrete then  for any $\alpha \in \Lambda$ the following hold:
\begin{enumerate}
\item[(1)] $\alpha + 1 = \min\{\beta \mid \beta > \alpha\}$,
\item[(2)] $\alpha - 1 = \max\{\beta \mid \beta < \alpha\}$.
\end{enumerate}
Notice, that $\Z^n$ is discretely ordered for any $n > 0$, but $\R^n$ is not.

Sometimes we would like to be able to divide elements of $\Lambda$ by non-zero integers. To this 
end we fix a canonical order-preserving embedding of $\Lambda$ into an ordered {\em divisible} 
abelian group $\Lambda_\Q$  and identify $\Lambda$ with its image in $\Lambda_\Q$. The group 
$\Lambda_\Q$ is the tensor product $\Q \otimes_\Z \Lambda$ of two abelian groups (viewed as 
$\Z$-modules) over $\Z$. One can represent elements of $\Lambda_\Q$ by fractions $\tfrac{\lambda}{m}$, 
where $\lambda \in \Lambda, m \in \Z, m \neq 0$,  and two fractions $\tfrac{\lambda}{m}$ and 
$\tfrac{\mu}{n}$ are equal if and only if $n \lambda = m \mu$. Addition of fractions is defined as 
usual, and the embedding is given by the map $\lambda \to \tfrac{\lambda}{1}$. The order on $\Lambda_\Q$
is defined by $\tfrac{\lambda}{m} \geqslant 0 \Longleftrightarrow  m \lambda \geqslant 0 \text{  
in  } \Lambda$.  Obviously, the embedding $\Lambda \to \Lambda_\Q$ preserves the order. It is easy 
to see that $\R_\Q = \R$ and $\Z_\Q = \Q$. Furthermore, it is not hard to  show that $(A \oplus B)_\Q 
\simeq A_\Q \oplus B_\Q$, so $(\R^n)_\Q = \R^n$ and $(\Z^n)_\Q = \Q^n$. Notice also, that for every
$\Lambda$ the group $\Z \oplus \Lambda$ is discrete.

For elements $\alpha, \beta \in \Lambda$ the {\em closed segment} $[\alpha, \beta]$ is defined by
$$[\alpha, \beta] = \{\gamma \in \Lambda \mid \alpha \leqslant \gamma \leqslant \beta \}.$$
Now a  subset $C \subset \Lambda$ is called {\em convex} if for every $\alpha, \beta \in C$ the set
$C$ contains $[\alpha, \beta]$. In particular, a subgroup $C$ of $\Lambda$ is convex if $[0, \beta]
\subset C$ for every positive $\beta \in C$. Observe, that  the set of all convex subgroups of 
$\Lambda$ is linearly ordered by inclusion. In the case when $\Lambda = \R^n$, or $\Lambda = \Z^n$
the convex subgroups form a chain: $ 0 < \Lambda_1 < \cdots < \Lambda_n = \Lambda$, where 
$\Lambda_i = \{(\lambda_1, \ldots, \lambda_i, 0, \ldots, 0) \mid \lambda_j \in \R \ (\text{or } 
\Z)\}$. In this case $\Lambda$ has a (unique) minimal non-trivial convex subgroup $\Lambda_1$.

For any $a \in \Lambda$ we define $|a| = a$ if $a \geq 0$ and $|a| = -a$ otherwise.

\subsection{$\Lambda$-metric spaces}
\label{subs:lambda_metric_space}

In \cite{Morgan_Shalen:1984} Morgan and Shalen defined $\Lambda$-metric spaces for an arbitrary 
ordered abelian group $\Lambda$.

Let $X$ be a non-empty set and $\Lambda$ an ordered abelian group. A {\em $\Lambda$-metric} on $X$ 
is a mapping $d: X \times X \rightarrow X$ such that:
\begin{enumerate}
\item[(LM1)] $\forall\ x,y \in X:\ d(x,y) \geqslant 0$;
\item[(LM2)] $\forall\ x,y \in X:\ d(x,y) = 0 \Leftrightarrow x = y$;
\item[(LM3)] $\forall\ x,y \in X:\ d(x,y) = d(y,x)$;
\item[(LM4)] $\forall\ x,y,z \in X:\ d(x,y) \leqslant d(x,z) + d(y,z)$.
\end{enumerate}

A {\em $\Lambda$-metric space} is a pair $(X,d)$, where $X$ is a non-empty set and $d$ is a 
$\Lambda$-metric on $X$. Usually, unless specified otherwise, we always assume that there is no 
convex subgroup $\Lambda_0$ of $\Lambda$ such that $d(x, y) \in \Lambda_0$ for every $x, y \in X$ 
(otherwise we can replace $\Lambda$ by $\Lambda_0$).

\begin{example}
For any ordered abelian group $\Lambda$ the map  $d(a,b) = |a - b|$ is a metric, so  $(\Lambda, d)$
is a $\Lambda$-metric space.
\end{example}

We fix a $\Lambda$-metric space $(X,d)$ and  a convex subgroup  $\Lambda_0$ of $\Lambda$. For any 
point $x \in X$ the  subset
$$X_{x, \Lambda_0} = \{y \in X \mid d(x, y) \in \Lambda_0\}$$
of $X$ is a $\Lambda_0$-metric space with respect to the metric $d_0 = d_{\mid_{X_0}}$, called a 
{\em $\Lambda_0$-metric subspace} of $X$.

If $x \in X$ and $\varepsilon \in \Lambda$ is positive then we define the {\em ball of radius
$\varepsilon$ centered at  $x$} as usual by
$$B_\varepsilon(x) = \{y \in X \mid d(x,y) \leqslant \varepsilon\}.$$
A subset $Y \subseteq X$ is {\em bounded} if $Y \subseteq B_\varepsilon(x)$ for some $x \in X$ and
$\varepsilon \geqslant 0$. If $\Lambda_0 \neq  \Lambda$ then any $\Lambda_0$-metric subspace 
$X_{x, \Lambda_0} $ of $X$  is bounded (it is contained in $B_\varepsilon(x)$ for any  $0 < \varepsilon 
\in  \Lambda \ssm \Lambda_0$).

If $(X,d)$ and $(X',d')$ are $\Lambda$-metric spaces, an {\em isometry} from $(X,d)$ to $(X',d')$ 
is a mapping $f: X \rightarrow X'$ such that $d(x,y) = d'(f(x), f(y))$ for all $x, y \in X$.

A mapping $f: X \rightarrow X'$ is called a {\em $(\lambda, c, L)$-local-quasi-isometry} from $(X,d)$ 
to $(X',d')$, where $\lambda \in \Z,\ c, L \in \Lambda$ are such that $\lambda \geqslant 1,\ c, L 
\geqslant 0$, if
$$\frac{1}{\lambda} d(x,y) - c \leqslant d'(f(x), f(y)) \leqslant \lambda d(x,y) + c$$
for all $x, y \in X$ such that $d(x,y) \leqslant L$. Here, as usual, we understand 
$\frac{1}{\lambda} d(x,y)$ as an element in $\Lambda_\Q$.

Similarly, $f$ is a {\em $(\lambda, c)$-quasi-isometry} if the inequalities above hold for any $x,y 
\in X$ (the condition $d(x,y) \leqslant L$ is dropped).

A {\em segment} in a $\Lambda$-metric space $X$ is the image of an isometry $\alpha: [a,b]
\rightarrow X$ for some $a,b \in \Lambda$.  In this case  $\alpha(a),  \alpha(b)$ are called the 
endpoints of the segment. By $[x,y]$ we denote any segment with endpoints $x,y$.

We call a $\Lambda$-metric space $(X,d)$ {\em geodesic} if for all $x,y \in X$, there is a segment
in $X$ with endpoints $x,y$. $(X,d)$ is {\em geodesically linear} if for all $x,y \in X$, there is 
a unique segment in $X$ with endpoints $x,y$.

\begin{lemma} \cite[Lemma 1.2.2]{Chiswell:2001}
\label{le:1.2.2}
Let $(X, d)$ be a $\Lambda$-metric space.
\begin{enumerate}
\item Let $\sigma$ be a segment in $X$ with endpoints $x, z$ and let $\tau$ be a segment in $X$ with
endpoints $y, z$.
\begin{enumerate}
\item[(a)] Suppose that, for all $u \in \sigma$ and $v \in \tau$, $d(u, v) = d(u, z) + d(z, v)$. Then
$\sigma \cup \tau$ is a segment with endpoints $x, y$.
\item[(b)] if $\sigma \cap \tau = \{ z \}$ and $\sigma \cup \tau$ is a segment, then its endpoints
are $x, y$.
\end{enumerate}
\item Assume that $(X, d)$ is geodesically linear. Let $x, y$ and $z \in X$, and let $\sigma$ be the
segment with endpoints $x, y$. Then $z \in \sigma$ if and only if $d(x, y) = d(x, z) + d(z, y)$.
\end{enumerate}
\end{lemma}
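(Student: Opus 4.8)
The plan is to handle the two items essentially separately, reducing item 2 to item 1(a) once the setup is in place.

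For 1(a) I would argue by explicitly concatenating parametrizations. Reparametrize $\sigma$ as an isometry $\alpha\colon[0,p]\to X$ with $\alpha(0)=x$, $\alpha(p)=z$ where $p=d(x,z)$, and $\tau$ as $\beta\colon[0,q]\to X$ with $\beta(0)=z$, $\beta(q)=y$, $q=d(z,y)$ (any segment with prescribed endpoints admits such a parametrization, possibly after reversing). Define $\gamma\colon[0,p+q]\to X$ by $\gamma(t)=\alpha(t)$ on $[0,p]$ and $\gamma(t)=\beta(t-p)$ on $[p,p+q]$; this is well defined since $\alpha(p)=z=\beta(0)$. To see that $\gamma$ is an isometry I would check $d(\gamma(s),\gamma(t))=|s-t|$ in three cases: both parameters in $[0,p]$, or both in $[p,p+q]$, are immediate since $\alpha,\beta$ are isometries; for $s\le p\le t$ the hypothesis gives $d(\gamma(s),\gamma(t))=d(\gamma(s),z)+d(z,\gamma(t))=(p-s)+(t-p)=t-s$. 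Hence $\sigma\cup\tau=\operatorname{im}\gamma$ is a segment with endpoints $\gamma(0)=x$ and $\gamma(p+q)=y$.

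For 1(b), write $\sigma\cup\tau=\operatorname{im}\gamma$ for an isometry $\gamma\colon[a,b]\to X$; since an isometry is injective by (LM2), its inverse carries $\sigma$ and $\tau$ to subsets $S=\gamma^{-1}(\sigma)$ and $T=\gamma^{-1}(\tau)$ of $[a,b]$ with $S\cup T=[a,b]$ and $S\cap T=\gamma^{-1}(z)=\{c\}$. The restriction $\gamma|_S\colon S\to\sigma$ is a bijective isometry, so $S$ is a subset of $\Lambda$ isometric to the interval $[0,p]$; a short computation (if $|\phi(t)-\phi(0)|=t$ and $|\phi(t)-\phi(p)|=p-t$ for all $t$, then $\phi(t)=\phi(0)+t$) shows that any such subset is itself a closed interval, and likewise for $T$. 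Two closed intervals of $\Lambda$ whose union is $[a,b]$ and whose intersection is the single point $\{c\}$ must be $[a,c]$ and $[c,b]$ up to labeling. Taking $S=[a,c]$ and $T=[c,b]$, the endpoints $\{\gamma(a),\gamma(c)\}$ of $\sigma$ equal $\{x,z\}$, and $\gamma(c)=z$ forces $\gamma(a)=x$; symmetrically $\gamma(b)=y$, so the endpoints of $\sigma\cup\tau$ are $x,y$.

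For item 2 the forward implication needs no linearity: if $z\in\sigma$ and $\sigma=\operatorname{im}\alpha$ with $\alpha\colon[0,L]\to X$, $\alpha(0)=x$, $\alpha(L)=y$, $L=d(x,y)$, then $z=\alpha(c)$ gives $d(x,z)+d(z,y)=c+(L-c)=d(x,y)$. For the converse, suppose $d(x,y)=d(x,z)+d(z,y)$ and let $\sigma_1=[x,z]$, $\sigma_2=[z,y]$ be the unique segments provided by geodesic linearity. I would verify the hypothesis of 1(a): for $u\in\sigma_1$ at distance $s$ from $x$ and $v\in\sigma_2$ at distance $t$ from $z$, the iterated triangle inequality (LM4) gives $d(x,y)\le d(x,u)+d(u,v)+d(v,y)=s+d(u,v)+(q-t)$, whence $d(u,v)\ge (p-s)+t=d(u,z)+d(z,v)$, while the reverse inequality is (LM4) once more; here $p=d(x,z)$, $q=d(z,y)$. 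Thus 1(a) makes $\sigma_1\cup\sigma_2$ a segment with endpoints $x,y$, and uniqueness of such a segment forces $\sigma_1\cup\sigma_2=\sigma$, so $z\in\sigma_1\subseteq\sigma$. The only genuinely delicate point is the sub-lemma buried in 1(b) --- that an isometric image of a $\Lambda$-interval sitting inside another $\Lambda$-interval is again an interval, together with the implicit fact that the pair of endpoints of a segment is intrinsic (it is the unique pair realizing the diameter). Everything else is bookkeeping with (LM4) and reparametrizations; degenerate cases ($x=z$ or $z=y$) should be recorded but cause no trouble.
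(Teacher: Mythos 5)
Your proof is correct, but note that the paper itself gives no proof of this lemma --- it is quoted verbatim from Chiswell's book \cite[Lemma 1.2.2]{Chiswell:2001}, and your argument (concatenating reparametrized isometries $[0,p]$ and $[0,q]$ for 1(a), pulling $\sigma$ and $\tau$ back to subintervals of $[a,b]$ for 1(b), and deducing item 2 from 1(a) plus uniqueness of segments) is essentially the standard one found there. The two points you flag as delicate --- that a distance-preserving image of $[0,p]$ in $\Lambda$ is an interval, since $|\psi(t)-\psi(0)|=t$ and $|\psi(t)-\psi(p)|=p-t$ force a consistent sign, and that the endpoint pair of a segment is intrinsic as the unique pair realizing the diameter --- are indeed the only substantive verifications, and your sketches of both are sound.
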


\subsection{Definition of hyperbolic $\Lambda$-metric space}
\label{subs:hyp_lambda_metric_space}

In \cite{Chiswell:2001} Chiswell, generalizing Gromov's approach to hyperbolicity \cite{Gromov:1987},
introduced hyperbolic $\Lambda$-metric spaces. We briefly discuss this notion below.

Let $(X,d)$ be a $\Lambda$-metric space. Fix  a point $v \in X$  and for $x,y \in X$ define the 
Gromov's product
$$(x \cdot y)_v = \frac{1}{2} (d(x,v) + d(y,v) - d(x,y)),$$
as an element of $\Lambda_\Q$.
A straightforward  computation shows that, if $t$ is another point from $X$ then
$$(x \cdot y)_t = d(t, v) + (x \cdot y)_v - (x \cdot t)_v - (y \cdot t)_v.$$
This and the triangle inequality implies the following result.

\begin{lemma}
\label{le:1.2.4}
Let $(X, d)$ be a $\Lambda$-metric space. Then the following hold:
\begin{enumerate}
\item For any $v, x, y \in X$
$$0 \leqslant (x \cdot y)_v \leqslant \min\{d(x, v), d(y, v)\}.$$
\item If for some $v \in X$ and all $x, y \in X,\ (x \cdot y)_v \in \Lambda$ then for all $v, x, y 
\in X,\ (x \cdot y)_v \in \Lambda$.
\end{enumerate}
\end{lemma}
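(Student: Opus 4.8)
The plan is to derive both parts directly from the triangle inequality (LM4) and the change-of-basepoint identity displayed just before the statement; neither requires anything beyond elementary manipulation in $\Lambda_\Q$.

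For part (1), I would clear the factor of $\tfrac{1}{2}$ and work with the element $2(x \cdot y)_v = d(x,v) + d(y,v) - d(x,y)$ of $\Lambda$. The lower bound $0 \leqslant (x \cdot y)_v$ is exactly the triangle inequality $d(x,y) \leqslant d(x,v) + d(y,v)$, obtained from (LM4) by taking $z = v$. For the upper bound, I would establish $(x \cdot y)_v \leqslant d(x,v)$ and $(x \cdot y)_v \leqslant d(y,v)$ separately. The first is equivalent, after doubling, to $d(y,v) \leqslant d(x,v) + d(x,y)$, which is (LM4) applied with the roles permuted together with (LM3) for symmetry; the second is the same statement with $x$ and $y$ interchanged. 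Taking the minimum of the two yields the claimed bound. The only point to keep in mind is that, although $(x \cdot y)_v$ a priori lies in $\Lambda_\Q$, every inequality is between doubled quantities lying in $\Lambda$, so the order on $\Lambda_\Q$ (defined via the order on $\Lambda$) is used consistently.

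For part (2), the key tool is the identity
$$(x \cdot y)_t = d(t,v) + (x \cdot y)_v - (x \cdot t)_v - (y \cdot t)_v$$
recorded above. Assuming $(x \cdot y)_v \in \Lambda$ for the fixed basepoint $v$ and all $x,y \in X$, I would fix arbitrary $t, x, y \in X$ and read off the right-hand side: the term $d(t,v)$ lies in $\Lambda$ by (LM1)--(LM4), while each of $(x \cdot y)_v$, $(x \cdot t)_v$, and $(y \cdot t)_v$ is a Gromov product based at $v$, hence lies in $\Lambda$ by hypothesis. Since $\Lambda$ is closed under addition and subtraction, the right-hand side lies in $\Lambda$, and therefore so does $(x \cdot y)_t$. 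As $t, x, y$ were arbitrary, this gives the conclusion.

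I do not expect a genuine obstacle here; both parts are short formal consequences of (LM4) and the displayed identity. The one place to exercise care is in part (2): the individual products such as $(x \cdot t)_v$ might naively be suspected to lie only in $\Lambda_\Q$, but the hypothesis is precisely that \emph{all} products based at $v$ lie in $\Lambda$, so the cancellation of denominators is guaranteed by assumption rather than by a separate argument. This is exactly what lets the single basepoint $v$ control every other basepoint $t$ simultaneously.
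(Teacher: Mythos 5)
Your proof is correct and follows exactly the route the paper intends: it states Lemma \ref{le:1.2.4} as an immediate consequence of the triangle inequality (for part (1)) and the change-of-basepoint identity $(x \cdot y)_t = d(t,v) + (x \cdot y)_v - (x \cdot t)_v - (y \cdot t)_v$ (for part (2)), which is precisely what you fleshed out. Your care with the order on $\Lambda_\Q$ via doubled quantities is a reasonable touch, though the paper treats it as routine.
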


Now, following Gromov (see \cite{Gromov:1987}) one can define a hyperbolic $\Lambda$-metric space.

\begin{defn}
Let $\delta \in \Lambda$ with $\delta \geqslant 0$. Then $(X, d)$ is {\em $\delta$-hyperbolic with 
respect to $v$} if, for all $x, y, z \in X$
$$(x \cdot y)_v \geqslant \min\{(x \cdot z)_v, (z \cdot y)_v\} - \delta.$$
\end{defn}

\begin{lemma}\cite[Lemma 1.2.5]{Chiswell:2001}
\label{le:1.2.5}
If $(X, d)$ is $\delta$-hyperbolic with respect to $v$, and $t$ is any other point of $X$, then
$(X, d)$ is $2\delta$-hyperbolic with respect to $t$.
\end{lemma}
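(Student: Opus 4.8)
The plan is to reduce the statement to a single four-point inequality in the metric $d$, and then to establish that inequality from the hyperbolicity of $X$ with respect to $v$.

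First I would apply the change-of-basepoint identity recorded above, namely $(x \cdot y)_t = d(t,v) + (x \cdot y)_v - (x \cdot t)_v - (y \cdot t)_v$, to each of the three products $(x\cdot y)_t$, $(x\cdot z)_t$, $(z\cdot y)_t$. Substituting these into the desired inequality $(x\cdot y)_t \geqslant \min\{(x \cdot z)_t, (z\cdot y)_t\} - 2\delta$ and cancelling the common summand $d(t,v)$, the claim becomes
$$(x \cdot y)_v - (x \cdot t)_v - (y\cdot t)_v \geqslant \min\{(x\cdot z)_v - (x\cdot t)_v - (z\cdot t)_v,\ (z \cdot y)_v - (z\cdot t)_v - (y\cdot t)_v\} - 2\delta.$$
Expanding every Gromov product at $v$ into distances, the terms $d(\cdot,v)$ cancel between the two sides, and a short computation turns this into the symmetric four-point inequality
$$d(x,y) + d(z,t) \leqslant \max\{d(x,z) + d(y,t),\ d(x,t) + d(z,y)\} + 4\delta$$
for the quadruple $\{x,y,z,t\}$. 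So it suffices to prove this four-point inequality for all $x,y,z$, with $t$ fixed.

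Next I would record the same reformulation for the hypothesis. Performing the identical distance computation on the defining inequality $(a\cdot b)_v \geqslant \min\{(a\cdot c)_v, (c\cdot b)_v\} - \delta$ shows that $\delta$-hyperbolicity with respect to $v$ is equivalent to the four-point inequality $d(a,b) + d(c,v) \leqslant \max\{d(a,c) + d(b,v),\ d(a,v) + d(c,b)\} + 2\delta$. Since the defining inequality is assumed for all triples $a,b,c$, I may permute the three non-basepoint points; doing so produces all three cyclic versions of this inequality, which together say precisely that among the three pairing-sums of the quadruple $\{a,b,c,v\}$ the two largest differ by at most $2\delta$. Thus the full symmetric four-point condition, with constant $2\delta$, holds for every quadruple that contains $v$.

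The remaining and main difficulty is to pass from quadruples containing $v$ to the quadruple $\{x,y,z,t\}$, which need not contain $v$. Here I would treat $v$ as an auxiliary fifth point and combine the four-point conditions for the four quadruples $\{x,y,z,v\}$, $\{x,y,t,v\}$, $\{x,z,t,v\}$, $\{y,z,t,v\}$, each of which is available by the previous paragraph. Ordering the relevant pairing-sums and carrying out the case analysis according to which pairs realise the largest sums, one deduces the four-point inequality for $\{x,y,z,t\}$, the price being exactly the doubling of the constant from $2\delta$ to $4\delta$. I expect this five-point combining step, rather than either reduction, to be the crux: it is where the characteristic factor of two in the conclusion originates, and it is the only part requiring genuine case analysis rather than formal manipulation. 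Lemma \ref{le:1.2.4} guarantees that all Gromov products involved are non-negative, which keeps the bookkeeping in these cases under control.
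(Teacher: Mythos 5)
Your proposal is correct, but note first that the paper does not prove this lemma at all: it is quoted from Chiswell's book, so there is no in-paper argument to compare against, and the right benchmark is the classical proof of basepoint-invariance (Chiswell's Lemma 1.2.5), which your plan essentially reconstructs. Both of your reductions check out. Expanding the Gromov products into distances does turn the goal into the four-point inequality $d(x,y)+d(z,t)\leqslant\max\{d(x,z)+d(y,t),\,d(x,t)+d(z,y)\}+4\delta$ for quadruples containing $t$ (in fact the detour through the change-of-basepoint identity is dispensable: expanding $(x\cdot y)_t$ directly in distances gives the same thing, since the identity is itself just that expansion), and $\delta$-hyperbolicity at $v$ is indeed equivalent to the symmetric four-point condition with constant $2\delta$ on all quadruples containing $v$. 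The five-point step you flag as the crux is genuine and closes exactly as you predict. To confirm it: set $P=(x\cdot y)_v$, $Q=(z\cdot t)_v$, $A=(x\cdot z)_v$, $B=(y\cdot t)_v$, $C=(x\cdot t)_v$, $D=(y\cdot z)_v$; your four quadruples $\{x,y,z,v\}$, $\{x,y,t,v\}$, $\{x,z,t,v\}$, $\{y,z,t,v\}$ give precisely $P\geqslant\min\{A,D\}-\delta$, $P\geqslant\min\{B,C\}-\delta$, $Q\geqslant\min\{A,C\}-\delta$, $Q\geqslant\min\{B,D\}-\delta$, and, assuming without loss of generality $A+B\leqslant C+D$, a short case analysis yields $P+Q\geqslant A+B-2\delta$: the natural choices give $A+B-2\delta$ outright, and in the residual cases one instead obtains $P+Q\geqslant C+D-2\delta$ and invokes the assumed ordering (the one remaining configuration, $A>D$, $B>C$, contradicts $A+B\leqslant C+D$ and is vacuous). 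Translated back to distances this is the $4\delta$ four-point inequality for $\{x,y,z,t\}$, i.e.\ $2\delta$-hyperbolicity at $t$; this is also, in product form, the computation in the cited source, which makes the same reduction after the normalization $(x\cdot z)_t\leqslant(z\cdot y)_t$. One further merit of your argument worth recording: it uses only the order on $\Lambda_\Q$ and the ability to clear the factor $\frac{1}{2}$ by doubling, so it is valid for an arbitrary ordered abelian group $\Lambda$, which is exactly what this paper's setting requires; your closing appeal to Lemma \ref{le:1.2.4} is harmless but not actually needed.
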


In view of Lemma \ref{le:1.2.5}, we call a $\Lambda$-metric space $(X, d)$ {\em $\delta$-hyperbolic}
if it is $\delta$-hyperbolic with respect to any point of $X$.

The definition of $\delta$-hyperbolicity can be reformulated as follows.

\begin{lemma}\cite[Lemma 1.2.6]{Chiswell:2001}
\label{le:1.2.6}
The $\Lambda$-metric space $(X, d)$ is $\delta$-hyperbolic if and only if  any $x, y, z, t \in X$
satisfy the following {\em $4$-point condition}:
$$d(x, y) + d(z, t) \leqslant \max\{d(x, z) + d(y, t), d(y, z) + d(x, t)\} + 2\delta.$$
\end{lemma}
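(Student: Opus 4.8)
The plan is to show that the two conditions are not merely equivalent but literally the same inequality once the Gromov products are expanded, so that nothing is lost in either direction. The engine of the argument is the elementary identity $P - \min\{Q,R\} = \max\{P-Q,\,P-R\}$, valid in any linearly ordered abelian group, together with the translation-invariance axiom (OA) and the fact that multiplication by the positive integer $2$ preserves the order.

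First I would fix four arbitrary points $x,y,z,t \in X$ and apply the definition of $\delta$-hyperbolicity with respect to the basepoint $v = t$ to the triple $x,y,z$:
$$(x \cdot y)_t \geqslant \min\{(x \cdot z)_t,\ (z \cdot y)_t\} - \delta.$$
Multiplying through by $2$ (which clears the denominators, so that we leave $\Lambda_\Q$ and return to $\Lambda$) and substituting $2(a\cdot b)_t = d(a,t)+d(b,t)-d(a,b)$, I would rewrite this, via the identity above, as
$$\max\{\,2(x\cdot y)_t - 2(x\cdot z)_t,\ 2(x\cdot y)_t - 2(z\cdot y)_t\,\} \geqslant -2\delta.$$
Expanding the two bracketed differences gives $d(y,t)+d(x,z)-d(x,y)-d(z,t)$ and $d(x,t)+d(z,y)-d(x,y)-d(z,t)$, respectively; adding the common term $d(x,y)+d(z,t)$ to both sides then collapses the inequality to exactly
$$d(x,y) + d(z,t) \leqslant \max\{d(x,z)+d(y,t),\ d(y,z)+d(x,t)\} + 2\delta,$$
which is the $4$-point condition.

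Since $x,y,z,t$ were arbitrary (and $t$ ranges over all basepoints), this gives the forward implication. For the converse I would observe that every step above is an equivalence: the identity $P-\min\{Q,R\}=\max\{P-Q,P-R\}$ is reversible, adding a fixed element to both sides of an inequality is reversible by (OA), and multiplication and division by $2$ between $\Lambda$ and $\Lambda_\Q$ are order-preserving in both directions. Hence, assuming the $4$-point condition for all quadruples, specializing the fourth point to an arbitrary basepoint $v$ and running the computation backwards recovers $(x\cdot y)_v \geqslant \min\{(x\cdot z)_v,(z\cdot y)_v\}-\delta$ for every $v$ and every triple, i.e.\ $\delta$-hyperbolicity.

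I do not expect a genuine obstacle; the only points demanding care are bookkeeping ones. I must ensure the $\max$/$\min$ manipulations are justified purely in a linearly ordered abelian group, where $\min\{a,b\}+c=\min\{a+c,b+c\}$ and $-\min\{a,b\}=\max\{-a,-b\}$ hold with no archimedean or divisibility hypothesis, and I must track that the Gromov products a priori live in $\Lambda_\Q$ while the final inequality lives in $\Lambda$, the multiplication by $2$ being precisely what reconciles the two. Confirming that the chain of transformations is genuinely reversible, rather than a one-way implication, is the single step I would spell out explicitly instead of leaving it to the reader.
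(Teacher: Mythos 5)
Your proof is correct, and it is essentially the same argument the paper relies on: the lemma is quoted without proof from Chiswell's book, where the proof is precisely this reversible computation --- doubling the Gromov products at the basepoint $v = t$ to clear denominators, expanding, and using $P - \min\{Q,R\} = \max\{P-Q,\,P-R\}$ together with translation-invariance of the order. Your explicit attention to the two genuinely non-automatic points (that the $\max$/$\min$ identities need no divisibility or archimedean hypotheses, and that multiplication by $2$ mediates between $\Lambda_\Q$ and $\Lambda$ order-faithfully) is exactly what makes the chain an equivalence rather than a one-way implication.
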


\subsection{Geodesic hyperbolic $\Lambda$-metric spaces}
\label{subs:geod_hyp_lambda_metric_space}

Geodesic hyperbolic $\Lambda$-metric spaces have some nice geometric properties, which can be 
expressed in various forms of ``thinness'' of geodesic triangles.  In this section $(X,d)$ is a 
geodesic $\Lambda$-metric space.

$\Lambda$-trees give an important class of $0$-hyperbolic geodesic $\Lambda$-metric spaces. They 
were introduced by Morgan and Shalen in \cite{Morgan_Shalen:1984}. Recall that a $\Lambda$-metric 
space is a {\em $\Lambda$-tree} if it satisfies  the following axioms:
\begin{enumerate}
\item[(T1)] $(X,d)$ is geodesic,
\item[(T2)] if two segments of $(X,d)$ intersect in a single point, which is an endpoint of both, 
then their union is a segment,
\item[(T3)] if the intersection of two segments with a common endpoint is also a segment.
\end{enumerate}

If $X$ is a $\Lambda$-tree and $x, y, z \in X$ then $[x,y] \cap [x,z] = [x,w]$ for some $w \in X$. 
In this case we write  $w = Y(y,x,z)$.

The following theorem was proved in \cite{Chiswell:2001} (Lemmas 2.1.6 and  2.4.3).
\begin{theorem}
A geodesic $\Lambda$-metric space $(X,d)$ is a $\Lambda$-tree if and only if it satisfies the 
following conditions:
\begin{enumerate}
\item[(1)] for all $x,y,v \in X$ $(x\cdot y)_v \in \Lambda$,
\item[(2)] $(X,d)$ is $0$-hyperbolic.
\end{enumerate}
\end{theorem}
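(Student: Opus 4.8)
The plan is to prove the two implications separately, treating the ``only if'' direction via standard $\Lambda$-tree geometry and devoting the main effort to the ``if'' direction, which is where condition (1) does the decisive work.

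For the forward direction, suppose $(X,d)$ is a $\Lambda$-tree. Given $v,x,y \in X$, I would use the median point: set $w = Y(x,v,y)$, so that $[v,x] \cap [v,y] = [v,w]$. First I would argue that $[w,x] \cap [w,y] = \{w\}$ (the two branches split at $w$), so that Lemma~\ref{le:1.2.2}(1)(a) glues $[x,w] \cup [w,y]$ into a segment and yields $d(x,y) = d(x,w) + d(w,y)$. A short computation then gives $(x \cdot y)_v = d(v,w)$, which lies in $\Lambda$; this is condition (1). For condition (2), I would observe that the same median description of Gromov products turns the required inequality $(x \cdot y)_v \geqslant \min\{(x \cdot z)_v, (z \cdot y)_v\}$ into the classical four-point condition for tree metrics, which holds in any $\Lambda$-tree; alternatively one verifies it directly from the configuration of the medians $Y(x,v,y)$, $Y(x,v,z)$, $Y(z,v,y)$.

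For the converse, assume (1) and (2) and verify the tree axioms, (T1) being the hypothesis. To prove (T2), let $\sigma = [x,w]$ and $\tau = [w,y]$ meet only in $\{w\}$. Setting $p = (x \cdot y)_w \in \Lambda$, I would first show $p = 0$ by contradiction: if $p > 0$, pick the points $u \in \sigma$ and $u' \in \tau$ at distance $p$ from $w$ (these exist by (1) and Lemma~\ref{le:1.2.4}), and use $0$-hyperbolicity with basepoint $w$, together with the identities $(u \cdot x)_w = d(u,w)$ and $(u' \cdot y)_w = d(u',w)$ valid for points on a segment, to compute $(u \cdot u')_w = p$ and hence $d(u,u') = 0$; then $u = u' \neq w$ lies in $\sigma \cap \tau$, a contradiction. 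With $(x \cdot y)_w = 0$ in hand, I would propagate betweenness to all points: for arbitrary $u \in \sigma$ and $v \in \tau$, two applications of the four-point condition (Lemma~\ref{le:1.2.6} with $\delta = 0$) — first to $\{x,y,v,w\}$ to get $d(x,v) = d(x,w) + d(w,v)$, then to $\{x,v,u,w\}$ to get $d(u,v) = d(u,w) + d(w,v)$ — establish the hypothesis of Lemma~\ref{le:1.2.2}(1)(a), so $\sigma \cup \tau$ is a segment.

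To prove (T3), let $\sigma = [x,y]$ and $\tau = [x,z]$ share the endpoint $x$, and put $p = (y \cdot z)_x \in \Lambda$, with $0 \leqslant p \leqslant \min\{d(x,y), d(x,z)\}$. Parametrizing both segments by arclength from $x$, I would show that $\sigma(s) = \tau(s)$ for all $s \leqslant p$ (by a Gromov-product computation at $x$ forcing $d(\sigma(s), \tau(s)) = 0$), and conversely that any common point $q$ must satisfy $d(x,q) \leqslant p$ — otherwise, writing $q = \sigma(s) = \tau(s)$ with $s > p$, the inequality $d(y,z) \leqslant d(y,q) + d(q,z)$ would force $(y \cdot z)_x \geqslant s > p$, a contradiction. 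Hence $\sigma \cap \tau = [x,\sigma(p)]$ is a segment. The main obstacle I anticipate is the all-points betweenness in (T2): the equation $(x \cdot y)_w = 0$ only records that $w$ lies on some geodesic from $x$ to $y$, which in a general geodesic space is far weaker than the uniform betweenness $d(u,v) = d(u,w) + d(w,v)$ needed to apply Lemma~\ref{le:1.2.2}(1)(a). The two decisive ingredients are condition (1), which lets me realize each Gromov product as an actual point on a segment (impossible if the product lay only in $\Lambda_\Q$), and the sharpened $\delta = 0$ four-point condition, which I would exploit to force the maximum in Lemma~\ref{le:1.2.6} onto the correct branch and thereby upgrade the relevant triangle inequality to an equality.
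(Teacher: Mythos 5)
Your argument is essentially correct, and it is worth noting at the outset that the paper itself offers no proof of this statement: it is quoted directly from Chiswell's book (Lemmas 2.1.6 and 2.4.3 of \cite{Chiswell:2001}), so your proposal is being measured against the cited source, where the same standard strategy appears. The substantive converse direction checks out in detail. In your (T2) argument the key computation works: with $p = (x\cdot y)_w$ and $u,u'$ at distance $p$ from $w$ on the two segments (these exist precisely because condition (1) puts $p$ in $\Lambda$, the decisive use of that hypothesis), two applications of $0$-hyperbolicity at $w$ give $(u\cdot u')_w \geqslant \min\{(u\cdot x)_w, (x\cdot y)_w, (y\cdot u')_w\} = p$, while Lemma \ref{le:1.2.4} caps it at $p$, forcing $d(u,u')=0$ and the desired contradiction. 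Your ``force the maximum onto the correct branch'' trick with Lemma \ref{le:1.2.6} at $\delta = 0$ also works exactly as claimed: in the pairing $d(x,y)+d(v,w) \leqslant \max\{d(x,v)+d(y,w),\ d(x,w)+d(y,v)\}$, the left side exceeds the second option by $2d(w,v) > 0$, so the first option must dominate, upgrading the triangle inequality to $d(x,v) = d(x,w)+d(w,v)$, and the second application to $\{x,v,u,w\}$ propagates this to all $u \in \sigma$, $v \in \tau$, which is precisely the hypothesis of Lemma \ref{le:1.2.2}(1)(a). The (T3) argument is likewise correct. The one repair needed is in the \emph{forward} direction: Lemma \ref{le:1.2.2}(1)(a) cannot be what glues $[x,w] \cup [w,y]$, since its hypothesis, $d(u,v) = d(u,w) + d(w,v)$ for all $u,v$ on the two branches, is exactly the betweenness you have not yet established at that point. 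In a $\Lambda$-tree you should instead invoke the axiom (T2) directly (the branches meet only in $\{w\}$, an endpoint of both) and then Lemma \ref{le:1.2.2}(1)(b) to identify the endpoints of the resulting segment as $x$ and $y$, which yields $d(x,y) = d(x,w) + d(w,y)$ and hence $(x \cdot y)_v = d(v,w) \in \Lambda$. With that one-line substitution your proof is complete and matches the route of the cited source.
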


In particular, if $\Lambda$ is a divisible ordered abelian group (for instance $\Lambda = \R^n$) 
then the first condition in the theorem is always satisfied, so in this case $\Lambda$-trees are 
precisely geodesic $0$-hyperbolic $\Lambda$-metric spaces.

Now we give a characterization of hyperbolic geodesic $\Lambda$-metric spaces in terms of thin 
triangles.

A {\em $\Lambda$-tripod} in a $\Lambda$-metric space is a $\Lambda$-tree spanned by three points 
(including degenerate cases when the points coincide, or are collinear). Here is an analog of 
Proposition 2.2 from \cite{Ghys_delaHarpe:1991}. The proof is straightforward.

\begin{lemma}
\label{le:1.2.7}
Let $(X, d)$ be a $\Lambda$-metric space such that for all $x, y, z \in X,\ (x \cdot y)_z \in
\Lambda$. Then for all $x,y,z \in X$ there exists a $\Lambda$-tripod $T$ and an isometry $\phi :
\{x,y,z\} \to T$, where $T$ is spanned by $\phi(x), \phi(y)$ and $\phi(z)$ such that $(x \cdot y)_z$
is equal to the length of the intersection $[\phi(z), \phi(x)] \cap [\phi(z), \phi(y)]$ in $T$.
\end{lemma}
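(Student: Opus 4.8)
The plan is to read off the three leg-lengths of the desired tripod from the Gromov products and then construct $T$ by gluing, after which both the isometry and the intersection claim become short computations. First I would set
$\alpha = (y \cdot z)_x$, $\beta = (x \cdot z)_y$ and $\gamma = (x \cdot y)_z$.
By Lemma \ref{le:1.2.4}(1) each of these is $\geqslant 0$, and by the hypothesis each lies in $\Lambda$ itself (not merely in $\Lambda_\Q$), so the closed segments $[0,\alpha]$, $[0,\beta]$, $[0,\gamma] \subset \Lambda$ are well defined.

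Next I would build $T$ by taking three copies $I_1, I_2, I_3$ of these segments (of lengths $\alpha, \beta, \gamma$ respectively) and identifying their common endpoint $0$ to a single central point $o$. On $T$ I define a metric $d_T$ by declaring $d_T(p,q)$ to be the absolute difference of $\Lambda$-coordinates when $p,q$ lie on the same leg, and $d_T(p,q) = d_T(p,o) + d_T(o,q)$ when they lie on different legs. A routine verification shows that $d_T$ satisfies (LM1)--(LM4) and that $(T,d_T)$ satisfies the tree axioms (T1)--(T3); hence $T$ is a $\Lambda$-tree spanned by the three leg-endpoints $p_1, p_2, p_3$, i.e. a $\Lambda$-tripod (degenerating harmlessly when one of $\alpha,\beta,\gamma$ is $0$).

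I then set $\phi(x) = p_1$, $\phi(y) = p_2$, $\phi(z) = p_3$ and check that $\phi$ is an isometry. Since in $T$ the distance between endpoints of distinct legs is the sum of the two leg-lengths, this reduces to the three identities $\alpha + \beta = d(x,y)$, $\alpha + \gamma = d(x,z)$, $\beta + \gamma = d(y,z)$. Each follows by expanding the definition of the Gromov product and cancelling symmetric terms; for example, in $(y\cdot z)_x + (x\cdot z)_y$ the terms $d(z,x)$ and $d(x,z)$ cancel, as do $d(y,z)$ and $d(z,y)$, leaving $\tfrac{1}{2}(d(y,x) + d(x,y)) = d(x,y)$. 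The other two are identical in form.

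Finally I would identify the intersection. As $T$ is a $\Lambda$-tree it is geodesically linear, and the unique segment $[\phi(z),\phi(x)]$ runs along $I_3$ from $\phi(z)$ to $o$ and then out along $I_1$, while $[\phi(z),\phi(y)]$ runs along $I_3$ to $o$ and then out along $I_2$; hence the two segments share exactly the leg $I_3$ from $\phi(z)$ to $o$, whose length is $d_T(\phi(z),o) = \gamma = (x\cdot y)_z$, as claimed. I expect the only genuinely non-mechanical step to be confirming that the glued space $T$ really is a $\Lambda$-tree (that $d_T$ is a $\Lambda$-metric and that (T2)--(T3) hold); but this is the standard ``Y''-construction, and the identities relating the $\alpha,\beta,\gamma$ to the original distances are forced by the Gromov-product definition, so nothing subtle arises beyond bookkeeping in the degenerate cases.
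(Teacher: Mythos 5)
Your proof is correct and is exactly the argument the paper has in mind: the paper omits the proof, declaring it ``straightforward'' as the analog of Proposition 2.2 of Ghys--de la Harpe, which is precisely your construction of gluing three segments of lengths $(y \cdot z)_x$, $(x \cdot z)_y$, $(x \cdot y)_z$ at a central point and verifying the three sum identities. The hypothesis $(x \cdot y)_z \in \Lambda$ enters just where you use it, to make the leg segments well defined in $\Lambda$, and your handling of the degenerate cases is fine.
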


We call the $\Lambda$-tripod $T$ from the lemma above a {\em comparison $\Lambda$-tripod} for the
triple $\{x,y,z\}$ and denote $T = T(x,y,z)$.

If $(X,d)$ is geodesic then  the isometry $\phi : \{x,y,z\}$ $\to$ $T(x,y,z)$ extends to an isometry 
of the geodesic triangle $\Delta(x,y,z) = [x,y] \cup [x,z] \cup [y,z]$ in $(X,d)$ to $T$ whose 
restriction to $\{x,y,z\}$ is exactly $\phi$ (we denote this extension again by $\phi$). Now, 
$\Delta(x,y,z)$ is called {\em $\delta$-thin} for some $\delta \in \Lambda$ if $d(u,v) \leqslant 
\delta$ for all $u,v \in \Delta(x,y,z)$ such that $\phi(u) = \phi(v)$.

\begin{lemma}
\label{le:1.2.8}
Let $(X, d)$ be a geodesic $\Lambda$-metric space such that for all $x, y, z \in X,\ (x \cdot y)_z
\in \Lambda$. Then
\begin{enumerate}
\item[(i)] $(x \cdot y)_z \leqslant d(z, [x,y])$,
\item[(ii)] if $\Delta(x,y,z)$ is $\delta$-thin then $d(z, [x,y]) \leqslant (x \cdot y)_z + \delta$
\end{enumerate}
\end{lemma}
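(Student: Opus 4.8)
The plan is to treat the two inequalities separately, deriving (i) from the triangle inequality alone and (ii) from the comparison tripod of Lemma~\ref{le:1.2.7}.

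For (i), I would fix an arbitrary point $w$ on the segment $[x,y]$. Since $w$ lies on a segment with endpoints $x,y$, the defining isometry gives $d(x,w)+d(w,y)=d(x,y)$. Substituting this into $2(x\cdot y)_z = d(x,z)+d(y,z)-d(x,y)$ yields
$$2(x\cdot y)_z = \bigl(d(x,z)-d(x,w)\bigr)+\bigl(d(y,z)-d(y,w)\bigr).$$
Two applications of the triangle inequality (LM4), namely $d(x,z)-d(x,w)\leqslant d(z,w)$ and $d(y,z)-d(y,w)\leqslant d(z,w)$, bound the right-hand side by $2d(z,w)$, so $(x\cdot y)_z\leqslant d(z,w)$. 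As $w\in[x,y]$ was arbitrary, $(x\cdot y)_z$ is a lower bound for $d(z,[x,y])$, which is (i).

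For (ii), I would invoke the comparison $\Lambda$-tripod $T=T(x,y,z)$ together with the extended isometry $\phi:\Delta(x,y,z)\to T$. Let $c$ denote the branch point of $T$; by the description in Lemma~\ref{le:1.2.7} its distance to $\phi(z)$ equals $(x\cdot y)_z$. The point $c$ lies simultaneously on the image of $[x,y]$ and on the image of $[x,z]$, so I would choose $w\in[x,y]$ and $w'\in[x,z]$ with $\phi(w)=\phi(w')=c$. Because $\phi$ restricts to an isometry on the side $[x,z]$, the point $w'$ satisfies $d(z,w')=d(\phi(z),c)=(x\cdot y)_z$ exactly. Since $\phi(w)=\phi(w')$ and $\Delta(x,y,z)$ is $\delta$-thin, we have $d(w,w')\leqslant\delta$. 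The triangle inequality then gives
$$d(z,[x,y])\leqslant d(z,w)\leqslant d(z,w')+d(w',w)\leqslant (x\cdot y)_z+\delta,$$
which is (ii).

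The computational content is light; the part requiring care is the bookkeeping in (ii). I must check that the branch point $c$ genuinely has a preimage on each of the two sides $[x,y]$ and $[x,z]$ — this follows since $\phi$ maps each side of the triangle onto the corresponding side of the tripod, and $c$ lies on both $[\phi(x),\phi(y)]$ and $[\phi(x),\phi(z)]$ — and that the leg length from $\phi(z)$ to $c$ is exactly $(x\cdot y)_z$ rather than one of the other two Gromov products. One should also note that $d(z,[x,y])$ need not, a priori, be attained as a minimum; combining (i) with the point $w$ produced in (ii) shows the infimum is realized to within $\delta$, which is all the statement requires.
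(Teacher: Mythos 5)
Your proof is correct. Part (ii) coincides with the paper's argument: both pick the point $q\in[x,z]$ mapping to the branch point of the comparison tripod (so that $d(z,q)=(x\cdot y)_z$ because $\phi$ is isometric on the side $[x,z]$), a point $p\in[x,y]$ with $\phi(p)=\phi(q)$, and conclude via $\delta$-thinness and the triangle inequality. Part (i), however, takes a genuinely different route. The paper proves (i) inside the tripod formalism as well: it chooses $w\in[x,y]$ \emph{realizing} $d(z,[x,y])$, transports it to a point $w'\in[x,z]\cup[y,z]$ with $\phi(w)=\phi(w')$ (without loss of generality $w'\in[x,z]$), and estimates $(x\cdot y)_z\leqslant d(w',z)=d(x,z)-d(x,w)\leqslant d(z,w)$. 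That argument tacitly assumes the distance from $z$ to the segment is attained at some point of $[x,y]$, which is not automatic in a general $\Lambda$-metric space. Your version avoids the tripod entirely and avoids that assumption: since $d(x,w)+d(w,y)=d(x,y)$ for \emph{every} $w\in[x,y]$, the identity $2(x\cdot y)_z=\bigl(d(x,z)-d(x,w)\bigr)+\bigl(d(y,z)-d(y,w)\bigr)$ plus two applications of (LM4) shows $(x\cdot y)_z$ is a lower bound for $d(z,w)$ over all of $[x,y]$, which yields (i) for any reasonable reading of $d(z,[x,y])$. What the paper's approach buys is uniformity of presentation --- the whole lemma sits inside the comparison-tripod picture of Lemma \ref{le:1.2.7}, mirroring the Ghys--de la Harpe proof it cites; what yours buys is elementarity and the removal of the implicit attainment hypothesis, and your closing observation that (i) together with the explicit witness from (ii) sandwiches $d(z,[x,y])$ to within $\delta$ keeps the statement meaningful even when no minimizer exists.
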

\begin{proof} The proof repeats the one of Lemma 2.17 \cite{Ghys_delaHarpe:1991}.

\smallskip

Let $p \in [x,y],\ q \in [x,z],\ r \in [y,z]$ be such that $\phi(p) = \phi(q) = \phi(r) = Y(\phi(x),
\phi(y), \phi(z))$.

Let $w \in [x,y]$ be such that $d(z, [x,y]) = d(z,w)$. Then there exists $w' \in [x,z] \cup [y,z]$
such that $\phi(w) = \phi(w')$. Without loss of generality assume that $w' \in [x,z]$. Then
$$(x \cdot y)_z \leqslant d(w', z) = d(x,z) - d(x,w') = d(x,z) - d(x,w) \leqslant d(z,w) = d(z, [x,y])$$
which proves (i).

Finally, if $\Delta(x,y,z)$ is $\delta$-thin then we have
$$d(z, [x,y]) \leqslant d(z,q) + d(p,q) \leqslant (x \cdot y)_z + \delta.$$
\end{proof}

\begin{prop}
\label{pr:1.2.9}
Let $(X, d)$ be a geodesic $\Lambda$-metric space such that for all $x, y, z \in X,\ (x \cdot y)_z
\in \Lambda$. Consider the following properties of $(X,d)$:
\begin{enumerate}
\item[(H1,$\delta$)] $(X,d)$ is $\delta$-hyperbolic,
\item[(H2,$\delta$)] $\Delta(x,y,z)$ is $\delta$-thin for any $x,y,z \in X$,
\item[(H3,$\delta$)] $d(u, [x,z] \cup [y,z]) \leqslant \delta$ for any $x,y,z \in X$ and $u \in
[x,y]$.
\end{enumerate}
Then the following implications hold
$$(H1,\delta) \Longrightarrow (H2,4\delta),\ \ \ (H2,\delta) \Longrightarrow (H1,2\delta)$$
$$(H2,\delta) \Longrightarrow (H3,\delta),\ \ \ (H3,\delta) \Longrightarrow (H2,4\delta)$$
$$(H1,\delta) \Longrightarrow (H3,4\delta),\ \ \ (H3,\delta) \Longrightarrow (H1,8\delta)$$
\end{prop}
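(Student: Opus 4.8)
The plan is to establish the six implications by translating between Gromov products, thinness of comparison tripods, and the distance-to-geodesic estimates, using Lemma \ref{le:1.2.8} as the bridge between (H2) and (H3) and Lemma \ref{le:1.2.7} to realize Gromov products as tripod lengths. Throughout I would work with a geodesic triangle $\Delta(x,y,z)$, its comparison tripod $T(x,y,z)$, and the extended isometry $\phi$, and I would repeatedly exploit the internal points $p \in [x,y]$, $q \in [x,z]$, $r \in [y,z]$ with $\phi(p)=\phi(q)=\phi(r)=Y(\phi(x),\phi(y),\phi(z))$, since these are precisely the points where the comparison map ``folds'' the triangle.

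First I would dispatch $(H1,\delta) \Longrightarrow (H2,4\delta)$, which I expect to be the genuine heart of the matter. The point is that in the comparison tripod two points $u,v$ on distinct sides of $\Delta$ with $\phi(u)=\phi(v)$ are equidistant from the common vertex along their respective legs, so the quantity $d(u,v)$ is controlled by how close $(X,d)$ comes to being $0$-hyperbolic on the four relevant vertices. Concretely, I would fix $u \in [x,z]$ and $v \in [y,z]$ with $\phi(u)=\phi(v)$, note $d(z,u)=d(z,v)=s$ say, and apply the $4$-point condition of Lemma \ref{le:1.2.6} to the quadruple $\{u,v,x,y\}$ (or $\{u,v,z,\cdot\}$) together with the collinearity relations $d(x,z)=d(x,u)+d(u,z)$ that hold because $u$ lies on a geodesic segment. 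Chasing these equalities and the $2\delta$ slack from the $4$-point condition should yield $d(u,v) \leqslant 4\delta$; the factor $4$ arises because one passes through two applications (or one application plus the factor-$2$ cost of changing basepoint via Lemma \ref{le:1.2.5}). I would handle the remaining case, where one of $u,v$ lies on $[x,y]$, by the same bookkeeping.

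Next, $(H2,\delta) \Longrightarrow (H1,2\delta)$ is the cleaner converse: given a $\delta$-thin triangle I would read off $(x\cdot y)_z$ as the length $d(z,q)=d(z,p)$ up to the thinness error, and then recover the Gromov-product hyperbolicity inequality by comparing the three internal lengths in the tripod, where $0$-hyperbolicity is automatic and the $2\delta$ accounts for the change of basepoint. The pair $(H2,\delta) \Longleftrightarrow (H3,\cdot)$ I would get almost for free from Lemma \ref{le:1.2.8}: part (ii) gives $d(z,[x,y]) \leqslant (x\cdot y)_z + \delta$ under $\delta$-thinness, and combined with part (i), $(x\cdot y)_z \leqslant d(z,[x,y])$, it pins the distance-to-the-opposite-side to within the thinness constant, which is exactly the content of (H3) for the specific vertex; extending this to an arbitrary $u \in [x,y]$ (rather than the vertex $z$) is what forces the factor $4$ in $(H3,\delta) \Longrightarrow (H2,4\delta)$, since one applies the vertex statement to a subtriangle and pays the change-of-basepoint cost. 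Finally, the two composite implications $(H1,\delta)\Longrightarrow(H3,4\delta)$ and $(H3,\delta)\Longrightarrow(H1,8\delta)$ I would simply obtain by composing the chains $(H1,\delta)\Rightarrow(H2,4\delta)\Rightarrow(H3,4\delta)$ and $(H3,\delta)\Rightarrow(H2,4\delta)\Rightarrow(H1,8\delta)$, so no new argument is needed once the four primitive implications are in hand.

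The main obstacle will be keeping the constants honest in the $\Lambda$-valued setting: because $\Lambda$ need not be Archimedean or divisible, I cannot use ``$\epsilon$-pushing'' or average two elements unless I pass to $\Lambda_\Q$, and the hypothesis that all Gromov products lie in $\Lambda$ (not merely $\Lambda_\Q$) must be invoked exactly where tripod lengths and geodesic-parameter values are claimed to be genuine elements of $\Lambda$. I would therefore be careful that every inequality is a comparison of honest $\Lambda$-elements and that the only place fractions enter is the a priori $\Lambda_\Q$-valued Gromov product, whose integrality is guaranteed by assumption. The real work, as noted, is the constant bookkeeping in $(H1,\delta)\Rightarrow(H2,4\delta)$; the rest is a routine, if careful, transcription of the classical Ghys--de la Harpe arguments with the Euclidean comparison replaced by the $\Lambda$-tripod comparison of Lemma \ref{le:1.2.7}.
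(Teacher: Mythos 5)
Your overall architecture coincides with the paper's (both transcribe Ghys--de la Harpe, Proposition 2.21: the same four primitive implications, the comparison tripod of Lemma \ref{le:1.2.7}, Lemma \ref{le:1.2.8} as the bridge, and the identical compositions $(H1,\delta)\Rightarrow(H2,4\delta)\Rightarrow(H3,4\delta)$ and $(H3,\delta)\Rightarrow(H2,4\delta)\Rightarrow(H1,8\delta)$). But your concrete plan for the core step $(H1,\delta)\Rightarrow(H2,4\delta)$ has a genuine gap: a single application of the $4$-point condition to $\{u,v,x,y\}$, even with the collinearity relations, does not produce $4\delta$. With $u\in[x,y]$, $v\in[x,z]$ and $d(x,u)=d(x,v)=t$, Lemma \ref{le:1.2.6} gives $d(u,v)\leqslant \max\{t+d(v,y),\,d(x,y)\}-d(x,y)+2\delta$, and the only a priori bound $d(v,y)\leqslant d(v,z)+d(z,y)$ turns this into $d(u,v)\leqslant 2(x\cdot y)_z+2\delta$, which is unbounded. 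The third vertex $z$ must enter through a \emph{second} application, and this is exactly what the paper does, working in Gromov-product form at the single basepoint $x$: it chains $(u\cdot v)_x\geqslant\min\{(u\cdot y)_x,(v\cdot y)_x\}-\delta$ with $(v\cdot y)_x\geqslant\min\{(v\cdot z)_x,(z\cdot y)_x\}-\delta$, using $(u\cdot y)_x=(v\cdot z)_x=t\leqslant(y\cdot z)_x$, to get $(u\cdot v)_x\geqslant t-2\delta$ and hence $d(u,v)=2t-2(u\cdot v)_x\leqslant 4\delta$. You do gesture at ``two applications,'' but you never identify that the second quadruple is $\{x,v,y,z\}$ (equivalently, that one must bound $d(v,y)$ via $(v\cdot y)_x$); that identification is the whole content of the step, and the factor $4$ is the factor $2$ in converting a Gromov product to a distance times the $2\delta$ from the two chained min-inequalities.

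The fallback you offer --- ``one application plus the factor-$2$ cost of changing basepoint via Lemma \ref{le:1.2.5}'' --- is a misconception that recurs through your sketch. After Lemma \ref{le:1.2.5} the paper's standing convention is that $\delta$-hyperbolic means $\delta$-hyperbolic with respect to \emph{every} basepoint, so no basepoint-change penalty occurs anywhere in this proof. In $(H2,\delta)\Rightarrow(H1,2\delta)$ the $2\delta$ likewise comes from two thinness applications (yielding $d(x'_1,x'_2)\leqslant 2\delta$ from the two triangles $\Delta(x_0,x_i,x_3)$, $i=1,2$), and the argument needs the genuine four-point configuration $x_0,\dots,x_3$ with comparison points $x'_i\in[x_0,x_i]$ at the common parameter $t=\min\{(x_1\cdot x_3)_{x_0},(x_2\cdot x_3)_{x_0}\}$, projected onto $[x_1,x_2]$ --- not a comparison of ``three internal lengths'' in one tripod. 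Similarly, $(H3,\delta)\Rightarrow(H2,4\delta)$ is not a formal ``subtriangle plus basepoint'' manipulation: the paper argues by contradiction, using Lemma \ref{le:1.2.8}(i) to get $d(v,[x,y])\geqslant\tfrac{1}{2}d(u,v)>2\delta$ from $d(u,v)>4\delta$, then choosing the auxiliary point $p\in[x,v]$ with $d(p,v)=\delta$ and showing $d(p,[x,y])>\delta$ and $d(p,[y,z])>\delta$, contradicting $(H3,\delta)$; this auxiliary point is the missing idea in your sketch. Your treatment of the easy items --- $(H2,\delta)\Rightarrow(H3,\delta)$, the two compositions, and the caution that all Gromov products lie in $\Lambda$ rather than merely $\Lambda_\Q$ --- does match the paper.
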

\begin{proof} We follow the proof of Proposition 2.21 \cite{Ghys_delaHarpe:1991}.

\smallskip

$(H1,\delta) \Longrightarrow (H2,4\delta)$:

\smallskip

Let $x,y,z \in X$ and $T(x,y,z)$ a comparison $\Lambda$-tripod (denote by $d'$ the metric on
$T(x,y,z)$). Let $u \neq v \in \Delta(x,y,z)$ be such that $\phi(u) = \phi(v)$. We have to show
that $d(u,v) \leqslant 4\delta$. Without loss of generality we can assume that $u \in [x,y],\ v \in
[x,z]$. If $t = d(x,u)$ then
$$d'(\phi(x),\phi(u)) = d'(\phi(x), \phi(v)) = t \leqslant (y \cdot z)_x$$
$$(u \cdot y)_x = (v \cdot z)_x = t$$
which implies
$$(u \cdot v)_x \geqslant \min\{(u \cdot y)_x, (v \cdot y)_x\} - \delta \geqslant
\min\{(u \cdot y)_x, (y \cdot z)_x, (z \cdot v)_x\} - 2\delta = t - 2\delta.$$
Since $(u \cdot v)_x = t - \frac{1}{2}d(u,v)$ then
$$d(u,v) = 2t - (u \cdot v)_x \leqslant 2t - 2(t-2\delta) = 4\delta.$$

\smallskip

$(H2,\delta) \Longrightarrow (H1,2\delta)$:

\smallskip

Suppose that all triangles in $X$ are $\delta$-thin and let $x_0, x_1, x_2, x_3 \in X$. We have to
show that
$$(x_1 \cdot x_2)_{x_0} \geqslant  \min\{(x_1 \cdot x_3)_{x_0}, (x_2 \cdot x_3)_{x_0}\} - 2\delta$$
Denote $t = \min\{(x_1 \cdot x_3)_{x_0}, (x_2 \cdot x_3)_{x_0}\}$. If $t \leqslant (x_1 \cdot x_2)_{x_0}$
then there is nothing prove, so let $t > (x_1 \cdot x_2)_{x_0}$.

For $i \in \{1,2,3\}$ let $x'_i \in [x_0, x_i]$ be such that $d(x_0, x'_i) = t$. Let $\phi_{i,j},\
i \neq j \in \{1,2,3\}$ be the isometry of the triangle $[x_0, x_i] \cup [x_i, x_j] \cup [x_j, x_0]$ to
the comparison $\Lambda$-tripod $T(x_0, x_i, x_j)$.

In the case when $i = 1, 2$ we have $d(x_0, x'_i) = d(x_0, x'_3) \leqslant (x_i \cdot x_3)_{x_0}$
since $\phi_{i,3}(x'_i) = \phi_{i,3}(x'_3)$ and $d(x_3, x'_i) \leqslant \delta$. Thus we have
$$d(x'_1, x'_2) \leqslant 2\delta.$$

Since $t > (x_1 \cdot x_2)_{x_0}$, there exists $y_j \in [x_1, x_2]$ such that $\phi_{1,2}(x'_j) =
\phi_{1,2}(y_j)$ and $d(x'_j, y_j) \leqslant \delta$. Hence,
$$2\delta \geqslant d(x'_1, x'_2) \geqslant d(y_1, y_2) - 2\delta = d(x_1, x_2) - d(x_1, y_1) -
d(x_2, y_2) - 2\delta$$
$$= d(x_1, x_2) - (d(x_1, x_0) - d(x'_1, x_0)) - (d(x_2, x_0) - d(x'_2, x_0))
- 2\delta$$
$$ = 2t - 2(x_1 \cdot x_2)_{x_0} - 3\delta.$$
So, $(x_1 \cdot x_2)_{x_0} \geqslant t - 2\delta$.

\smallskip

$(H2,\delta) \Longrightarrow (H3,\delta)$: obvious

\smallskip

$(H3,\delta) \Longrightarrow (H2,4\delta)$:

\smallskip

Suppose that $(H2,4\delta)$ does not hold, that is, there exist $x,y,z \in X$ and $u \in [x,y],\
v \in [x,z]$ such that $d(x,u) = d(x,v) < (y \cdot z)_x$ but $d(u,v) > 4\delta$. By Lemma \ref{le:1.2.8}
we have
$$d(v, [x,y]) = \min\{d(v, [x,u]), d(v, [u,y])\} \geqslant \min\{(x \cdot u)_v, (u \cdot y)_v\}.$$
Next, $2(x \cdot u)_v = d(u,v)$ and
$$2(u \cdot y)_v = d(u,v) + d(y,v) - (d(x,y) - d(x,u))$$
$$ = d(u,v) + (d(y,v) + d(x,v) - d(x,y)) \geqslant d(u,v),$$
hence
$$d(v, [x,y]) \geqslant \frac{1}{2} d(u,v) > 2\delta.$$
In particular, $d(x,v) > 2\delta$ and there exists $p \in [x,v]$ such that $d(p,v) = \delta$. Now
we have
$$d(p, [x,y]) \geqslant d(v, [x,y]) - d(v, p) > \delta$$
$$d(p, [y,z]) \geqslant d(x, [y,z]) - d(x,p) \geqslant (y \cdot z)_x - d(x,p) > t - d(x,p)$$
$$= d(v,x) - d(x,p) = d(p,v) = \delta.$$
It follows that $d(p, [x,y] \cup [y,z]) > \delta$ which contradicts our assumption.

\smallskip

$(H1,\delta) \Longrightarrow (H3,4\delta)$: follows from $(H1,\delta) \Longrightarrow (H2,4\delta)$
and $(H2,\delta) \Longrightarrow (H3,\delta)$.

\smallskip

$(H3,\delta) \Longrightarrow (H1,8\delta)$: follows from $(H3,\delta) \Longrightarrow (H2,4\delta)$
and $(H2,\delta) \Longrightarrow (H1,2\delta)$.
\end{proof}

If $(X, d)$ is geodesic then for $x, y, z \in X$ we denote by $\Delta_I(x,y,z)$ the geodesic triangle
whose vertices are the points $p, q, r$ on the sides of $\Delta(x,y,z)$ which are sent to the center 
point of $T(x,y,z)$ under the isometry $\phi : \Delta(x,y,z) \to T(x,y,z)$. From Proposition 
\ref{pr:1.2.9} it follows that if $(X,d)$ is $\delta$-hyperbolic then the length of the sides of 
$\Delta_I(x,y,z)$ is bounded by $4\delta$.

\subsection{Boundaries of hyperbolic $\Lambda$-metric spaces}
\label{subs:boundary}

We say that a sequence $\{\lambda_i\}$ of elements of $\Lambda$ {\em converges to infinity}, and 
write 
$$\lim_{i \to \infty} \lambda_i = \infty$$ 
if for every $\alpha \in \Lambda$ there is a natural number $n_\alpha$ such that $\lambda_i \geqslant
\alpha$ for every $i \geqslant n_\alpha$. Similarly, a double-indexed family $\{\lambda_{i j}\} 
\subseteq \Lambda$ converges to infinity, that is, 
$$\lim_{\substack{i \to \infty\\ j \to \infty}}  \lambda_{i j} = \infty$$ 
if for every $\alpha \in \Lambda$ there is a natural number $n_\alpha$ such that $\lambda_{i j} 
\geqslant \alpha$ for every $i, j \geqslant n_\alpha$.

Since $\Lambda$ is an arbitrary ordered abelian group, the notion of convergence to infinity can be 
applied with respect to any convex subgroup $\Lambda_0 \subseteq \Lambda$ by replacing $\Lambda$
with $\Lambda_0$.

Let $(X, d)$ be a $\Lambda$-metric space. Fix a base point $v \in X$. We say that a sequence of points
$\{x_i\} \subseteq X$ {\em converges to infinity} if
$$\lim_{\substack{i \to \infty\\ j \to \infty}} (x_i \cdot x_j)_v = \infty.$$
Observe that if a sequence $\{x_i\}$ converges to infinity  with respect to $v \in X$ then it converges 
to infinity with respect to any other $v' \in X$, since (by the triangle inequality)
\begin{equation} 
\label{eq:base-points}
|(x_i \cdot x_j)_v - (x_i \cdot x_j)_{v'}| \leqslant  d(v, v')
\end{equation}
for any $x_i, x_j, v, v'$.

Now, we term two convergent to infinity sequences $\{x_i\}, \{y_j\} \subseteq X$ {\em close} or 
{\em equivalent} (and write $\{x_i\} \sim \{y_i\}$) with respect to $v$ if
$$\lim_{\substack{i \to \infty\\ j \to \infty}} (x_i \cdot y_j)_v  = \infty.$$
Notice that (\ref{eq:base-points}) shows again that if $\{x_i\}, \{y_j\}$ are equivalent with respect 
to $v$ then they are equivalent with respect to any base point $v' \in X$.

\begin{lemma}
\label{le:1.2.10}
If $(X,d)$ is a hyperbolic $\Lambda$-metric space then the property ``to be close'' defines an 
equivalence relation on the set of all sequences in $(X,d)$ that converge to infinity.
\end{lemma}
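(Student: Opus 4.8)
The plan is to verify the three defining properties of an equivalence relation—reflexivity, symmetry, and transitivity—on the set of convergent-to-infinity sequences, where the only nontrivial part is transitivity. Throughout I fix a base point $v \in X$ and use that $(X,d)$ is $\delta$-hyperbolic with respect to $v$ for some $\delta \geqslant 0$ (Lemma \ref{le:1.2.5} guarantees this holds with respect to any base point). Reflexivity is essentially the definition: if $\{x_i\}$ converges to infinity, then by definition $\lim_{i,j} (x_i \cdot x_j)_v = \infty$, which is exactly the statement that $\{x_i\} \sim \{x_i\}$. Symmetry is immediate from the symmetry of the Gromov product, $(x_i \cdot y_j)_v = (y_j \cdot x_i)_v$, together with the fact that convergence to infinity of a double-indexed family is insensitive to swapping the roles of the two indices.

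\emph{The transitivity step is where the hyperbolicity hypothesis does the real work.} Suppose $\{x_i\} \sim \{y_j\}$ and $\{y_j\} \sim \{z_k\}$; I want $\{x_i\} \sim \{z_k\}$, i.e. $\lim_{i,k}(x_i \cdot z_k)_v = \infty$. The key inequality is the $\delta$-hyperbolicity axiom applied to the triple $x_i, z_k, y_j$:
\begin{equation}
\label{eq:trans-hyp}
(x_i \cdot z_k)_v \geqslant \min\{(x_i \cdot y_j)_v,\ (y_j \cdot z_k)_v\} - \delta.
\end{equation}
Now I fix an arbitrary threshold $\alpha \in \Lambda$. By the hypothesis $\{x_i\} \sim \{y_j\}$ there is $N_1$ with $(x_i \cdot y_j)_v \geqslant \alpha + \delta$ whenever $i,j \geqslant N_1$, and by $\{y_j\} \sim \{z_k\}$ there is $N_2$ with $(y_j \cdot z_k)_v \geqslant \alpha + \delta$ whenever $j,k \geqslant N_2$. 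Setting $N = \max\{N_1, N_2\}$ and choosing \emph{any} fixed index $j \geqslant N$, the right-hand side of \eqref{eq:trans-hyp} is at least $(\alpha + \delta) - \delta = \alpha$ for all $i,k \geqslant N$, giving $(x_i \cdot z_k)_v \geqslant \alpha$ on that range. Since $\alpha$ was arbitrary, $\lim_{i,k}(x_i \cdot z_k)_v = \infty$.

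\emph{The one subtlety I would flag} is the quantifier handling in the transitivity argument: the middle sequence index $j$ must be chosen once it is large enough (so that both hypotheses apply simultaneously), and then held fixed while $i$ and $k$ run off to infinity. This is legitimate because \eqref{eq:trans-hyp} holds for every individual triple, so a single good value of $j$ suffices to push the minimum above the threshold. I also want to note that the additive shift by $\delta$ is harmless precisely because $\Lambda$ is an ordered group: the translation $\alpha \mapsto \alpha + \delta$ is order-preserving and surjective onto $\Lambda$, so demanding the products exceed $\alpha + \delta$ is no stronger a condition, for the purpose of checking convergence to infinity, than demanding they exceed $\alpha$. No appeal to divisibility, Archimedeanity, or any special structure of $\Lambda$ is needed, which is what makes the statement work uniformly over all ordered abelian groups.
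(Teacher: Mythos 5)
Your proposal is correct and follows essentially the same route as the paper: reflexivity and symmetry straight from the definitions, and transitivity via the $\delta$-hyperbolicity inequality $(x_i \cdot z_k)_v \geqslant \min\{(x_i \cdot y_j)_v, (y_j \cdot z_k)_v\} - \delta$ after raising the threshold to $\alpha + \delta$ for both hypotheses. Your explicit remark about fixing the middle index $j$ once it is large enough is a sound clarification of a quantifier point the paper's proof handles implicitly by taking all of $i,j,k \geqslant n_\alpha$.
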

\begin{proof} Reflexivity and symmetry of ``$\sim$'' follow immediately from definitions. To prove 
transitivity consider convergent to infinity sequences $\{x_i\} \sim \{y_j\}$ and $\{y_j\} \sim 
\{z_k\}$. Suppose that $X$ is $\delta$-hyperbolic for some $\delta \in \Lambda$. Then for a given 
arbitrary $\alpha \in \Lambda$ choose $n_\alpha$ be such that
$$(x_i \cdot y_j)_v \geqslant \alpha + \delta,\ \ \ (y_j \cdot z_k)_v \geqslant \alpha + \delta$$
for some $v \in X$ and all $i,j,k \geqslant n_\alpha$. Hence,
$$(x_i \cdot z_k)_v \geq \min\{(x_i \cdot y_j)_v, (y_j \cdot z_k)_v\} - \delta \geqslant
(\alpha + \delta) - \delta = \alpha$$
which shows that $\{x_i\}$ and $\{z_k\}$ are $\Lambda$-close.
\end{proof}

For a hyperbolic $\Lambda$-metric space $(X,d)$ we define the {\em boundary at infinity} of $X$ as 
the set of equivalence classes of close convergent to infinity sequences in $(X,d)$ and denote it 
$\partial X$. Observe that if $\Lambda = \R$ then $\partial X$ is the hyperbolic boundary of $(X,d)$. 
If $a \in \partial X$ and $\{x_i\} \in a$ then we write $x_i \to a$ as $i \to \infty$.

Now let $\Lambda_0$ be a convex non-trivial subgroup of $\Lambda$ and $v \in X$. Take a 
$\Lambda_0$-subspace $X_{v, \Lambda_0}$ with the base point $v$ of $X$, which is a 
$\Lambda_0$-metric space. If $X$ is $\delta$-hyperbolic and $\delta \in \Lambda_0$ then $X_{v, 
\Lambda_0}$ is $\delta$-hyperbolic, so the argument above applies and one gets the boundary at 
infinity $\partial X_{v, \Lambda_0}$ of $X_{v, \Lambda_0}$, which we call the {\em $\Lambda_0$-boundary 
of $X$ with respect to the base point $v$}. Notice that in the case of $X_{v, \Lambda_0}$ one can 
consider sequences not only from $X_{v, \Lambda_0}$ but also $\{x_i\}$ such that $x_i \in X_{v, 
\Lambda_0}$ for all sufficiently large $i$.

Recall that in the case of hyperbolic $\R$-metric spaces there exists a notion of the Gromov product 
on the boundary. Similarly, for $x, y \in \partial X$ we define the {\em Gromov product on the 
$\Lambda$-boundary} as follows
$$(\alpha \cdot \beta)_v = \sup_{\substack{x_i \to \alpha\\ y_j \to \beta}} \liminf_{\substack{i \to
\infty\\ j \to \infty}} (x_i \cdot y_j)_v$$ 
provided the limit exists (it depends on $\Lambda$).

\begin{lemma}
\label{le:1.2.11}
If $\Lambda = \R^n$ with the right lexicographic order and $\delta = (d, 0, \ldots, 0)$, then 
$(\alpha \cdot \beta)_v$ exists for any distinct $\alpha, \beta \in \partial X$.
\end{lemma}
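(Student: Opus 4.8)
The plan is to exploit the filtration of $\R^n$ by its convex subgroups $0 < \Lambda_1 < \cdots < \Lambda_n = \R^n$ and to peel off the dominant (top) coordinate, on which the space becomes $0$-hyperbolic precisely because $\delta = (d,0,\ldots,0)$ lies in the smallest convex subgroup $\Lambda_1$. Let $\pi : \R^n \to \R$ be the projection onto the $n$-th coordinate; it is a surjective order-preserving homomorphism with kernel $\Lambda_{n-1} \cong \R^{n-1}$ and $\pi(\delta)=0$. Since an order-preserving homomorphism commutes with $\min$, applying $\pi$ to the $\delta$-hyperbolicity inequality turns it into the $0$-hyperbolicity inequality for the real numbers $\bar p_{ij} := \pi((x_i\cdot y_j)_v)$. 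Thus on the level of the dominant coordinate, $X$ behaves like a $0$-hyperbolic $\R$-metric space, i.e. it is tree-like, and this is the structure I would harvest.

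First I would analyze the dominant coordinate. In the right lexicographic order, convergence to infinity is equivalent to the top coordinate tending to $+\infty$, so from $\{x_i\}\to\alpha$ and $\{y_j\}\to\beta$ one gets $\pi((x_i\cdot x_{i'})_v)\to+\infty$ and $\pi((y_j\cdot y_{j'})_v)\to+\infty$. On the other hand $\alpha\ne\beta$ means $\{x_i\}\not\sim\{y_j\}$, so $(x_i\cdot y_j)_v$ does not converge to infinity and hence $\bar p_{ij}$ does not tend to $+\infty$. Feeding these facts into the $0$-hyperbolic inequality for $\bar p_{ij}$ gives the classical ends estimate: for $i,i'$ large one has either $\bar p_{ij}=\bar p_{i'j}$ or both values exceed $\pi((x_i\cdot x_{i'})_v)$, which is arbitrarily large. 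Since $\bar p_{ij}$ stays bounded (and is nonnegative by Lemma \ref{le:1.2.4}), the products stabilize and $\bar p_{ij}$ converges to a finite limit $\bar L\ge 0$. This pins the dominant coordinate of $(x_i\cdot y_j)_v$ to $\bar L$ and is the step that keeps the whole quantity from running off to infinity.

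With the top coordinate controlled I would then induct on $n$. The base case $n=1$ is the classical fact for $\delta$-hyperbolic $\R$-metric spaces that the supremum-of-liminf Gromov product of two distinct boundary points is finite (see \cite{Ghys_delaHarpe:1991}). For the inductive step, freezing the dominant coordinate at $\bar L$ reduces the existence of $\liminf_{i,j}(x_i\cdot y_j)_v$ and of the supremum over representative sequences to the same question for the $\Lambda_{n-1}$-components of the Gromov products, which take values in $\Lambda_{n-1}\cong\R^{n-1}$ and still see the hyperbolicity constant $\delta\in\Lambda_1\subset\Lambda_{n-1}$. The inductive hypothesis then yields existence in $\R^{n-1}$, and one assembles $(\alpha\cdot\beta)_v$ coordinate by coordinate, from the dominant coordinate downward.

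The main obstacle is that $\R^n$ with the right lexicographic order is not order-complete: a bounded set need not have a supremum (for instance $\{(a,0)\mid a\in\R\}$ is bounded above yet has no least upper bound), which is exactly why the statement depends on $\Lambda$. Everything hinges on showing that the dominant coordinate converges to a single value $\bar L$ rather than merely staying bounded; this is where $\pi(\delta)=0$, and hence genuine $0$-hyperbolicity rather than approximate hyperbolicity at the top level, is indispensable. I expect the delicate part to be making the inductive descent rigorous: correctly identifying the reduced $\R^{n-1}$-valued boundary data after each top coordinate is frozen, and checking that both the $\liminf$ and the supremum genuinely survive the passage so that $(\alpha\cdot\beta)_v$ is realized as an honest element of $\R^n$.
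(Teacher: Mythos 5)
Your top-coordinate analysis is essentially sound, modulo one fillable point: ``$(x_i\cdot y_j)_v$ does not converge to infinity'' does not immediately give a uniform bound $(x_i\cdot y_j)_v\leqslant a$ for all $i,j$; as in the paper's first step, if the products were unbounded you would extract subsequences along which they tend to infinity and use $\delta$-hyperbolicity to conclude the subsequences (hence $\alpha$ and $\beta$) coincide. The genuine gap is in your inductive descent, in two respects. First, the step is not well-posed: after you freeze the top coordinate at $\bar L$, the $\R^{n-1}$-components of the Gromov products are not the boundary data of any $\delta$-hyperbolic $\R^{n-1}$-metric space --- there is no space, no sequences converging to infinity in it, and no pair of distinct boundary points to which the inductive hypothesis (the lemma for $n-1$) could be applied, so ``the inductive hypothesis then yields existence'' has nothing to act on. Second, and more seriously, your plan has no mechanism for bounding the \emph{bottom} coordinate. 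Lex-boundedness $(x_i\cdot y_j)_v\leqslant a$ puts no constraint on the first coordinate once the upper coordinates fall strictly below those of $a$, and nonnegativity of the full product (Lemma \ref{le:1.2.4}) does not make the first coordinate nonnegative: $(-100,0,3)\geqslant 0$ in the right lexicographic order. So even a repaired descent that freezes coordinates $2,\dots,n$ (which the quotient $0$-hyperbolicity argument can deliver) leaves the remaining real coordinate free to wander over all of $\R$, and then the lex-$\liminf$ and the supremum can fail to exist --- exactly the incompleteness failure you yourself flag.

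The paper closes both holes with a single two-sided estimate, with no projection and no induction. Given the uniform bound $(x_i\cdot y_j)_v\leqslant a$, convergence to infinity of $\{x_i\}$ and $\{y_j\}$ yields anchors $m,n$ with $(x_m\cdot x_{m+k})_v,\ (y_n\cdot y_{n+k})_v\geqslant a$ for all $k\geqslant 0$, and two applications of $\delta$-hyperbolicity give, for all $M>m$ and $N>n$,
\[
(x_m\cdot y_n)_v-2\delta\ \leqslant\ (x_M\cdot y_N)_v\ \leqslant\ (x_m\cdot y_n)_v+2\delta.
\]
Since $\delta=(d,0,\ldots,0)$ lies in the minimal convex subgroup, this sandwich freezes all coordinates above the first at once and traps the first coordinate in the compact interval $[c_1-2d,\,c_1+2d]$, so the $\liminf$ exists by completeness of $\R$; the same $\pm 2\delta$ comparison between different representative pairs $(\{x_i'\},\{y_j'\})$ and $(\{x_i\},\{y_j\})$ then bounds the bottom coordinate uniformly over representatives, giving the supremum. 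The crucial ingredient your proposal omits is precisely the lower bound $(x_M\cdot y_N)_v\geqslant(x_m\cdot y_n)_v-2\delta$: the quotient argument produces only equalities of upper coordinates, never this two-sided control at the bottom level, and your base case invokes a classical fact for an $\R$-metric space that your reduction never actually produces.
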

\begin{proof}
We define 
$$(\{x_i\} \cdot \{y_j\})_v = \liminf_{\substack{i \to \infty\\ j \to \infty}}(x_i \cdot y_j)_v.$$ 
We have to prove first that $(\{x_i\} \cdot \{y_j\})_v$ exists for any two sequences converging at 
infinity which are not equivalent and then prove that $(\alpha \cdot \beta)_v$ exists for any 
distinct $\alpha, \beta \in \partial X$.

Suppose then that $\{x_i\}$ and $\{y_j\}$ are two sequences converging to infinity. If for any $a 
\in \R^n$ there exist some indices $k_a, k_a'$ such that $(x_{k_a} \cdot y_{k_a'})_v > a$, then 
the sub-sequences $\{x_{k_a}\}$ and $\{y_{k_a'}\}$, where $a = (0, \ldots, 0, n)$, converge to the 
same point in $\partial X$, which implies that $\alpha$ and $\beta$ coincide which is a contradiction.
Hence, we can assume that there exists some $a \in \R^n$ such that $(x_i \cdot y_j)_v \leqslant a$ 
for any $i, j$. Since $\{x_i\}$ and $\{y_j\}$ converge to infinity, there exist $m, n$ such that 
$(x_m \cdot x_{m+k})_v, (y_n \cdot y_{n+k})_v \geqslant a$ for any $k \geqslant 0$.

Take $M, N \in \N$ such that $M > m$ and $N > n$. Hence,
$$(x_M \cdot y_N)_v \geqslant \min\{(x_M \cdot x_m)_v, (x_m \cdot y_N)_v\} - \delta = (x_m \cdot y_N)_v
- \delta \geqslant (x_m \cdot y_n)_v - 2\delta.$$
A similar argument shows that $(x_m, y_n)_v \geqslant (x_M \cdot y_N)_v - 2\delta$.

Suppose now that $(x_m \cdot y_n)_v = (c_1, \ldots, c_n)$. It follows that 
$$(c_1, \ldots, c_{n-1}, c_n - 2d) = (x_m \cdot y_n)_v - 2\delta \leqslant (x_M \cdot y_N)_v \leqslant 
(x_m \cdot y_n)_v + 2\delta$$
$$ = (c_1, \ldots, c_{n-1}, c_n + 2d)$$ 
for any $M > m,\ N > n$. The completeness of $\R$ implies then that $(\{x_i\} \cdot \{y_j\})_v$ 
exists for any two non-equivalent sequences which converge at infinity.

Proving existence of $(\alpha \cdot \beta)_v$ is quite similar. Let $\{x_i'\}$ tend to $\alpha$ and 
$\{y_j'\}$ tend to $\beta$ and $(x_i' \cdot y_j')_v < a$ for any $i, j$. There exist $m,n$ such that 
$(x_m \cdot x_{m+k}')_v, (y_n \cdot y_{n+k}')_v > a$ for any $k \geqslant 0$. We then use the same 
argument as above to prove that 
$$(\{x_i\} \cdot \{y_j\})_v - 2\delta \leqslant (\{x_i'\} \cdot \{y_j'\})_v \leqslant (\{x_i\} \cdot 
\{y_j\})_v +2\delta,$$ 
so the supremum must again exist by completeness of $\R$.
\end{proof}

\subsection{Isometries of $\Lambda$-metric spaces}
\label{subs:isometries}

Let $(X,d)$ be a $\delta$-hyperbolic $\Lambda$-metric space and $\Lambda_\delta$ be the minimal 
convex subgroup of $\Lambda$ containing $\delta$ (that is, for every $\alpha \in \Lambda_\delta$ 
there exists $k \in \N$ such that $\alpha < k \delta$). If $\delta = 0$ then we set $\Lambda_\delta$
to be the trivial subgroup of $\Lambda$. Consider the set of all isometric mappings from $X$ to itself
which we denote by $Isom(X)$. The image of $x \in X$ under $\gamma \in Isom(X)$ we denote by 
$\gamma x$. 

Observe that if $\{x_i\} \subseteq X$ converges to infinity then $\{\gamma x_i\}$ also converges to
infinity, and if $\{x_i\}, \{y_i\}$ are close then $\{\gamma x_i\}, \{\gamma y_i\}$ are also close. 
This shows that $\gamma$ extends to the mapping $\partial X \to \partial X$ which we by abuse of 
notation again denote $\gamma$.

$\gamma \in Isom(X)$ is {\em minimal on $X$} if it does not have any invariant $\Lambda_0$-subspace
of $X$ for some non-trivial proper convex subgroup $\Lambda_0$ of $\Lambda$. If $\gamma$ is not 
minimal then it induces an isometry $\gamma_0$ on a $\Lambda_0$-subspace $X_0$ of $X$ and the 
definitions below apply to the case when $\Lambda = \Lambda_0$ and $X = X_0$. Note that if the set 
$\{\gamma^n x \mid n \in \Z\}$ for some $x \in X$ is bounded by $\alpha \in \Lambda$ and 
$\Lambda_0$ is the minimal convex subgroup of $\Lambda$ containing $\alpha$ then $\gamma$ stabilizes  
$$X_0 = \{y \in X \mid d(x, y) \in \Lambda_0\}$$
which is a $\Lambda_0$-subspace of $X$ containing $x$. Indeed, for every $y \in X_0$ we have
$$d(y, \gamma y) \leqslant d(x, y) + d(x, \gamma x) + d(\gamma x, \gamma y) = 2d(x, y) + d(x, 
\gamma x) \in \Lambda_0$$ 
Hence, $\gamma$ is minimal only if $X_0 = X$. 

An isometry $\gamma : X \to X$ is called {\em elliptic} if there exists $x \in X$ such that the set 
$\{\gamma^n x \mid n \in \Z\}$ has diameter of at most $K \delta$ for some $K \in \N$. This 
definition generalizes the standard definition of elliptic isometry in $\R$-metric spaces. Indeed,
recall that in a proper geodesic $\R$-metric space an isometry is elliptic if it has a fixed point. 
Our definition in this case (when $X$ is a proper geodesic $\R$-metric space) also implies a fixed 
point: since the set $\{\gamma^n x \mid n \in \Z\}$ is bounded and $X$ is proper, there exists a 
subsequence $\{n_i\}$ such that $\gamma^{n_i} x \to y$ for some $y \in X$ and $\gamma y = y$. But 
in general, for an arbitrary $\Lambda$-metric space $X$, even if $\Lambda = \R$ our definition does 
not imply that there exists a fixed point of $\gamma$. 

Suppose $\gamma$ is elliptic and $x \in X$ is such that the set $\{\gamma^n x \mid n \in \Z\}$ is 
bounded by $K \delta$ for some $K \in \N$. Let $X_0$ be the $\Lambda_\delta$-subspace of $X$ 
containing $x$. Hence, for every $y \in X_0$ and $m, n \in \Z$ there exists $M \in \N$ such that
$$d(\gamma^n y, \gamma^m y) \leqslant d(\gamma^n y, \gamma^n x) + d(\gamma^n x, \gamma^m x) + 
d(\gamma^m x, \gamma^m y)$$
$$ = 2d(x, y) + d(\gamma^n x, \gamma^m x) \leqslant 2 d(x, y) + K \delta \leqslant M \delta$$ 
In particular, it follows that $\gamma$ fixes $X_0$ and it cannot be minimal unless $X = X_0$. 

The following fact follows immediately from the definition above.

\begin{lemma}
\label{le:elliptic_R^n}
Let $\Lambda = \R^n$ with the right lexicographic order and let $(X,d)$ be a geodesic 
$\delta$-hyperbolic $\Lambda$-metric space, where $\delta = (d, 0, \ldots, 0)$. Then $\gamma \in
Isom(X)$ is elliptic if and only if for some $x \in X$ the set $\{\gamma^n x \mid n \in \Z\}$ is 
bounded inside of some $\R$-subspace of $X$.
\end{lemma}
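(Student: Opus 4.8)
The plan is to unwind both conditions into statements about the diameter of the orbit $\{\gamma^n x \mid n \in \Z\}$ measured inside the minimal non-trivial convex subgroup $\Lambda_1 = \{(\lambda, 0, \ldots, 0) \mid \lambda \in \R\}$ of $\Lambda = \R^n$, and then to compare them using the Archimedean property of $\R$. The first observation I would record is that an $\R$-subspace of $X$ is precisely a $\Lambda_1$-metric subspace $X_{y, \Lambda_1} = \{z \in X \mid d(y, z) \in \Lambda_1\}$, since $\Lambda_1 \cong \R$, and that $\delta = (d, 0, \ldots, 0) \in \Lambda_1$, so $\Lambda_\delta = \Lambda_1$. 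Here I assume $\delta > 0$; the case $\delta = 0$ is degenerate, as then $X$ is a $\Lambda$-tree and the elliptic condition forces the orbit to be a single fixed point rather than merely a bounded set.

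For the forward implication I would start directly from the definition: if $\gamma$ is elliptic then there is $x \in X$ with $d(\gamma^m x, \gamma^n x) \leqslant K\delta = (Kd, 0, \ldots, 0)$ for all $m, n \in \Z$ and some $K \in \N$. Taking $m = 0$ gives $d(x, \gamma^n x) \in \Lambda_1$ for every $n$, so the whole orbit lies in the $\R$-subspace $X_{x, \Lambda_1}$, and the uniform bound $(Kd, 0, \ldots, 0) \in \Lambda_1$ shows it is bounded there. This half is immediate once the identification of $\R$-subspaces with $\Lambda_1$-subspaces is in place.

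For the converse I would suppose that $\{\gamma^n x\}$ is contained in and bounded inside some $\R$-subspace $X_{y, \Lambda_1}$. Boundedness inside an $\R$-metric space means there is a single $\alpha \in \R$, $\alpha \geqslant 0$, with $d(\gamma^m x, \gamma^n x) \leqslant (\alpha, 0, \ldots, 0)$ for all $m, n$. At this point I would invoke the Archimedean property of $\R$: since $d > 0$, choose $K \in \N$ with $Kd \geqslant \alpha$; then $(\alpha, 0, \ldots, 0) \leqslant (Kd, 0, \ldots, 0) = K\delta$ in the right lexicographic order, so the orbit has diameter at most $K\delta$, and $\gamma$ is elliptic by definition.

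There is essentially no hard step here — this is the ``follows immediately'' claim — but the two points that require care are: (i) correctly identifying the notion of an $\R$-subspace with the $\Lambda_1$-metric subspace, so that the constant $\delta \in \Lambda_1$ lives in the right place; and (ii) noting that it is \emph{genuine} boundedness inside the $\R$-subspace, not mere containment (since the $\R$-subspace itself can be an unbounded $\R$-metric space), that yields the finite $\R$-valued diameter bound $\alpha$, which the Archimedean comparison then absorbs into a multiple of $\delta$.
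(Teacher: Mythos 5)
Your proof is correct, and it is precisely the argument the paper has in mind: the paper offers no written proof, stating only that the lemma ``follows immediately from the definition above,'' and your definitional unwinding --- identifying $\R$-subspaces with the $\Lambda_1$-metric subspaces $X_{y,\Lambda_1}$ for the minimal convex subgroup $\Lambda_1 \ni \delta$, using convexity of $\Lambda_1$ for the forward direction, and the Archimedean property of $\R$ (with $d > 0$) to absorb the real diameter bound $\alpha$ into $K\delta$ for the converse --- is exactly the intended ``immediate'' argument, with the two genuinely load-bearing points (convexity and Archimedeanness) correctly isolated.
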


An isometry $\gamma : X \to X$ is called {\em parabolic with respect to $x \in X$} if the diameter 
of the set $\{\gamma^n x \mid n \in \Z\}$ is not bounded by any $\alpha \in \Lambda$ and there 
exists $a_x \in \partial X$ such that for any subsequence of integers $\{n_i\}$ with the property 
$d(x, \gamma^{n_i} x) \to \infty$ we have that $\{\gamma^{n_i} x\} \to a_x$. 

\begin{lemma}
\label{le:parabolic}
Let $(X, d)$ be a $\delta$-hyperbolic $\Lambda$-metric space. If $\gamma \in Isom(X)$ is parabolic
with respect to $x \in X$ then $\gamma$ is parabolic with respect to any other $y \in X$ and $a_x
= a_y$.
\end{lemma}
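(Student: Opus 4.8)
The plan is to lean on two elementary observations. First, since each $\gamma^n$ is an isometry, the orbits of $x$ and $y$ stay at constant termwise distance,
$$d(\gamma^n x, \gamma^n y) = d(x, y) \quad \text{for all } n \in \Z.$$
Second, the Gromov product is insensitive to bounded perturbations of its arguments: unwinding the definition of $(\cdot\,\cdot)_v$ with the triangle inequality shows that if $d(a, a') \leqslant C_1$ and $d(b, b') \leqslant C_2$, then
$$|(a \cdot b)_v - (a' \cdot b')_v| \leqslant C_1 + C_2.$$
Since $d(x, y)$ is one fixed element of $\Lambda$, shifting any quantity by a bounded multiple of $d(x,y)$ cannot change whether a (single- or double-indexed) family converges to infinity; this is the mechanism by which everything about $x$ transfers to $y$.

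First I would check that the orbit of $y$ is unbounded. From the first display and the triangle inequality,
$$d(\gamma^n y, \gamma^m y) \geqslant d(\gamma^n x, \gamma^m x) - 2 d(x, y),$$
so given any $\alpha \in \Lambda$ one picks $n, m$ with $d(\gamma^n x, \gamma^m x) > \alpha + 2 d(x, y)$ (possible since the orbit of $x$ is unbounded) to get $d(\gamma^n y, \gamma^m y) > \alpha$. Next, the same triangle inequality gives $|d(y, \gamma^n y) - d(x, \gamma^n x)| \leqslant 2 d(x, y)$, whence for a subsequence $\{n_i\}$ one has $d(y, \gamma^{n_i} y) \to \infty$ if and only if $d(x, \gamma^{n_i} x) \to \infty$.

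Now let $\{n_i\}$ be any subsequence with $d(y, \gamma^{n_i} y) \to \infty$. By the previous paragraph $d(x, \gamma^{n_i} x) \to \infty$, so parabolicity with respect to $x$ gives $\{\gamma^{n_i} x\} \to a_x$. Applying the second display with $C_1 = C_2 = d(x,y)$, and then with $C_1 = d(x,y),\ C_2 = 0$, yields
$$|(\gamma^{n_i} y \cdot \gamma^{n_j} y)_v - (\gamma^{n_i} x \cdot \gamma^{n_j} x)_v| \leqslant 2 d(x, y), \qquad |(\gamma^{n_i} y \cdot \gamma^{n_j} x)_v - (\gamma^{n_i} x \cdot \gamma^{n_j} x)_v| \leqslant d(x, y).$$
The first inequality shows that $\{\gamma^{n_i} y\}$ converges to infinity, and the second shows $\{\gamma^{n_i} y\} \sim \{\gamma^{n_i} x\}$; by Lemma \ref{le:1.2.10} this places $\{\gamma^{n_i} y\}$ in the class $a_x$, i.e. $\{\gamma^{n_i} y\} \to a_x$. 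Thus $\gamma$ is parabolic with respect to $y$, with $a_y = a_x$.

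I expect no deep obstacle here: the whole argument is driven by the two displayed estimates together with the fact that a fixed element of $\Lambda$ is negligible for convergence to infinity. The only point requiring a little care is that $\Lambda$ is an arbitrary ordered abelian group, so one must argue with the order-theoretic definition of \emph{converges to infinity} rather than with any archimedean bound, and one must make sure that the single constant $d(x,y)$ absorbs all the perturbations uniformly in $i, j$ --- which it does, since it does not depend on the indices.
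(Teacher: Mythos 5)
Your proof is correct, and while it follows the same overall skeleton as the paper's (transfer orbit unboundedness via the triangle inequality, show the divergence of $d(y,\gamma^{n_i}y)$ and $d(x,\gamma^{n_i}x)$ along subsequences is equivalent, then transfer convergence to the boundary point), the central estimate is genuinely different. The paper controls $(\gamma^{n_i} y \cdot \gamma^{n_j} y)_v$ by applying the $\delta$-hyperbolicity inequality twice, reducing to the three products $(\gamma^{n_i} x \cdot \gamma^{n_i} y)_v$, $(\gamma^{n_i} x \cdot \gamma^{n_j} x)_v$, $(\gamma^{n_j} x \cdot \gamma^{n_j} y)_v$, and then computes $(\gamma^{n_i} x \cdot \gamma^{n_i} y)_v = \frac{1}{2}\left(d(v,\gamma^{n_i}x) + d(v,\gamma^{n_i}y) - d(x,y)\right) \to \infty$ using the isometry identity $d(\gamma^{n_i}x,\gamma^{n_i}y)=d(x,y)$. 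You instead prove the sharper, purely metric stability bound $|(a \cdot b)_v - (a' \cdot b')_v| \leqslant d(a,a') + d(b,b')$, which needs only the triangle inequality and no $\delta$ at all; hyperbolicity then enters your argument solely through Lemma \ref{le:1.2.10}, which guarantees that boundary classes are well defined. Your route has a small additional payoff: the case $C_2 = 0$ gives the mixed-product estimate $|(\gamma^{n_i} y \cdot \gamma^{n_j} x)_v - (\gamma^{n_i} x \cdot \gamma^{n_j} x)_v| \leqslant d(x,y)$ explicitly, so the closeness $\{\gamma^{n_i} y\} \sim \{\gamma^{n_i} x\}$, and hence $a_y = a_x$, is established directly --- a point the paper's proof leaves somewhat implicit when it asserts that divergence of $(\gamma^{n_i} y \cdot \gamma^{n_j} y)_v$ alone implies $a_x = a_y$ (strictly, one must also extract the mixed products from the hyperbolicity chain). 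Your closing remarks about arguing order-theoretically and about the single constant $d(x,y)$ being uniform in $i,j$ are exactly the right cautions for an arbitrary ordered abelian group $\Lambda$.
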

\begin{proof} Suppose $\gamma$ is parabolic with respect to $x$ and fix $y \in X$. Notice that for
a subsequence of integers $\{n_i\}$ we have
$$d(x, \gamma^{n_i} x) \to \infty\ \Longleftrightarrow\ d(y, \gamma^{n_i} y) \to \infty.$$
Indeed, assuming $d(x, \gamma^{n_i} x) \to \infty$, we get
$$d(y, \gamma^{n_i} y) \geqslant d(x, \gamma^{n_i} y) - d(x,y) \geqslant d(x, \gamma^{n_i} x) -
d(\gamma^{n_i} x, \gamma^{n_i} y) - d(x,y)$$
$$= d(x, \gamma^{n_i} x) - 2 d(x,y) \to \infty$$
and the converse implication can be obtained similarly.

\smallskip

Let $v \in X$ be a base-point. If $d(x, \gamma^{n_i} x) \to \infty$ then $(\gamma^{n_i} x \cdot
\gamma^{n_j} x)_v \to \infty$ as $n_i, n_j \to \infty$. Next, from hyperbolicity of $X$ we get
$$(\gamma^{n_i} y \cdot \gamma^{n_j} y)_v \geqslant \min\{(\gamma^{n_i} x \cdot \gamma^{n_i} y)_v,
(\gamma^{n_i} x \cdot \gamma^{n_j} y)_v\} - \delta$$
$$\geqslant \min\{(\gamma^{n_i} x \cdot \gamma^{n_i} y)_v, (\gamma^{n_i} x \cdot \gamma^{n_j} x)_v,
(\gamma^{n_j} x \cdot \gamma^{n_j} y)_v\} -2 \delta$$
which implies that $(\gamma^{n_i} y \cdot \gamma^{n_j} y)_v \to \infty$ as $n_i, n_j \to \infty$ since
$$(\gamma^{n_i} x \cdot \gamma^{n_i} y)_v = \frac{1}{2}(d(v, \gamma^{n_i} x) + d(v, \gamma^{n_i} y) -
d(\gamma^{n_i} x, \gamma^{n_i} y))$$
$$ = \frac{1}{2}(d(v, \gamma^{n_i} x) + d(v, \gamma^{n_i} y) - d(x, y)) \to \infty\ {\rm as}\ n_i \to \infty$$
and similarly $(\gamma^{n_j} x \cdot \gamma^{n_j} y)_v \to \infty$ as $n_j \to \infty$. It follows that
there exists $a_y \in \partial X$ such that $\{\gamma^{n_i} y\} \to a_y$ and $\gamma$ is parabolic 
with respect $y$. Moreover, the fact that $(\gamma^{n_i} y \cdot \gamma^{n_j} y)_v \to \infty$ as 
$n_i, n_j \to \infty$ implies that $a_x = a_y$.
\end{proof}

In view of Lemma \ref{le:parabolic} we say that $\gamma \in Isom(X)$ is {\em parabolic} if it is
parabolic with respect to some $x \in X$. Note that a parabolic isometry cannot fix any 
$\Lambda_0$-subspace $X_0$ of $X$ for $\Lambda_0 \subsetneq \Lambda$, so, it is minimal.

Observe that if $\gamma$ is parabolic and a subsequence of integers $\{n_i\}$ is such that $d(x,
\gamma^{n_i} x) \to \infty$ then $\{\gamma^{n_i} x\}$ can be taken as a representative of $a$. But
then from $d(x, \gamma^{n_i+1} x) \geqslant d(x, \gamma^{n_i+1} x) - d(x, \gamma x)$ we get
$d(x, \gamma^{n_i+1} x) \to \infty$, so, $\{\gamma^{n_i+1} x\} \to a$. But at the same time,
$\{\gamma^{n_i+1} x\} \to \gamma a$ and we get $\gamma a = a$.

\smallskip

$\gamma$ is called {\em hyperbolic with respect to $x \in X$} if the diameter of the set $\{\gamma^n
x \mid n \in \Z\}$ is not bounded by any $\alpha \in \Lambda$ and there exist distinct $a_x, b_x \in 
\partial X$ such that for any subsequence of natural numbers $\{n_i\}$, if $d(x, \gamma^{n_i} x) 
\to \infty$ then $\{\gamma^{-n_i} x\} \to a_x$ and $\{\gamma^{n_i} x\} \to b_x$. Similar to the 
parabolic case one can show that if $\gamma$ is hyperbolic with respect to $x \in X$ then it is 
hyperbolic with respect to any other $y \in X$ and $a_x = a_y,\ b_x = b_y$ (up to a permutation). 
Hence, we say that $\gamma$ is {\em hyperbolic} if it is hyperbolic with respect to any $x \in X$ 
and we denote $a_- = a_x$ and $a_+ = b_x$. Again, similar to the parabolic case one can easily 
show that $\gamma a_- = a_-$ and $\gamma a_+ = a_+$.

It is easy to see that if $\gamma$ is elliptic, parabolic, or hyperbolic then so is $\gamma^k$ for
every fixed $k \in \Z$.

\smallskip

Finally, an isometry $\gamma$ is called an {\em inversion} if $\gamma$ does not fix any 
$\Lambda_\delta$-subspace of $X$, but $\gamma^2$ fixes a $\Lambda_\delta$-subspace of $X$.
Obviously, inversions can exist only in the case when $\Lambda \neq \Lambda_\delta$. Also,
observe that if $\delta = 0$ then any $\Lambda_\delta$-subspace of $X$ is a single point, hence, 
in the case when $X$ is a proper geodesic $0$-hyperbolic $\Lambda$-metric space, that is, a
$\Lambda$-tree, our definition coincides with the definition of inversion for $\Lambda$-trees.

\smallskip

Observe that if $\Lambda = \R$, a geodesic $\Lambda$-metric space $(X,d)$ is an ordinary hyperbolic 
space, so, every isometry of $X$ is either elliptic, or parabolic, or hyperbolic (see \cite[Theorem 
9.2.1]{CoornaertDelzantPapadopoulos:1990}). If $X$ is not geodesic, or if $\Lambda = \Z$ then the
case of an inversion adds.

Next, if $\delta = 0$ then a $\Lambda$-metric space $(X,d)$ is a $\Lambda$-tree and classification
of its isometries is also known (see \cite[Section 3.1]{Chiswell:2001}): any isometry of $X$ is
either elliptic, or hyperbolic, or an inversion in the case when $\Lambda \neq 2\Lambda$.

Our next goal is to classify isometries of $\delta$-hyperbolic $\Lambda$-metric spaces.

The following lemma is similar to \cite[Lemma 2.2]{CoornaertDelzantPapadopoulos:1990}.

\begin{lemma}
\label{le:1.3.1}
Let $\gamma$ be a minimal isometry of a geodesic $\delta$-hyperbolic $\Lambda$-metric space $(X, d)$.
If there exists $x \in X$ such that
$$d(x, \gamma^2 x) > d(x, \gamma x) + 3 \delta$$
then $\gamma$ is hyperbolic in the case when $\Lambda = 2 \Lambda$, and $\gamma$ is either hyperbolic,
or an inversion if $\Lambda \neq 2 \Lambda$.
\end{lemma}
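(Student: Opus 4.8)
The plan is to prove that under the displacement hypothesis the $\gamma$-orbit of $x$ is a bi-infinite quasigeodesic, extract its two endpoints on the boundary, and then use minimality together with the divisibility of $\Lambda$ to separate the genuinely hyperbolic case from the inversion case.

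First I would fix notation: set $L = d(x, \gamma x)$, so that $d(\gamma^{n} x, \gamma^{n+1} x) = L$ for all $n$ by equivariance, and set $\eta = d(x, \gamma^{2} x) - L$, so the hypothesis is $\eta > 3\delta$. A direct computation gives $(\gamma^{n-1} x \cdot \gamma^{n+1} x)_{\gamma^{n} x} = \frac{1}{2}(L - \eta)$ for every $n$, and the hypothesis makes this strictly smaller than $\frac{1}{2}L - \delta$. This ``gap at every vertex'' is the quantitative input that forces the orbit to behave like a geodesic at scale $\delta$.

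The core step is the propagation inequality $d(x, \gamma^{n+1} x) \geqslant d(x, \gamma^{n} x) + (\eta - 2\delta)$ for $n \geqslant 1$, proved by induction on $n$. Working at the base point $\gamma^{n} x$, I would compare the three Gromov products of $x$, $\gamma^{n-1} x$ and $\gamma^{n+1} x$: the product $(x \cdot \gamma^{n-1} x)_{\gamma^{n} x}$ equals $\frac{1}{2}(L + (d(x,\gamma^{n}x)-d(x,\gamma^{n-1}x)))$ and is at least $\frac{1}{2}(L + \eta) - \delta$ by the inductive hypothesis, while $(\gamma^{n-1} x \cdot \gamma^{n+1} x)_{\gamma^{n} x} = \frac{1}{2}(L - \eta)$ is small; since these differ by more than $\delta$, the elementary consequence of $\delta$-hyperbolicity that the two smallest of three Gromov products differ by at most $\delta$ forces $(x \cdot \gamma^{n+1} x)_{\gamma^{n} x} \leqslant \frac{1}{2}(L - \eta) + \delta$, which rearranges to the claim. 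Iterating yields $d(x, \gamma^{n} x) \geqslant L + (n-1)(\eta - 2\delta)$; as $\eta - 2\delta > \delta > 0$, the orbit of $x$ leaves every $\Lambda_\delta$-ball, so it is unbounded at the scale of $\Lambda_\delta$.

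Next I would prove a uniform bound on the opposite-side products, $(\gamma^{-i} x \cdot \gamma^{j} x)_{x} \leqslant \frac{1}{2}(L - \eta) + 2\delta$ for all $i, j \geqslant 1$. The one-step case $(\gamma^{-1} x \cdot \gamma^{j} x)_{x} = \frac{1}{2}(L - (d(x,\gamma^{j+1}x) - d(x,\gamma^{j}x)))$ is at most $\frac{1}{2}(L-\eta) + \delta$ directly from the increment bound, and the general case follows by running the same two-smallest comparison at the base point $x$ with the fixed middle point $\gamma^{-1} x$, whose same-side product with $\gamma^{-i} x$ is again large. If the orbit in fact escapes to infinity in $\Lambda$, this bound finishes the hyperbolic case: the propagation inequality makes the forward and backward orbits converge to infinity and determine limit points $a_+, a_- \in \partial X$, and the uniform bound on $(\gamma^{-n} x \cdot \gamma^{n} x)_{x}$ keeps $a_+ \neq a_-$; since $\gamma a_\pm = a_\pm$ and $\gamma^{\pm n} x \to a_\pm$, this is exactly the definition of a hyperbolic isometry.

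The remaining, and hardest, case is when the translation is ``infinitesimal'': the orbit stays bounded in $\Lambda$ although it escapes $\Lambda_\delta$, so the escape takes place only inside the minimal convex subgroup $\Lambda_\eta$ containing $\eta$, which is then a proper subgroup. Here I would pass to the induced action on the appropriate $\Lambda_\eta$-subspace via the reduction attached to minimality, recover two distinct endpoints in the corresponding $\Lambda_\eta$-boundary, and then decide whether $\gamma$ translates its invariant ``axis'' by a length that can be halved — producing a genuine $\gamma$-invariant $\Lambda_\delta$-subspace and hence, after re-examination, the hyperbolic alternative — or cannot, which is precisely an inversion and occurs exactly when $\Lambda \neq 2\Lambda$. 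I expect this last step to be the main obstacle: everything up to it is clean $\delta$-hyperbolicity bookkeeping, whereas matching the quasigeodesic/limit-point picture to the formal definitions across a possibly proper convex subgroup, and in particular producing the $\gamma^{2}$-invariant $\Lambda_\delta$-subspace that witnesses an inversion while ruling it out when $\Lambda = 2\Lambda$, is where the interaction between the order structure of $\Lambda$ and the geometry must be handled carefully.
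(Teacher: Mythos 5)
Your quantitative first half is correct, and in places sharper than the paper's own argument: where the paper uses the $4$-point condition to extract the increment bound $\alpha_{n+1} > \alpha_n + \delta$ for $\alpha_n = d(x,\gamma^n x)$ and then treats the unbounded case rather tersely, your propagation inequality $d(x,\gamma^{n+1}x) \geqslant d(x,\gamma^{n}x) + (\eta - 2\delta)$ together with the opposite-side bound $(\gamma^{-i}x \cdot \gamma^{j}x)_x \leqslant \frac{1}{2}(L-\eta) + 2\delta$ gives a clean, direct verification of the definition of a hyperbolic isometry whenever the orbit converges to infinity in $\Lambda$. (The paper reaches this conclusion in its Case I, where minimality forces $\Lambda = \Lambda_\delta$, and routes the remaining unbounded situations through a tree quotient; your boundary argument covers them uniformly. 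The only caveat is that your ``two smallest Gromov products differ by at most $\delta$'' bookkeeping takes place in $\Lambda_\Q$, which is harmless.)

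The genuine gap is the bounded-orbit case, exactly where you predicted trouble, and the device you propose there cannot work. You suggest passing ``to the induced action on the appropriate $\Lambda_\eta$-subspace via the reduction attached to minimality,'' but minimality says precisely that $\gamma$ has \emph{no} invariant proper $\Lambda_0$-subspace, so there is no such subspace to pass to; moreover your preliminary claim that the escape ``takes place only inside $\Lambda_\eta$'' is false in general, since the increments $d(x,\gamma^{n+1}x) - d(x,\gamma^{n}x)$ are bounded below by $\eta - 2\delta$ but above only by $L$, so the orbit can wander at scales far above $\Lambda_\eta$ while remaining $\Lambda$-bounded. The missing idea is a \emph{quotient}, not a subspace: collapse $X$ by $y \sim z \Leftrightarrow d(y,z) \in \Lambda_\delta$ to obtain a geodesic $0$-hyperbolic $\Lambda/\Lambda_\delta$-metric space, hence a $\Lambda/\Lambda_\delta$-tree, on which $\gamma$ induces an isometry $\gamma_1$; then the classification of $\Lambda$-tree isometries applies through the translation length formula $l(\gamma_1) = \max\{d_1([x],\gamma_1^2 [x]) - d_1([x],\gamma_1 [x]),\, 0\}$ (Chiswell, Lemma 3.1.8). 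With the orbit bounded, minimality rules out both $l(\gamma_1) > 0$ (the convex subgroup generated by $l(\gamma_1)$ would be proper, and $\gamma$ would stabilize a proper subspace over the axis) and ellipticity of $\gamma_1$, leaving only the inversion case, which exists on trees exactly when $\Lambda \neq 2\Lambda$ and pulls back: the fixed point of $\gamma_1^2$ supplies the $\gamma^2$-invariant $\Lambda_\delta$-subspace you were looking for, while a fixed point of $\gamma_1$ itself (forced when $\Lambda = 2\Lambda$, since then $\Lambda/\Lambda_\delta$ is $2$-divisible and trees over it admit no inversions) would contradict minimality. Without this quotient-tree step your argument neither establishes the inversion alternative nor rules it out when $\Lambda = 2\Lambda$.
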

\begin{proof} Consider the points $x,\ \gamma x,\ \gamma^2 x$ and $\gamma^n x$, where $n \in \N$. 
By the $4$-point condition (see Lemma \ref{le:1.2.6}) we have
$$d(x, \gamma^2 x) + d(\gamma x, \gamma^n x) \leqslant \max\{d(x, \gamma x) + d(\gamma^2 x, 
\gamma^n x),\ d(x, \gamma^n x) + d(\gamma^2 x, \gamma x)\} + 2\delta$$
or, if we denote $\alpha_k = d(\gamma^k x, x)$ for every $k \in \N$ then
$$\alpha_2 + \alpha_n \leqslant \max\{\alpha_1 + \alpha_{n-2},\ \alpha_n + \alpha_1\} + 2\delta$$
since $d(\gamma^k x, \gamma^m x) = \alpha_{|k-m|}$. In other words we get
$$\max\{\alpha_{n-2},\ \alpha_n\} \geqslant \alpha_{n-1} + \alpha_2 - \alpha_1 - 2\delta$$
and from the assumption $\alpha_2 > \alpha_1 + 3\delta$ we obtain
$$\max\{\alpha_{n-2},\ \alpha_n\} > \alpha_{n-1} + \delta$$
which holds for any $n \in \N$. Next, we prove by induction on $n$ that 
$$\alpha_n + \delta < \alpha_{n+1}$$
If $n = 0$ then we have
$$\alpha_1 + 3\delta < \alpha_2 \leqslant 2\alpha_1$$
and $\alpha_0 + \delta < \alpha_1$. Suppose the inequality holds for $n$, that is, $\alpha_{n+1} > 
\alpha_n + \delta$. Since we have
$$\max\{\alpha_{n+2},\ \alpha_n\} > \alpha_{n+1} + \delta$$
it implies that $\max\{\alpha_{n+2},\ \alpha_n\} = \alpha_{n+2}$ and
$$\alpha_{n+2} > \alpha_{n+1} + \delta$$
as required. 

In particular, it follows that $\alpha_n > n\delta$. Consider two cases.

\smallskip

{\bf Case I.} There exists $K \in \N$ such that $K \delta > \alpha_1 = d(x, \gamma x)$ ($\delta$
and $d(x, \gamma x)$ are ``comparable'').

\smallskip

In this case, the diameter of the set $\{\gamma^n x \mid n \in \Z\}$ is not bounded by any $\alpha 
\in \Lambda$ (since $\gamma$ is minimal). Moreover,  for any $v \in X$ we have
$$\lim_{\substack{i \to \infty\\ j \to \infty}} (\gamma^i x \cdot \gamma^j x)_v = \infty$$
and
$$\lim_{\substack{i \to \infty\\ j \to \infty}} (\gamma^{-i} x \cdot \gamma^{-j} x)_v = \infty$$
since $\gamma$ is minimal and $\delta$ and $d(x, \gamma x)$ are ``comparable''. It shows that 
there exist distinct $a_x, b_x \in \partial X$ such that for any subsequence of natural numbers 
$\{n_i\}$ we have $\{\gamma^{-n_i} x\} \to a_x$ and $\{\gamma^{n_i} x\} \to b_x$. That is, $\gamma$ 
is hyperbolic with respect to $x$, hence, hyperbolic.
 
\smallskip
 
{\bf Case II.} $d(x, \gamma x) = \alpha_1 > K \delta$ for every $K \in \N$ ($d(x, \gamma x)$ is 
``infinitely large'' with respect to $\delta$).

\smallskip

Recall that $\Lambda_\delta$ is the minimal convex subgroup of $\Lambda$ containing $\delta$. Notice 
that by the assumption we have $\Lambda_\delta \neq \Lambda$. Define an equivalence relation 
``$\sim$'' on $X$ by setting
$$y \sim z\ \ \ \ \Longleftrightarrow\ \ \ \ d(y, z) \in \Lambda_\delta,\ {\rm for\ any}\ y, z \in X$$
Observe that $X_1 = X / \sim$ is a $\Lambda_1$-metric space, where $\Lambda_1 = \Lambda / 
\Lambda_\delta$,
with respect to the metric
$$d_1([y], [z]) = d(y, z) + \Lambda_\delta$$ 
where $[y], [z]$ are the images of $y, z \in X$ in $X_1$. Since $X$ is geodesic, from the definition
of $X_1$ it follows that $X_1$ is also geodesic. Moreover, $X_1$ is $0$-hyperbolic since $\delta \in 
\Lambda_\delta$, and it follows that $X_1$ is a $\Lambda_1$-tree.

The isometry $\gamma$ of $X$ induces an isometry $\gamma_1$ of $X_1$ and we have
$$d_1([x], \gamma_1^2 [x]) \geqslant d_1([x], \gamma_1 [x])$$
Recall that the {\em translation length} $l(\gamma_1)$ of $\gamma_1$ (see, for example, 
\cite{Chiswell:2001}) is defined as
$$l(\gamma_1) = \min\{ d_1([y], \gamma_1 [y]) \mid [y] \in X_1\}$$
According to Lemma 3.1.8 \cite{Chiswell:2001}, for any $[y] \in X_1$
$$l(\gamma_1) = \max\{d_1([y], \gamma_1^2 [y]) - d_1([y], \gamma_1 [y]), 0\}$$
and in particular
$$l(\gamma_1) = \max\{d_1([x], \gamma_1^2 [x]) - d_1([x], \gamma_1 [x]), 0\} = d_1([x], \gamma_1^2 
[x]) - d_1([x], \gamma_1 [x]) \geqslant 0$$

If $l(\gamma_1) > 0$ then $\gamma_1$ is a hyperbolic isometry of $X_1$ which fixes a pair of ends
$a_1, b_1 \in \partial X_1$ of full $\Lambda_1$-type (the ends of the axis of $\gamma_1$). Indeed, 
if $a_1, b_1$ are not of full $\Lambda_1$-type, then $\gamma_1$ fixes a $\Lambda_2$-subspace $X_2$
of $X_1$, where $\Lambda_2 \neq \Lambda_1$. But in this case, $X_2$ lifts to a $\Lambda_3$-subspace
$X_3$ of $X$, where $\Lambda_3 \neq \Lambda$ which is stabilized by $\gamma$ - a contradiction with
minimality of $\gamma$. Hence, $a_1, b_1 \in \partial X_1$ have preimages $a_x, b_x \in \partial X$
which are fixed by $\gamma$ and $\gamma$ is hyperbolic.

If $l(\gamma_1) = 0$ then $\gamma_1$ is either elliptic or an inversion. If $\gamma_1$ is elliptic
then there exists $[y] \in X_1$ such that $\gamma_1 [y] = [y]$, that is, $\gamma$ stabilizes the 
$\Lambda_0$-subspace of $X$ containing $y$ - a contradiction with minimality of $\gamma$. Suppose
$\gamma_1$ is an inversion, that is, $\gamma_1$ does not fix a point in $X_1$ but $\gamma_1^2$ has
a fixed point $[y] \in X_1$. Observe that this is possible only if $\Lambda_1 \neq 2 \Lambda_1$ which
implies that $\Lambda \neq 2 \Lambda$. Now, $\gamma$ does not fix any $\Lambda_\delta$-subspace 
of $X$, but $\gamma^2$ fixes the $\Lambda_\delta$-subspace of $X$ containing $y$. Hence, $\gamma$ 
is an inversion.

\end{proof}

The lemma below is similar to \cite[Lemma 2.3]{CoornaertDelzantPapadopoulos:1990}.

\begin{lemma}
\label{le:1.3.2}
Let $(X, d)$ be a geodesic $\delta$-hyperbolic $\Lambda$-metric space, where and $\delta > 0$. Suppose 
$\gamma_1, \gamma_2$ are isometries of $X$ which are neither hyperbolic, nor inversions, and such 
that for some $x \in X$
$$d(x, \gamma_1 x) \geqslant 2 (\gamma_1 x \cdot \gamma_2 x)_x + 6 \delta,\ \ d(x, \gamma_2 x) 
\geqslant 2 (\gamma_1 x \cdot \gamma_2 x)_x + 6 \delta$$
Then $\gamma_2 \gamma_1$ and $\gamma_1 \gamma_2$ are hyperbolic if $\Lambda = 2 \Lambda$, and
are either hyperbolic, or inversions if $\Lambda \neq 2 \Lambda$.
\end{lemma}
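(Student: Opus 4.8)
The plan is to reduce everything to Lemma~\ref{le:1.3.1}. First observe that $\gamma_1\gamma_2 = \gamma_1(\gamma_2\gamma_1)\gamma_1^{-1}$ is conjugate to $\gamma_2\gamma_1$ by an isometry, and since $h=\phi g\phi^{-1}$ satisfies $h^n(\phi y)=\phi(g^n y)$ with $\phi$ inducing a convergence-preserving bijection of $\partial X$, conjugation preserves the type (elliptic/parabolic/hyperbolic/inversion). So it suffices to treat $g:=\gamma_2\gamma_1$, and for $g$ it suffices, by Lemma~\ref{le:1.3.1}, to exhibit a point with $d(w,g^2 w)>d(w,gw)+3\delta$; I take $w=x$.

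I would first record two inputs. Writing $a=d(x,\gamma_1 x)$, $b=d(x,\gamma_2 x)$ and $p=(\gamma_1 x\cdot\gamma_2 x)_x$, the hypothesis is exactly $a\geqslant 2p+6\delta$ and $b\geqslant 2p+6\delta$, i.e. the two displacement directions diverge immediately from $x$. Secondly, because each $\gamma_i$ is neither hyperbolic nor an inversion, the contrapositive of Lemma~\ref{le:1.3.1} applied at $\gamma_i^{-1}y$ gives $d(\gamma_i^{-1}y,\gamma_i y)\leqslant d(y,\gamma_i y)+3\delta$ for every $y$, which rearranges into the alignment estimate
$$(\gamma_i^{-1}y\cdot\gamma_i y)_y \geqslant \tfrac12\bigl(d(y,\gamma_i y)-3\delta\bigr).$$
This says forward and backward images of a point under a non-hyperbolic, non-inversion isometry point in almost the same direction, and it is the geometric engine of the argument.

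Next I would run the estimate along the $g$-orbit of the pair $\{x,\gamma_2 x\}$, namely the bi-infinite sequence $\ldots,\gamma_1^{-1}x,\ x,\ \gamma_2 x,\ gx,\ g\gamma_2 x,\ g^2 x,\ldots$ (using $g^{-1}\gamma_2 x=\gamma_1^{-1}x$), whose consecutive distances alternate $a,b,a,b,\ldots$. The crux is that every consecutive Gromov product in this sequence is at most $p+\delta$; as all points are $g$-translates of $x$ and $\gamma_2 x$, it is enough to verify the two products at $x$ and at $\gamma_2 x$. At $x$, combining $(\gamma_1 x\cdot\gamma_2 x)_x=p$ with the alignment bound $(\gamma_1^{-1}x\cdot\gamma_1 x)_x\geqslant\tfrac12(a-3\delta)>p+\delta$ forces, by the hyperbolicity inequality, $(\gamma_1^{-1}x\cdot\gamma_2 x)_x\leqslant p+\delta$; applying $\gamma_2^{-1}$ turns the product at $\gamma_2 x$ into $(\gamma_2^{-1}x\cdot\gamma_1 x)_x$, bounded identically using the alignment of $\gamma_2$. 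The bound at $\gamma_2 x$ also yields $d(x,gx)\geqslant a+b-2p-2\delta$, so $d(x,gx)$ is large.

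Finally I would propagate smallness across the basepoint $gx$: two applications of the $\delta$-inequality, replacing $\gamma_2 x$ by $x$ and $g\gamma_2 x$ by $g^2 x$, are each legitimate because the ``outer'' products $(\gamma_2 x\cdot x)_{gx}$ and $(g^2 x\cdot g\gamma_2 x)_{gx}=(gx\cdot\gamma_2 x)_x$ are large (of size $\geqslant a-p-\delta$ and $\geqslant b-p-\delta$ by the previous bound on $d(x,gx)$), and this gives $(x\cdot g^2 x)_{gx}\leqslant p+3\delta$. Since $(x\cdot g^2 x)_{gx}=d(x,gx)-\tfrac12 d(x,g^2 x)$ and $d(x,gx)\geqslant a+b-2p-2\delta$, the inequality $p+3\delta<\tfrac12 d(x,gx)-\tfrac32\delta$ (which is merely $b>2p+5\delta$) delivers exactly $d(x,g^2 x)>d(x,gx)+3\delta$, and Lemma~\ref{le:1.3.1} concludes. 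The main obstacle I anticipate is conceptual rather than computational: the hypotheses control displacements only at $x$, while the $g$-orbit applies $\gamma_1,\gamma_2$ at many points and could a priori fold back — this is precisely what the alignment estimate rules out. The remaining care goes into the minimality bookkeeping needed to invoke Lemma~\ref{le:1.3.1} for $g$ and for the $\gamma_i$ (passing to a minimal invariant subspace, and its hyperbolicity constant, when an isometry is not minimal), and into ensuring the accumulated additive $\delta$-errors never exhaust the $6\delta$ slack in the hypothesis.
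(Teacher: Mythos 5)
Your proof is correct, and it follows the same overall strategy as the paper --- feed the contrapositive of Lemma \ref{le:1.3.1} into a chain of hyperbolicity estimates to produce $d(x,g^2x) > d(x,gx)+3\delta$ for $g=\gamma_2\gamma_1$, then invoke Lemma \ref{le:1.3.1} again --- but the middle of the argument is run along a genuinely different route. The paper follows Coornaert--Delzant--Papadopoulos nearly verbatim: it works in displacement notation $|\alpha|=d(x,\alpha x)$, $|\alpha-\beta|=d(\alpha x,\beta x)$ and applies the four-point condition (Lemma \ref{le:1.2.6}) three times, to the quadruples $(x,\gamma_1x,\gamma_1^2x,\gamma_1\gamma_2x)$, $(x,\gamma_1x,\gamma_1\gamma_2x,\gamma_1\gamma_2\gamma_1x)$ and $(x,\gamma_1\gamma_2x,\gamma_1\gamma_2\gamma_1x,(\gamma_1\gamma_2)^2x)$, accumulating the intermediate bounds $|\gamma_1-\gamma_2|\leqslant|\gamma_1\gamma_2|+2\delta$, $|\gamma_1\gamma_2|\geqslant|\gamma_i|+4\delta$ and $|\gamma_1\gamma_2\gamma_1|\geqslant|\gamma_1\gamma_2|+2\delta$. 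You instead use the Gromov-product form of hyperbolicity along the $g$-orbit of $\{x,\gamma_2x\}$, with the ``alignment'' reformulation $(\gamma_i^{-1}y\cdot\gamma_iy)_y\geqslant\tfrac{1}{2}(d(y,\gamma_iy)-3\delta)$ of the contrapositive; $g$-equivariance reduces everything to two product bounds at $x$ and $\gamma_2x$, and your route yields the explicit estimate $d(x,gx)\geqslant a+b-2p-2\delta$, sharper than the paper's $|\gamma_1\gamma_2|\geqslant|\gamma_i|+4\delta$. I verified your constants: the consecutive products are $\leqslant p+\delta$ (using $\tfrac{1}{2}(a-3\delta)\geqslant p+\tfrac{3}{2}\delta>p+\delta$), the two outer products at $gx$ are $\geqslant p+5\delta$, hence $(x\cdot g^2x)_{gx}\leqslant p+3\delta$, and $d(x,gx)\geqslant 2p+10\delta$ closes the loop with $\delta$ to spare. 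Your conjugation observation $\gamma_1\gamma_2=\gamma_1(\gamma_2\gamma_1)\gamma_1^{-1}$ is also a genuine economy: the paper handles the second product only by saying the argument is similar. The one caveat you flag --- Lemma \ref{le:1.3.1} is stated for \emph{minimal} isometries, yet its contrapositive is applied to $\gamma_1,\gamma_2$ and its direct form to $g$ without minimality hypotheses --- is real, but the paper's own proof does exactly the same, so your argument is no worse off on this point, and your explicit acknowledgment of the needed bookkeeping is in fact more careful than the original.
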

\begin{proof} We follow the scheme of proof of \cite[Lemma 2.3]{CoornaertDelzantPapadopoulos:1990}
and adopt the same terminology: for any isometries $\alpha,\ \beta$ of $X$ denote $|\alpha - \beta| 
= d(\alpha x, \beta x)$ and $|\alpha| = d(x, \alpha x)$.

Since $\gamma_1, \gamma_2$ are neither hyperbolic, nor inversions, by Lemma \ref{le:1.3.1} we have
$$|\gamma_1^2| \leqslant |\gamma_1| + 3\delta,\ |\gamma_2^2| \leqslant |\gamma_2| + 3\delta$$
Next, from
$$|\gamma_1| \geqslant 2 (\gamma_1 x \cdot \gamma_2 x)_x + 6 \delta,\ \ |\gamma_2| \geqslant 
2 (\gamma_1 x \cdot \gamma_2 x)_x + 6 \delta$$
by definition of Gromov product we get
$$|\gamma_1 - \gamma_2| \geqslant |\gamma_1| + 6\delta,\ |\gamma_1 - \gamma_2| \geqslant |\gamma_2| 
+ 6\delta$$
Now we apply the $4$-point condition (see Lemma \ref{le:1.2.6}) to $x,\ \gamma_1 x,\ \gamma_1^2 x$,
and $(\gamma_1 \gamma_2) x$:
$$|\gamma_1| + |\gamma_1^2 - \gamma_1 \gamma_2| \leqslant \max\{|\gamma_1^2| + |\gamma_1 - 
\gamma_1 \gamma_2|,\ |\gamma_1 \gamma_2| + |\gamma_1 - \gamma_1^2|\} + 2\delta$$
or
$$|\gamma_1| + |\gamma_1 - \gamma_2| \leqslant \max\{|\gamma_1^2| + |\gamma_2|,\ |\gamma_1 \gamma_2| 
+ |\gamma_1|\} + 2\delta$$
From $|\gamma_1^2| \leqslant |\gamma_1| + 3\delta$ and $|\gamma_1 - \gamma_2| \geqslant |\gamma_2|
+ 6\delta$ we obtain
$$|\gamma_1| + |\gamma_1 - \gamma_2| \geqslant |\gamma_1^2| + |\gamma_2| + 3\delta$$
which implies that $|\gamma_1| + |\gamma_1 - \gamma_2|$ cannot not be smaller than $|\gamma_1^2| 
+ |\gamma_2| + 2\delta$ and 
$$\max\{|\gamma_1^2| + |\gamma_2|,\ |\gamma_1 \gamma_2| + |\gamma_1|\}  = |\gamma_1 \gamma_2| + 
|\gamma_1|$$
Hence, 
$$|\gamma_1| + |\gamma_1 - \gamma_2| \leqslant |\gamma_1 \gamma_2| + |\gamma_1| + 2\delta$$
or
$$|\gamma_1 - \gamma_2| \leqslant |\gamma_1 \gamma_2| + 2\delta$$
Similar argument produces
$$|\gamma_1 - \gamma_2| \leqslant |\gamma_2 \gamma_1| + 2\delta$$
Combining the above inequalities with $|\gamma_1 - \gamma_2| \geqslant |\gamma_1| + 6\delta,\ 
|\gamma_1 - \gamma_2| \geqslant |\gamma_2| + 6\delta$ we get
$$|\gamma_1 \gamma_2| \geqslant |\gamma_1| + 4\delta,\ \ \ \ |\gamma_2 \gamma_1| \geqslant |\gamma_1| 
+ 4\delta,\ \ \ \ |\gamma_1 \gamma_2| \geqslant |\gamma_2| + 4\delta,$$
$$|\gamma_2 \gamma_1| \geqslant |\gamma_2| + 4\delta$$
Next we apply the $4$-point condition to $x,\ \gamma_1 x,\ (\gamma_1 \gamma_2) x$, and $(\gamma_1 
\gamma_2 \gamma_1) x$:
$$|\gamma_2 \gamma_1| + |\gamma_1 \gamma_2| \leqslant \max\{|\gamma_1 \gamma_2 \gamma_1| + 
|\gamma_2|,\ 2|\gamma_1|\} + 2\delta$$
But we have 
$$|\gamma_2 \gamma_1| + |\gamma_1 \gamma_2| \geqslant 2|\gamma_1| + 8\delta$$
so
$$|\gamma_2 \gamma_1| + |\gamma_1 \gamma_2| \leqslant |\gamma_1 \gamma_2 \gamma_1| + |\gamma_2| + 
2\delta$$
and since $|\gamma_2 \gamma_1| \geqslant |\gamma_2| + 4\delta$, it follows that
$$|\gamma_1 \gamma_2| + 2\delta \leqslant |\gamma_1 \gamma_2 \gamma_1|$$
We combine the above inequality with $|\gamma_1 \gamma_2| \geqslant |\gamma_1| + 4\delta$ and 
$|\gamma_1 \gamma_2| \geqslant |\gamma_2| + 4\delta$ to get
$$|\gamma_2| + 6\delta \leqslant |\gamma_1 \gamma_2 \gamma_1|,\ \ \ |\gamma_1| + 6\delta \leqslant 
|\gamma_1 \gamma_2 \gamma_1|$$
Apply the $4$-point condition to $x,\ (\gamma_1 \gamma_2) x,\ (\gamma_1 \gamma_2 \gamma_1) x$, and
$(\gamma_1 \gamma_2)^2 x$:
$$|\gamma_1 \gamma_2| + |\gamma_1 \gamma_2 \gamma_1| \leqslant \max\{|(\gamma_1 \gamma_2)^2| + 
|\gamma_1|,\ |\gamma_2| + |\gamma_1 \gamma_2|\} + 2\delta$$
From $|\gamma_2| + 6\delta \leqslant |\gamma_1 \gamma_2 \gamma_1|$ we get
$$|\gamma_2| + |\gamma_1 \gamma_2| + 6\delta \leqslant |\gamma_1 \gamma_2 \gamma_1| + |\gamma_1 
\gamma_2|$$
and it follows that
$$\max\{|(\gamma_1 \gamma_2)^2| + |\gamma_1|,\ |\gamma_2| + |\gamma_1 \gamma_2|\} = |(\gamma_1 
\gamma_2)^2| + |\gamma_1|$$
Hence,
$$|\gamma_1 \gamma_2| + |\gamma_1 \gamma_2 \gamma_1| \leqslant |(\gamma_1 \gamma_2)^2| + 
|\gamma_1| + 2\delta$$
and from $|\gamma_1| + 6\delta \leqslant |\gamma_1 \gamma_2 \gamma_1|$ we obtain
$$|\gamma_1 \gamma_2| + 4\delta \leqslant |(\gamma_1 \gamma_2)^2|$$
and
$$|\gamma_1 \gamma_2| + 3\delta < |(\gamma_1 \gamma_2)^2|$$
From Lemma \ref{le:1.3.1} it follows that $\gamma_1 \gamma_2$ is hyperbolic if $\Lambda = 2 
\Lambda$, and it is either hyperbolic, or an inversion if $\Lambda \neq 2 \Lambda$. The argument for 
$\gamma_2 \gamma_1$ is similar.
\end{proof}

Now, using the above lemmas, we are ready classify minimal isometries of a geodesic 
$\delta$-hyperbolic $\Lambda$-metric space.

\begin{theorem}
\label{th:Lambda_isom}
Let $(X,d)$ be a a geodesic $\delta$-hyperbolic $\Lambda$-metric space. Then every minimal isometry 
of $X$ is either elliptic, or parabolic, or hyperbolic in the case when $\Lambda = 2 \Lambda$, and is 
either elliptic, or parabolic, or hyperbolic, or an inversion when $\Lambda \neq 2 \Lambda$.
\end{theorem}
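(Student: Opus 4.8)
The plan is to read off the type of a minimal isometry $\gamma$ from the behaviour of a single orbit $\{\gamma^n x \mid n \in \Z\}$, splitting first according to whether the hypothesis of Lemma \ref{le:1.3.1} is met. If there is an $x$ with $d(x, \gamma^2 x) > d(x, \gamma x) + 3\delta$, then Lemma \ref{le:1.3.1} already gives that $\gamma$ is hyperbolic (when $\Lambda = 2\Lambda$), and hyperbolic or an inversion (when $\Lambda \neq 2\Lambda$), so there is nothing more to do. The substance is therefore the complementary case in which $d(x, \gamma^2 x) \leqslant d(x, \gamma x) + 3\delta$ for every $x$, which I would subdivide according to whether the orbit is bounded. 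Throughout, the organising tool is the quotient $\Lambda_1$-tree $X_1 = X/{\sim}$ from the proof of Lemma \ref{le:1.3.1}, where $y \sim z \Leftrightarrow d(y,z) \in \Lambda_\delta$ and $\Lambda_1 = \Lambda/\Lambda_\delta$: it isolates the ``large scale'' of $X$, governed by the known classification of isometries of $\Lambda_1$-trees, from the ``$\delta$-scale'' where parabolic behaviour lives.

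First I would treat the \emph{bounded orbit} case: suppose the orbit lies in some ball $B_\alpha(x)$. If its diameter is at most $K\delta$ then $\gamma$ is elliptic by definition. Otherwise some $d(x, \gamma^n x) \notin \Lambda_\delta$, so $\Lambda_\delta \subsetneq \Lambda$ and $X_1$ is non-trivial. The induced $\gamma_1$ has a bounded orbit, hence translation length $0$ (Lemma 3.1.8 of \cite{Chiswell:2001}), so $\gamma_1$ is elliptic or an inversion of the tree. A fixed point $[y]$ of $\gamma_1$ would give $d(y, \gamma y) \in \Lambda_\delta$, whence $\gamma$ would stabilise the $\Lambda_\delta$-subspace through $y$; minimality forces that subspace to be all of $X$, i.e. $\Lambda_\delta = \Lambda$, a contradiction. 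Thus $\gamma_1$ is an inversion, $\Lambda_1 \neq 2\Lambda_1$ (so $\Lambda \neq 2\Lambda$), and $\gamma$ is an inversion.

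Next suppose the orbit is \emph{unbounded} (contained in no $B_\alpha(x)$). I would first observe that necessarily $\Lambda_\delta = \Lambda$: if not, unboundedness makes the image orbit in $X_1$ unbounded, so $\gamma_1$ is a hyperbolic tree isometry, and at an axis point $[y]$ the identity $d_1([y], \gamma_1^2[y]) - d_1([y], \gamma_1[y]) = l(\gamma_1) > 0$ lifts to $d(y, \gamma^2 y) > d(y, \gamma y) + 3\delta$, putting us back in the case handled by Lemma \ref{le:1.3.1}. When $\Lambda_\delta = \Lambda$ the only $\Lambda_\delta$-subspace is $X$ itself, so $\gamma$ cannot be an inversion, and being unbounded it is not elliptic; it remains to show $\gamma$ is parabolic or hyperbolic. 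This is the heart of the theorem and the step I expect to be hardest, since it cannot be pushed down to a tree and must reproduce the classical argument over $\R$ with $\Lambda$-valued Gromov products.

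For this last step I would exploit Lemma \ref{le:1.3.2}. Assuming $\gamma$ is not hyperbolic, every relevant power $\gamma^{k}$ is also neither hyperbolic nor an inversion (a hyperbolic or inversion power would, since $\gamma$ commutes with it, force $\gamma$ itself to be hyperbolic). Were two escaping subsequences $\{\gamma^{n_i} x\}$ and $\{\gamma^{m_i} x\}$ to accumulate at different boundary points, one could choose indices making $\gamma^{n_i}, \gamma^{m_i}$ satisfy the separation hypotheses
$$d(x, \gamma^{n_i} x) \geqslant 2(\gamma^{n_i} x \cdot \gamma^{m_i} x)_x + 6\delta, \qquad d(x, \gamma^{m_i} x) \geqslant 2(\gamma^{n_i} x \cdot \gamma^{m_i} x)_x + 6\delta$$
of Lemma \ref{le:1.3.2}, forcing the product $\gamma^{n_i + m_i}$ to be hyperbolic and hence $\gamma$ hyperbolic, a contradiction. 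Consequently, when $\gamma$ is not hyperbolic all escaping subsequences (in either direction) converge to one and the same point $a_x \in \partial X$, which is precisely parabolicity; and when they do separate, the same product estimates single out a distinct forward limit $b_x$ and backward limit $a_x$, giving a hyperbolic isometry. The delicate points in carrying this out are verifying that an escaping subsequence converges to infinity at all, i.e. the self-coherence of the Gromov products (via the thin-triangle estimates of Proposition \ref{pr:1.2.9}), and controlling these estimates uniformly, since even with $\Lambda_\delta = \Lambda$ the group $\Lambda$ may fail to be archimedean below $\delta$. The degenerate case $\delta = 0$, where $X$ is a $\Lambda$-tree, is subsumed, its trichotomy elliptic/hyperbolic/inversion matching the statement.
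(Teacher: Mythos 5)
Your proposal is correct and takes essentially the same route as the paper's own proof: the same dichotomy between bounded and unbounded orbits, the same quotient $\Lambda_1$-tree $X_1 = X/{\sim}$ (with $y \sim z \Leftrightarrow d(y,z) \in \Lambda_\delta$) together with Chiswell's translation-length formula to extract either an elliptic contradiction with minimality or an inversion, and the same endgame applying Lemma \ref{le:1.3.2} to two escaping subsequences with distinct boundary limits to force a hyperbolic power. Your additional reduction to $\Lambda_\delta = \Lambda$ in the unbounded case (by lifting the axis inequality of a hyperbolic $\gamma_1$ back to $d(y,\gamma^2 y) > d(y,\gamma y) + 3\delta$ and invoking Lemma \ref{le:1.3.1}) is a harmless refinement absent from the paper, and the transfer from a hyperbolic power back to $\gamma$ itself is left at the same level of brevity as in the paper's proof.
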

\begin{proof}  If $\delta = 0$ then $X$ is a $\Lambda$-tree and any isometry of $X$ (not necessarily
a minimal one) is either hyperbolic, or elliptic, or an inversion (see, \cite{Alperin_Bass:1987, 
Chiswell:2001}).

\smallskip

Suppose $\delta > 0$ and let $\gamma$ be a minimal isometry of $X$. Suppose $\gamma$ is neither 
elliptic, nor parabolic. It follows that for any $x \in X$ the diameter of the set $\{\gamma^n x 
\mid n \in \Z\}$ is not bounded by $K\delta$ for any $K \in \N$.

\smallskip

Next, suppose for any $x \in X$, the diameter of the set $\{\gamma^n x \mid n \in \Z\}$ is bounded
by some $\alpha \in \Lambda$. We would like to show that $\gamma$ is an inversion in this case. 
Indeed, observe that the minimal convex subgroup $\Lambda' \subseteq \Lambda$ containing $\alpha$ 
must coincide with $\Lambda$ (otherwise $\gamma$ stabilizes a $\Lambda'$-subspace of $X$ and this 
is a contradiction with minimality of $\gamma$). Next, by our assumption $\Lambda_\delta \neq 
\Lambda$. Define an equivalence relation ``$\sim$'' on $X$ by setting
$$y \sim z\ \ \ \ \Longleftrightarrow\ \ \ \ d(y, z) \in \Lambda_\delta,\ {\rm for\ any}\ y, z \in X$$
Hence, $X_1 = X / \sim$ is a $\Lambda_1$-metric space, where $\Lambda_1 = \Lambda / \Lambda_\delta$,
with respect to the metric
$$d_1([y], [z]) = d(y, z) + \Lambda_\delta$$ 
where $[y], [z]$ are the images of $y, z \in X$ in $X_1$. Since $X$ is geodesic, from the definition
of $X_1$ it follows that $X_1$ is also geodesic. Moreover, $X_1$ is $0$-hyperbolic since $\delta \in 
\Lambda_\delta$, and it follows that $X_1$ is a $\Lambda_1$-tree. 

The isometry $\gamma$ of $X$ induces an isometry $\gamma_1$ of $X_1$ and the diameter of the set 
$\{\gamma_1^n [x] \mid n \in \Z\}$ is bounded by $\alpha + \Lambda_\delta$. Consider the translation 
length $l(\gamma_1)$ of $\gamma_1$. If $l(\gamma_1) > 0$ then $\gamma_1$ is hyperbolic and the 
diameter of the set $\{\gamma_1^n [x] \mid n \in \Z\}$ cannot be bounded by any $\beta \in \Lambda_1$. 
Hence, $\gamma_1$ is either an inversion, or elliptic. If $\gamma_1$ is elliptic then it fixes a 
point $[y] \in X_1$ which implies that $\gamma$ stabilizes a $\Lambda_\delta$-subspace of $X$ - a 
contradiction with minimality of $\gamma$. Hence, $\gamma_1$ is an inversion (which is possible 
only if $\Lambda \neq 2 \Lambda$) and it follows that $\gamma$ is also an inversion.

\smallskip

Finally, suppose that for any $x \in X$, the diameter of the set $\{\gamma^n x \mid n \in \Z\}$ is 
not bounded by any $\alpha \in \Lambda$. It follows that there exists a sequence of integers $\{n_i\}$ 
such that $d(x, \gamma^{n_i} x) \to \infty$. Hence, there exists $a \in \partial X$ such that 
$\{\gamma^{n_i} x\} \to a$. Since we assume that $\gamma$ is not parabolic, there must be at least 
one other sequence of integers $\{m_j\}$ such that $d(x, \gamma^{m_j} x) \to \infty$ and a point 
$b \in \partial X$ such that $\{\gamma^{m_j} x\} \to b$ such that $a \neq b$. 

Since $a \neq b$, it follows that the Gromov product $(a \cdot b)_x$ of $a$ and $b$ is finite. At the
same time $d(x, \gamma^{n_i} x) \to \infty,\ \ d(x, \gamma^{m_j} x) \to \infty$, so, there exist $N \in
\{n_i\}$ and $M \in \{m_j\}$ such that $N \neq M$ and
$$d(x, \gamma^N x) \geqslant 2(\gamma^N x \cdot \gamma^M x)_x + 6\delta,\ \ \ d(x, \gamma^M x) 
\geqslant 2(\gamma^N x \cdot \gamma^M x)_x + 6\delta$$
By Lemma \ref{le:1.3.2}, the isometry $\gamma^{N - M}$ is hyperbolic if $\Lambda = 2 \Lambda$, and
is either hyperbolic, or an inversion if $\Lambda \neq 2 \Lambda$. Hence, the required statement for
$\gamma$ follows.
\end{proof}

The theorem above immediately can be applied in the case when $\Lambda = \R^n,\ \Z^n$.

\begin{theorem}
\label{th:R^n_isom}
Let $(X,d)$ be a geodesic $\delta$-hyperbolic $\R^n$-metric space, where $\R^n$ is taken with the 
right lexicographic order. Then every minimal isometry of $X$ is either elliptic, or parabolic, or 
hyperbolic.
\end{theorem}

\begin{theorem}
\label{th:Z^n_isom}
Let $(X,d)$ be a geodesic $\delta$-hyperbolic $\Z^n$-metric space, where $\Z^n$ is taken with the 
right lexicographic order. Then every minimal isometry of $X$ is either elliptic, or parabolic, or 
hyperbolic, or an inversion.
\end{theorem}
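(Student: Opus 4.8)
The plan is to obtain this statement as an immediate specialization of Theorem \ref{th:Lambda_isom} to the case $\Lambda = \Z^n$. Theorem \ref{th:Lambda_isom} already branches according to whether $\Lambda = 2\Lambda$ or $\Lambda \neq 2\Lambda$, giving the three-fold classification (elliptic, parabolic, hyperbolic) in the first case and the four-fold classification (with an inversion added) in the second. So the only thing I need to determine is which of these two alternatives applies to $\Z^n$ with the right lexicographic order.

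The key step is simply to verify that $\Z^n \neq 2\Z^n$. Indeed, $2\Z^n$ consists precisely of those tuples all of whose coordinates are even, so the element $(1, 0, \ldots, 0) \in \Z^n$ does not lie in $2\Z^n$. Hence $2\Z^n$ is a proper subgroup of $\Z^n$, and we fall into the second alternative of Theorem \ref{th:Lambda_isom}. Applying that theorem with $\Lambda = \Z^n \neq 2\Z^n$ then gives at once that every minimal isometry of $X$ is elliptic, parabolic, hyperbolic, or an inversion, which is exactly the assertion.

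There is essentially no obstacle here, since all the real work has been carried out in Lemmas \ref{le:1.3.1} and \ref{le:1.3.2} and packaged into Theorem \ref{th:Lambda_isom}; the proof reduces to a one-line divisibility observation. It is worth contrasting this with Theorem \ref{th:R^n_isom}: because $\R$ is divisible we have $\R^n = 2\R^n$, so there the first alternative applies and inversions cannot occur, whereas over the discretely ordered group $\Z^n$ the failure of $2$-divisibility is precisely what makes the inversion case genuinely possible.
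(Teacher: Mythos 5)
Your proposal is correct and matches the paper's own treatment: the paper states this theorem as an immediate application of Theorem \ref{th:Lambda_isom}, with the only content being exactly the observation you make, namely that $\Z^n \neq 2\Z^n$ (in contrast to $\R^n = 2\R^n$ for Theorem \ref{th:R^n_isom}). Nothing further is needed.
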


Using Theorem \ref{th:Lambda_isom} we can give a nice characterization of hyperbolic isometries in
the case when $\Lambda$ is either $\R^n$, or $\Z^n$.

\begin{corollary}
\label{co:character_hyp}
Let $(X,d)$ be a geodesic $\delta$-hyperbolic $\Lambda$-metric space, where $\Lambda$ is either 
$\R^n$, or $\Z^n$ with the right lexicographic order and $\delta = (\delta_0, 0, \ldots, 0)$. Let 
$\gamma$ be a minimal isometry of $X$. If $n > 1$ then $\gamma$ can be only either hyperbolic,
or an inversion. Moreover, for any $n$, $\gamma$ is hyperbolic if and only if there exist $x \in X$ 
and $\lambda, c \in \Lambda$ such that $ht(\lambda) = ht(d(x,\gamma x))$ and $d(x, \gamma^k x) 
\geqslant k \lambda + c$ for any $k \in \N$.
\end{corollary}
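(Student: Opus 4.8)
The plan is to read both assertions off the classification of minimal isometries (Theorem \ref{th:Lambda_isom}) together with the quotient-tree device used in the proof of Lemma \ref{le:1.3.1}. Throughout write $\Lambda_1 < \cdots < \Lambda_n = \Lambda$ for the chain of convex subgroups; since $\delta = (\delta_0, 0, \ldots, 0)$ we have $\Lambda_\delta = \Lambda_1$, and for $\mu \in \Lambda$ the height $ht(\mu)$ is exactly the index $i$ with $\mu \in \Lambda_i \setminus \Lambda_{i-1}$. The one structural fact I would exploit repeatedly is that, because $\Lambda_1$ is the least significant factor, for each $1 \le i < n$ the projection $\pi_i \colon \Lambda \to \Lambda/\Lambda_i$ sends a sequence converging to infinity in $\Lambda$ to one converging to infinity in $\Lambda/\Lambda_i$, and conversely; this lets me transport statements about Gromov products and about $\partial X$ between $X$ and its tree quotients.

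For the first assertion ($n > 1$) I would form $X_1 = X/\!\sim$ with $y \sim z \Leftrightarrow d(y,z) \in \Lambda_1$, a $(\Lambda/\Lambda_1)$-tree carrying the induced isometry $\gamma_1$, exactly as in Lemma \ref{le:1.3.1}. By minimality $\gamma_1$ cannot be elliptic (a fixed point of $\gamma_1$ is a $\gamma$-invariant proper $\Lambda_1$-subspace), so by the classification of tree isometries $\gamma_1$ is hyperbolic or an inversion. If $\gamma_1$ is an inversion then $\gamma^2$ fixes a $\Lambda_1$-subspace while $\gamma$ does not, so $\gamma$ is an inversion. If $\gamma_1$ is hyperbolic, the two ends of its axis lift, via the projection remark above, to two distinct $\gamma$-fixed points of $\partial X$, and the two-sided escape $d(x,\gamma^{\pm k}x) \to \infty$ forces $\gamma$ to be hyperbolic. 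Either way $\gamma$ is hyperbolic or an inversion, which already excludes elliptic and parabolic.

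For the equivalence I would first record that a minimal hyperbolic $\gamma$ necessarily has $ht(d(x,\gamma x)) = n$: for $n>1$ the translation length $\ell_1$ of $\gamma_1$ on $X_1$ must have full height $n-1$ in $\Lambda/\Lambda_1$ (otherwise the $\Lambda_{ht(\ell_1)}$-part of its axis is a proper $\gamma$-invariant subspace, contradicting minimality), whence $d(x,\gamma x)$ has height $n$; the case $n=1$ is immediate. For the forward direction I then pass to the top quotient $\widehat X = X/\!\sim_{n-1}$, where $y \sim_{n-1} z \Leftrightarrow d(y,z) \in \Lambda_{n-1}$, an $\R$- or $\Z$-tree on which $\widehat\gamma$ is hyperbolic with translation length $\widehat\ell>0$, so that $\widehat d(\widehat x,\widehat\gamma^{\,k}\widehat x)\ge k\widehat\ell$; lifting $\widehat\ell$ to a $\lambda$ with $ht(\lambda)=n=ht(d(x,\gamma x))$, the strict growth in the top coordinate absorbs the lower-order discrepancy and yields $d(x,\gamma^k x)\ge k\lambda+c$ for a suitable $c$. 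For the converse I use the classification: a lower bound $d(x,\gamma^k x)\ge k\lambda+c$ with $ht(\lambda)=ht(d(x,\gamma x))$ forces the displacement to grow without bound at its own scale in both time directions, which is incompatible with the bounded orbits of elliptic isometries (diameter in $\Lambda_1$) and of inversions (where $\gamma^2$ fixes a $\Lambda_1$-subspace $X_0$, so $d(x,\gamma^{2m}x)\le 2\,d(x,X_0)+C$ with $C \in \Lambda_1$ is bounded), and incompatible with the single limit end of a parabolic; hence $\gamma$ is hyperbolic.

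The delicate point, and the step I expect to be the real obstacle, is precisely this height bookkeeping at the bottom scale. In the forward direction one must choose $\lambda$ and especially the additive constant $c$ so that a genuinely top-scale inequality survives in the lexicographic order despite the uncontrolled lower-order fluctuation of $d(x,\gamma^k x)$; and in the converse one must read the hypothesis at the right scale (a constant $c$ of height exceeding $ht(\lambda)$ would render the inequality vacuous), so that $k\lambda$ really does overtake the bounded orbit of an inversion. Getting these scale comparisons exactly right, rather than the classification input, is where the argument needs the most care.
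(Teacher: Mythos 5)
Your first assertion is argued essentially as in the paper: the paper also forms the quotient $X_1 = X/\!\sim$ by the bottom convex subgroup $\Lambda_\delta = \Lambda_1$, observes $X_1$ is an $\R^{n-1}$- or $\Z^{n-1}$-tree, and excludes elliptic and parabolic $\gamma$ by analyzing the type of the induced $\gamma_1$ (elliptic $\gamma_1$ contradicts minimality; hyperbolic $\gamma_1$ gives two lifted fixed ends; an inversion $\gamma_1$ makes $\gamma$ an inversion), so that part of your proposal matches. For the ``moreover'' you genuinely diverge. The paper stays in the bottom quotient $X_1$, invokes Chiswell's tree characterization ($\gamma_1$ hyperbolic iff $d_1(x_1,\gamma_1^k x_1) \geqslant k\lambda_1 + c_1$ for $x_1$ on the axis), and lifts $x_1, \lambda_1, c_1$ back to $X$ and $\Lambda$, handling $n=1$ separately via the fact from \cite{CoornaertDelzantPapadopoulos:1990} that $k \mapsto \gamma^k x$ is a quasi-isometry; you instead pass to the top quotient $\widehat{X} = X/\Lambda_{n-1}$ (a rank-one tree) for the forward direction and prove the converse by directly excluding elliptic, parabolic and inversion from the growth hypothesis. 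Your converse-by-exclusion is more self-contained than the paper's (which derives both directions from the tree iff), and your preliminary observation that minimality forces $ht(d(x,\gamma x)) = n$ is correct and implicitly needed by the paper as well.

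The one step that, as stated, would fail is the ``absorption'' in your forward direction, and your route makes it harder than the paper's. Shrinking the top coordinate of $\lambda$ strictly below the translation length $\widehat{\ell}$ of $\widehat{\gamma}$ does create growing top-scale slack that dominates lower-order fluctuations --- this works in $\R^n$, and in $\Z^n$ when $\widehat{\ell} \geqslant 2$. But in $\Z^n$ with $\widehat{\ell} = 1$ and a basepoint projecting onto the axis there is no room to shrink: $d(x,\gamma^k x) - k\lambda$ then has top coordinate $0$ and $n-1$ a priori uncontrolled lower coordinates, which could drift superlinearly negative, so no fixed $c$ works without further input. The repair is either to supply that input --- for hyperbolic $\gamma$ the Gromov products $(\gamma^{-j}x \cdot \gamma^k x)_x$ are bounded by some fixed $C$ (the two limit ends are inequivalent, and hyperbolicity pins the products within $2\delta$ of a bound, as in the proof of Lemma \ref{le:1.2.11}), giving the near-superadditivity $d(x,\gamma^{j+k}x) \geqslant d(x,\gamma^j x) + d(x,\gamma^k x) - 2C$, from which a Fekete-type argument yields $k\lambda + c$ with $ht(\lambda)=n$ --- or to follow the paper's bottom quotient, where the only uncontrolled coordinate is the single $\Lambda_\delta$-coordinate and one can always lower $\lambda_1$ at a height-$\geqslant 2$ spot of $\Lambda/\Lambda_\delta$ to create dominating slack, with $n=1$ dispatched separately by the quasi-isometry fact. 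Separately, your parenthetical caveat is well taken and in fact exposes an imprecision in the statement itself that the paper's proof does not address: as written, taking $c$ negative of height exceeding $ht(\lambda)$ makes $d(x,\gamma^k x) \geqslant k\lambda + c$ hold vacuously (e.g.\ for an elliptic $\gamma$ with $\lambda = d(x,\gamma x) \in \Lambda_1$), so the hypothesis must be read with $ht(c) \leqslant ht(\lambda)$ (and $\lambda > 0$) for the converse to be true at all.
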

\begin{proof} If $n = 1$ then there exists $x \in X$ such that $k \to \gamma^k x$ is a quasi-isometry
(see, for example, \cite{CoornaertDelzantPapadopoulos:1990}) of $\Z$ into $X$. Hence, there exist 
$\lambda, c \in \Lambda$ (which is either $\R$, or $\Z$ in this case) such that $d(x, \gamma^k x) 
\geqslant k \lambda + c$ for any $k \in \N$.

\smallskip

Suppose $n > 1$. By Theorem \ref{th:Lambda_isom}, $\gamma$ is either hyperbolic, or elliptic, or 
parabolic or an inversion, so, consider all these possibilities. Recall that $\Lambda_\delta$ is the 
minimal convex subgroup of $\Lambda$ containing $\delta$ (in our case, $\Lambda_\delta = \R$, or 
$\Lambda_\delta = \Z$). Define $X_1 = X / \sim$, where $x \sim y$ if $d(x, y) \in \Lambda_\delta$. 
Since $X$ is geodesic, $X_1$ is a $\Lambda_1$-tree (here, either $\Lambda_1 = \R^{n-1}$, or 
$\Lambda_1 = \Z^{n-1}$). Observe that $\gamma$ induces an isometry $\gamma_1$ of $X_1$ which can be
either elliptic, or hyperbolic, or an inversion if $\Lambda_1 = \Z^{n-1}$.

If $\gamma$ is elliptic then it fixes a $\Lambda_0$-subspace - a contradiction with minimality.

If $\gamma$ is parabolic then $\gamma_1$ cannot be elliptic because then $\gamma$ is not minimal. 
$\gamma_1$ cannot be hyperbolic because in this case $\gamma_1$ fixes two distinct points on the 
boundary $\partial X_1$ (since $\gamma$ is minimal) which can be lifted to two distinct points on the
boundary $\partial X$ fixed by $\gamma$ - a contradiction since we assume that $\gamma$ is parabolic.
Eventually, if $\gamma_1$ is an inversion then $\gamma_1$ fixes a point in $X_1$, that is, $\gamma^2$
fixes a $\Lambda_\delta$-subspace of $X$. But since $\gamma$ is parabolic, $\gamma^2$ is also parabolic
and the diameter of $\{\gamma^2 x\}$ is unbounded by any $\alpha \in \Lambda$ for any $x \in X$ - a
contradiction. 

Hence, we can conclude that $\gamma$ can be neither elliptic, nor parabolic if $n > 1$. That is, it
can be only either hyperbolic, or an inversion.

\smallskip

Finally, in the case when $n > 1$, the isometry $\gamma_1$ is either hyperbolic, or an inversion. 
Moreover (see \cite{Chiswell:2001}), $\gamma_1$ is hyperbolic if and only if there exists $x_1 \in 
X_1$, which belongs to the axis of $\gamma_1$, and $\lambda_1, c_1 \in \Lambda_1$ such that $d(x_1, 
\gamma_1^k x_1) \geqslant k \lambda_1 + c_1$ for any $k \in \N$. So, $x_1, \lambda_1$, and $c_1$ 
can be lifted back to $X$ and $\Lambda$ respectively, and we get the required result for $\gamma$.
\end{proof}

Finally, we conclude this section with an investigation of the behavior of non-minimal isometries in
the case when $\Lambda$ is either $\R^n$, or $\Z^n$.

\begin{prop}
\label{class_is_unique}
Let $(X,d)$ be a geodesic $\delta$-hyperbolic $\R^n$-metric space, where $\R^n$ is taken with the
right lexicographic order and $\delta = (\delta_0, 0, \ldots, 0)$. Let $\gamma$ be a non-minimal 
isometry of $X$ fixing two distinct $\R^{n-i}$-subspaces $X_0$ and $X_1$, where $i \in [1,n-1]$. 
Then the action of $\gamma$ on $X_0$ and $X_1$ is of the same type.
\end{prop}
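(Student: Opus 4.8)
The plan is to reduce the statement to a conjugacy between the two restricted actions. First I set $\Lambda' = \Lambda_{n-i}$, the convex subgroup of $\R^n$ with $\Lambda' \cong \R^{n-i}$; since $i \le n-1$ we have $\delta = (\delta_0,0,\dots,0) \in \Lambda_1 \subseteq \Lambda'$. A geodesic of $X$ joining two points of $X_j$ has length in $\Lambda'$ and hence stays inside $X_j$, so each $(X_j, d|_{X_j})$ is a geodesic $\delta$-hyperbolic $\R^{n-i}$-metric space and $\gamma$ restricts to an isometry $\gamma|_{X_j}$, whose type (elliptic, parabolic or hyperbolic, by Theorem \ref{th:R^n_isom}) we must show agrees for $j=0,1$. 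Next I pass to the quotient $\bar X = X/{\sim}$, where $y \sim z \iff d(y,z) \in \Lambda'$. Because $\delta \in \Lambda'$ the space $\bar X$ is a geodesic $0$-hyperbolic $(\R^n/\Lambda')$-metric space, i.e.\ an $\R^i$-tree, and $\gamma$ induces an isometry $\bar\gamma$. The two subspaces collapse to distinct points $\bar x_0 \neq \bar x_1$, both fixed by $\bar\gamma$; since $\bar X$ is a tree, $\bar\gamma$ fixes the bridge $[\bar x_0, \bar x_1]$ pointwise, and the fibres over $\bar p \in [\bar x_0,\bar x_1]$ form a family of $\gamma$-invariant $\R^{n-i}$-subspaces interpolating between $X_0$ and $X_1$.

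The core of the argument is to build a $\gamma$-equivariant isometry $\rho \colon X_0 \to X_1$ from this fixed bridge: $\rho(y_0)$ is the ``bridge partner'' of $y_0$, the endpoint in $X_1$ of a geodesic from $y_0$ whose image in $\bar X$ is $[\bar x_0,\bar x_1]$. Equivariance, $\rho(\gamma y_0) = \gamma\,\rho(y_0)$, will follow from the canonicity of this construction together with the fact that $\gamma$ fixes $X_0$, $X_1$ and the bridge. Granting that $\rho$ is an isometry we obtain $\gamma|_{X_1} = \rho\,(\gamma|_{X_0})\,\rho^{-1}$, and since an isometric conjugacy preserves the whole dynamics of an isometry it preserves the type; equivalently it suffices to prove $d(y_0,\gamma^k y_0) = d(\rho y_0, \gamma^k \rho y_0)$ for all $k$, which transfers the characterization of Corollary \ref{co:character_hyp} (boundedness by $K\delta$ versus the number of distinct accumulation points of $\{\gamma^{\pm k}y\}$ on the intrinsic boundary). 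The heuristic behind exact equality is the ``lexicographic product'' picture: cross-bridge distances have a fixed $\Lambda'$-component, so the isometry condition applied to $y_0,\,\rho y_0,\,\gamma^k y_0,\,\gamma^k\rho y_0$ forces the $\Lambda'$-displacements at the two ends to agree.

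The main obstacle is precisely the proof that $\rho$ is an exact isometry, and there are three dangers to control. The distance $D = d(x_0,x_1)$ has height $> n-i$, so it dominates the fine $\Lambda'$-scale that distinguishes the types; consequently the $4$-point condition (Lemma \ref{le:1.2.6}) and stable translation lengths, applied through a geodesic crossing the bridge, are blind to exactly the data we must compare. Moreover the intrinsic boundaries $\partial X_0, \partial X_1$ are disjoint from $\partial X$ (the $X_j$ are bounded in $X$), so the global basepoint-invariance of type (Lemma \ref{le:parabolic} and its hyperbolic analogue) does not apply directly. Finally, one must rule out the ``horocyclic'' contraction familiar from real hyperbolic geometry, where transport between parallel level sets scales distances; here the decisive point is that $\bar X$ is $0$-hyperbolic, so the height-$>n-i$ geometry is tree-like and carries the $\Lambda'$-metric across the bridge without distortion. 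Making this rigorous is the technical heart: using $\delta$-thinness (Proposition \ref{pr:1.2.9}) of geodesic quadrilaterals whose two long sides both project onto the fixed bridge, one shows that the $\Lambda'$-distances realized at the $X_0$-end and the $X_1$-end coincide, which is exactly the isometry property of $\rho$.

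A fall-back, should the construction of a single global $\rho$ prove awkward, is to exploit that the $\gamma$-invariant $\R^{n-i}$-subspaces correspond to the fixed subtree of $\bar\gamma$ in $\bar X$, which is connected; one then argues that the type is locally constant along this subtree and concludes by connectivity. This variant still rests on the same transversal transport estimate, so it does not avoid the principal difficulty but may package it more conveniently.
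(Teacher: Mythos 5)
Your reduction setup is sound (each fiber of the quotient map $X \to \bar X = X/\Lambda'$ over a fixed point of $\bar\gamma$ is a $\gamma$-invariant $\R^{n-i}$-subspace, and $\bar\gamma$ does fix $[\bar x_0,\bar x_1]$ pointwise), but the core step --- that the ``bridge partner'' map $\rho\colon X_0\to X_1$ is well defined and an exact isometry --- is a genuine gap, and in fact it fails in general. First, $\rho$ is not well defined: for $y_0\in X_0$ and \emph{every} $y_1\in X_1$, any geodesic $[y_0,y_1]$ projects onto $[\bar x_0,\bar x_1]$, so ``endpoint in $X_1$ of a geodesic projecting to the bridge'' selects all of $X_1$; the natural refinement (minimize the $\Lambda'$-part of $d(y_0,\cdot)$ over $X_1$) is a nearest-point projection, which in a $\delta$-hyperbolic space is only coarsely defined and need not be injective or even a quasi-isometry. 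The paper's own Example \ref{ex:hyperbolic_1a} exhibits exactly the obstruction you flagged and then tried to dismiss: there the cross-fiber distance between $x_i$ and $y_j$ depends on $x$ and $y$ only through the Busemann-type quantities $d(x,\ast)-2(x\cdot a)_\ast$ and $d(y,\ast)-2(y\cdot b)_\ast$, so two points of one fiber lying on a common horosphere about $a$, arbitrarily far apart, have \emph{identical} distance profiles to every point of the other fiber. Consequently no four-point computation of the kind you propose (applied to $y_0,\,\rho y_0,\,\gamma^k y_0,\,\gamma^k\rho y_0$) can recover $\Lambda'$-distances across the bridge: the cross data simply does not contain them, and the $0$-hyperbolicity of $\bar X$, which governs only the coarse height-$>n-i$ geometry, does nothing to exclude this horocyclic collapse. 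Your fallback via local constancy of the type along the fixed subtree inherits the same defect, as you yourself note.

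The paper avoids any transport of the metric and argues by a direct coarse comparison. Fixing $x\in X_0$ and $y\in X_1$, and supposing for instance that $\gamma|_{X_0}$ is elliptic (orbit of $x$ of diameter at most $M\delta$) while $\gamma|_{X_1}$ is hyperbolic, it extracts from the exact identity $d(x,y)=d(\gamma^k x,\gamma^k y)$ together with hyperbolicity the two-sided estimate $|\,d(\gamma^k y,y)-2(\gamma^k y\cdot x)_y\,|\leqslant (M+2)\delta$ (the displayed inequalities (\ref{eq:class_is_unique}) and (\ref{eq:class_is_unique_2})); it then shows $(\gamma^k y\cdot x)_y\leqslant L+k\delta$ for all large $k$, since otherwise $d(y,\gamma^{k+1}y)\leqslant 2d(y,\gamma y)+(M+2)\delta$, contradicting unboundedness of the hyperbolic orbit; finally this linear-in-$k\delta$ bound contradicts the growth $d(y,\gamma^k y)\geqslant k\lambda+c$ with $\lambda>5\delta$ furnished by Corollary \ref{co:character_hyp} after replacing $\gamma$ by a suitable power. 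The elliptic/parabolic and hyperbolic/parabolic mixtures are disposed of by variants of the same inequality. In short, only growth rates of $d(y,\gamma^k y)$ and of the cross-products $(\gamma^k y\cdot x)_y$ --- data that survive the horocyclic degeneracy --- are compared, which is why the direct argument succeeds where an exact equivariant conjugacy cannot exist.
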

\begin{proof} Since $X$ is geodesic, there exist unique $\alpha_0 \in \partial X_0$ and $\alpha_1
\in \partial X_1$ such that if $\{x_k\} \to \alpha_0,\ \{y_k\} \to \alpha_1$ and $x \in X_0,\ y \in
X_1$ then
$$(x_k \cdot y)_x \to \infty,\ \ (y_k \cdot x)_y \to \infty\ \ {\rm with\ respect\ to}\ \R^{n-i}$$
It follows that $\gamma \alpha_0 = \alpha_0$ and $\gamma \alpha_1 = \alpha_1$.

Suppose $\gamma|_{X_0}$ is elliptic and $x$ is such that for any $k \in \Z$ we have $d(x, \gamma^k
x) \leqslant M \delta$ for some $M \in \N$. If $\gamma|_{X_1}$ is hyperbolic, it follows that either 
$\{\gamma^k y\} \to \alpha_1$, or $\{\gamma^{-k} y\} \to \alpha_1$. Without loss of generality assume 
that $\{\gamma^k y\} \to \alpha_1$. We have
$$d(x, y) = d(\gamma^k y, \gamma^k x) = d(\gamma^k y, y) + d(\gamma^k x, y) - 2 (\gamma^k y \cdot
\gamma^k x)_y$$
$$\leqslant d(\gamma^k y, y) + d(x, y) + d(\gamma^k x, x) - 2(\gamma^k y \cdot \gamma^k x)_y$$
$$\leqslant d(\gamma^k y, y) + d(x, y) + M\delta - 2(\gamma^k y \cdot \gamma^k x)_y$$
Next, since $X_0 \neq X_1$, we have $(\gamma^k y \cdot x)_y \in \R^{n-i}$ and $(\gamma^k x \cdot
x)_y \in \R^{n-j}$ for some $j < i$. It follows that $(\gamma^k y \cdot x)_y < (\gamma^k x \cdot x)_y$
and from
$$(\gamma^k y \cdot \gamma^k x)_y \geqslant \min\{(\gamma^k y \cdot x)_y, (\gamma^k x \cdot x)_y\}
- \delta$$
we get
$$(\gamma^k y \cdot \gamma^k x)_y \geqslant (\gamma^k y \cdot x)_y - \delta$$
or
$$-2(\gamma^k y \cdot \gamma^k x)_y \leqslant -2(\gamma^k y \cdot x)_y + 2\delta$$
Thus,
$$d(x,y) \leqslant d(\gamma^k y, y) + d(x, y) + M \delta - 2(\gamma^k y \cdot \gamma^k x)_y$$
$$\leqslant d(\gamma^k y, y) + d(x, y) + (M + 2) \delta - 2(\gamma^k y \cdot x)_y$$
and eventually we obtain
\begin{equation}
\label{eq:class_is_unique}
0 \leqslant d(\gamma^k y, y) + (M + 2) \delta - 2(\gamma^k y \cdot x)_y
\end{equation}
for any $k$.

Using a similar argument but starting with $d(\gamma^k x, y) \geqslant d(x,y) - d(x, \gamma^k x)$, 
we can eventually obtain that
\begin{equation}
\label{eq:class_is_unique_2}
0 \geqslant d(\gamma^k y, y) - (M + 2) \delta - 2(\gamma^k y \cdot x)_y
\end{equation}
for any $k$. Indeed we have
$$d(x, y) = d(\gamma^k y, \gamma^k x) = d(\gamma^k y, y) + d(\gamma^k x, y) - 2 (\gamma^k y \cdot
\gamma^k x)_y$$
$$\geqslant d(\gamma^k y, y) + d(x, y) - d(\gamma^k x, x) - 2(\gamma^k y \cdot \gamma^k x)_y$$
$$\geqslant d(\gamma^k y, y) + d(x, y) - M \delta - 2(\gamma^k y \cdot \gamma^k x)_y$$
Next, since $X_0 \neq X_1$, we have $(\gamma^k y \cdot \gamma^k x)_y \in \R^{n-i}$ and $(\gamma^k x 
\cdot x)_y \in \R^{n-j}$ for some $j < i$. It follows that $(\gamma^k y \cdot \gamma^k x)_y < 
(\gamma^k x \cdot x)_y$ and from
$$(\gamma^k y \cdot x)_y \geqslant \min\{(\gamma^k y \cdot \gamma^k x)_y, (\gamma^k x \cdot x)_y\}
- \delta$$
we get
$$(\gamma^k y \cdot x)_y \geqslant (\gamma^k y \cdot \gamma^k x)_y - \delta$$
or
$$-2(\gamma^k y \cdot \gamma^k x)_y \geqslant -2(\gamma^k y \cdot x)_y - 2\delta$$
Thus,
$$d(x,y) \geqslant d(\gamma^k y, y) + d(x, y) - M \delta - 2(\gamma^k y \cdot \gamma^k x)_y$$
$$\geqslant d(\gamma^k y, y) + d(x, y) - (M + 2) \delta - 2(\gamma^k y \cdot x)_y$$
from which we obtain (\ref{eq:class_is_unique_2}).

\smallskip

Now, we assume $(\gamma^k y \cdot \gamma^{k+1} y)_y \leqslant (\gamma^{k+1} y \cdot 
x)_y$ and deduce
$$d(y, \gamma^{k+1} y) \leqslant 2d(y, \gamma y) + (M + 2) \delta$$ 
From our assumption we get
$$-2(\gamma^k y \cdot \gamma^{k+1} y)_y \geqslant - 2(\gamma^{k+1} y \cdot x)_y$$
Next, we have
$$d(y, \gamma y) = d(\gamma^k y, \gamma^{k+1} y) = d(y, \gamma^k y) + d(y, \gamma^{k+1} y) -
2(\gamma^k y \cdot \gamma^{k+1} y)_y$$
$$\geqslant 2d(y, \gamma^{k+1} y) - d(y, \gamma y) - 2(\gamma^k y \cdot \gamma^{k+1} y)_y$$
where the latter inequality follows from the triangle inequality. We can rewrite the latter inequality 
in the form
$$2d(y, \gamma y) \geqslant 2d(y, \gamma^{k+1} y) - 2(\gamma^k y \cdot \gamma^{k+1} y)_y$$
so, combining it with $-2(\gamma^k y \cdot \gamma^{k+1} y)_y \geqslant - 2(\gamma^{k+1} y \cdot 
x)_y$ we obtain
$$2d(y, \gamma y) \geqslant 2d(y, \gamma^{k+1} y) - 2(\gamma^{k+1} y \cdot x)_y$$
Eventually, since
$$d(\gamma^{k+1} y, y) - 2(\gamma^{k+1} y \cdot x)_y \geqslant - (M + 2) \delta$$
(we replaced $k$ by $k+1$ in (\ref{eq:class_is_unique})), it follows that
$$2d(y, \gamma y) \geqslant d(y,\gamma^{k+1} y) - (M + 2) \delta$$
or
$$d(y, \gamma^{k+1} y) \leqslant 2d(y, \gamma y) + (M + 2) \delta$$
Observe that the latter inequality gives a contradiction since we assume that $\gamma$ acts as a 
hyperbolic isometry on $X_1$. It follows that the inequality 
$$(\gamma^н y \cdot \gamma^{k+1} y)_y \leqslant (\gamma^{k+1} y \cdot x)_y$$ 
cannot hold for arbitrarily large $k$ and there exists $N \in \N$ such that $(\gamma^k y \cdot 
\gamma^{k+1} y)_y > (\gamma^{k+1} y \cdot x)_y$ for any $k > N$.

It implies that 
$$(\gamma^k y\cdot x)_y \geqslant \min\{(\gamma^k y \cdot \gamma^{k+1} y)_y, (\gamma^{k+1} y \cdot
x)_y\} - \delta = (\gamma^{k+1} y \cdot x)_y - \delta$$ 
Therefore, there exists $L \in \R^{n-i}$ such that $(\gamma^k y \cdot x)_y \leqslant L + k \delta$ 
for any $k > N$.

However, since $\gamma$ acts hyperbolically on $X_1$, by Corollary \ref{co:character_hyp}, there 
exist $\lambda, c \in \R^{n-i}$ such that $d(y, \gamma^k y) > k \lambda + c$. We can assume that 
$\lambda > 5 \delta$ since we can replace $\gamma$ with $\gamma' = \gamma^i$ for $i$ large enough 
so that $\lambda' > 5 \delta$. However, according to (\ref{eq:class_is_unique_2}) we have
$$0 \geqslant d(\gamma^k y, y) - (M + 2) \delta - 2(\gamma^k y \cdot x)_y \geqslant d(\gamma^k y, y) 
- (M + 2) \delta - 2 L - 2 k \delta$$
which implies that
$$0 > k \lambda + c - (M + 2) \delta - 2 L - 2 k \delta > 5 k \delta + c - (M + 2) \delta - 2 L - 
2 k \delta$$
$$ = 3 k \delta - (2 L + (M + 2) \delta - c)$$
Since $2 L + (M + 2) \delta - c$ is a constant, we have a contradiction.

\smallskip

The same argument can be used to prove that if $\gamma|_{X_0}$ is elliptic, then $\gamma|_{X_1}$
cannot be parabolic since then both $\{\gamma^k y\}$ and $\{\gamma^{-k} y\}$ converge to the
same point on the boundary while $\{\gamma^k x \mid k \in \Z\}$ stays within a fixed distance from
$x$.

\smallskip

We can use a similar argument to show that, if $\gamma|_{X_0}$ is hyperbolic, then $\gamma|_{X_1}$ 
cannot be parabolic. Indeed, if $\gamma|_{X_1}$ is parabolic then it has a unique fixed point in 
$\partial X_1$ which both $\{\gamma^k y\}$ and $\{\gamma^{-k} y\}$ converge to. At the same time, 
$\gamma|_{X_0}$ has two fixed points in $\partial X_0$: $\{\gamma^k x\}$ converges to one of them 
and $\{\gamma^{-k} x\}$ to the other one. Suppose, without loss of generality, that $\{\gamma^k x\}$ 
converges to $\alpha_0$ and $\{\gamma^k y\}$ converges to $\alpha_1$. The we have
$$d(x,y) = d(\gamma^k x, \gamma^k y) = d(x,y) + d(x, \gamma^k x) + d(y, \gamma^k y) - 2(\gamma^k x 
\cdot y)_x - 2(\gamma^k y \cdot x)_y$$
which implies that
$$0 = d(x, \gamma^k x) + d(y, \gamma^k y) - 2(\gamma^k x \cdot y)_x - 2(\gamma^k y \cdot x)_y$$
Finally, we use the obtained equality as a analog of (\ref{eq:class_is_unique}) and repeat the argument 
given above for both $(\gamma^k y \cdot \gamma^{k+1} y)_y$ and $(\gamma^k x \cdot \gamma^{k+1} x)_x$.
\end{proof}

\begin{corollary}
\label{true_for_Z^n}
Let $(X,d)$ be a geodesic $\delta$-hyperbolic $\Z^n$-metric space, where $\Z^n$ is taken with the 
right lexicographic order and $\delta = (\delta_0, 0, \ldots, 0)$. Let $\gamma$ be a non-minimal 
isometry of $X$ fixing $\Z^{n-i}$-subspaces $X_0$ and $X_1$. Then the action of $\gamma$ on $X_0$ 
and $X_1$ is of the same type.
\end{corollary}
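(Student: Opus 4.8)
The plan is to reduce the statement to Proposition \ref{class_is_unique}. The first observation is that the entire argument of Proposition \ref{class_is_unique} is purely order-algebraic: it invokes only the $4$-point condition, elementary inequalities between (point) Gromov products, the filtration of $\Lambda$ by its convex subgroups (the ``height'' bookkeeping), and Corollary \ref{co:character_hyp}. None of these steps uses completeness of $\R$ or the fact that $\R^n$ is not discrete, so replacing $\R^{n-i}$ by $\Z^{n-i}$ throughout leaves every inequality intact. Consequently the three mixed cases excluded there --- elliptic versus hyperbolic, elliptic versus parabolic, and hyperbolic versus parabolic --- are excluded for $\Z^n$ as well. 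Hence if $\gamma|_{X_0}$ and $\gamma|_{X_1}$ both belong to $\{$elliptic, parabolic, hyperbolic$\}$, they already must be of the same type.

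The genuinely new feature of the $\Z^n$ case is that, by Theorem \ref{th:Z^n_isom}, an isometry may also be an \emph{inversion}, so I must additionally show that an inversion on one of $X_0, X_1$ forces an inversion on the other. Here I would pass to $\gamma^2$, which again fixes both $X_0$ and $X_1$ and is non-minimal, and use that squaring sends an inversion to an elliptic isometry while preserving each of the other three types. Thus if $\gamma|_{X_0}$ is an inversion then $\gamma^2|_{X_0}$ is elliptic; by the same-type statement already established for $\gamma^2$, the isometry $\gamma^2|_{X_1}$ is elliptic as well, and therefore $\gamma|_{X_1}$ is either elliptic or an inversion. In particular the pairs (inversion, parabolic) and (inversion, hyperbolic) are impossible, since they would give $\gamma^2$ a forbidden (elliptic, parabolic) or (elliptic, hyperbolic) pair.

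The one case that does not reduce this way, and which I expect to be the main obstacle, is separating an inversion from an elliptic. To treat it I would use the height-$1$ quotient already employed above: set $y \sim z \Longleftrightarrow d(y,z) \in \Lambda_\delta = \Z$, so that $T = X/\!\sim$ is a $\Z^{n-1}$-tree on which $\gamma$ induces an isometry $\bar\gamma$, and the images $\pi(X_0), \pi(X_1)$ are $\bar\gamma$-invariant subtrees. Under $\pi$, an elliptic on $X_1$ gives $\bar\gamma$ a fixed point in $T$ (so $\bar\gamma$ is elliptic on the tree $T$), whereas an inversion on $X_0$ means precisely that $\bar\gamma$ has no fixed point in $\pi(X_0)$ although $\bar\gamma^2$ does. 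Projecting the (nonempty) fixed subtree of $\bar\gamma$ onto the invariant subtree $\pi(X_0)$ along the bridge between them then produces a point of $\pi(X_0)$ fixed by $\bar\gamma$, contradicting the inversion. Combining this with the exclusions above rules out every mixed pair, forcing $\gamma|_{X_0}$ and $\gamma|_{X_1}$ to be of the same type.

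The delicate point is this last projection step: it depends on the classification of isometries of $\Lambda$-trees and on the existence and uniqueness of the nearest-point bridge between two invariant subtrees, which I would quote from the $\Lambda$-tree theory (e.g.\ Chiswell); here the discreteness of $\Z^{n-1}$ is an advantage, since it guarantees that the relevant minimal distance is attained. Everything else is a direct transcription of Proposition \ref{class_is_unique} together with the bookkeeping of how the four isometry types behave under squaring and under passage to the height-$1$ quotient tree.
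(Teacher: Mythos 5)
Your reduction of the elliptic/parabolic/hyperbolic mixed cases to Proposition \ref{class_is_unique} is exactly what the paper does (it says the adaptation is straightforward), but your treatment of inversions diverges from the paper's and contains two genuine gaps. First, the claim that ``squaring sends an inversion to an elliptic isometry'' is not justified under the paper's definitions: if $\gamma|_{X_0}$ is an inversion, then $\gamma^2$ merely \emph{stabilizes} a $\Lambda_\delta$-subspace (here a $\Z$-subspace) of $X_0$, and nothing rules out $\gamma^2$ having $\Z$-unbounded orbits inside that subspace; the paper's notion of elliptic requires orbit diameter at most $K\delta$ for a fixed $K \in \N$, which does not follow. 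So you are not licensed to apply the already-established trichotomy statement to $\gamma^2$, and the exclusions of (inversion, parabolic) and (inversion, hyperbolic) are left hanging. Second, the step you yourself flag as delicate does fail as stated: in a $\Z^{n-1}$-tree with the lexicographic order, the nearest-point projection onto a subtree of the form $\pi(X_0)$ need not exist, because discreteness of $\Z^{n-1}$ does not make bounded-below subsets attain minima once $n-1 \geqslant 2$ (e.g.\ $\{(-k,1) \mid k \in \N\}$ is a decreasing set of positive elements with no minimum; concretely, in the linear tree $\Z^2$ the point $(0,5)$ has no closest point on the subtree $\Z \times \{0\}$). Your (inversion, elliptic) argument can be rescued without any bridge: if $\bar\gamma$ fixes a point $p$ of the quotient tree and stabilizes the subtree $S = \pi(X_0)$, then for any $s \in S$ the median $m = Y(s, \bar\gamma s, p)$ lies on $[s, \bar\gamma s] \subseteq S$ by convexity and satisfies $\bar\gamma m = m$ (both $m$ and $\bar\gamma m$ lie on $[p, \bar\gamma s]$ at the same distance from $p$), giving a fixed point inside $S$ and the desired contradiction — but this is not the argument you gave.

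It is also worth noting that the paper's own handling of the inversion case is both different and stronger: it shows the configuration cannot occur at all. Passing to the quotient $\Z^{n-1}$-tree $Y$, an inversion on $Y_0$ gives a swapped pair $a, b \in Y_0$ with $\gamma_1 a = b$, $\gamma_1 b = a$; taking any $z$ in the distinct invariant subtree $Y_1$ and comparing the geodesics $[a,b] \cup [b,z]$, $[b,a] \cup [a,z]$ (or the variant through an interior point $c \in [a,b]$) with their $\gamma_1$-images yields a contradiction with the uniqueness of geodesics in a tree. Thus an isometry acting as an inversion on one invariant $\Z^{n-i}$-subspace cannot fix a second one, making all your case analysis for mixed pairs involving inversions unnecessary. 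Your squaring idea, even if the elliptic claim were repaired, would only exclude mixed pairs and would leave open (inversion, inversion), which the paper shows is vacuous; that is fine for the statement as worded, but the two routes genuinely differ, and as written yours has the two gaps above.
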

\begin{proof} If the actions of $\gamma$ on $X_0$ and $X_1$ are either elliptic, or hyperbolic, or 
parabolic then the proof is a straightforward adaptation of that of Proposition \ref{class_is_unique}.

Suppose that $\gamma$ is an inversion on $X_0$. Observe that by definition $\gamma$ does not fix 
any $\Z$-subspace of $X$.

Let $Y$ be the $\Z^{n-1}$-tree obtained by contracting all $\Z$-subspaces of $X$ to points (more
precisely, $Y = X / \sim$, where $x \sim y$ if and only if $d(x, y) \in \Z$, and since $X$ is geodesic,
$Y$ is a geodesic $0$-hyperbolic $\Z^{n-1}$-metric space). Let $Y_0$ and $Y_1$ be the subtrees of
$Y$ corresponding to $X_0$ and $X_1$. Observe that $\gamma$ induces an isometry $\gamma_1$ of $Y$
such that $\gamma_1$ fixes both $Y_0$ and $Y_1$ and $\gamma_1$ acts on $Y_0$ as an inversion.
Hence, let $a, b \in Y_0$ such that $\gamma_1 a = b,\ \gamma_1 b = a$ and take an arbitrary $z \in 
Y_1$. If $[a, b] \cup [b, z]$ is a geodesic in $Y$ then so is $\gamma_1 [b, z] = [a, \gamma_1 z]$ 
and $b \notin [a, \gamma_1 z]$. But the unique geodesic from $a$ to any element of $Y_1$ must 
contain $b$, hence, a contradiction. If $[b, a] \cup [a, z]$ is a geodesic then we get a contradiction
in a similar way. Finally, if there is some $c \in [a, b]$ such that $[a, c] \cup [c, z]$ and $[b, c] 
\cup [c, z]$ are both geodesics then we get a contradiction by using similar considerations with $c$ 
and $\gamma_1 c$. 

It follows that $\gamma_1$ cannot fix distinct $Y_0$ and $Y_1$ and the same applies to $\gamma$
in $X$.
\end{proof}

\subsection{Examples}
\label{subs:examples}

\begin{example}
\label{ex:hyperbolic_1a}
Let $X$ be a proper, geodesic $\delta$-hyperbolic $\R$-metric space, $\ast \in X$ and $a,b \in 
\partial X$ such that $(a \cdot b)_\ast = 0$. Let $\{X_i \mid i \in \Z\}$ be a set of copies of $X$
with the copy of $x \in X$ in $X_i$ denoted $x_i$, and define $Y = \bigcup X_i$

Let us define a metric $d$ on $Y$. By abuse of notation, we are also going to use $d$ for the metric 
in $X$. For any $i \in \Z$, define $d(x_i, y_i) = (d(x, y), 0)$. If $i < j$ then define
$$d(x_i, y_j) = (d(x, \ast) + d(y, \ast) - 2(x \cdot a)_\ast - 2(y \cdot b)_\ast, |i - j|)$$
First of all, $(Y, d)$ is a $(8 \delta, 0)$-hyperbolic metric space. To see that, take $\rho$ to be a
geodesic line joining $a$ and $b$ such that $\ast \in \rho$ (such a geodesic line exists since $X$
is geodesic). Define $\rho_i$ to be the image of $\rho$ in $X_i$, $a_i$ and $b_i$ to be the images 
of $a$ and $b$ in $\partial X_i$, and $[x,\omega)$ to be a geodesic ray between some $x \in X$ and 
$\omega \in \partial X$.

For any $x_i, y_j \in Y$ with $i \neq j$, let 
$$[x_i, a_i) \cup \rho_{i+1} \cup \cdots \cup \rho_{j-1} \cup (b_j, y_j]$$ 
will be a geodesic embedding of $[0, d(x_i, y_j)]$ into $Y$. It is then easy to prove that $(Y, d)$ 
is $(8 \delta, 0)$-hyperbolic by using a geometric argument.

Let now $\gamma$ be an isometry of $X$ which preserves $a$ and $b$. We would like to extend it to 
a mapping $\overline{\gamma} : Y \to Y$ by $\overline{\gamma}(x_i) = (\gamma x)_{i+1}$. For every 
$x_i, y_j \in Y$, consider the first component of $d(x_i, y_j)$ which we denote by $D(x, y)$. Explicitly,
$$D(x, y) = d(x, \ast) + d(y, \ast) - 2(x \cdot a)_\ast - 2(y \cdot b)_\ast$$ 
It is easy to see that $\overline{\gamma}$ is an isometry of $Y$ if and only if $D(x, y) = D(\gamma 
x, \gamma y)$ for any $x, y \in X$. We have
$$D(x,y) = d(x, \ast) + d(y, \ast) - 2 (x \cdot a)_\ast - 2 (y \cdot b)_\ast$$
$$= d(x, \ast) + d(y, \ast) - \sup_{\substack{a_i \to a\\ b_i \to b}} \lim_{i \to \infty} d(x, \ast) 
+ d(a_i, \ast) - d(x, a_i) + d(y, \ast) + d(b_i, \ast) - d(y, b_i)$$
$$= -\sup_{\substack{a_i \to a\\ b_i \to b}} \lim_{i \to \infty} d(a_i, \ast) - d(x, a_i) + d(b_i, 
\ast) - d(y, b_i)$$
Recall now that $\gamma$ fixes $a$ and $b$, so we have that $\{a_i \mid a_i \to a\} = \{\gamma a_i 
\mid a_i \to a\}$ and $\{b_i \mid b_i \to b\} = \{\gamma b_i \mid b_i \to b\}$. Hence,
$$D(x,y) - D(\gamma x, \gamma y) = \sup_{\substack{a_i \to a\\ b_i \to b}} \lim_{i \to \infty} 
(d(a_i, \ast) - d(\gamma x, a_i) + d(b_i, \ast) - d(\gamma y, b_i))$$
$$- \sup_{\substack{a_i \to a\\ b_i \to b}} \lim_{i \to \infty} (d(a_i, \ast) - d(x, a_i) + d(b_i, 
\ast) - d(y, b_i))$$
$$= \sup_{\substack{a_i \to a\\ b_i \to b}} \lim_{i \to \infty} (d(\gamma a_i, \ast) - d(\gamma x, 
\gamma a_i) + d(\gamma b_i, \ast) - d(\gamma y, \gamma b_i))$$
$$- \sup_{\substack{a_i \to a\\ b_i \to b}} \lim_{i \to \infty} (d(a_i, \ast) - d(x, a_i) + d(b_i, 
\ast) - d(y, b_i))$$
$$\leqslant \sup_{\substack{a_i \to a\\ b_i \to b}} \lim_{i \to \infty} d(\gamma a_i, \ast) + 
d(\gamma b_i, \ast) - d(a_i, \ast) - d(b_i, \ast)$$
$$= \sup_{\substack{a_i \to a\\ b_i \to b}} \lim_{i \to \infty} d(\gamma a_i, \ast) + d(\gamma b_i, 
\ast) - d(\gamma a_i, \gamma \ast) - d(\gamma b_i, \gamma \ast)$$
$$\leqslant \sup_{\substack{a_i \to a\\ b_i \to b}} \lim_{i \to \infty} d(\gamma a_i, \ast) + 
d(\gamma b_i, \ast) - d(\gamma a_i, \gamma b_i)$$
$$= 2 (\gamma a \cdot \gamma b)_\ast = 0$$
We use a similar reasoning to prove that $D(\gamma x, \gamma y) - D(x,y) \leqslant 0$, which 
implies that $D(\gamma x, \gamma y) = D(x,y)$.

\smallskip

Let us reuse the same spaces $X$ and $Y$ but assume $\gamma b = a$ and $\gamma a = b$.
Since $\gamma^2$ has more fixed points on the boundary than $\gamma$, it is easy to see that
$\gamma$ is an elliptic isometry of $X$.

This time, we extend $\gamma$ to $Y$ by using $\overline{\gamma}(x_i) = (\gamma x)_{-i}$. We can 
use the same argument as above to prove that $\overline{\gamma}$ is an elliptic isometry of $Y$ by 
using the fact that $\{a_i \mid a_i \to a\} = \{\gamma b_i \mid b_i \to b\}$ and $\{b_i \mid b_i \to 
b\} = \{\gamma a_i \mid a_i \to a\}$, allowing an analog of the previous computations.
\end{example}

\begin{example}
\label{ex:hyperbolic_2}
Let $X$ be a $\delta$-hyperbolic metric space in $\Lambda_1$, $T$ a $\Lambda_2$-tree, $d_X$ and 
$d_T$ the associated metrics and $Y \subseteq X$ bounded and $\gamma$ its diameter. Define $d_Y : 
X \times T \to \Lambda_1 \oplus \Lambda_2$ by 
$$d_Y((x_1, t),\ (x_2, t)) = (d_X(x_1, x_2),\ 0)$$ 
and 
$$d_Y((x_1, t_1),\ (x_2, t_2)) = (d_X(x_1, Y) + d_X(x_2, Y),\ d_T(t_1, t_2))$$ 
To see that $d_Y$ is a $(\delta+\gamma,0)$-hyperbolic metric, notice that $(X \times T, d_Y)$ can be 
embedded into a space where we connect each pair $X \times t_1,\ X \times t_2$, where $d_T(t_1, t_2) 
= 1$ by attaching a copy of $Y \times \Lambda_2$ to $Y \times t_1$ and $Y \times t_2$ and imagining 
they meet at the end.

In particular, suppose $G$ acts on $X$ and $T$ and there exists an $x \in X$ such that $G x$ is bounded. 
We can then consolidate both actions into an action on $(X \times T,\ d_{Gx})$ which is a $(\delta + 
\gamma, 0)$-hyperbolic $\Lambda_1 \oplus \Lambda_2$-metric space.

Note that one could use $T$ as a $\delta'$-hyperbolic space resulting in $d_Y$ being $(\delta + 
\gamma,\delta')$-hyperbolic.
\end{example}

\section{Group actions and hyperbolic length\\ functions}
\label{subs:group_actions}

In this section we introduce hyperbolic length functions on groups. This gives an equivalent approach 
to study group actions on hyperbolic $\Lambda$-metric spaces.

In \cite{Lyndon:1963} Lyndon introduced a notion of an abstract length function $l:G \to \Lambda$ 
on a group $G$ with values in $\Lambda$. This started  the whole study of the group length functions 
and actions. Following Lyndon we call a function $l:G \to \Lambda$ a {\em length function} if it 
satisfies the following axioms
\begin{enumerate}
\item[($\Lambda 1$)] $\forall\ g \in G:\ l(g) \geqslant 0$ and $l(1) = 0$,
\item[($\Lambda 2$)] $\forall\ g \in G:\ l(g) = l(g^{-1})$,
\item[($\Lambda 3$)] $\forall\ g,h \in G:\ l(g h) \leqslant l(g) + l(h)$.
\end{enumerate}

Now we introduce the following crucial definition.

A length function $l : G \to \Lambda$ is called {\em hyperbolic} if there is $\delta \in \Lambda$ 
such that
\begin{enumerate}
\item[($\Lambda 4, \delta$)] $\forall\ f, g, h \in G:\ c(f,g) \geqslant \min\{c(f,h),c(g,h)\} - \delta$,
\end{enumerate}

where $c(g,h) = \frac{1}{2}\left(l_v(g) + l_v(h) - l_v(g^{-1} h)\right)$.

Usually, a length function satisfying ($\Lambda 4, \delta$), is called {\em $\delta$-hyperbolic}.

Lyndon himself considered a much stronger form of the axiom ($\Lambda 4, \delta$), the one with 
$\delta = 0$. After him length functions $l : G \to \Lambda$ are called {\em Lyndon length functions}. 
In our terminology these are $0$-hyperbolic length functions. Chiswell in \cite{Chiswell:2001} showed 
that groups with Lyndon length functions $L : G \to \R$ (and an extra axiom) are precisely those 
that act freely on $\R$-trees, and later Morgan and Shalen generalized his construction to arbitrary 
$\Lambda$ \cite{Morgan_Shalen:1984}. For more details we refer to the book \cite{Chiswell:2001}.

In Section \ref{subsec:actions-length} we show how an action of a group $G$ by isometries on a 
(hyperbolic) $\Lambda$-metric space induces naturally a (hyperbolic) length function on $G$ with 
values in $\Lambda$. And in Section \ref{subsec:length-actions} we prove the converse, thus 
establishing equivalence of these two approaches.

\subsection{From actions - to length functions}
\label{subsec:actions-length}

Let $X = (X, d)$ be a $\Lambda$-metric space. By $Isom(X)$ we denote the group of bijective isometries 
of $X$. We say that a group $G$ acts on a $X$ if for any $g \in G$ there is an isometry $\phi_g \in 
Isom(X)$ such that for any $x \in X$ and any $g,h \in G$ one has $\phi_{gh}(x) = \phi_g(\phi_h(x))$, 
that is, the map $g \to \phi_g$ is a group homomorphism $G \to Isom(X)$.  In this case, for every 
$x \in X$ and $g \in G$ we denote $\phi_g(x)$ by $g x$.

If $G$ acts on $(X,d)$ then one  can fix a point $v \in X$ and consider a function $l_v : G \to 
\Lambda$ defined as $l_v(g) = d(v, g v)$, called  a {\em length function based at $v$}. The basic 
properties of based length functions come from the metric properties of $(X,d)$.

\begin{theorem}
\label{le:1}
If a group $G$ acts on a $\Lambda$-metric space $(X, d)$ and $v \in X$ then the length function
$l_v$ based at $v$ is a length function on $G$ with values in $\Lambda$. Moreover, if $(X, d)$ is 
$\delta$-hyperbolic with respect to $v$ for some $\delta \in \Lambda$ then $l_v$ is $\delta$-hyperbolic.
\end{theorem}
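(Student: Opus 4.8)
The statement has two parts: first that $l_v$ satisfies the length-function axioms $(\Lambda 1)$--$(\Lambda 3)$, and second that $\delta$-hyperbolicity of $(X,d)$ with respect to $v$ transfers to the $\delta$-hyperbolicity axiom $(\Lambda 4,\delta)$ for $l_v$. The first part is a routine translation of the metric axioms (LM1)--(LM4) into group-theoretic language, so I would dispatch it quickly, and then concentrate on showing that the Gromov product $c(g,h)$ of the length function equals a genuine Gromov product in the space $X$, from which the hyperbolicity axiom follows immediately.

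\textbf{First part.} For $(\Lambda 1)$, note $l_v(g)=d(v,gv)\geqslant 0$ by (LM1), and $l_v(1)=d(v,v)=0$ by (LM2). For $(\Lambda 2)$, the key observation is that $g$ acts by an isometry, so
$$l_v(g^{-1})=d(v,g^{-1}v)=d(gv,g(g^{-1}v))=d(gv,v)=d(v,gv)=l_v(g),$$
using (LM3) at the last step. For $(\Lambda 3)$, again using that $g$ is an isometry together with (LM4) applied to the three points $v,\ gv,\ ghv$:
$$l_v(gh)=d(v,ghv)\leqslant d(v,gv)+d(gv,ghv)=d(v,gv)+d(v,hv)=l_v(g)+l_v(h).$$

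\textbf{Second part (the main point).} The crux is to identify $c(g,h)$ with a Gromov product in $X$. By definition
$$c(g,h)=\tfrac{1}{2}\bigl(l_v(g)+l_v(h)-l_v(g^{-1}h)\bigr)=\tfrac{1}{2}\bigl(d(v,gv)+d(v,hv)-d(v,g^{-1}hv)\bigr).$$
Since $g$ is an isometry, $d(v,g^{-1}hv)=d(gv,hv)$, and likewise $d(v,gv)=d(gv,v)$, so that
$$c(g,h)=\tfrac{1}{2}\bigl(d(gv,v)+d(hv,v)-d(gv,hv)\bigr)=(gv\cdot hv)_v,$$
which is precisely the Gromov product of the points $gv$ and $hv$ based at $v$. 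The plan is then to apply the hyperbolicity hypothesis on $(X,d)$ to the three points $x=fv,\ y=gv,\ z=hv$. The hypothesis that $(X,d)$ is $\delta$-hyperbolic with respect to $v$ gives
$$(fv\cdot gv)_v\geqslant\min\{(fv\cdot hv)_v,(hv\cdot gv)_v\}-\delta,$$
and translating each Gromov product back through the identity $c(\,\cdot\,,\cdot\,)=(\,\cdot\, v\cdot\,\cdot\, v)_v$ yields exactly $c(f,g)\geqslant\min\{c(f,h),c(g,h)\}-\delta$, which is axiom $(\Lambda 4,\delta)$.

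\textbf{Anticipated obstacle.} There is no serious obstacle; the only point requiring care is the bookkeeping with $g^{-1}$ versus an isometry acting on both arguments, i.e.\ verifying $d(v,g^{-1}hv)=d(gv,hv)$ so that the algebraic Gromov product $c(g,h)$ matches the geometric Gromov product $(gv\cdot hv)_v$ at the same basepoint $v$. Once that identification is established cleanly, both the triangle-inequality axioms and the hyperbolicity axiom follow by direct substitution, with no inequalities to estimate beyond those already guaranteed by the metric and by the $\delta$-hyperbolicity hypothesis on $X$.
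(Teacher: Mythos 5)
Your proof is correct and follows essentially the same route as the paper's: verify $(\Lambda 1)$--$(\Lambda 3)$ directly from the metric axioms and the isometric action, then establish the key identity $c(g,h) = (gv \cdot hv)_v$ and transfer $\delta$-hyperbolicity of $(X,d)$ with respect to $v$ into axiom $(\Lambda 4,\delta)$. Your explicit verification of $d(v,g^{-1}hv)=d(gv,hv)$ is exactly the step the paper uses as well, so there is nothing to add.
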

\begin{proof} ($\Lambda 1$) is obvious since $l_v(1) = d(v,v) = 0$. Also, ($\Lambda 2$) follows since 
$d(v, gv) = d(g^{-1} v, v)$.

Next, since $d(v, (gh)v) = d(g^{-1}v, hv)$ then by definition of the metric we have
$$d(g^{-1}v, hv) \leqslant d(g^{-1}v, v) + d(v, hv)$$
and ($\Lambda 3$) follows from the equality $d(g^{-1}v, v) = d(v, gv)$.

Finally, assume that $(X,d)$ is $\delta$-hyperbolic. Observe that
$$c(f, g) = \frac{1}{2}\left(l_v(f) + l_v(g) - l_v(f^{-1} g)\right) = \frac{1}{2}(d(v, fv) + d(v, gv) 
- d(v, (f^{-1}g) v))$$
$$= \frac{1}{2}\left(d(v, fv) + d(v, gv) - d(fv, gv)\right) = (fv \cdot gv)_v.$$
In the same way we have $c(f,h) = (fv \cdot hv)_v,\ c(g,h) = (gv \cdot hv)_v$ and since $(X,d)$ is 
$\delta$-hyperbolic then we have
$$(f v \cdot g v)_v \geqslant \min \{(f v \cdot h v)_v, (g v \cdot h v)_v\} - \delta,$$
which proves ($\Lambda 4$) for $l_v$.
\end{proof}

\subsection{From length functions - to actions}
\label{subsec:length-actions}

Let $l:G \to \Lambda$ be a length function. The set
$$\ker(l) = \{ g \in G\ |\ l(g) = 0 \}$$
is called the {\em kernel} of $l$. It is easy to see that $\ker(l)$ is a subgroup of $G$ (this follows
from ($\Lambda 3$)).

\begin{lemma} 
\label{le:ker-cosets}
Let $l:G \to \Lambda$ be a length function. Then for any $a \in \ker(l), g \in G$
$$l(a g) = l(g a) = l(g),$$
that is, $l$ is a constant function on each coset of $\ker(l)$.
\end{lemma}
\begin{proof}
Let  $a \in \ker(l), g \in G$. Then $l(ag) \leqslant l(a) + l(g) = l(g)$ and $l(g) = l(a^{-1} a g) 
\leqslant l(a^{-1}) + l(ag) = l(ag)$, so $l(ag) = l(g)$.  A similar argument works for $l(ga) = 
l(g)$.
\end{proof}

\begin{theorem}
\label{th:delta_length}
If $l : G \rightarrow \Lambda$ is a length function, then there are a $\Lambda$-metric space $(X, 
d)$, an action of $G$ on $X$, and a point $v \in X$ such that $l = l_v$. Moreover, if $l : G \rightarrow
\Lambda$ is $\delta$-hyperbolic then the space $(X,d)$ is also $\delta$-hyperbolic.
\end{theorem}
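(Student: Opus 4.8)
The plan is to build the space directly out of the group $G$, using the length function to define distances, exactly as in the classical passage from Lyndon length functions to trees. Since the kernel $\ker(l)$ may be nontrivial (and need not be normal), the underlying set cannot be $G$ itself but rather the set of left cosets $X = \{g\ker(l) \mid g \in G\}$, on which I would define $d(g\ker(l), h\ker(l)) = l(g^{-1}h)$ and take the base point $v = \ker(l)$. The idea is that left translation then recovers $G$ as isometries and the Gromov product at $v$ becomes literally the quantity $c$ appearing in the hyperbolicity axiom.

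First I would check that $d$ is well-defined on cosets. This is the one genuinely delicate point, and it is precisely what Lemma \ref{le:ker-cosets} is for: if $a, b \in \ker(l)$ then two applications of that lemma give $l((ga)^{-1}(hb)) = l(a^{-1}g^{-1}hb) = l(g^{-1}hb) = l(g^{-1}h)$, so the value depends only on the cosets $g\ker(l)$ and $h\ker(l)$. The metric axioms then fall out of the length function axioms: (LM1) is ($\Lambda 1$); (LM2) holds because $l(g^{-1}h)=0$ means exactly $h \in g\ker(l)$; (LM3) follows from ($\Lambda 2$) applied to $l(h^{-1}g) = l(g^{-1}h)$; and (LM4) follows from the factorization $g^{-1}h = (g^{-1}k)(k^{-1}h)$ together with ($\Lambda 3$) and ($\Lambda 2$). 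Thus $(X,d)$ is a $\Lambda$-metric space. Next I would let $G$ act by left translation, $g \cdot (h\ker(l)) = (gh)\ker(l)$; this is well-defined and, since $d(g\cdot h\ker(l),\, g\cdot k\ker(l)) = l(h^{-1}g^{-1}gk) = l(h^{-1}k) = d(h\ker(l), k\ker(l))$, it is an action by isometries. For the base point $v = \ker(l)$ one gets $l_v(g) = d(v, gv) = d(\ker(l), g\ker(l)) = l(g)$, so $l = l_v$, which settles the first assertion.

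For the hyperbolic statement, I would compute the Gromov product at $v$. Using $d(g\ker(l), v) = l(g^{-1}) = l(g)$ by ($\Lambda 2$), a direct substitution gives $(g\ker(l) \cdot h\ker(l))_v = \tfrac{1}{2}\bigl(l(g) + l(h) - l(g^{-1}h)\bigr) = c(g,h)$. Hence the $\delta$-hyperbolicity inequality for $X$ at the single base point $v$ is literally axiom ($\Lambda 4, \delta$), so $\delta$-hyperbolicity of $l$ makes $(X,d)$ $\delta$-hyperbolic with respect to $v$.

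The remaining subtlety, which I expect to be the real obstacle rather than the routine verifications, is that the paper's convention requires $\delta$-hyperbolicity with respect to \emph{every} point, not just $v$; invoking Lemma \ref{le:1.2.5} naively would only yield $2\delta$. This is resolved by transitivity: since $G$ acts transitively on the cosets, every point of $X$ has the form $gv$, and because each $g$ acts by an isometry and isometries preserve Gromov products, i.e.\ $(gx \cdot gy)_{gv} = (x \cdot y)_v$, the inequality at $v$ transfers verbatim to the inequality at $gv$. Therefore $(X,d)$ is $\delta$-hyperbolic in the required sense, completing the proof and establishing the converse of Theorem \ref{le:1}.
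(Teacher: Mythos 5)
Your construction is exactly the paper's proof: the coset space $X = G/\ker(l)$ with metric $d(g\ker(l), h\ker(l)) = l(g^{-1}h)$, well-definedness via Lemma \ref{le:ker-cosets}, left translation as the isometric action, and the identification $(g\ker(l) \cdot h\ker(l))_v = c(g,h)$ turning axiom $(\Lambda 4, \delta)$ directly into $\delta$-hyperbolicity at the base point. Your closing observation --- that transitivity of the action transfers $\delta$-hyperbolicity from $v$ to every point with the same constant $\delta$, rather than the $2\delta$ a naive appeal to Lemma \ref{le:1.2.5} would give --- is correct and in fact slightly more careful than the paper, whose proof only verifies hyperbolicity with respect to the single point $A$.
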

\begin{proof} Denote $A = \ker(l)$ and consider the set $X = G / A$ of all left cosets of $A$ in $G$. 
Define a function $d_A : G / A \times G / A \rightarrow \Lambda$ so that $d_A(g A, h A) = l(g^{-1} 
h)$. Observe that by Lemma \ref{le:ker-cosets}, $d_A$ is well-defined. Indeed, if $g', h' \in G$ 
such that $g' A = g A,\ h' A = h A$ then $g' = g a_1$ and $h' = h a_2$ and $d_A(g' A, h' A) = 
l(a_1^{-1} g^{-1} h a_2) = l(g^{-1} h) = d_A(g A, h A)$.

We claim that $(X, d_A)$ is a $\Lambda$-metric space. Indeed, axioms $(LM1)$ and $(LM3)$ of 
$\Lambda$-metric space are evident. Next, $d_A(g A, h A) = l(g^{-1} h) = 0$ if and only if $g^{-1} h 
\in A$ if and only if $g A = h A$, so $(LM2)$ follows. Finally, the triangle inequality $(LM4)$ 
follows from the corresponding property of the length function $l$.

Now we show that $(X, d_A)$ is $\delta$-hyperbolic with respect to the point $A$, provided $l$ is 
$\delta$-hyperbolic. Notice that for $f A, g A \in G / A$ we have
$$(f A \cdot g A)_A = \frac{1}{2}\left(d_A(f A, A) + d_A(g A, A) - d_A(f A, g A)\right)$$
$$ = \frac{1}{2}\left(l(f) + l(g) - l(f^{-1}g)\right) = c(f, g).$$
Hyperbolicity is then a consequence of $G$ having a $\delta$-hyperbolic length function.

Now, $G$ acts on $G / A$ in a natural way, that is, if $g \in G,\ h A \in G / A$ then $g \cdot (h A)
= (gh) A$. This action is isometric since $d_A(f A, h A) = l(f^{-1} h ) = l(f^{-1}g^{-1} g h) =
d_A(g \cdot (f A), g \cdot (h A))$. Finally, $l(g) = d_A(A, gA) = l_A(g)$.
\end{proof}

Given a group $G$ and a $\delta$-hyperbolic length function $l: G \rightarrow \Lambda$, denote by
$(X_l, d_l)$ the $\delta$-hyperbolic $\Lambda$-metric space constructed in Theorem
\ref{th:delta_length}. Note that the stabilizer of the point $v$ is exactly the kernel of $l = l_v$.

In general, given an action of $G$ on an arbitrary $\delta$-hyperbolic $\Lambda$-metric space $(X, 
d)$ and a point $x \in X$, the stabilizer $G_x$ of $x$ is exactly the kernel of the $\delta$-hyperbolic 
length function $l_x$ based at $x$.

We are going to use the notion of the kernel later in Section \ref{sec:kernel}.

\subsection{Examples of group actions on hyperbolic $\Lambda$-metric \\ spaces}
\label{subsec:group_action_examples}

Here are some examples of groups acting on hyperbolic $\Lambda$-metric spaces for various $\Lambda$.

\begin{example}
\label{ex:hyperbolic_3}
Given a torsion-free word-hyperbolic group $G$ and its generating set $S$, the Cayley graph $X =
Cay(G, S)$ with the word metric (with respect to $S$) is a $\delta$-hyperbolic $\Z$-metric space for 
some $\delta \in \Z$. $G$ acts on $X$ by isometries, in particular, no element of $G$ fixed a point 
in $X$.
\end{example}

\begin{example}
\label{ex:hyperbolic_4}
Since any $\Lambda$-tree is a $\delta$-hyperbolic $\Lambda$-metric space with $\delta = 0$, the class
of groups acting on hyperbolic $\Lambda$-metric spaces contains all groups acting on $\Lambda$-trees
(in particular, all $\Lambda$-free groups).
\end{example}

\begin{example}
\label{ex:hyperbolic_5}
Any subgroup of a group acting on a $\Lambda$-metric space $(X,d)$ also acts on this space. So, the
class of groups acting on hyperbolic $\Lambda$-metric spaces is closed under taking subgroups.
\end{example}

\begin{example}
\label{ex:hyperbolic_6}
Given two groups $G_1$ and $G_2$ acting respectively on $\Lambda$-metric spaces $(X_1,d_1)$ and
$(X_2,d_2)$, there exists a $\Lambda \oplus \Z$-metric space $(X,d)$, where $\Lambda \oplus
\Z$ is taken with the right lexicographic order, such that $G_1 \ast G_2$ acts on $X$. To see
this, take some $x_1 \in X_1$ and $x_2 \in X_2$ and define $l(g) = d_n(x_n, g x_n)$ for $g \in G_n$.

Any element $g$ of $G_1 \ast G_2$ has a unique normal form $g = g_1 g_2 \cdots g_n$ with $g_i \in 
G_1 \cup G_2$ for any $i$ and if $g_i \in G_1$, then $g_{i+1} \in G_2$ and vice versa. Define then 
$l(g) = (l(g_1) + \cdots + l(g_n), 1)$ and $X = (G_1 \ast G_2, d_l)$. It is easy to show that this 
is a metric space.

Finally, it is not hard to see that, if both $X_1$ and $X_2$ are hyperbolic, then so is $X$.
\end{example}

\begin{example}
\label{ex:hyperbolic_7}
Let $H$ be a torsion-free hyperbolic group, $u$ a cyclically reduced word in generators of $H$, and 
$$G = \langle H, t \mid t^{-1} u t = u \rangle$$ 
We would like to construct a $\Z^2$-metric space $X$ on which $G$ acts as follows. 

Define $T$ as the subset of the boundary of $G$ made up of limits of sequences of the form 
$$\{w, w u, wu^2, w u^2, \ldots\}\ \ \ {\rm and}\ \ \ \{w, w u^{-1}, w u^{-2}, w u^{-3}, \ldots\},$$ 
where $w$ is an arbitrary element of $G$. Take a rooted tree $Y$ whose vertices have valence $|T|$ 
and label them by elements of $G / H$ in such a way that the adjacent vertices are the pairs of the 
form $(g H, (h t^{\pm 1} g) H)$, where $h \in H,\ g \in G$.

Denote the metric on $Y$ by $d_Y$, the word metric on $H$ (with respect to some fixed generating 
set) b $d_H$, and the hyperbolicity constant of $(H, d_H)$ by $\delta$. Note that any element of $G$ 
either is of the form $h$, if it is in $H$, or is a product of elements of the type $g t^{\pm1} h$ with 
$g \in G$ and $h \in H$ otherwise.

Now, let $X = Y \times H$ and for $(g H, h_1), (g H, h_2) \in Y \times H$ we define the distance
$d((g_1 H, h_1), (g_2 H, h_2))$ as follows. If $g_1 H = g_2 H$ then we set
$$d((g_1 H, h_1), (g_2 H, h_2)) = (d_H(h_1, h_2), 0)$$ 
If $g_1 H \neq g_2 H$ then let $e_1 e_2 \ldots e_N$, where $N = d_Y(g_1 H, g_2 H)$, be the path 
from $g_1 H$ to $g_2 H$ in $Y$. Observe that every edge $e_i$ is associated with a pair of ends
$(\alpha(e_i), \omega(e_i))$ in the corresponding copies of $(H, d_H)$. Hence, define 
$$d((g_1 H, h_1), (g_2 H, h_2)) = (d_H(1, g_1) - 2 ((g H, h_1) \cdot \alpha(e_1)) + d_H(1, g_2)$$
$$- 2 ((g H, h_1) \cdot \omega(e_n)) - 2 \sum_{i=1}^N (\omega(e_i) \cdot \alpha(e_{i+1})), d_Y(g_1 H, 
g_2 H)),$$ 
where ``$\ \cdot$'' represents the Gromov product of two ends, or an end and a point in a hyperbolic
space (computed in the appropriate copy of $(H, d_H)$).

In the light of the alternative given in the definition of a hyperbolic length function, it suffices to 
prove that $X$ is $(\delta, 0)$-hyperbolic and the action 
$$(g t^{\pm1} h) \cdot (g' H, h') = ((g t^{\pm 1} h g') H, h')$$ 
if $g' \notin H$, or 
$$(g t^{\pm 1} h) \cdot (g' H, h') = ((g t^{\pm 1}) H, h h')$$ 
if $g' \in H$, is an action by isometries to have that 
$$l(g t^{\pm 1} h) = d((H, 1), ((g t^{\pm 1}) H, h))$$ 
is a $(\delta, 0)$-hyperbolic length function on $G$. 
\end{example}

\section{Kernels and hyperbolicity constants}
\label{subs:delta}

Let $\Lambda$ be an ordered abelian group and $G$ a group with a length function $l : G \to \Lambda$
(that is, $l$ satisfies the axioms $(\Lambda 1) - (\Lambda 3)$). Fix a convex subgroup $\Lambda_0 
\leqslant \Lambda$. The order on $\Lambda$ induces an order on the quotient abelian group $\bar{\Lambda} 
= \Lambda / \Lambda_0$, so $\bar{\Lambda}$ becomes an ordered abelian group with an order preserving 
the quotient epimorphim $\eta : \Lambda \to \bar{\Lambda}$.

The $\Lambda_0$-kernel of $G$ (with respect to $l : G \to \Lambda$) is the set
$$G_{\Lambda_0} = \{g \in G \mid l(g) \in \Lambda_0\}$$
By the axiom ($\Lambda 3$), the kernel $G_{\Lambda_0}$ is a subgroup of $G$ (though not necessary 
normal).

\begin{lemma}
Let $l : G \to \Lambda$ be a length function, $\Lambda_0$ be a convex subgroup of $\Lambda$, and 
$0 < \delta \not\in \Lambda_0$. Then the restriction $l_0$ of the length function $l$ to $G_{\Lambda_0}$ 
is a $\delta$-hyperbolic length function $l_0 : G_{\Lambda_0} \to \Lambda$.
\end{lemma}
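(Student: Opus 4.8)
The plan is to check the length-function axioms for the restriction $l_0 = l|_{G_{\Lambda_0}}$. Axioms $(\Lambda 1)$, $(\Lambda 2)$ and $(\Lambda 3)$ are universally quantified (in)equalities in the values of $l$, so they pass to the subgroup $G_{\Lambda_0}$ verbatim; hence the whole content of the lemma is the hyperbolicity axiom $(\Lambda 4, \delta)$. The guiding idea is that, once all arguments are taken inside $G_{\Lambda_0}$, every Gromov product $c(\cdot,\cdot)$ is bounded above by an element of $\Lambda_0$, whereas $\delta$ (being positive and outside the convex subgroup $\Lambda_0$) exceeds every element of $\Lambda_0$; thus the term $-\delta$ already makes the right-hand side of $(\Lambda 4, \delta)$ negative, and the inequality holds for free.

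First I would use that $G_{\Lambda_0}$ is a subgroup (this is exactly what $(\Lambda 3)$ provides, as recorded before the statement). Then for any $f, g, h \in G_{\Lambda_0}$ the elements $f^{-1}g$, $f^{-1}h$, $g^{-1}h$ again lie in $G_{\Lambda_0}$, so $l$ takes values in $\Lambda_0$ on all of them. By Lemma \ref{le:1.2.4}(1), applied to the metric space associated to $l$ in Theorem \ref{th:delta_length} (equivalently, by a one-line use of the triangle inequality for $l$), one has $0 \leqslant c(f,g) \leqslant \min\{l(f), l(g)\}$, together with the analogous bounds for the remaining pairs. In particular $\min\{l(f), l(h)\} \in \Lambda_0$, and each Gromov product is non-negative.

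The key step is the comparison of $\delta$ with $\Lambda_0$: since $0 < \delta \notin \Lambda_0$ and $\Lambda_0$ is convex, every $\lambda \in \Lambda_0$ with $\lambda \geqslant 0$ satisfies $\lambda < \delta$ (otherwise $0 < \delta \leqslant \lambda$ would force $\delta \in [0,\lambda] \subseteq \Lambda_0$). Combining this with the bounds above, put $m = \min\{c(f,h), c(g,h)\}$, so that $0 \leqslant m \leqslant \min\{l(f), l(h)\} \in \Lambda_0$; if we had $m \geqslant \delta$, then $0 < \delta \leqslant \min\{l(f), l(h)\}$ would give $\delta \in \Lambda_0$, a contradiction, whence $m < \delta$. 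Therefore $\min\{c(f,h), c(g,h)\} - \delta = m - \delta < 0 \leqslant c(f,g)$, which is precisely $(\Lambda 4, \delta)$ for $l_0$. The only point demanding genuine care is this last comparison: because the Gromov product carries a factor $\tfrac12$, the quantity $m$ lives in $\Lambda_\Q$, so one must make sure the decisive inequality is read off between the honest $\Lambda$-elements $\delta$ and $\min\{l(f), l(h)\}$, where convexity of $\Lambda_0$ inside $\Lambda$ applies directly.
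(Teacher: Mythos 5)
Your proof is correct and follows essentially the same route as the paper: both arguments note that the Gromov products of elements of $G_{\Lambda_0}$ are confined to $\Lambda_0$, that convexity forces $\delta$ to exceed every element of $\Lambda_0$, and hence that the right-hand side $\min\{c(f,h),c(g,h)\} - \delta$ is negative while $c(f,g) \geqslant 0$. The only cosmetic difference is how you dodge $\Lambda_\Q$: the paper doubles the inequality to $2c(f,g) \geqslant \min\{2c(f,h), 2c(g,h)\} - 2\delta$ so all quantities lie in $\Lambda$, whereas you bound $\min\{c(f,h),c(g,h)\}$ by the honest $\Lambda_0$-element $\min\{l(f),l(h)\}$ --- both are sound.
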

\begin{proof} Since $\Lambda_0$ is convex, it follows that $\delta > \lambda$ for any $\lambda \in 
\Lambda_0$. To prove the result one needs only to show that for any $f, g, h \in G_{\Lambda_0)}$, 
the following inequality holds
$$c(f,g) \geqslant \min\{c(f,h), c(g,h)\} - \delta,$$
which is, of course, equivalent to (avoiding working in $\mathbb{Q} \otimes \Lambda$):
$$2c(f,g) \geqslant \min\{2c(f,h), 2c(g,h)\} - 2\delta.$$
Since $\Lambda_0$ is convex, it follows that $2c(f,h), 2c(g,h) \in \Lambda_0$, so 
$$\min\{2c(f,h), 2c(g,h)\} - 2\delta < 0$$ 
But $2c(f,g) \geqslant 0$, which finishes the proof.
\end{proof}

The whole idea behind $\delta$-hyperbolic length functions is a generalization of hyperbolic
properties from usual metric spaces to $\Lambda$-metric spaces for an arbitrary $\Lambda$.
But when $\Lambda$ is not archimedean the choice of $\delta$ becomes very important as
the following construction shows.

\smallskip

Let $G$ be a word-hyperbolic group, that is, the geodesic word length function $l = | \cdot |_S :
G \to \Z$ with respect to some finite generating set $S$ is $\delta$-hyperbolic for some
$\delta \in \Z$.

Consider the group $H = G \times G$ and a map $l_H : H \to \Z^2$ defined by $l_H(f,g) =
(l(f), l(g))$, where $\Z^2$ is considered with the right lexicographic order.

\begin{lemma}
\label{le:bad_delta}
The function $l_H : H \to \Z^2$ is a $\delta_H$-hyperbolic length function on $H$, where
$\delta_H = (\delta_1, \delta_1) \in \Z^2$ and $\delta_1  \in \Z$ is such that
$\delta_1 > \delta$.
\end{lemma}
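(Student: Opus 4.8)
The plan is to first check that $l_H$ satisfies the length-function axioms $(\Lambda 1)$--$(\Lambda 3)$ and then verify the hyperbolicity axiom $(\Lambda 4, \delta_H)$ by reducing everything to the coordinatewise behaviour of the Gromov product associated to $l$ on $G$, which I denote $c$. Writing a typical element of $H$ as a pair $f = (f_1, f_2)$ with $f_i \in G$, axioms $(\Lambda 1)$ and $(\Lambda 2)$ are immediate, since $l_H(f) = (l(f_1), l(f_2)) \geqslant 0$, $l_H(1) = (0,0)$, and $l(f_i) = l(f_i^{-1})$. For the triangle inequality $(\Lambda 3)$ I would use that $l(f_1 g_1) \leqslant l(f_1) + l(g_1)$ and $l(f_2 g_2) \leqslant l(f_2) + l(g_2)$ hold in $G$; comparing in the right lexicographic order, the second coordinate already gives $l(f_2 g_2) \leqslant l(f_2) + l(g_2)$, and in the case of equality the first-coordinate inequality takes over, yielding $l_H(fg) \leqslant l_H(f) + l_H(g)$.

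The central computation is to show that, working in $(\Z^2)_\Q = \Q^2$,
$$c_H(f, g) = \tfrac{1}{2}\bigl(l_H(f) + l_H(g) - l_H(f^{-1} g)\bigr) = \bigl(c(f_1, g_1),\ c(f_2, g_2)\bigr),$$
so that the Gromov product on $H$ is simply the pair of Gromov products of the two coordinates. This follows directly from $f^{-1} g = (f_1^{-1} g_1,\, f_2^{-1} g_2)$ and the coordinatewise definition of $l_H$.

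The key step, and the place where the hypothesis $\delta_1 > \delta$ is essential, is the observation that in the right lexicographic order the second coordinate dominates: for any two pairs, the second coordinate of their minimum equals the minimum of their second coordinates. Hence the second coordinate of $\min\{c_H(f,h), c_H(g,h)\}$ is exactly $\min\{c(f_2,h_2), c(g_2,h_2)\}$. Applying the $\delta$-hyperbolicity of $l$ in the second coordinate gives
$$c(f_2, g_2) \geqslant \min\{c(f_2, h_2),\ c(g_2, h_2)\} - \delta,$$
and since $\delta_1 > \delta$ this upgrades to the strict inequality
$$c(f_2, g_2) > \min\{c(f_2, h_2),\ c(g_2, h_2)\} - \delta_1,$$
i.e.\ the second coordinate of $c_H(f,g)$ strictly exceeds that of $\min\{c_H(f,h), c_H(g,h)\} - \delta_H$. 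Because a strict inequality in the dominant (second) coordinate already decides the lexicographic comparison, this yields $c_H(f,g) > \min\{c_H(f,h), c_H(g,h)\} - \delta_H$ with no reference to the first coordinates, proving $(\Lambda 4, \delta_H)$ uniformly for all $f,g,h \in H$.

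I expect the main obstacle to be conceptual rather than computational: one must resist trying to argue coordinatewise. A naive coordinatewise attempt fails, because the first coordinate of $\min\{c_H(f,h), c_H(g,h)\}$ need not equal $\min\{c(f_1,h_1), c(g_1,h_1)\}$ — the minimum in $\Z^2$ is selected by the second coordinate, which may force the ``wrong'' first coordinate — so the first-coordinate hyperbolicity of $l$ does not transfer. The strictness $\delta_1 > \delta$ is precisely what circumvents this by making the second-coordinate inequality strict, which is exactly why $\delta_1 = \delta$ would not suffice.
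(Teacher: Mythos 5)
Your proof is correct and takes essentially the same route as the paper: both compute $c_H(f,g) = (c(f_1,g_1), c(f_2,g_2))$ and then settle the right-lexicographic comparison via the strict second-coordinate inequality $c(f_2,g_2) \geqslant \min\{c(f_2,h_2), c(g_2,h_2)\} - \delta > \min\{c(f_2,h_2), c(g_2,h_2)\} - \delta_1$, which decides the order regardless of the first coordinates. Your observation that the second coordinate of a lexicographic minimum equals the minimum of the second coordinates lets you skip the paper's explicit case split (its $\kappa$ defined according to whether $c(g_1,g_3) < c(g_2,g_3)$ or $c(g_1,g_3) = c(g_2,g_3)$) and its unused first-coordinate estimate, but the decisive step is identical, and your explanation of why $\delta_1 = \delta$ fails matches Remark \ref{rem:bad_delta}.
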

\begin{proof} It is easy to see that the axioms ($\Lambda 1$) and ($\Lambda 2$) hold immediately.

The triangle inequality ($\Lambda 3$) is also straightforward. Indeed, let $\textbf{h}_1, \textbf{h}_2
\in H$ be such that $\textbf{h}_1 = (f_1, g_1), \textbf{h}_2 = (f_2, g_2)$. Then
$$l_H(\textbf{h}_1 \textbf{h}_2) = l_H(f_1 f_2, g_1 g_2) = (l(f_1 f_2), l(g_1 g_2)) \leqslant
(l(f_1) + l(f_2), l(g_1) + l(g_2))$$
$$ = l_H(\textbf{h}_1) + l_H(\textbf{h}_2).$$

To prove ($\Lambda 4$) let $\textbf{h}_1, \textbf{h}_2, \textbf{h}_3 \in H$ be such that $\textbf{h}_1
= (f_1, g_1), \textbf{h}_2 = (f_2, g_2)$ and $\textbf{h}_3 = (f_3, g_3)$. We have to show that
$$c(\textbf{h}_1, \textbf{h}_2) \geqslant \min\{c(\textbf{h}_1, \textbf{h}_3), c(\textbf{h}_2,
\textbf{h}_3)\} - \delta_H.$$
We have
$$c(\textbf{h}_i, \textbf{h}_j) = \frac{1}{2}(l(\textbf{h}_i) + l(\textbf{h}_j) -
l(\textbf{h}_i^{-1} \textbf{h}_j)) = (c(f_i, f_j), c(g_i, g_j)).$$
Hence, we have to show that
$$(c(f_1, f_2), c(g_1, g_2)) \geqslant \min\{(c(f_1, f_3), c(g_1,g_3)) , (c(f_2, f_3), c(g_2, g_3))\}
- (\delta_1, \delta_1),$$
where $\delta_1 \in \Z$ is such that $\delta_1 > \delta$. Since $G$ is $\delta$-hyperbolic
we have
$$c(f_1, f_2) \geqslant \min\{c(f_1, f_3), c(f_2, f_3)\} - \delta > \min\{c(f_1, f_3), c(f_2, f_3)\} -
\delta _1.$$
Without loss of generality suppose that $c(g_1, g_3) \leqslant c(g_2, g_3)$, so that
$$c(g_1, g_2) > c(g_1, g_3) - \delta_1.$$
Thus, we need to prove that
$$(c(f_1, f_2), c(g_1, g_2)) \geqslant (\kappa, c(g_1, g_3)) - (\delta_1, \delta_1),$$
where $\kappa \in \Z^2$ is defined as follows
\[
\kappa = \left\{\begin{array}{ll}
\mbox{$c(f_1, f_3)$}  & \mbox{$c(g_1, g_3) < c(g_2, g_3)$} \\
\mbox{$\min\{c(f_1, f_3), c(f_2, f_3)\}$} & \mbox{$c(g_1,g_3) = c(g_2, g_3)$}
\end{array}
\right.
\]
The above inequality holds since $c(g_1, g_2) > c(g_1, g_3) - \delta_1$.

\end{proof}

\begin{remark}
\label{rem:bad_delta}
The result of Lemma \ref{le:bad_delta} does not hold for $\delta_1 = \delta$. Indeed, in the case 
when $c(g_1, g_3) < c(g_2, g_3)$ and $c(g_1, g_2) = c(g_1, g_3) - \delta$ we must have $c(f_1, f_2)
\geqslant c(f_1, f_3) - \delta$ which may not be true.
\end{remark}

The construction above shows that if $G$ has a $\delta$-hyperbolic length function $l$ in a
non-archimedean $\Lambda$ and if there are many elements of $G$ such that $l(g) < \delta$ then
the properties of $G$ may be quite far from properties of word-hyperbolic groups.

\section{Various types of actions and functions}
\label{sec:hyp_length_func}

In this section we study properties of $\delta$-hyperbolic length functions. In particular, we
consider new axioms in addition to $(\Lambda 1)-(\Lambda 4, \delta)$ which shed some light on the 
structure of the underlying group. We also try to characterize the corresponding group actions (which 
exist in view of Theorem \ref{th:delta_length}).

Below we  use the following notation. If $l : G \to \Lambda$ is a $\delta$-hyperbolic length function 
and $\alpha \in \Lambda$ then for $g,h \in G$ we write
$$g h = g \circ_\alpha h$$
if $c(g^{-1}, h) \leqslant \alpha$. Respectively, $g h = g \circ h$ means that $c(g^{-1}, h) = 0$, 
that is, $l(gh) = l(g) + l(h)$.

\subsection{Regularity}
\label{subs:reg_length_func}

Let $l : G \to \Lambda$ be a $\delta$-hyperbolic length function. We introduce several conditions
on $l$, all of which lead to the same property called {\em regularity}. Both conditions depend on 
a parameter $k \in \N$. Below is the first condition on $l$, which we denote ($R1, k$).

\smallskip

{\noindent ($R1, k$):} $\exists\ k \in \N\ \ \forall\ g,h \in G\ \ \exists\ u \in G$
$$g = u \circ_{k \delta} g_1\ \ \&\ \ h = u \circ_{k \delta} h_1\ \ \&\ \
g^{-1} h = g_1^{-1} \circ_{k \delta} h_1$$

Observe that if $\delta = 0$ then $l$ is a Lyndon length function on $G$ and it satisfies ($R1, k$) 
for some $k$ (and hence for any) if and only if $l$ is regular (see \cite{KMRS:2012, KMS_Survey:2013} for
all definitions and properties of regular Lyndon length functions). For an arbitrary $\delta$, clearly 
($R1, k$) implies ($R1, m$) for any $m > k$.

\smallskip

Here is another condition on $l$, we call it ($R2, k$).

\smallskip

{\noindent ($R2, k$):} $\exists\ k \in \N\ \ \forall\ g,h \in G\ \ \exists\ u \in G$
$$l(u) \leqslant c(g,h) + k \delta\ \ \&\ \ l(u^{-1} g) \leqslant c(g^{-1}, g^{-1} h) + k \delta$$
$$\&\ \ l(u^{-1} h) \leqslant c(h^{-1}, h^{-1} g) + k \delta$$

Again, as in the case of ($R1, k$), the condition ($R2, k$) obviously implies ($R2, m$) for any $m 
> k$.

The following result makes a connection between these conditions.

\begin{lemma}
\label{le:reg_len_equiv}
Let $G$ be a group and $l : G \to \Lambda$ be a $\delta$-hyperbolic length function. Then the
following implications hold for $l$:
$$(R1, k) \Longrightarrow (R2, k+1),\ \ \ (R2, k) \Longrightarrow (R1, k)$$
\end{lemma}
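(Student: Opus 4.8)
The plan is to fix $g,h \in G$ together with a candidate $u \in G$ and to translate both conditions into one system of three scalar inequalities in $\Lambda$. Writing $a = l(g)$, $b = l(h)$, $e = l(g^{-1}h)$, $p = l(u)$, $q = l(u^{-1}g)$, $r = l(u^{-1}h)$, and using $(\Lambda 2)$ freely, a direct expansion of the Gromov products shows that $g = u \circ_{k\delta} g_1$, $h = u \circ_{k\delta} h_1$, $g^{-1}h = g_1^{-1} \circ_{k\delta} h_1$ (with $g_1 = u^{-1}g$, $h_1 = u^{-1}h$) are equivalent, respectively, to
$$p + q \leq a + 2k\delta, \qquad p + r \leq b + 2k\delta, \qquad q + r \leq e + 2k\delta.$$
For instance $c(u^{-1}, u^{-1}g) = \tfrac12(p + q - a)$, so the first $\circ_{k\delta}$-relation reads exactly $p + q - a \leq 2k\delta$. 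In the same variables the three clauses of $(R2,k)$ become
$$2p \leq a + b - e + 2k\delta, \qquad 2q \leq a + e - b + 2k\delta, \qquad 2r \leq b + e - a + 2k\delta,$$
since $c(g,h) = \tfrac12(a+b-e)$, $c(g^{-1},g^{-1}h) = \tfrac12(a+e-b)$ and $c(h^{-1},h^{-1}g) = \tfrac12(b+e-a)$.

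For the implication $(R2,k) \Longrightarrow (R1,k)$ I would keep the same witness $u$ and simply add the $(R2)$ inequalities in pairs. Adding the first and second gives $2(p+q) \leq 2a + 4k\delta$, i.e. $p + q \leq a + 2k\delta$; the other two $(R1)$ inequalities follow symmetrically. This direction is purely formal and uses no hyperbolicity at all.

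The implication $(R1,k) \Longrightarrow (R2,k+1)$ is the substantive one, and here I would again retain the same $u$ and invoke the axiom $(\Lambda 4,\delta)$. To bound $p = l(u)$ apply $(\Lambda 4,\delta)$ to the triple $g,h,u$, giving $c(g,h) \geq \min\{c(g,u), c(h,u)\} - \delta$. Now $c(g,u) = \tfrac12(a + p - q)$, and the first $(R1)$ inequality rearranges to $a - q \geq p - 2k\delta$, whence $c(g,u) \geq p - k\delta$; symmetrically $c(h,u) \geq p - k\delta$ from the second $(R1)$ inequality. Thus $c(g,h) \geq p - (k+1)\delta$, which is the first clause of $(R2,k+1)$. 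The remaining two clauses are obtained the same way after left-translating the triple: applying $(\Lambda 4,\delta)$ to $g^{-1}, g^{-1}h, g^{-1}u$ yields $c(g^{-1},g^{-1}h) \geq \min\{c(g^{-1},g^{-1}u), c(g^{-1}h, g^{-1}u)\} - \delta$, where $c(g^{-1},g^{-1}u) = \tfrac12(a+q-p)$ and $c(g^{-1}h, g^{-1}u) = \tfrac12(e+q-r)$; the $(R1)$ bounds $p+q \leq a+2k\delta$ and $q+r \leq e+2k\delta$ force both to be $\geq q - k\delta$, so $c(g^{-1},g^{-1}h) \geq q - (k+1)\delta$. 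The third clause follows identically from the triple $h^{-1}, h^{-1}g, h^{-1}u$ using the second and third $(R1)$ inequalities.

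The main point to watch — and what I expect to be the crux — is that $(\Lambda 4,\delta)$ governs only the identity-based products $c(\cdot,\cdot)$, so a naive appeal to hyperbolicity ``at the vertex $g$'' would route through Lemma \ref{le:1.2.5} and cost a factor of $2$ in $\delta$, ruining the $k \mapsto k+1$ bound. The device that avoids this is precisely to apply $(\Lambda 4,\delta)$ not at shifted base points but to the left-translated triples $g^{-1}, g^{-1}h, g^{-1}u$ and $h^{-1}, h^{-1}g, h^{-1}u$: since the axiom quantifies over all elements of $G$, this realizes the vertex products with no loss, and exactly one $\delta$ is spent per single use of $(\Lambda 4,\delta)$, accounting for the single extra $\delta$. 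The only routine care is the bookkeeping $l(u^{-1}g) = l(g^{-1}u)$ and $l((g^{-1}h)^{-1}g^{-1}u) = l(h^{-1}u) = r$ via $(\Lambda 2)$ when identifying the Gromov products above.
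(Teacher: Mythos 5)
Your proof is correct and takes essentially the same route as the paper: the direction $(R2,k) \Longrightarrow (R1,k)$ is the same pairwise summation with the same witness $u$ (using $c(g,h) + c(g^{-1},g^{-1}h) = l(g)$), and $(R1,k) \Longrightarrow (R2,k+1)$ spends exactly one application of $(\Lambda 4,\delta)$ per clause on the triples $\{g,h,u\}$, $\{g^{-1},g^{-1}h,g^{-1}u\}$ and $\{h^{-1},h^{-1}g,h^{-1}u\}$, which are precisely the paper's triples since $g_1^{-1} = g^{-1}u$ and $h_1^{-1} = h^{-1}u$. Your scalar bookkeeping (including $c(g^{-1}h, g^{-1}u) = \tfrac12(e+q-r)$, which silently fixes a typo in the paper's second displayed inequality) checks out, as does your closing observation about why translated triples avoid the factor-of-$2$ loss from changing base points.
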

\begin{proof} $(R1, k) \Longrightarrow (R2, k+1):$

\smallskip

We have $g = u \circ_{k \delta} g_1$ which is equivalent to $c(u^{-1}, g_1) \leqslant k \delta$, so
$l(u) + l(g_1) - l(g) \leqslant 2k \delta$. It follows that $2l(u) - 2c(u,g) \leqslant 2k \delta$, or
$l(u) \leqslant c(u,g) + k \delta$. Using the same argument we get $l(u) \leqslant c(u,h) + k \delta$.
Hence,
$$c(g,h) \geqslant \min\{c(u,g), c(u,h)\} - \delta \geqslant l(u) - (k+1)\delta$$
and we have $l(u) \leqslant c(g,h) + (k+1) \delta$.

Similarly, from $g^{-1} = g_1^{-1} \circ_{k \delta} u^{-1}$ we get
$$l(g_1) = l(g_1^{-1}) \leqslant c(g^{-1}, g_1^{-1}) + k \delta,$$
from $g^{-1} h = g_1^{-1} \circ_{k \delta} h_1$ we get
$$l(g_1) = l(g_1^{-1}) \leqslant c(g^{-1}, g^{-1} h) + k \delta,$$
which imply
$$l(u^{-1} g) = l(g_1) \leqslant c(g^{-1}, g^{-1} h) + (k+1) \delta.$$
The inequality  $l(u^{-1} h) = l(h_1) \leqslant c(h^{-1}, h^{-1} g) + (k+1) \delta$ is derived in 
the same way.

\smallskip

$(R2, k) \Longrightarrow (R1, k):$

\smallskip

We have that $l(u) \leqslant c(g,h) + k \delta$ and $l(u^{-1} g) \leqslant c(g^{-1}, g^{-1} h) + 
k \delta$ which imply that $l(u) + l(u^{-1} g) \leqslant l(g) + 2k \delta$. Therefore,
$$c(u^{-1}, u^{-1} g) = \frac{1}{2}\left(l(u) + l(u^{-1} g) - l(g)\right) \leqslant k \delta,$$
that is, $g = u (u^{-1} g) =  u \circ_{k \delta} (u^{-1} g)$. Similar argument proves the inequalities
$h = u \circ_{k \delta} (u^{-1} h)$ and $g^{-1} h = (u^{-1} g)^{-1} \circ_{k \delta} (u^{-1} h)$.
\end{proof}

\begin{defn}
\label{de:regular_len}
A $\delta$-hyperbolic length function $l : G \to \Lambda$ is called {\em regular} if it satisfies 
either $(R1, k)$, or $(R2, k)$ for some $k \in \N$.
\end{defn}

Observe that Definition \ref{de:regular_len} agrees with the notion of regularity for Lyndon length
functions in the case when $\delta = 0$.

\smallskip

Now, we say that the action of $G$ on a $\delta$-hyperbolic space $(X,d)$ is {\em regular} if for 
some $x \in X$ (hence, for any) the length function $l$ based at $x$ is regular. Since in this case
for every $g, h \in G$ we have $c(g,h) = (g x \cdot h x)_x$ then regularity of the action can be 
explicitly characterized by the following condition (which is just a reformulation of $(R2, k)$ in
terms of actions):

\smallskip

{\noindent $\exists\ k \in \N\ \ \forall\ g,h \in G\ \ \exists\ u \in G$}
$$d(x, u x) \leqslant (g x \cdot h x)_x + k \delta\ \ \&\ \ d(x, (u^{-1} g) x) \leqslant (g^{-1} x
\cdot (g^{-1} h) x)_x + k \delta$$
$$\&\ \ d(x, (u^{-1} h) x) \leqslant (h^{-1} x \cdot (h^{-1} g) x)_x + k \delta$$

In the case when $(X, d)$ is geodesic we introduce the following condition on the action of $G$.

\smallskip

{\noindent $(RA, k)$:} there exists $k \in \N$ such that for any $g,h \in G$ there exists $u \in G$
with the property that $u x$ belongs to the $k \delta$-neighborhood of the interior of
$\Delta_I(x, gx, hx)$.

\smallskip

Obviously, if $\delta = 0$ then the interior of $\Delta_I(x, gx, hx)$ is a single point $Y(x, gx, 
hx) = [x, gx] \cap [x, hx] \cap [gx, hx]$ and $(RA, k)$ is equivalent to the regularity condition
for group actions on $\Lambda$-trees (see \cite{KMS:2011, KMRS:2012}). In general, equivalence of
$(RA, k)$ and regularity of the action follows from the lemma below.

\begin{lemma}
\label{le:reg_act_equiv}
Let $G$ be a group acting on a $\delta$-hyperbolic space $(X,d)$ and $l : G \to \Lambda$ a
$\delta$-hyperbolic length function based at $x \in X$. If $X$ is geodesic then the following
implications hold
$$(RA, k) \Longrightarrow (R2, k+4),\ \ \ (R2, k) \Longrightarrow (RA, 3k+4)$$
\end{lemma}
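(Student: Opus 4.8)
The plan is to translate both conditions into geometric statements about how close the point $ux$ sits to the centre of the comparison tripod of $\Delta(x,gx,hx)$, and then to pass between the two formulations using the thinness bound on $\Delta_I$ from Proposition~\ref{pr:1.2.9} and the $4$-point condition of Lemma~\ref{le:1.2.6}. First I would record the dictionary between $c$ and Gromov products. As in Theorem~\ref{le:1} one has $c(g,h)=(gx\cdot hx)_x$, and a direct computation using that $G$ acts by isometries gives $c(g^{-1},g^{-1}h)=(x\cdot hx)_{gx}$ and $c(h^{-1},h^{-1}g)=(x\cdot gx)_{hx}$. Let $p\in[x,gx]$, $q\in[x,hx]$, $r\in[gx,hx]$ be the three vertices of $\Delta_I(x,gx,hx)$, i.e. the points sent to the centre of $T(x,gx,hx)$. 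By Lemma~\ref{le:1.2.7} the legs of the tripod have lengths equal to the corresponding Gromov products, so $d(x,p)=d(x,q)=(gx\cdot hx)_x$, $d(gx,p)=d(gx,r)=(x\cdot hx)_{gx}$ and $d(hx,q)=d(hx,r)=(x\cdot gx)_{hx}$. Hence, writing the three inequalities of $(R2,k)$ via $l(u)=d(x,ux)$, $l(u^{-1}g)=d(ux,gx)$, $l(u^{-1}h)=d(ux,hx)$, condition $(R2,k)$ says exactly that there is $u$ with $d(x,ux)\leqslant d(x,p)+k\delta$, $d(ux,gx)\leqslant d(gx,p)+k\delta$ and $d(ux,hx)\leqslant d(hx,q)+k\delta$.

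For $(RA,k)\Rightarrow(R2,k+4)$ I would take $u$ with $ux$ in the $k\delta$-neighbourhood of $\Delta_I$, pick $w\in\Delta_I$ with $d(ux,w)\leqslant k\delta$, and bound each of the three distances by routing through the appropriate vertex of $\Delta_I$. The key observation is a covering fact: the two vertices lying at the prescribed distance from a given outer vertex (e.g. $p$ and $q$ for $x$, since $d(x,p)=d(x,q)=(gx\cdot hx)_x$) between them touch all three sides $[p,q],[q,r],[r,p]$ of $\Delta_I$, so whichever side contains $w$, at least one admissible vertex is an endpoint of that side and hence within one side-length of $w$. Since the sides of $\Delta_I$ have length at most $4\delta$ by Proposition~\ref{pr:1.2.9}, the triangle inequality $d(x,ux)\leqslant d(x,p)+d(p,w)+d(w,ux)$ (and its two analogues for $gx$ and $hx$) yields each bound with $k\delta$ replaced by $(k+4)\delta$, which is precisely $(R2,k+4)$.

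For $(R2,k)\Rightarrow(RA,3k+4)$ I would use only the first two inequalities together with the fact that $p\in[x,gx]$, so that $d(x,p)+d(gx,p)=d(x,gx)$. Applying the $4$-point condition of Lemma~\ref{le:1.2.6} to the quadruple $x,gx,p,ux$ gives $d(x,gx)+d(p,ux)\leqslant\max\{d(x,p)+d(gx,ux),\,d(gx,p)+d(x,ux)\}+2\delta$, and both entries of the maximum are at most $d(x,gx)+k\delta$ by the two bounds; the common term $d(x,gx)$ cancels and one obtains $d(ux,p)\leqslant(k+2)\delta$. As $p$ is a vertex of $\Delta_I$ and $(k+2)\leqslant(3k+4)$, the point $ux$ lies in the $(3k+4)\delta$-neighbourhood of $\Delta_I$, which gives $(RA,3k+4)$.

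The routine parts are the isometry computations for the $c$-to-Gromov-product dictionary and the triangle-inequality bookkeeping. The one genuinely delicate step is the covering argument in the first implication: one must verify that the pairing ``two centre-vertices equidistant from each outer vertex'' is arranged so that every side of $\Delta_I$ is adjacent to an admissible vertex for each of the three inequalities, which is what keeps the loss down to a single side-length $4\delta$. Note that the reverse implication in fact yields the much sharper constant $k+2$; the stated $3k+4$ leaves ample slack and also absorbs any reading of ``interior of $\Delta_I$'' that excludes its vertices.
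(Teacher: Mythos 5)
Your proof is correct, and while the first implication follows the paper's route, the second is genuinely different and in fact sharper. For $(RA,k)\Rightarrow(R2,k+4)$ you argue as the paper does, by routing from $ux$ through a corner of $\Delta_I(x,gx,hx)$; but your covering observation is a worthwhile refinement: the paper simply asserts that $\Delta_I$ ``has diameter at most $4\delta$,'' whereas only its three vertices are pairwise within $4\delta$ (a generic point of one side is a priori only within $6\delta$ of the opposite vertex), and your remark that each outer vertex admits \emph{two} admissible inner vertices, between them touching all three sides of $\Delta_I$, is exactly what legitimately keeps the loss at one side-length $4\delta$ and hence the constant at $k+4$. For $(R2,k)\Rightarrow(RA,3k+4)$ the paper proceeds quite differently: it first extracts two-sided estimates $(gx\cdot hx)_x-k\delta\leqslant d(x,ux)\leqslant (gx\cdot hx)_x+k\delta$ (and the analogue for $d(gx,ux)$), then compares the two geodesic triangles $\Delta(x,gx,hx)$ and $\Delta(x,gx,ux)$, using $4\delta$-thinness to match the point of $[x,gx]$ at distance $(gx\cdot ux)_x$ from $x$ against both $ux$ and the vertex $p$ of $\Delta_I$, accumulating $2k\delta+4\delta+k\delta=(3k+4)\delta$. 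You instead apply the four-point condition of Lemma \ref{le:1.2.6} directly to the quadruple $x,gx,p,ux$, exploiting $d(x,p)+d(p,gx)=d(x,gx)$ so that the common term cancels, and obtain $d(ux,p)\leqslant (k+2)\delta$ outright. This is shorter, needs only the first two inequalities of $(R2,k)$ and no auxiliary geodesic $[x,ux]$ or second comparison triangle, and yields the stronger conclusion $(RA,k+2)$, which trivially implies the stated $(RA,3k+4)$ since enlarging the constant weakens the condition. What the paper's route buys in exchange is the explicit thin-triangle picture (locating $ux$ relative to both triangles), which mirrors the tree case $\delta=0$ where $\Delta_I$ degenerates to the median point; your route buys economy and an optimal-looking constant by staying entirely at the level of the $H1$-type four-point inequality.
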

\begin{proof} $(RA, k) \Longrightarrow (R2, k+4):$

\smallskip

Since $X$ is $\delta$-hyperbolic then for any $g, h \in G$, the triangle $\Delta_I(x, gx, hx)$ has
diameter at most $4 \delta$. Hence, one of its vertices is at a distance $c(g, h) = (g x \cdot 
h x)_x$ from $x$, another vertex is at a distance $c(g^{-1}, g^{-1} h)$ from $gx$ (one can see 
this by translating the whole picture by $g^{-1}$), and the third one is at a distance $c(h^{-1}, 
h^{-1} g)$ from $hx$ (again, one can see this by translating the whole picture by $h^{-1}$). Thus, 
if $ux$ is in the $k \delta$-neighborhood of the interior of $\Delta_I(x, gx, hx)$, then it is in the
$(k+4) \delta$-neighborhood of all three of its corners which implies $(R2, k+4)$.

\smallskip

$(R2, k) \Longrightarrow (RA, k+4):$

\smallskip

Suppose $(R2, k)$ holds, so, the action of $G$ on $(X,d)$ (that is, $(X_l, d_l)$) is regular, and we
want to show that it satisfies $(RA, 3k+4)$ with respect to the base-point $x \in X$.

First of all, it is easy to see that
$$(g x \cdot h x)_x + (g^{-1} x \cdot (g^{-1} h) x)_x = d_l(x, gx)$$
Next, we have
$$d_l(x, gx) \leqslant d_l(x, ux) + d_l(ux, gx) \leqslant (gx \cdot hx)_x + k \delta + d_l(ux, gx)$$
from which it follows that
$$d_l(x, gx) - d_l(ux, gx) \leqslant d_l(x, ux) \leqslant (gx \cdot hx)_x + k \delta$$
which, combined with the inequalities $d_l(ux, gx) \leqslant (g^{-1}x \cdot (g^{-1}h) x) + k\delta$
and $d_l(x, gx) - (g^{-1}x \cdot (g^{-1}h) x)_x = (gx \cdot hx)_x$, implies that
$$(gx \cdot hx)_x - k \delta \leqslant d_l(x, ux) \leqslant (gx \cdot hx)_x + k \delta$$
A similar argument shows that
$$(g^{-1} x \cdot (g^{-1}h) x)_x - k \delta \leqslant d_l(gx, ux) \leqslant (g^{-1}x \cdot (g^{-1}h) x)_x
+ k \delta$$

Consider $\Delta(x, gx, hx)$ and $\Delta(x, gx, ux)$ which are both geodesic triangles in $X$. The
point on $[x, gx]$ situated at a distance $(gx \cdot hx)_x$ from $x$ is a vertex of $\Delta_I(x, gx,
hx)$, and the points at a distance $(gx \cdot ux)_x$ on $[x, gx]$ and $[x, ux]$ are at a distance at
most $4 \delta$ from one another. Next, we have
$$(ux \cdot gx)_x = \frac{1}{2}(d(x, ux) + d(x, gx) - d(ux, gx))$$
$$\geqslant \frac{1}{2}(((gx \cdot hx)_x - k \delta) + d(x, ux) - ((g^{-1}x \cdot (g^{-1}h) x)_x + 
k \delta))$$
$$= \frac{1}{2}(2 (gx \cdot hx)_x - 2k \delta) = (gx \cdot hx)_x - k \delta$$
It follows that a point on $[x, gx]$ at a distance $(gx \cdot ux)_x$ from $x$ is at a distance at most
$k \delta$ from a vertex of $\Delta_I(x, gx, hx)$, a point on $[x, ux]$ at a distance $(gx \cdot ux)_x$
from $x$ is at a distance at most $2 k \delta$ from $ux$, so $ux$ is at a distance at most
$(3k + 4) \delta$ from one of the vertices of $\Delta_I(x, gx, hx)$. Hence, $(RA, 3k+4)$ holds for
the action of $G$ on $X$.
\end{proof}

\subsection{Completeness and hyperbolic groups}
\label{subs:compl_length_func}

A $\delta$-hyperbolic length function $l : G \to \Lambda$ is called {\em complete} if the following
axiom holds:
\begin{enumerate}
\item[(C)] $\forall\ g \in G\ \ \forall\ \alpha \leqslant l(g)\ \ \exists\ u \in G:\ \ g = u \circ g_1\ \
\&\ \ l(u) = \alpha$.
\end{enumerate}
Note that such an element $u$ may not be unique.

\begin{lemma}
\label{le:30}
Let $l : G \rightarrow \Lambda$ be a complete $\delta$-hyperbolic length function. Then
 for all $g,h \in G$, if $g = u \circ g_1,\ h = v \circ h_1$, with $l(u) = l(v) \leqslant
c(g,h)$, it follows that $l(u^{-1} v) \leqslant 4 \delta$.
\end{lemma}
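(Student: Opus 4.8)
The plan is to avoid the geometric realization entirely and argue directly from the defining axioms, reformulating the two factorization hypotheses as equalities of the products $c(\cdot,\cdot)$ and then pushing the lower bound on $c(g,h)$ down to a lower bound on $c(u,v)$ by two applications of $(\Lambda 4, \delta)$. Write $t = l(u) = l(v)$. First I would unpack $g = u \circ g_1$ and $h = v \circ h_1$: by definition of ``$\circ$'' these mean $l(g) = l(u) + l(g_1)$ and $l(h) = l(v) + l(h_1)$, where $g_1 = u^{-1} g$ and $h_1 = v^{-1} h$. Substituting into the definition of the product gives
\[
c(u, g) = \tfrac12\bigl(l(u) + l(g) - l(u^{-1} g)\bigr) = \tfrac12\bigl(l(u) + l(g) - l(g_1)\bigr) = l(u) = t,
\]
and symmetrically $c(v, h) = t$; thus the prefixes $u$ and $v$ already realize the full products toward $g$ and $h$.

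Next I would apply $(\Lambda 4, \delta)$ twice, being careful about the comparison element each time. Taking $f = g$, the second argument $v$, and $h$ as comparison element, and using the hypothesis $c(g, h) \geqslant t$ together with $c(v, h) = t$, I obtain
\[
c(g, v) \geqslant \min\{c(g, h), c(v, h)\} - \delta \geqslant t - \delta.
\]
Then, applying the axiom to $u$ and $v$ with $g$ as comparison element, and feeding in $c(u, g) = t$ together with the bound just obtained (recall $c$ is symmetric by $(\Lambda 2)$), I get
\[
c(u, v) \geqslant \min\{c(u, g), c(g, v)\} - \delta \geqslant \min\{t,\ t - \delta\} - \delta = t - 2\delta.
\]

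Finally I would convert back to lengths: from the definition, $l(u^{-1} v) = l(u) + l(v) - 2\,c(u, v) = 2t - 2\,c(u, v)$, so the last bound yields $l(u^{-1} v) \leqslant 2t - 2(t - 2\delta) = 4\delta$, as claimed. The only delicate point is the bookkeeping in the double use of $(\Lambda 4, \delta)$: one must compare against $h$ first and then against $g$, so that each application feeds the previously established lower bound into the next; choosing the comparison elements in the wrong order stalls the estimate. I also note that completeness is not actually used in this implication — it serves only to guarantee that decompositions $g = u \circ g_1$ with prescribed $l(u)$ exist in the first place, while the bound itself is a purely formal consequence of hyperbolicity. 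Geometrically (in the realization $(X_l, d_l)$), the points $uO$ and $vO$ both lie at distance $t$ from the base point along geodesics toward $gO$ and $hO$, and since $t \leqslant (gO \cdot hO)_O$ they sit before the branch point, so thinness of the triangle forces them within $4\delta$ of each other.
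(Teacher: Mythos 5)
Your proof is correct. It shares its endgame with the paper's proof --- both conclude by applying $(\Lambda 4, \delta)$ to the triple $\{u, v, g\}$, using $c(u,g) = l(u)$ and a lower bound $c(v,g) \geqslant l(u) - \delta$, and then converting $c(u,v) \geqslant l(u) - 2\delta$ into $l(u^{-1}v) \leqslant 4\delta$ --- but you reach the intermediate bound by a genuinely more direct route. The paper obtains it in disguised form: it applies hyperbolicity to the translated triple $\{h_1, v^{-1}, v^{-1}ug_1\}$ (in effect, Gromov products based at $v$ rather than at the identity), rewrites the hypothesis $l(u) = l(v) \leqslant c(g,h)$ as $l(g^{-1}h) \leqslant l(g_1) + l(h_1)$ to identify which term realizes the minimum, and uses the triangle-inequality fact $l(v^{-1}ug_1) \geqslant l(g_1)$; the resulting estimate $l(v^{-1}ug_1) - l(g_1) \leqslant 2\delta$ is exactly $c(v,g) \geqslant l(u) - \delta$ in disguise, since $2c(v,g) = 2l(u) + l(g_1) - l(v^{-1}ug_1)$. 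You instead note that $c(v,h) = l(v)$ (the mirror image of $c(u,g) = l(u)$, which the paper computes only on the $g$-side) and apply the axiom once, at the identity, to the pair $g, v$ with comparison element $h$, feeding in $c(g,h) \geqslant t$ directly. This buys a shorter and more symmetric argument that never changes basepoint and makes the role of the hypothesis $l(u) = l(v) \leqslant c(g,h)$ transparent; the paper's detour does produce along the way the two-sided statement $0 \leqslant l(v^{-1}ug_1) - l(g_1) \leqslant 2\delta$, a quantitative comparison of the distances from $u$ and from $v$ to $g$ that has some independent interest. Your side remarks are also accurate: $c$ is symmetric by $(\Lambda 2)$, the manipulations are legitimate in $\Lambda_\Q$ (one may double throughout to stay in $\Lambda$, as the paper does), and completeness is indeed not used in the lemma itself --- the paper likewise invokes it only in Corollary \ref{co:30}, to guarantee that decompositions with $l(u) = l(v) = c(g,h)$ exist.
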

\begin{proof}  Consider the triple $\{h_1, v^{-1}, v^{-1} u g_1\}$. Then by $\delta$-hyperbolicity
of $l$ we have
$$2c(v^{-1}, h_1) \geqslant \min\{2c(v^{-1} u g_1, v^{-1}), 2c(h_1, v^{-1} u g_1)\} - 2\delta,$$
so that
$$0 \geqslant \min\{l(v^{-1} u g_1) - l(g_1), l(v^{-1} u g_1) + l(h_1) - l(g^{-1} h)\} - 2\delta.$$
At the same time we have $l(u) = l(v) \leqslant c(g,h)$, which implies that $l(g^{-1} h) \leqslant
l(g_1) + l(h_1)$ so that
$$l(v^{-1} u g_1) - l(g_1) \leqslant l(v^{-1} u g_1) + l(h_1) - l(g^{-1} h),$$
which gives $0 \geq l(v^{-1} u g_1) - l(g_1) - 2 \delta$. Hence
$$0 \leqslant l(v^{-1} u g_1) - l(g_1) \leqslant 2 \delta.$$

Now consider the triple $\{u, v, g\}$. We have
$$2c(u, v) \geqslant \min\{2c(u, g), 2c(v, g)\} - 2\delta.$$
Next, $2c(u, g) = 2 l(u)$ and $2c(v,g) = 2l(u) + l(g_1) - l(v^{-1} u g_1)$. But $l(g_1) - l(v^{-1}
u g_1) \leqslant 0$ implies $2c(u,g) \geqslant 2c(v,g)$, so $2c(u,v) \geqslant 2c(v,g) - 2 \delta$
and therefore
$$2l(u) - l(u^{-1} v) \geqslant 2l(u) + l(g_1) - l(v^{-1} u g_1) - 2\delta,$$
so that we have the desired inequality
$$l(u^{-1} v) \leqslant l(v^{-1} u g_1) - l(g_1) + 2\delta \leqslant 4\delta.$$
\end{proof}

\begin{corollary}
\label{co:30}
Let $l : G \rightarrow \Lambda$ be a complete $\delta$-hyperbolic length function. Suppose also that
$c(g,h) \in \Lambda$ for all $g,h \in G$.
Then
 $\forall\ g,h \in G\ \ \exists\ u,v \in G:\ \ g = u \circ g_1\ \ \&\ \ h = v \circ h_1\ \
\&\ \ l(u) = l(v) = c(g,h)\ \ \&\ \ l(u^{-1} v) \leqslant 4 \delta$.
\end{corollary}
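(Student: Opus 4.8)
The plan is to derive the corollary directly from Lemma \ref{le:30}, the only additional ingredient being the completeness axiom (C), which the hypothesis $c(g,h) \in \Lambda$ is precisely what allows us to invoke.

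First I would record the elementary bound $0 \leqslant c(g,h) \leqslant \min\{l(g), l(h)\}$. This is the length-function counterpart of Lemma \ref{le:1.2.4}(1): by Theorem \ref{th:delta_length} one has $c(g,h) = (gv \cdot hv)_v$ for the induced action, so the bound transfers, but it is just as quick to argue directly from the axioms. Nonnegativity follows from ($\Lambda 3$) applied to $g^{-1}h$, since $l(g^{-1}h) \leqslant l(g^{-1}) + l(h) = l(g) + l(h)$. For the upper bound, writing $h = g(g^{-1}h)$ and using ($\Lambda 3$) gives $l(h) \leqslant l(g) + l(g^{-1}h)$, whence $c(g,h) \leqslant l(g)$; the symmetric computation starting from $g = h(h^{-1}g)$ yields $c(g,h) \leqslant l(h)$.

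Next, since $c(g,h) \in \Lambda$ by hypothesis and $c(g,h) \leqslant l(g)$, the completeness axiom (C) applies with $\alpha = c(g,h)$ and produces $u \in G$ with $g = u \circ g_1$ and $l(u) = c(g,h)$. In the same way, using $c(g,h) \leqslant l(h)$, axiom (C) produces $v \in G$ with $h = v \circ h_1$ and $l(v) = c(g,h)$. Thus $l(u) = l(v) = c(g,h)$, so in particular these chosen $u,v$ satisfy the hypothesis $l(u) = l(v) \leqslant c(g,h)$ of Lemma \ref{le:30}. Applying that lemma to the decompositions $g = u \circ g_1$ and $h = v \circ h_1$ then gives $l(u^{-1} v) \leqslant 4\delta$, which is exactly the remaining assertion.

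I expect no genuine obstacle here, since the substantive content is already packaged in Lemma \ref{le:30}; the corollary merely exhibits specific $u,v$ realizing its hypotheses. The one point demanding a moment's care is the role of the assumption $c(g,h) \in \Lambda$: axiom (C) only supplies splittings of prescribed length $\alpha$ for $\alpha \in \Lambda$, not for arbitrary $\alpha \in \Lambda_\Q$, so this hypothesis is exactly what legitimizes choosing $u$ and $v$ with $l(u) = l(v) = c(g,h)$. Verifying $c(g,h) \leqslant \min\{l(g),l(h)\}$ is what guarantees both invocations of (C) are admissible, and that is the whole of the work.
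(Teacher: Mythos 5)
Your proof is correct and follows essentially the same route as the paper: invoke completeness (C) with $\alpha = c(g,h)$ to produce $u$ and $v$ with $l(u) = l(v) = c(g,h)$, then apply Lemma \ref{le:30}. The only difference is that you explicitly verify the admissibility bound $c(g,h) \leqslant \min\{l(g), l(h)\}$, which the paper leaves implicit; that is a reasonable bit of added care rather than a departure.
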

\begin{proof}
From completeness of $l$ it follows that for any $g,h \in G$ there exist $u, v \in G$ such that
$g = u \circ g_1,\ h = v \circ h_1$, with $l(u) = l(v) = c(g,h)$. Now, by Lemma  \ref{le:30} we have 
$l(u^{-1} v) \leqslant 4 \delta$ and the result follows.
\end{proof}

Here are examples of groups with complete $\delta$-hyperbolic length functions.

\begin{example}
\label{ex:complete_1}
Every $\delta$-hyperbolic group has a complete $\Z$-valued $\delta$-hyperbolic length function,
which is the geodesic length of its elements.
\end{example}

\begin{example}
\label{ex:complete_2}
If $G$ has a complete $\delta$-hyperbolic length function with values in $\Lambda$ and $C_G(u)$ is a
centralizer of $u \in G$, then the group $G' = \langle G, t \mid [C_G(u), t] = 1 \rangle$ has a
complete $\delta$-hyperbolic length function with values in $\Lambda \oplus \Z$ (with the
right lexicographic order).
\end{example}

We call a $\Lambda$-metric space $(X, d)$ {\em quasi-geodesic} if for every $x,y \in X$ there is a map 
$\gamma : [0, d(x,y)] \to X$ such that $\gamma(0) = x,\ \gamma(d(x,y)) = y$, and for any $0 \leqslant 
\alpha \leqslant \beta \leqslant d(x,y)$ we have
$$\beta - \alpha \leqslant d(\gamma(\alpha), \gamma(\beta)) \leqslant \beta - \alpha + C,$$
here $C$ is a constant which depends on $X$ only.

\begin{theorem}
\label{th:complete}
Let $l : G \rightarrow \Lambda$ be a complete $\delta$-hyperbolic length function. Then the 
$\delta$-hyperbolic $\Lambda$-metric space $(X_l, d_l)$ constructed from the pair $(G, l)$ is 
quasi-geodesic.
\end{theorem}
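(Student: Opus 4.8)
The plan is to build an explicit path out of the elements supplied by the completeness axiom (C), and to control distances along it using $\delta$-hyperbolicity. First I would reduce to paths emanating from the base point $A = \ker(l)$. Recall that $X_l = G/A$ with $d_l(gA, hA) = l(g^{-1}h)$, and that left multiplication by any fixed element of $G$ is an isometry of $X_l$ (as verified in the proof of Theorem~\ref{th:delta_length}). Hence, given two cosets $x = g_0 A$ and $y = h_0 A$, after translating by $g_0^{-1}$ it suffices to produce a path from $A$ to $wA$, where $w = g_0^{-1} h_0$ and $d_l(x,y) = l(w)$; the isometry $g_0$ then carries this path back to one joining $x$ and $y$, preserving all distances. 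For each $\alpha \in [0, l(w)]$, axiom (C) yields an element $u_\alpha \in G$ with $l(u_\alpha) = \alpha$ and $w = u_\alpha \circ w_\alpha$, so that $l(w) = l(u_\alpha) + l(u_\alpha^{-1} w)$. I would then define $\gamma \colon [0, l(w)] \to X_l$ by $\gamma(\alpha) = u_\alpha A$, choosing one such $u_\alpha$ for each $\alpha$.

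The endpoints and the lower bound are immediate. At $\alpha = 0$ we have $l(u_0) = 0$, so $u_0 \in A$ and $\gamma(0) = A$; at $\alpha = l(w)$ we get $l(u_\alpha^{-1} w) = 0$, so $u_{l(w)} A = wA$ and $\gamma(l(w)) = wA$. For $0 \leqslant \alpha \leqslant \beta \leqslant l(w)$ the triangle inequality ($\Lambda 3$) gives $l(u_\beta) \leqslant l(u_\alpha) + l(u_\alpha^{-1} u_\beta)$, whence $d_l(\gamma(\alpha), \gamma(\beta)) = l(u_\alpha^{-1} u_\beta) \geqslant \beta - \alpha$.

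The upper bound is the crux, and the key observation is that the endpoint $w$ itself serves as the third point for the hyperbolicity inequality. Using $w = u_\alpha \circ w_\alpha$ one computes directly
$$c(u_\alpha, w) = \tfrac{1}{2}\bigl(l(u_\alpha) + l(w) - l(u_\alpha^{-1} w)\bigr) = l(u_\alpha) = \alpha,$$
and similarly $c(u_\beta, w) = \beta$. Applying ($\Lambda 4, \delta$) to the triple $u_\alpha, u_\beta, w$ and using $\alpha \leqslant \beta$ gives
$$c(u_\alpha, u_\beta) \geqslant \min\{c(u_\alpha, w), c(u_\beta, w)\} - \delta = \alpha - \delta.$$
Since $l(u_\alpha^{-1} u_\beta) = l(u_\alpha) + l(u_\beta) - 2c(u_\alpha, u_\beta)$, this yields $d_l(\gamma(\alpha), \gamma(\beta)) \leqslant \alpha + \beta - 2(\alpha - \delta) = (\beta - \alpha) + 2\delta$.

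Thus the quasi-geodesic inequalities hold with constant $C = 2\delta$, which depends only on $X_l$. The only real subtlety to watch is the bookkeeping of the completeness factorizations $w = u_\alpha \circ w_\alpha$ — that is, that $\circ$ encodes exactly the additivity $l(w) = l(u_\alpha) + l(u_\alpha^{-1}w)$ needed to collapse $c(u_\alpha, w)$ to $\alpha$; everything else is a routine substitution, and notably Lemma~\ref{le:30} is not required for this argument.
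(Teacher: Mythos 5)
Your proof is correct, and it takes a genuinely more direct route than the paper at the one nontrivial step. Both arguments build the identical path: use axiom (C) to factor $w = u_\alpha \circ w_\alpha$ with $l(u_\alpha) = \alpha$ and set $\gamma(\alpha) = u_\alpha A$ (the paper writes this as $\gamma(\alpha) = (g u_1)A$ without your translation reduction, which is an immaterial difference), and the endpoint and lower-bound computations coincide. The divergence is in the upper bound on $d_l(\gamma(\alpha), \gamma(\beta))$: the paper invokes Lemma~\ref{le:30}, refactoring $v_1 = u'_1 \circ v'_2$ at length $\alpha$ and concluding that $u_1$ and $u'_1$ differ by an element $s$ with $l(s) \leqslant 4\delta$, so that $l(u_1^{-1} v_1) \leqslant (\beta - \alpha) + 4\delta$; you instead observe that the $\circ$-factorizations make the Gromov products with the common endpoint \emph{exact}, $c(u_\alpha, w) = \alpha$ and $c(u_\beta, w) = \beta$, and a single application of ($\Lambda 4, \delta$) to the triple $\{u_\alpha, u_\beta, w\}$ gives $c(u_\alpha, u_\beta) \geqslant \alpha - \delta$, hence $l(u_\alpha^{-1} u_\beta) \leqslant (\beta - \alpha) + 2\delta$. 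Your version is shorter, needs no auxiliary lemma, and yields the sharper constant $C = 2\delta$ in place of the paper's $4\delta$ (the fractional values $c(\cdot,\cdot) \in \Lambda_\Q$ cause no trouble, since the final inequality lives in $\Lambda$). What the paper's detour buys is Lemma~\ref{le:30} itself, a statement of independent interest — any two complete factorizations of $g$ and $h$ truncated at a common length $\leqslant c(g,h)$ are uniformly $4\delta$-close, which is reused in Corollary~\ref{co:30} — whereas your argument only compares truncations of the \emph{same} element $w$ and so cannot replace that lemma elsewhere; for Theorem~\ref{th:complete} alone, however, your approach is the better one.
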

\begin{proof} Recall that $X_l$ is the set $G / A$, where $A = \ker l$, and the metric $d_l$ on $X_l$
is defined by $d_l(g A, h A) = l(g^{-1}h)$.

Let $x = g A,\ y = h A \in X_l$. Consider a map $\gamma : [0, d(x,y) ] \to X_l$ such that $\gamma(0) 
= x,\ \gamma(d(x,y)) = y$, and for any $0 \leqslant \alpha \leqslant d(x,y)$ we put $\gamma(\alpha) 
= (g u_1) A$, where $g^{-1} h = u_1 \circ u_2$ and $l(u_1) = \alpha$ (such $u_1, u_2 \in G$ exist by
completeness of $l$). Thus we have $d(\gamma(0), \gamma(\alpha)) = d(gA, (g u_1) A) = l(u_1) = 
\alpha$.

Now, for any $\alpha$ and $\beta$ such that $0 \leqslant \alpha \leqslant \beta \leqslant d(x,y)$ 
we have $\gamma(\alpha) = (g u_1) A,\ \gamma(\beta) = (g v_1) A$, where $g^{-1} h = u_1 \circ u_2 
= v_1 \circ v_2$ and $l(u_1) = \alpha,\ l(v_1) = \beta$. Since $\alpha \leqslant \beta$, by Lemma 
\ref{le:30}, $v_1 = (u_1 s) \circ v_2$, where $l(s) \leqslant 4\delta$. Hence, we have
$$\beta - \alpha \leqslant d(\gamma(\alpha), \gamma(\beta)) = l(u_1^{-1} v_1) \leqslant l(v_2) + 
l(s) \leqslant \beta - \alpha + 4\delta$$
\end{proof}

\begin{theorem}
\label{th:complete_hyp}
A group $G$ is $\delta$-hyperbolic if and only if there exists a $\delta$-hyperbolic length function
$l : G \to \Z$ with the following properties
\begin{enumerate}
\item[(a)] $l$ is complete,
\item[(b)] $|\{g \in G \mid l(g) \leqslant 1\}| < \infty$.
\end{enumerate}
\end{theorem}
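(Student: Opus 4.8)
The plan is to prove the two directions separately; the forward one is bookkeeping, while the converse amounts to recognizing the abstract function $l$ as a genuine word metric. For the forward implication, suppose $G$ is $\delta$-hyperbolic, i.e. there is a finite generating set $S$ for which the geodesic word length $l=|\cdot|_S:G\to\Z$ is a $\delta$-hyperbolic length function. I would simply check that this $l$ satisfies (a) and (b). Completeness is Example \ref{ex:complete_1}: a geodesic word $s_1\cdots s_n$ representing $g$ has, for every $\alpha\le l(g)$, a prefix $u=s_1\cdots s_\alpha$ with $l(u)=\alpha$ and $g=u\circ g_1$. Property (b) holds because $\{g:l(g)\le 1\}=\{1\}\cup S\cup S^{-1}$ is finite. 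This produces the required $l$ with the same constant $\delta$.

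For the converse, assume $l:G\to\Z$ is complete, $\delta$-hyperbolic, and $B=\{g:l(g)\le 1\}$ is finite. The first and crucial step is to extract a finite generating set and compare $l$ to its word metric. Put $S=B$; it is symmetric by $(\Lambda 2)$, finite by (b), and contains $\ker l$. Applying the completeness axiom (C) repeatedly with $\alpha=1$, any $g$ with $l(g)=n$ peels off unit-length factors, giving $g=s_1s_2\cdots s_n a$ with each $s_i\in S$ and $a\in\ker l\subseteq S$; hence $G=\langle S\rangle$ is finitely generated and $|g|_S\le n+1=l(g)+1$. Conversely, if $g=t_1\cdots t_m$ is $S$-geodesic then $(\Lambda 3)$ and $l(t_i)\le 1$ give $l(g)\le m=|g|_S$. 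Thus
\[
l(g)\ \le\ |g|_S\ \le\ l(g)+1\qquad\text{for all }g\in G .
\]

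The second step transfers hyperbolicity from $l$ to the word metric. Writing $A=\ker l$, the metric $d_l(gA,hA)=l(g^{-1}h)$ on $X_l$ satisfies the four-point condition with constant $2\delta$ by Lemma \ref{le:1.2.6}, and the displayed comparison shows that the word metric $d_S(g,h)=|g^{-1}h|_S$ differs from $d_l$ by at most the additive constant $1$. Hence $d_S$ satisfies the four-point condition with constant $2\delta+2$, i.e. $(G,d_S)$ is $(\delta+1)$-hyperbolic. Since every interior point of an edge of the Cayley graph $Cay(G,S)$ lies within $\tfrac12$ of a vertex, the four-point condition passes from the vertex set to the whole geodesic graph with only a bounded further increase; therefore $Cay(G,S)$ is Gromov-hyperbolic and $G$ is word-hyperbolic with a constant controlled by $\delta$. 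Equivalently, by Theorem \ref{th:complete} the space $(X_l,d_l)$ is quasi-geodesic, so $g\mapsto gA$ is a quasi-isometry from the geodesic space $Cay(G,S)$ onto the $\delta$-hyperbolic $(X_l,d_l)$, and hyperbolicity is a quasi-isometry invariant among quasi-geodesic spaces.

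The main obstacle is the converse's first step: without completeness, finiteness of the unit ball says nothing about elements of large length, so it is exactly axiom (C) that lets one decompose every element into unit pieces and thereby see $l$ as comparable to a word metric. A secondary subtlety is that $\ker l$ need not be normal, which forces the argument to run on $G$ and its Cayley graph rather than on a quotient group; and the additive-$1$ comparison, while harmless for establishing word-hyperbolicity, is the place where the hyperbolicity constant may grow, so reproducing $\delta$ exactly (rather than up to a bounded additive error) would require a more careful edge-by-edge estimate.
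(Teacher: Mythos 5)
Your proof is correct and follows essentially the same route as the paper's: take $S=\{g \in G \mid l(g)\leqslant 1\}$, use completeness with $\alpha=1$ to peel every element into unit-length factors so that $l$ is identified (up to the kernel) with the word metric $|\cdot|_S$, and transfer hyperbolicity to the Cayley graph. The paper's proof is a short sketch that simply asserts ``$l$ can be viewed as a word metric'' and that hyperbolicity of the Cayley graph follows; your additive-$1$ comparison $l(g)\leqslant |g|_S\leqslant l(g)+1$ handling a possibly nontrivial $\ker(l)$, and your caveat that the hyperbolicity constant may grow by a bounded amount, are exactly the details the paper suppresses rather than a different argument.
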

\begin{proof} If $G$ is $\delta$-hyperbolic then its word metric $| \cdot |_S : G \to \Z$
with respect to some finite generating set $S$ is a $\delta$-hyperbolic length function, it is obvious.

\smallskip

Now, suppose on a group $G$ there exists a $\delta$-hyperbolic length function $l : G \to \Z$
which satisfies the conditions (a) and (b). Denote $S = \{g \in G \mid l(g) \leqslant 1\}$. Observe that
$S$ is finite and $G = \langle S \rangle$ since by completeness of $l$ every $g \in G$ can be decomposed
as a finite product of elements from $S$. Hence, $l$ can be viewed as a word metric $| \cdot |_S$ with
respect to $S$. Finally, the Cayley graph of $G$ with respect to $S$ is $\delta$-hyperbolic which follows
from $\delta$-hyperbolicity of $l$.
\end{proof}

\subsection{Free length functions}
\label{subs:free_length_func}

A $\delta$-hyperbolic length function $l: G \to \Lambda$ is called {\em free} if
\begin{enumerate}
\item [(F)] $\forall\ g \in G:\ g \neq 1 \rightarrow l(g^2) > l(g) + 3\delta$
\end{enumerate}
Observe that if $\delta = 0$, that is, the $\delta$-hyperbolic $\Lambda$-metric space $(X_l, d_l)$ 
is a $\Lambda$-tree then $l$ is a free Lyndon length function.

We say that the action of $G$ on a $\delta$-hyperbolic space $(X,d)$ is {\em free} if for some $x \in
X$ (hence, for any) the length function $l$ based at $x$ is free. Obviously, if a $\delta$-hyperbolic 
length function $l: G \to \Lambda$ is free then $\ker(l)$ is trivial.

\begin{example}
\label{ex:free_1}
Every torsion-free $\delta$-hyperbolic group has a free (and complete) $\Z$-valued
$\delta$-hyperbolic length function, which is the geodesic length of its elements.
\end{example}

Observe that, in view of Lemma \ref{le:1.3.1}, free action implies that every element of $G$ acts 
as either a hyperbolic isometry, or an inversion. Hence, we say that a group $G$ is {\em $\Lambda$-free} 
if it acts on a $\delta$-hyperbolic $\Lambda$-metric space $(X,d)$ freely and without inversions.

\section{Proper actions and hyperbolicity relative to the kernel}
\label{sec:kernel}

In this section we consider action of a finitely generated group $G$ on a geodesic $\delta$-hyperbolic
$\R$-metric space $(X,d)$ and show that if the action is ``nice'' (regular and proper)
then $G$ is weakly hyperbolic relative to the kernel of the associated length function.

\subsection{Proper actions}
\label{subs:proper}

We fix the group $G$ with a finite generating set $S$ and the $\delta$-hyperbolic
$\R$-metric space $(X,d)$ with a base-point $x \in X$. As usual we have a $\delta$-hyperbolic
length function $l : G \to \R$ based at $x$ and its kernel $\ker(l)$ is a subgroup of $G$.
Recall that $\ker(l) = G_x = Stab_G(x)$.

Recall that the action of $G$ on $(X,d)$ satisfies the axiom ($R2, k$) if there exists $k \in \N$
such that for all $g,h \in G$ there exists $u \in G$ with the property
$$d(x, u x) \leqslant (g x \cdot h x)_x + k \delta\ \ \&\ \ d(x, (u^{-1} g) x) \leqslant (g^{-1} x
\cdot (g^{-1} h) x)_x + k \delta$$
$$\&\ \ d(x, (u^{-1} h) x) \leqslant (h^{-1} x \cdot (h^{-1} g) x)_x + k \delta$$
See Subsection \ref{subs:reg_length_func} for all the details.

Next, we say the action is {\em proper relative to $x$} if there exists some $\alpha \in \R$
such that $l(g) > \alpha$ for any $g \in G \ssm G_x$ and the set
$$B_N = \{g \in G \mid d(x, gx) \leqslant N\}$$
is bounded for any $N \in \N$ in the Cayley graph $\Gamma(G, S \cup G_x)$ of $G$ relative
to $S \cup G_x$.


Since $S$ is finite, there exists $N \in \N$ such that $S \subseteq B_N$. Define the weighted
graph $\Gamma$ so that
$$V(\Gamma) = G / G_x,\ \ E(\Gamma) = \{(g G_x, (g h) G_x) \mid g \in G, h \in B_N\}$$
and the weight function $w : E(\Gamma) \to \R$ is defined by $w(g G_x, (gh) G_x) = l(h)$.
Note that $\Gamma$ is connected since $S \subseteq B_N$. Next, $\Gamma$ is a metric space with
respect to the metric $d_\Gamma$ defined by
$$d_\Gamma(g G_x, h G_x) = \min\{w(p) \mid p\ {\rm is\ a\ path\ in}\ \Gamma\ {\rm connecting}\
g G_x\ {\rm and}\ h G_x\}.$$
Notice that for $g, h \in G$, we have $d(gx, hx) \leqslant N$ if and only if $(g G_x, h G_x) \in
E(\Gamma)$. It follows that in a geodesic path in $\Gamma$ no two consecutive edges both have
weights less than or equal to $\frac{N}{2}$ (by the triangle inequality).

\begin{lemma}
\label{le:rel_hyp_2}
Suppose the action of $G$ on $(X,d)$ is proper relative to $x$ and that it satisfies $(R2, k)$ for
some $k \in \N$.
\begin{enumerate}
\item[(i)] If $(G_x, a G_x), (a G_x, b G_x) \in E(\Gamma)$ and $(G_x, a G_x) \cup (a G_x, b G_x)$ is
a geodesic in $\Gamma$ then $d(x, b x) \geqslant d(x, a x) + d(a x, b x) - 2k\delta$.
\item[(ii)] If $(G_x, a G_x), (a G_x, b G_x), (b G_x, c G_x) \in E(\Gamma)$ and $(G_x, a G_x) \cup
(a G_x, b G_x) \cup (b G_x, c G_x)$ is a geodesic in $\Gamma$ with $w(a G_x, b G_x) < \frac{N}{2}$
then $d(x, c x) \geqslant d(x, a x) + d(a x, b x) + d(b x, c x) - 5k\delta$.
\end{enumerate}
\end{lemma}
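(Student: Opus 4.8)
The plan is to prove both parts by reducing each inequality to an upper bound on a single Gromov product, and then to use regularity $(R2,k)$ to manufacture a ``shortcut'' vertex in $\Gamma$ whose existence would contradict geodesicity unless that product is small.

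For (i), set $P_x=(ax\cdot bx)_x$, $P_a=(x\cdot bx)_{ax}$, $P_b=(x\cdot ax)_{bx}$. A direct computation from the definition of the Gromov product yields the identities $d(x,ax)=P_x+P_a$, $d(ax,bx)=P_a+P_b$, $d(x,bx)=P_x+P_b$, so that the desired inequality $d(x,bx)\ge d(x,ax)+d(ax,bx)-2k\delta$ is equivalent to $P_a\le k\delta$. I would argue by contradiction, assuming $P_a>k\delta$. Applying $(R2,k)$ (based at $x$) to $g=a$, $h=b$ produces $u\in G$; translating its three defining inequalities by $u$ and by $a,b$ turns them into $d(x,ux)\le P_x+k\delta$, $d(ux,ax)\le P_a+k\delta$, $d(ux,bx)\le P_b+k\delta$. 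Since $(G_x,aG_x),(aG_x,bG_x)\in E(\Gamma)$ we have $d(x,ax),d(ax,bx)\le N$, whence $P_x=d(x,ax)-P_a<N-k\delta$ and $P_b=d(ax,bx)-P_a<N-k\delta$; therefore $d(x,ux)<N$ and $d(ux,bx)<N$, so $(G_x,uG_x)$ and $(uG_x,bG_x)$ are genuine edges. The path $G_x\to uG_x\to bG_x$ then has $\Gamma$-length at most $P_x+P_b+2k\delta$, while the geodesic $G_x\to aG_x\to bG_x$ has length $P_x+2P_a+P_b$; comparison forces $2P_a\le 2k\delta$, the sought contradiction.

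For (ii), I would first note that because the middle edge is short, the observation that a $\Gamma$-geodesic has no two consecutive edges of weight $\le N/2$ forces the outer edges to be long: $d(x,ax)>N/2$ and $d(bx,cx)>N/2$. Applying part (i) to the geodesic subpath $G_x\to aG_x\to bG_x$ gives $(x\cdot bx)_{ax}\le k\delta$, and applying it to $aG_x\to bG_x\to cG_x$ (after translating by the $\Gamma$-automorphism $a^{-1}$) gives $(ax\cdot cx)_{bx}\le k\delta$; equivalently $d(x,bx)\ge d(x,ax)+d(ax,bx)-2k\delta$ and $d(ax,cx)\ge d(ax,bx)+d(bx,cx)-2k\delta$. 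Feeding these two bounds into the $4$-point condition (Lemma \ref{le:1.2.6}) for $x,ax,bx,cx$ yields the claim directly, except in the degenerate regime where the middle edge is comparable to $\delta$ (namely $d(ax,bx)\le(2k+1)\delta$) and the sum $d(x,cx)+d(ax,bx)$ is the smallest of the three opposite-pair sums.

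In that regime I would use $d(x,cx)=d(x,bx)+d(bx,cx)-2(x\cdot cx)_{bx}$ together with the bound on $d(x,bx)$ to reduce the problem to showing $(x\cdot cx)_{bx}\le\tfrac{3}{2}k\delta$. Hyperbolicity at $bx$ applied to $x,ax,cx$, combined with $(ax\cdot cx)_{bx}\le k\delta$, shows that either $(x\cdot cx)_{bx}\le(k+1)\delta$ (which finishes the estimate once $k\ge2$), or $(x\cdot cx)_{bx}$ is large; the latter I would exclude by a second regularity-shortcut argument, using the long edge $d(bx,cx)>N/2$ to guarantee that the center of $\{x,cx\}$ based at $bx$ supplied by $(R2,k)$ again furnishes valid $\Gamma$-edges, so that a large $(x\cdot cx)_{bx}$ would contradict geodesicity of the path through $aG_x,bG_x$. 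I expect this degenerate case to be the main obstacle: the $4$-point condition disposes of the generic situation cheaply, but controlling the Gromov product between the global endpoints $x$ and $cx$ when the middle step is tiny is precisely where the long-edge hypothesis and the delicate second shortcut are indispensable. Throughout, since $(R2,k)\Rightarrow(R2,k+1)$ one may assume $k\ge2$, which absorbs the additive $\delta$-losses into the stated bound $5k\delta$.
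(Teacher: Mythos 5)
Your part (i) is correct and is in substance the paper's own proof: the paper produces the same regularity point $u$ for the triple $\{x,ax,bx\}$, checks that $(G_x,uG_x)$ and $(uG_x,bG_x)$ are edges, and compares the weight of the path $G_x\to uG_x\to bG_x$ with the geodesic, phrasing it as a case split on whether $(x\cdot bx)_{ax}<k\delta$ rather than as your contradiction. Your generic case of (ii) --- two applications of (i), the second via the translation $a^{-1}$, fed into the $4$-point condition --- is a legitimate variant (the paper does not use the $4$-point condition here), though note the constant issue: it yields $d(x,cx)\geqslant d(x,ax)+d(ax,bx)+d(bx,cx)-(4k+2)\delta$, which is within $5k\delta$ only for $k\geqslant 2$, and you cannot simply replace $k$ by $k+1$ since the lemma's conclusion is stated with the same $k$ as the hypothesis.

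The genuine gap is in your degenerate regime, and it is twofold. First, the dichotomy you invoke is not what hyperbolicity gives: from $(ax\cdot cx)_{bx}\leqslant k\delta$ one gets only $\min\{(x\cdot ax)_{bx},\,(x\cdot cx)_{bx}\}\leqslant (k+1)\delta$, and the branch $(x\cdot ax)_{bx}\leqslant(k+1)\delta$ puts no constraint whatsoever on $(x\cdot cx)_{bx}$ --- in particular it does not force it to be large, so the case analysis does not close. Second, and more seriously, your second shortcut does not cover the range you need. Writing $P=(x\cdot cx)_{bx}$, the $(R2,k)$-point $u$ for the triple $\{x,bx,cx\}$ satisfies only $d(x,ux)\leqslant d(x,bx)-P+k\delta$, and in the degenerate case the best a priori bound is $d(x,bx)\leqslant d(x,ax)+d(ax,bx)\leqslant N+(2k+1)\delta$; hence the edge $(G_x,uG_x)$ is guaranteed only when $P\geqslant(3k+1)\delta$, whereas you must rule out everything above $\tfrac{3}{2}k\delta$. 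In the window $\tfrac{3}{2}k\delta<P<(3k+1)\delta$ one may have $d(x,ux)>N$, no two-edge shortcut exists, and the estimate degrades to a loss of $2k\delta+2P$, up to $(8k+2)\delta$, which exceeds $5k\delta$. This window is precisely where the paper does its extra work: since (i) gives $(ax\cdot bx)_x\leqslant k\delta$, both $ax$ and $ux$ lie within $O(k\delta)$ of points on the geodesic $[x,bx]$, and because $d(x,ux)>N\geqslant d(x,ax)$ these two points are ordered along $[x,bx]$, which yields $d(ax,ux)\leqslant d(x,ux)-d(x,ax)+3k\delta$; the three-edge comparison path $G_x\to aG_x\to uG_x\to cG_x$ (whose middle edge exists because $d(ax,ux)\leqslant(2k+2)\delta<N$, using the hypothesis $w(aG_x,bG_x)<\tfrac{N}{2}$) then gives $d(x,ax)+d(ax,bx)+d(bx,cx)\leqslant d(x,ux)+d(ux,cx)+3k\delta\leqslant d(x,cx)+5k\delta$. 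Without this step, or an equivalent control of $d(ax,ux)$, your proof of (ii) is incomplete.
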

\begin{proof} (i) Consider the geodesic triangle $\Delta_I(x, ax, bx)$ with the vertices $p \in [x, ax]$,
$q \in [x, bx]$ and $r \in [ax, bx]$. From ($R2, 1$) it follows that there exists $u \in G$ such that
$$d(x, ux) \leqslant d(x, p) + k\delta,\ \ d(ax, ux) \leqslant d(ax, q) + k\delta,\ \
d(bx, ux) \leqslant d(bx, r) + k\delta$$
By definition of $\Delta_I(x, ax, bx)$ we have
$$d(x, bx) = d(x,q) + d(q, bx) = d(x, ax) + d(ax, bx) - d(p, ax) - d(r, ax).$$
If $d(p, ax) = d(r, ax) < k\delta$ then there is nothing to prove. Otherwise, we have
$$d(x, ux) \leqslant d(x, p) + k\delta \leqslant d(x, ax) - k\delta + k\delta = d(x, ax)
\leqslant N$$
and
$$d(ux, bx) \leqslant d(r, bx) + k\delta \leqslant d(ax, bx) - k\delta + k\delta = d(ax, bx)
\leqslant N,$$
so we have that $(G_x, u G_x) \cup (u G_x, bG_x)$ is a path in $\Gamma$. But then
$$d(x, ax) + d(ax, bx) = w(G_x, a G_x) + w(a G_x, b G_x) \leqslant w(G_x, u G_x) + w(u G_x, b G_x)$$
$$= d(x, ux) + d(ux, bx) \leqslant d(x, q) + d(q, bx) +2k\delta = d(x,bx) + 2k\delta$$

\smallskip

(ii) Consider the geodesic triangle $\Delta_I(x, bx, cx)$ with the vertices $p \in [x, bx]$, $q \in
[x, cx]$ and $r \in [bx, cx]$. Again, from ($R2, k$) it follows that there exists $u \in G$ such that
$$d(x, ux) \leqslant d(x, p) + k\delta,\ \ d(bx, ux) \leqslant d(bx, q) + k\delta,\ \
d(cx, ux) \leqslant d(cx, r) + k\delta$$
By definition of $\Delta_I(x, ax, cx)$ we have
$$d(x, cx) = d(x,q) + d(q, cx) = d(x, bx) + d(bx, cx) - d(p, bx) - d(r, bx).$$
If $d(p, bx) = d(r, bx) < k\delta$ then
$$d(x, cx) \geqslant d(x, bx) + d(bx, cx) - 2k\delta \geqslant d(x, ax) + d(ax, bx) - 2k\delta +
d(bx, cx) - 2k\delta,$$
so, there is nothing to prove. So, assume that $d(r, bx) \geqslant k\delta$. Then we have
$$d(ux, cx) \leqslant d(r, cx) + k\delta \leqslant d(bx, cx) - k\delta + k\delta \leqslant N,$$
that is, $(u G_x, c G_x) \in E(\Gamma)$. Note also that, if $d(x, ux) \leqslant N$ then $(G_x,
u G_x) \in E(\Gamma)$ which implies that
$$d(x, ax) + d(ax, bx) + d(bx, cx) \leqslant d(x, ux) + d(ux, cx) \leqslant d(x, cx) + 2k\delta,$$
so, again, there would be nothing to prove. Thus we can assume that $d(x, ux) > N$.

Observe that $ux$ is at a distance of at most $k\delta$ from a point on $[x, bx]$. If this point is
at a distance of at most $\delta$ from a point on $[ax, bx]$, then $ux$ is at a distance of at
most $\frac{N}{2} + (k+1)\delta$ from $ax$. If, on the other hand, it is at a distance of at most
$\delta$ from a point on $[x, ax]$, say $t$, then we have
$$d(x, ax) < d(x, ux) \leqslant d(x, t) + (k+1)\delta,$$
so, $d(t, ax) < (k+1)\delta$, and thus $d(ax, ux) < (2k+2)\delta$. In both cases, since we can assume
that $N > (2k+2)\delta$, we have $(aG_x, uG_x) \in E(\Gamma)$. It follows that
$$d(x, ax) + d(ax, bx) + d(bx, cx) \leqslant d(x, ax) + d(ax, ux) + d(ux, cx).$$

Finally, we have $d(x, bx) \geqslant d(x, ax) + d(ax, bx) - 2k\delta$, so, $(ax \cdot bx)_x \leqslant
k\delta$. It follows that if $v$ is a point on $[x, bx]$ such that
$$d(x, ax) - 2k\delta \leqslant d(v, x) \leqslant d(x, ax) + 2k\delta$$
then $d(ax, v) \leqslant 2k \delta$. Similarly, if $w$ is a point on $[x, bx]$ such that
$$d(x, ux) - 2k\delta \leqslant d(w, x) \leqslant d(x, ux) + 2k\delta$$
then $d(ux, w) \leqslant k\delta$. Since we assume that $d(x, ux) > d(x, ax)$, we have that
$$d(ax, ux) \leqslant d(x, ux) - d(x, ax) + 3k\delta.$$
It follows that
$$d(x, ax) + d(ax, ux) + d(ux, cx) \leqslant d(x, ax) + d(x, ux) - d(x, ax) + 3k\delta + d(ux, cx)$$
$$= d(x, ux) + d(ux, cx) + 3k\delta \leqslant d(x, cx) + 5k \delta.$$
\end{proof}

Now, define a map $\varphi : \Gamma \to X$ so that
$$\varphi(g G_x) = gx\ \ {\rm and}\ \ \varphi(g G_x, h G_x) = [gx, hx].$$

\begin{corollary}
\label{co:rel_hyp_2}
Suppose the action of $G$ on $(X,d)$ satisfies $(R2, k)$ for some $k \in \mathbb{N}$. Then 
$\Gamma$ is hyperbolic.
\end{corollary}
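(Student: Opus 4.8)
The plan is to show that the map $\varphi : \Gamma \to X$ is a quasi-isometric embedding of the geodesic metric space $\Gamma$ into the geodesic $\delta$-hyperbolic space $X$; since a geodesic space that quasi-isometrically embeds into a hyperbolic geodesic space is itself hyperbolic (the image of any geodesic triangle is a quasi-geodesic triangle, which is uniformly thin by stability of quasi-geodesics in $X$, and thinness pulls back through the embedding, see \cite{Ghys_delaHarpe:1991, CoornaertDelzantPapadopoulos:1990}), this yields the claim. Throughout, properness relative to $x$ is in force, so Lemma \ref{le:rel_hyp_2} is available; recall also that every edge of $\Gamma$ has weight $l(h) = d(x, hx) \le N$, so all edge-weights lie in $(0, N]$, and that $N$ may be taken large relative to $k\delta$ (as in the proof of Lemma \ref{le:rel_hyp_2}(ii)).

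One inequality is immediate with constants $(1,0)$. Each edge $(gG_x, (gh)G_x)$ is sent by $\varphi$ to a geodesic $[gx,(gh)x]$ whose length equals its weight $l(h) = d(gx,(gh)x)$. Hence any $\Gamma$-path maps to an $X$-path of the same length, and $d(\varphi(u),\varphi(v)) \le d_\Gamma(u,v)$ for all $u,v$. For the reverse inequality, fix $u,v$, choose a $\Gamma$-geodesic $u = v_0, v_1, \dots, v_m = v$, and write $P_i = \varphi(v_i)$ and $w_i = d(P_{i-1},P_i)$, so that $d_\Gamma(u,v) = \sum_i w_i$. It suffices to show that the broken geodesic $P_0 P_1 \cdots P_m$ satisfies $d(P_0,P_m) \ge \tfrac{1}{\lambda} \sum_i w_i - c$ for uniform constants $\lambda, c$; then $d_\Gamma(u,v) \le \lambda\, d(\varphi(u),\varphi(v)) + \lambda c$, completing the quasi-isometric embedding.

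To control the broken geodesic I use Lemma \ref{le:rel_hyp_2}. Call an edge \emph{long} if its weight exceeds $N/2$ and \emph{short} otherwise; by the triangle-inequality remark preceding the lemma, a $\Gamma$-geodesic has no two consecutive short edges. Since left translation is a weight-preserving automorphism of $\Gamma$ and $G$ acts on $X$ by isometries, I may apply parts (i) and (ii) of Lemma \ref{le:rel_hyp_2} at any interior vertex after translating it to the base coset. Part (i) gives $(P_{i-1}\cdot P_{i+1})_{P_i} \le k\delta$ at every junction, i.e. the turning is uniformly small; part (ii) gives, across an isolated short edge $e_{i+1}$, the estimate $d(P_{i-1},P_{i+2}) \ge w_i + w_{i+1} + w_{i+2} - 5k\delta$, so the turning measured between the outer points is again bounded by a fixed multiple of $k\delta$. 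Merging each short edge into a neighbouring long edge therefore produces a coarser chain $Q_0, Q_1, \dots, Q_r$ of points, joined by geodesic segments, in which every segment has length in $[N/2 - 2k\delta,\ 2N]$ and every breakpoint has Gromov product bounded by a fixed constant $C(k\delta)$. Because $N$ is large relative to $k\delta$, each segment length exceeds the turning by more than the threshold required by the standard local-to-global criterion for broken geodesics in a $\delta$-hyperbolic space, so the chain $Q_0 \cdots Q_r$ — and hence $P_0 \cdots P_m$ — is a uniform quasi-geodesic. Finally, since every block contains a long edge and $N > 8k\delta$, each block's geodesic length $d(Q_{j-1},Q_j)$ is at least half the total weight of that block; summing, the quasi-geodesic length satisfies $\sum_j d(Q_{j-1},Q_j) \ge \tfrac12 \sum_i w_i = \tfrac12 d_\Gamma(u,v)$, and the quasi-geodesic inequality yields $d(P_0,P_m) \ge \tfrac{1}{2\lambda} d_\Gamma(u,v) - c$, as required.

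The main obstacle is exactly the handling of the short edges. A junction-by-junction application of part (i) loses $\Theta(k\delta)$ at every vertex, an error proportional to the number of edges and hence fatal when short edges are much shorter than $k\delta$; part (ii) of Lemma \ref{le:rel_hyp_2} is what avoids this, by charging the error of each short edge once, against its long neighbour, so that after merging one is left with a chain of genuinely long segments whose small turning-to-length ratio makes the local-to-global quasi-geodesic criterion applicable. The bounded weight-window $(0,N]$ keeps the merged segment lengths comparable, ensuring the quasi-geodesic constants $\lambda, c$ are uniform and independent of $u,v$; with uniform constants in hand, stability of quasi-geodesics transfers the thinness of triangles from $X$ to $\Gamma$ and finishes the proof.
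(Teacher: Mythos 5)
Your proposal is correct and takes essentially the same route as the paper: both prove that $\varphi$ sends $\Gamma$-geodesics to uniform quasi-geodesics in $X$, using Lemma \ref{le:rel_hyp_2} (with part (ii) doing exactly the job you identify, absorbing the short edges whose weight may be far below $k\delta$), and then invoke a standard local-to-global stability theorem together with the fact that $N$ may be taken large relative to $\delta$ to get a quasi-isometric embedding, from which hyperbolicity of $\Gamma$ follows. The only difference is the packaging of the stability step: the paper observes that any $\frac{N}{4}$-window of $\varphi(\gamma)$ meets at most three edges (with short middle edge), so $\varphi(\gamma)$ is a $(1, 5k\delta, \frac{N}{4})$-local-quasi-geodesic and \cite[Theorem 3.1.4]{CoornaertDelzantPapadopoulos:1990} applies, whereas you merge short edges into blocks and use the broken-geodesic criterion with bounded corner Gromov products (where your corner bound at a block whose far end carries the merged short edge needs one extra routine hyperbolicity estimate beyond parts (i)--(ii), costing only an additive $\delta$).
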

\begin{proof} Let $\gamma$ be a geodesic in $\Gamma$. By the construction, $\varphi(\gamma)$ is a
concatenation of geodesic segments of lengths at most $N$ so that no two consecutive segments both
have weights less than or equal to $\frac{N}{2}$. Hence, if $x \in \varphi(\gamma)$ then the
intersection $B_{\frac{N}{4}}(x) \cap \varphi(\gamma)$ is contained in the image $\varphi(\gamma_0)$
of a subpath $\gamma_0$ of $\gamma$ in $\Gamma$ which contains not more than three edges.
Moreover, if $\gamma_0$ consists of three edges then the length of the middle one is less than
$\frac{N}{2}$. Now, by Lemma \ref{le:rel_hyp_2}, $\varphi(\gamma)$ is a $(1, 5 \delta, 
\frac{N}{4})$-local-quasi-geodesic. It is a known result (see 
\cite[Theorem 3.1.4]{CoornaertDelzantPapadopoulos:1990}) that there exist constants $L(\delta, 
\lambda, c),\ \lambda'(\delta, \lambda, c)$, and $c'(\delta, \lambda, c)$ such that any 
$(\lambda,c,L)$-local-quasi-geodesic is a $(\lambda',c')$-quasi-geodesic.

We can assume that $\frac{N}{4} \geqslant L(\delta, 1, 5\delta)$ since $N$ is independent of $\delta$ 
and $L = L(\delta)$. Then we use $\lambda = \lambda'(\delta, 1, 5\delta)$ and $c = c'(\delta, 1, 
5\delta)$ and it is immediate that $\Gamma$ is $(\lambda \delta + c)$-hyperbolic.
\end{proof}


\begin{lemma}
\label{le:rel_hyp_1}
If the action of $G$ on $(X,d)$ is proper relative to $x$ then $\Gamma$ is quasi-isometric to the
Cayley graph $\Gamma(G, S \cup G_x)$ of $G$ relative to $S \cup G_x$.
\end{lemma}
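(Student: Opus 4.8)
I want to show that the weighted graph $\Gamma$ (with vertex set $G/G_x$ and edges given by elements of $B_N$) is quasi-isometric to the Cayley graph $\Gamma(G, S \cup G_x)$. The plan is to exhibit a natural map between the two graphs and verify the two defining inequalities of a quasi-isometry, together with quasi-surjectivity. The obvious candidate for the map is the identity on vertices: both graphs have vertex set that is (or naturally surjects onto) $G/G_x$. More precisely, $\Gamma(G, S \cup G_x)$ has vertex set $G$, but elements of $G_x$ act trivially on the base-point $x$, so a coset $gG_x$ collapses to a single vertex of $\Gamma$; I would first argue that collapsing the $G_x$-cosets in $\Gamma(G, S\cup G_x)$ changes distances only by a bounded amount, so that up to quasi-isometry we may work with $G/G_x$ as the common vertex set.

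\textbf{Comparing the two metrics.} The two metrics differ in how they weight edges. In $\Gamma(G, S\cup G_x)$ every generator in $S\cup G_x$ has length $1$, whereas in $\Gamma$ an edge $(gG_x,(gh)G_x)$ with $h\in B_N$ carries the real weight $l(h) = d(x,hx) \le N$. The first inequality I would establish is that a single edge of $\Gamma$ is crossed by a bounded-length path in $\Gamma(G,S\cup G_x)$ and vice versa. For one direction: if $(gG_x,hG_x)\in E(\Gamma)$, then $g^{-1}h\in B_N$ (after adjusting by $G_x$), so $d(x, g^{-1}h\, x)\le N$; since the action is proper relative to $x$, the set $B_N$ is bounded in $\Gamma(G,S\cup G_x)$, meaning $g^{-1}h$ is a product of at most $D$ generators from $S\cup G_x$ for a constant $D=D(N)$ independent of the edge. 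This gives $d_{\Gamma(G,S\cup G_x)}(gG_x,hG_x)\le D$, with each $\Gamma$-edge of weight $\le N$. For the reverse direction: every generator $s\in S$ lies in $B_N$ (since $S\subseteq B_N$), hence contributes a single $\Gamma$-edge of weight $l(s)$, and a generator in $G_x$ fixes the coset and so costs nothing; thus a $\Gamma(G,S\cup G_x)$-geodesic of length $n$ maps to a $\Gamma$-path of weight at most $Nn$.

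\textbf{Assembling the quasi-isometry constants.} Combining these two comparisons gives, for all $g,h\in G$, an inequality of the form
$$\tfrac{1}{\lambda}\, d_{\Gamma(G,S\cup G_x)}(gG_x, hG_x) - c \;\le\; d_\Gamma(gG_x, hG_x) \;\le\; \lambda\, d_{\Gamma(G,S\cup G_x)}(gG_x, hG_x) + c,$$
where $\lambda$ and $c$ are built from $N$ and the boundedness constant $D$ coming from properness. The upper bound follows from the fact that each $S\cup G_x$-step costs at most $N$ in $\Gamma$; the lower bound follows because each $\Gamma$-edge of weight $w$ is crossed by at most $D$ steps in $\Gamma(G,S\cup G_x)$, and the properness hypothesis ($l(g) > \alpha$ for $g\notin G_x$, so edge weights are bounded below away from $0$) prevents $\Gamma$-distances from degenerating. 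Quasi-surjectivity is automatic here because both graphs have the same vertex set $G/G_x$ and the comparison map is the identity on vertices.

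\textbf{The main obstacle.} The delicate point is the lower bound, i.e.\ controlling how many $S\cup G_x$-steps are needed to cross a $\Gamma$-edge uniformly. This is exactly where properness relative to $x$ is essential: without the hypothesis that $B_N$ is bounded in $\Gamma(G,S\cup G_x)$, a single $\Gamma$-edge could require arbitrarily long paths in the Cayley graph, destroying the quasi-isometry. I also need the lower bound $l(g)>\alpha$ on lengths of nontrivial cosets to ensure that distinct vertices of $\Gamma$ are genuinely separated, so that the weight metric $d_\Gamma$ is comparable to an honest graph metric and no two vertices are ``infinitesimally close.'' Once these two consequences of properness are invoked, the remaining estimates are routine bounded-distortion computations, and the quasi-isometry follows.
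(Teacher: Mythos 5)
Your proposal is correct and takes essentially the same route as the paper's proof: the coset map $g \mapsto g G_x$, the upper bound $d_\Gamma(gG_x,hG_x) \leqslant N\, d'(g,h)$ from $S \cup G_x \subseteq B_N$, and the lower bound from exactly the two consequences of properness you isolate ($B_N$ bounded in $\Gamma(G, S\cup G_x)$, giving your constant $D$, which the paper calls $N'$, together with edge weights bounded below by $\alpha$), yielding $d'(g,h) \leqslant \frac{2N'}{\alpha}\, d_\Gamma(gG_x,hG_x)$. The only cosmetic difference is that the paper handles the $G_x$-adjustments at edge junctions explicitly via the observation that in a geodesic word over $S \cup G_x$ at least every other letter lies in $S$ (producing the factor $2N'$), whereas you absorb this into your constant $D$.
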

\begin{proof} Denote the metric on $\Gamma(G, S \cup G_x)$ by $d'$. Consider the function
$\Gamma(G, S \cup G_x) \to \Gamma$ defined by $g \to g G_x$. Since $S \cup G_x \subseteq B_N$,
the image of a path from $\Gamma(G, S \cup G_x)$ is a path in $\Gamma$ of the same combinatorial
length and we have
$$d_\Gamma(g G_x, h G_x) \leqslant N d'(g, h).$$
Next, suppose that $N' \in \mathbb{N}$ is such that $B_N$ is contained in the ball of radius $N'$
centered at the identity in $\Gamma(G, S \cup G_x)$ (such $N'$ exists since the action is proper).
Any edge of $\Gamma$ has weight at least $\alpha$ and it lifts to a geodesic path in
$\Gamma(G, S \cup G_x)$ of length at most $N'$. Since in any geodesic word in $(S \cup G_x)$ at
least every other letter is in $S$, we have that the edge together with any elements of $S$ adjacent
to it will have length at most $2N'$. Hence,
$$d'(g,h) \leqslant \frac{2N'}{\alpha} d_\Gamma(g G_x, h G_x)$$
and the required statement follows.
\end{proof}

\begin{theorem}
\label{th:rel_hyp_1}
Let $G$ be a finitely generated group acting on a  $\delta$-hyperbolic $\mathbb{R}$-metric space
$(X, d)$ with a base-point $x \in X$. If the action of $G$ on $X$ is proper relative to $x$ and
that it satisfies $(R2, k)$ for some $k \in \mathbb{N}$ then $G$ is weakly hyperbolic relative to
$G_x$.
\end{theorem}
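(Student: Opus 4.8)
The plan is to recognize that, following Farb and Osin, $G$ is \emph{weakly hyperbolic relative to} $G_x$ precisely when the relative Cayley graph $\Gamma(G, S \cup G_x)$ is a (Gromov) hyperbolic metric space. Thus the entire statement reduces to proving that $\Gamma(G, S \cup G_x)$ is hyperbolic, and the two results immediately preceding the theorem are tailored exactly for this purpose. I would therefore not attempt any fresh geometric estimate, but rather assemble the existing pieces.

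First I would apply Corollary \ref{co:rel_hyp_2}: since the action satisfies $(R2, k)$, the weighted graph $\Gamma$ with vertex set $G / G_x$ is hyperbolic. This is where the real geometric content sits, namely the estimates of Lemma \ref{le:rel_hyp_2} showing that the images under $\varphi$ of geodesics in $\Gamma$ are local-quasi-geodesics in $X$, which then upgrade to genuine quasi-geodesics by the stability theorem \cite[Theorem 3.1.4]{CoornaertDelzantPapadopoulos:1990}, yielding a hyperbolicity constant of the form $\lambda \delta + c$.

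Next I would invoke Lemma \ref{le:rel_hyp_1}: since the action is proper relative to $x$, the graph $\Gamma$ is quasi-isometric to $\Gamma(G, S \cup G_x)$. Both are connected graphs, hence geodesic metric spaces, so I may appeal to the classical quasi-isometry invariance of hyperbolicity among geodesic spaces. Transporting the hyperbolicity of $\Gamma$ across this quasi-isometry gives that $\Gamma(G, S \cup G_x)$ is hyperbolic, which is exactly the assertion that $G$ is weakly hyperbolic relative to $G_x$.

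The argument is essentially a bookkeeping combination of the two lemmas, so there is no computational obstacle left at this stage; the subtle points have all been absorbed into Corollary \ref{co:rel_hyp_2} and Lemma \ref{le:rel_hyp_1}. The only things requiring a word of care are verifying that the hypotheses are compatible and that the spaces involved are geodesic. Here the theorem assumes both properness relative to $x$ and $(R2, k)$, so both preceding results apply simultaneously, and the finiteness of $S$ (used to define $N$ and hence $\Gamma$) is guaranteed by the standing assumption that $G$ is finitely generated.
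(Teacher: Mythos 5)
Your proposal matches the paper's own proof exactly: the paper states that Theorem \ref{th:rel_hyp_1} is a direct consequence of Lemma \ref{le:rel_hyp_1} and Corollary \ref{co:rel_hyp_2}, which is precisely your assembly, with the quasi-isometry invariance of hyperbolicity for geodesic spaces left implicit. Your additional remarks on geodesicity of the graphs and finiteness of $S$ are correct bookkeeping and do not change the route.
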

\begin{proof} This is a direct consequence of Lemma \ref{le:rel_hyp_1} and Corollary 
\ref{co:rel_hyp_2}.
\end{proof}

\subsection{Geometric alternative to relatively proper actions}
\label{subs:geom_alt_rel_proper_actions}

One problem with our definition of a relatively proper action is that it is very hard to detect, 
especially geometrically. Let us look at an alternative condition which gives the same result but 
it is more easy to interpret geometrically, though it gives less insight into the relation between 
the action and the relative Cayley graph.

Let $X$ be a geodesic $\delta$-hyperbolic space on which $G$ acts and choose a base-point $x$. As
before, $B_n = \{g \in G \mid d(x, g x) \leqslant n\}$. We say the action of $G$ on $X$ has {\em
property $P(n)$} if
\begin{enumerate}
\item[(i)] there exists $\alpha$ such that $B_\alpha = G_x$,
\item[(ii)] $B_n$ generates $G$,
\item[(iii)] the set of double cosets $\{G_x\, g\, G_x \mid g \in B_n\}$ is finite.
\end{enumerate}


\begin{prop}
\label{pr:weak_rel_hyp}
Suppose $G$ is finitely generated relatively to $G_x$ and the action of $G$ on $X$ has property 
$P(n)$ for some $n > 6144 \log_2(154) + 768 + 2288 \delta$. Then $G$ is weakly relatively 
hyperbolic relative to $G_x$.
\end{prop}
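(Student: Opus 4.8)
The plan is to imitate the strategy behind Theorem \ref{th:rel_hyp_1}, replacing the regularity input $(R2,k)$ (which is not available under $P(n)$) by a direct ``no-shortcut'' argument powered by the large value of $n$. First I would build the auxiliary graph $\Gamma$ whose vertex set is $G/G_x$ and in which $gG_x$ and $hG_x$ are joined by an edge precisely when $g^{-1}h \in B_n$; this is well defined on cosets because $l$ is constant on each double coset $G_x g G_x$, so $B_n$ is a union of double cosets. Condition (ii) of $P(n)$ guarantees that $\Gamma$ is connected, and condition (iii) guarantees that only finitely many double cosets $G_x g_1 G_x, \dots, G_x g_t G_x$ meet $B_n$. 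Exactly as in Lemma \ref{le:rel_hyp_1}, the map $g \mapsto gG_x$ is then a quasi-isometry from the relative Cayley graph $\Gamma(G, S \cup G_x)$ (for a finite relative generating set $S \subseteq B_n$, which exists since $G$ is finitely generated relative to $G_x$) onto $\Gamma$: every edge of $\Gamma$ lifts to an element of some $G_x g_i G_x$ and hence has relative length at most $2 + \max_i |g_i|_{S \cup G_x}$, while conversely each generator in $S$ gives an edge of $\Gamma$. It therefore suffices to prove that $\Gamma$ itself is Gromov hyperbolic.

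To do this I would transfer the geometry of $X$ through the map $\varphi : \Gamma \to X$, $gG_x \mapsto gx$, sending each edge to a geodesic segment $[gx, hx]$, and show that $\varphi$ sends $\Gamma$-geodesics to global quasi-geodesics of $X$. The crucial elementary observation is a no-shortcut property: if $v_0, \dots, v_m$ are consecutive vertices of a $\Gamma$-geodesic $\gamma$ and $|i - j| \ge 2$, then $d(\varphi(v_i), \varphi(v_j)) > n$, for otherwise the pair $(v_i, v_j)$ would itself be an edge of $\Gamma$ shortcutting $\gamma$. Combined with condition (i), which forces every edge to have $\varphi$-length in the interval $(\alpha, n]$, this controls the overlap of consecutive segments of $\varphi(\gamma)$: a computation with the Gromov product (using Lemma \ref{le:1.2.8} and the thinness estimates of Proposition \ref{pr:1.2.9}) bounds the Gromov product at each interior vertex by a function of $n$ and $\delta$, so that $\varphi(\gamma)$ is a $(\lambda, c)$-local-quasi-geodesic at a scale comparable to $n$.

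The next step is to upgrade this to a global statement. Here I would invoke the local-to-global principle for quasi-geodesics in $\delta$-hyperbolic spaces, \cite[Theorem 3.1.4]{CoornaertDelzantPapadopoulos:1990}, which provides explicit constants $L(\delta, \lambda, c)$, $\lambda'(\delta, \lambda, c)$, $c'(\delta, \lambda, c)$ such that every $(\lambda, c, L)$-local-quasi-geodesic is a global $(\lambda', c')$-quasi-geodesic. The hypothesis $n > 6144 \log_2(154) + 768 + 2288\delta$ is precisely what guarantees that the scale of local quasi-geodesicity produced above exceeds the threshold $L(\delta, \lambda, c)$; unwinding the explicit constants in that theorem (and in the thinness estimates) is what yields the stated numerical bound. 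Once $\varphi$ is known to carry geodesics to global quasi-geodesics, stability of quasi-geodesics (the Morse lemma) in the $\delta$-hyperbolic space $X$ makes the $\varphi$-images of geodesic triangles uniformly thin, and pulling this back shows that geodesic triangles in $\Gamma$ are thin; hence $\Gamma$ is hyperbolic. Together with the quasi-isometry of the first paragraph, this gives that $\Gamma(G, S \cup G_x)$ is hyperbolic, i.e. $G$ is weakly hyperbolic relative to $G_x$.

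The main obstacle lies in the second and third steps taken together: extracting honest local-quasi-geodesic constants $(\lambda, c)$ and scale $L$ from the no-shortcut property when the lower bound $\alpha$ on edge lengths may be small compared to $\delta$, and then tracking these constants through \cite[Theorem 3.1.4]{CoornaertDelzantPapadopoulos:1990} precisely enough to recover the explicit threshold on $n$. The delicate point is that one edge may be as short as $\alpha$ while its neighbour is as long as $n$, so the bound on the interior Gromov products, and hence the effective quasi-geodesic constants, must be derived uniformly; it is this uniform constant-chase, rather than any new conceptual ingredient, that forces the large explicit value required of $n$.
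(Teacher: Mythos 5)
Your first and last steps are sound and agree with the paper: the quasi-isometry between $\Gamma$ and $\Gamma(G, S \cup G_x)$ is obtained exactly as you say (take $S$ to be a finite set of representatives of the finitely many double cosets $G_x\, g\, G_x$ meeting $B_n$, so every edge of $\Gamma$ lifts to a relative path of length at most $3$, while the lower bound $\alpha$ on edge weights controls the other direction), and the numerical threshold does arise from requiring $n/4 \geqslant L(\delta,1,5\delta) = 1536\log_2(154) + 192 + 572\delta$ in \cite[Theorem 3.1.4]{CoornaertDelzantPapadopoulos:1990}. The genuine gap is your middle step, where you discard regularity and try to replace Lemma \ref{le:rel_hyp_2} by a ``no-shortcut'' argument. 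First, the no-shortcut claim is false as stated: $\Gamma$ is a \emph{weighted} graph, and if $v_i, v_j$ on a geodesic $\gamma$ satisfy $d(\varphi(v_i),\varphi(v_j)) \leqslant n$, the shortcut edge has weight $d(\varphi(v_i),\varphi(v_j))$, which by the triangle inequality in $X$ is at most the weight of the subpath of $\gamma$ between them; since equality can occur, the presence of that edge does not contradict geodesicity of $\gamma$, and you cannot conclude $d(\varphi(v_i),\varphi(v_j)) > n$. Second, and more fundamentally, no shortcut argument can bound the interior Gromov products: consider consecutive edges $(v_0,v_1),(v_1,v_2)$ with $d(\varphi(v_0),\varphi(v_1)) = d(\varphi(v_1),\varphi(v_2)) = n$ but $d(\varphi(v_0),\varphi(v_2)) = n+1$. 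No edge joins $v_0$ to $v_2$ (the endpoints are just out of range), the two-edge path may perfectly well be geodesic in $\Gamma$, and yet $(\varphi(v_0)\cdot\varphi(v_2))_{\varphi(v_1)} = (n-1)/2$, so $\varphi(\gamma)$ folds back and is not a local quasi-geodesic at any scale comparable to $n$. What excludes such folding in the paper is precisely regularity: $(R2,k)$ produces an orbit point $ux$ near the internal triangle $\Delta_I(x,ax,bx)$, which is the engine of Lemma \ref{le:rel_hyp_2} and hence of the $(1,5\delta,\tfrac{N}{4})$-local-quasi-geodesic statement in Corollary \ref{co:rel_hyp_2}. Property $P(n)$ is a purely combinatorial condition on $B_n$ (finiteness of double cosets, generation, $B_\alpha = G_x$) and contains no information about how orbit points sit relative to geodesics of $X$, so regularity cannot be derived from it.

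The root of the problem is your premise that regularity ``is not available under $P(n)$'': this misreads the setup. The subsection offers $P(n)$ as an alternative only to relative \emph{properness}; regularity remains a standing hypothesis, as the paper's own proof makes explicit (``Lemma \ref{le:rel_hyp_2} requires only regularity'') and as Corollary \ref{co:application_tree} confirms by re-imposing a regular action. Once regularity is restored, your outline collapses to the paper's argument: Lemma \ref{le:rel_hyp_2} and Corollary \ref{co:rel_hyp_2} apply verbatim, and only the quasi-isometry lemma needs the double-coset adaptation you already gave.
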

\begin{proof} It is suffficient to prove that Lemma \ref{le:rel_hyp_1} and Corollary \ref{co:rel_hyp_2}
hold in this case. Since Lemma \ref{le:rel_hyp_2} requires only regularity and that $n \geqslant
(2k + 2) \delta$, we have nothing to prove there.

Lemma \ref{le:rel_hyp_1} is rather easy to prove in this case. By assumption, $B_n$ generates $G$ and
that $\{G_x\, g\, G_x \mid g \in B_n\}$ is finite, so we can take $S$ to be a finite set of 
representatives of $\{G_x\, g\, G_x \mid g \in B_n\}$. Again, since both $S$ and $G_x$ are contained 
in $B_n$, paths in $\Gamma(G, G_x \cup S)$ are still paths in $\Gamma$ and the images of its edges 
have weight at most $N$, so $d_\Gamma(g G_x, h G_x) \leqslant N d_\Gamma(G, G_x \cup S)(g,h)$ for 
any $g, h$. On the other hand, any edge in $\Gamma$ has weight at least $\alpha$ and the length of 
any of its preimages in $\Gamma(G, G_x \cup S)$ is at most $3$, so 
$$d_{\Gamma(G, G_x \cup S)}(g,h) \leqslant \frac{3}{\alpha} d_\Gamma(g G_x, h G_x)$$

Finally, in order for the proof of Corollary \ref{co:rel_hyp_2} to work, it is sufficient to have 
$\frac{N}{4} \geqslant L(\delta, 1, 5\delta)$, and it can be inferred from the proofs of Lemma 3.1.9
and Theorem 3.1.4 of \cite{CoornaertDelzantPapadopoulos:1990} that $L(\delta, 1, 5\delta) = 1536 
\log_2(154) + 192 + 572 \delta$.
\end{proof}


\subsection{Application to actions on trees}
\label{subs:application_trees}

\begin{corollary}
\label{co:application_tree}
Let $G$ be a finitely generated relative to some $G_x$ group, which acts regularly on an $\R$-tree
$T$ with the property $P(n)$ for some $n \in \R, n > 0$. Then $G$ is weakly hyperbolic relative to 
$G_x$.
\end{corollary}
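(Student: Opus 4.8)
The plan is to specialize the whole machinery of Section~\ref{sec:kernel} to the degenerate case $\delta = 0$, since an $\R$-tree is precisely a geodesic $0$-hyperbolic $\R$-metric space. I would follow the scheme of Proposition~\ref{pr:weak_rel_hyp} and Theorem~\ref{th:rel_hyp_1}: build the weighted graph $\Gamma$ with vertex set $G/G_x$, taking $S$ to be a finite set of representatives of the double cosets $\{G_x\, g\, G_x \mid g \in B_n\}$, which is finite by $P(n)$(iii); since $B_n$ generates $G$ by $P(n)$(ii), $\Gamma$ is connected, and by $P(n)$(i) the stabilizer $G_x = B_\alpha$ plays the role of the kernel. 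It then suffices, exactly as in Theorem~\ref{th:rel_hyp_1}, to establish (a) that $\Gamma$ is hyperbolic, and (b) that $\Gamma$ is quasi-isometric to $\Gamma(G, S \cup G_x)$.

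The key point is that when $\delta = 0$ the constants in Lemma~\ref{le:rel_hyp_2} vanish: regularity becomes the condition $(R2,0)$, and parts (i) and (ii) become the exact equalities $d(x, bx) = d(x, ax) + d(ax, bx)$ and $d(x, cx) = d(x, ax) + d(ax, bx) + d(bx, cx)$. Combined with the fact that along a geodesic $\gamma$ of $\Gamma$ no two consecutive edges both have weight at most $N/2$, these additivities force $\varphi(\gamma)$ to be a genuine geodesic in $T$, using uniqueness of geodesics in an $\R$-tree to rule out backtracking. Since the weight of an edge of $\Gamma$ equals the length of the corresponding segment of $\varphi$, this shows $\varphi$ is an \emph{isometric} embedding of $(\Gamma, d_\Gamma)$ into the $0$-hyperbolic space $T$, whence $\Gamma$ is itself $0$-hyperbolic. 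This replaces Corollary~\ref{co:rel_hyp_2}, and crucially it avoids the local-to-global quasi-geodesic theorem of \cite{CoornaertDelzantPapadopoulos:1990}: that theorem, through its constant $L(\delta, 1, 5\delta)$, was the sole source of the lower bound $n > 6144 \log_2(154) + 768 + 2288\delta$ in Proposition~\ref{pr:weak_rel_hyp}. With $\delta = 0$ one produces a genuine geodesic rather than a local quasi-geodesic, so no lower bound on $n$ survives beyond $n > 0$.

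Part (b) is independent of $\delta$ and runs verbatim as in Proposition~\ref{pr:weak_rel_hyp}: since $S \cup G_x \subseteq B_n$, images of paths in $\Gamma(G, S \cup G_x)$ are paths in $\Gamma$, giving $d_\Gamma(gG_x, hG_x) \leqslant N\, d_{\Gamma(G, S\cup G_x)}(g,h)$, while each edge of $\Gamma$ has weight bounded below by $\alpha$ (from $P(n)$(i)) and lifts to a path of bounded length in $\Gamma(G, S\cup G_x)$, giving the reverse inequality; hence the two graphs are quasi-isometric and $G$ is weakly hyperbolic relative to $G_x$. I expect the main obstacle to be the rigorous upgrade in part (a): proving that the local additivity of Lemma~\ref{le:rel_hyp_2} propagates to a global geodesic for the whole concatenation $\varphi(\gamma)$, i.e.\ that $\varphi$ is genuinely isometric and not merely a local isometry. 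This requires tracking the ``no two consecutive short edges'' condition through an induction on the number of edges of $\gamma$ and invoking uniqueness of $\R$-tree geodesics; one must also confirm that the regularity hypothesis $(R2,k)$ specializes correctly to $(R2,0)$ and that $P(n)$ at an arbitrary $n>0$ indeed yields both connectivity of $\Gamma$ and the finiteness needed for the quasi-isometry.
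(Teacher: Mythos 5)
Your argument is correct, but it is not the route the paper takes. The paper's proof of Corollary \ref{co:application_tree} is a rescaling trick: let $T^k$ denote $T$ with all distances multiplied by $k$; then $G$ still acts regularly on $T^k$, the rescaled action has property $P(nk)$, and since $\delta = 0$ the numerical lower bound in Proposition \ref{pr:weak_rel_hyp} degenerates to a constant $c$ independent of the action, so one simply chooses $k$ with $nk \geqslant c$ and quotes Proposition \ref{pr:weak_rel_hyp} for the action on $T^k$ --- the conclusion, weak hyperbolicity of $G$ relative to $G_x$, is a statement about the group and is unaffected by rescaling the space. You instead rework the machinery directly at $\delta = 0$: Lemma \ref{le:rel_hyp_2}(i) becomes the exact equality $d(x,bx) = d(x,ax) + d(ax,bx)$ at each pair of consecutive edges of a geodesic of $\Gamma$, and in an $\R$-tree this junction-by-junction additivity does propagate globally (if $p_1 \in [p_0,p_2]$ and $[p_1,p_2] \cup [p_2,p_3]$ is geodesic, then $(p_0 \cdot p_3)_{p_2} = 0$, and one inducts along the path), so $\varphi$ is an isometric embedding of the vertex metric of $\Gamma$ into $T$, $\Gamma$ is $0$-hyperbolic, and the local-to-global theorem of \cite{CoornaertDelzantPapadopoulos:1990} --- which you correctly identify as the sole source of the threshold $n > 6144 \log_2(154) + 768 + 2288\delta$ --- is never invoked; note that in the tree setting part (i) of Lemma \ref{le:rel_hyp_2} alone suffices and part (ii) is not needed. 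The trade-off: the paper's proof is two lines and uses Proposition \ref{pr:weak_rel_hyp} as a black box, its only real content being that $P(n)$ rescales to $P(nk)$ while $\delta = 0$ is scale-invariant; yours is longer and requires writing out the propagation step you flagged as the main obstacle (a standard fact about concatenations with vanishing Gromov products at the junctions in a $\Lambda$-tree), but it is self-contained, explains conceptually why no threshold on $n$ survives at $\delta = 0$, and yields the sharper intermediate statement that $\Gamma$ embeds isometrically into a tree rather than merely being hyperbolic. One shared caveat: both arguments, like Proposition \ref{pr:weak_rel_hyp} itself, implicitly read $P(n)$(i) with $\alpha > 0$, since the quasi-isometry with $\Gamma(G, S \cup G_x)$ divides by $\alpha$.
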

\begin{proof} Define $T^k$ to be the metric space $T$ with all distances multiplied by $k$ and 
take a basepoint $x^k$. It is obvious that $G$ acts on $T^k$ for any $k$, so we can define $B_n^k 
= \{g \in G \mid d(x^k, g x^k) \leqslant n\}$. It is easy to see that $B_n^k = B_{nk}$, thus, the 
action of $G$ on $T^k$ has property $P(nk)$.

Since $\delta = 0$ for all $T^k$, it follows that $a \delta^2 + b\delta + c = c$ for any such action, 
and for some $k$ we obtain $n k \geqslant c$, as required.
\end{proof}

\section{Completing hyperbolic $\Z$- and $\R$-metric \\ spaces}
\label{sec:geod_spaces}

In this section we investigate the question of ``completing'' a given non-geodesic hyperbolic
$\Z$-metric space $X$, that is, constructing a geodesic hyperbolic $\Z$-metric space
$\overline{X}$ in which $X$ (quasi-)isometrically embeds. Observe that any $\delta$-hyperbolic
$\Z$-metric space embeds isometrically into a complete geodesic $\delta$-hyperbolic $\R$-metric
space (see \cite{Bonk_Schramm:2000}) but this completion does not have to be a $\Z$-metric space.

Given a $\delta$-hyperbolic $\Z$-metric space $(X, d)$ which we fix for the rest of this section, 
below we introduce two $\Z$-completions of $X$ which we call $\Gamma_1(X)$ and $\Gamma_2(X)$. We 
shall also define an analogous construction $\beta(X)$ when $X$ is an $\R$-metric space.

Our constructions will have, compared to Bonk and Schramm's, the disadvantage that the hyperbolicity
constant will increase. However, they will have the advantage that isometries, embeddings and
quasi-isometries of $X$ extend easily and that boundaries are easy to work with.

\subsection{$\Gamma_1(X)$}
\label{subs:gamma_1}

Define a graph $\Gamma_1(X)$ as follows: to the set of points of $X$ which we call {\em essential
vertices} we add new vertices which fill ``gaps'' between essential vertices.
\begin{enumerate}
\item[(1)] Define $\Gamma_1(X) = X$, that is, all vertices of $\Gamma_1(X)$ initially are essential.

\item[(2)] For any pair $\{x, y\}$ of essential vertices with the property that there exists no $z \in
\Gamma_1(X)$ such that $d(x,y) = d(x,z) + d(z,y)$, add to $\Gamma_1(X)$ all vertices on a $\Z$-path 
of distance $d(x,y)$. The added $\Z$-paths we call {\em basic} and the new vertices we call {\em 
auxiliary vertices}. Observe that after this step, for every essential vertices of $\Gamma_1(X)$ 
there exists a $\Z$-geodesic segment (composed from auxiliary vertices) connecting them.

\item[(3)] For any triple $\{x,y,z\}$ of essential vertices, consider the projection of the triangle
$\Delta(x,y,z)$ onto the tripod $T(x,y,z)$. Every two auxiliary vertices of $\Delta(x,y,z)$ which
map into the same point of the tripod we connect by a $\Z$-path whose length is the smallest
integer larger or equal to $4\delta$ (that is, we add to $\Gamma_1(X)$ all vertices on this path)
unless there exists already a path of length less than or equal to $4\delta$ between them. The
added paths we call {\em bridges} and the new vertices we call {\em negligible vertices}.

\item[(4)] We extend the metric $d : X \to \Z$ to the metric $d : \Gamma_1(X) \to \Z$ as
follows:
\begin{enumerate}
\item[(a)] the distance between two essential vertices is inherited from $X$,

\item[(b)] the distance between an auxiliary vertex to the adjacent essential vertices is defined
by construction, hence, the distance from an auxiliary vertex to any other either essential, or
auxiliary vertex is also defined (as the minimum of lengths of paths connecting them),

\item[(c)] the distance from a negligible vertex to the adjacent auxiliary vertices is defined
by construction, so, the distance from a negligible vertex to any other vertex of $\Gamma_1(X)$ is
also defined.
\end{enumerate}
\end{enumerate}

\begin{remark}
\label{rem:gamma_1}
\begin{enumerate}
\item Observe that if $\delta = 0$ then the process of building $\Gamma_1(X)$ is equivalent to the
construction of a $\Lambda$-tree out of a $0$-hyperbolic $\Lambda$-metric space (see, for example,
\cite[Theorem 2.4.4]{Chiswell:2001}) since all bridges have length $0$.
\item If $X$ is geodesic then $\Gamma_1(X) = X$. Indeed, if $X$ is geodesic then for any essential
vertices $x$ and $y$ there is no essential vertex $z$ such that $d(x,y) = d(x,z) + d(y,z)$ only when
$d(x,y) = 1$. So, no auxiliary vertices are added. Finally, since $X$ is hyperbolic, for any pair of
auxiliary vertices there is already a path of length at most $4\delta$. Hence, no bridges are added
and, hence, no negligible vertices are added either.
\end{enumerate}
\end{remark}

\begin{remark}
\label{rem:gamma_1_unique}
Note that $\Gamma_1(X)$ is unique for a given $X$. Indeed it is easy to see that the steps $(1)$ and 
$(2)$ above do not depend on the order in which we process pairs of essential vertices. As for the 
step $(3)$, since bridges are exactly of length $4\delta$, the only cases where a bridge is not added 
is one where there already was a path which did not contain bridges, so the order in which we process 
triples of essential vertices does not matter.
\end{remark}

By \cite[Theorem 4.1]{Bonk_Schramm:2000}, $X$ isometrically embeds into a complete geodesic
$\delta$-hyperbolic $\R$-metric space $\overline{X}$. Denote the metric on $\overline{X}$ by 
$\bar{d}$. We are going to use $(\overline{X}, \bar{d})$ in our construction below.

Define a map $\varphi: \Gamma_1(X) \to \overline{X}$ as follows. First of all, observe that the set
of essential vertices of $\Gamma_1(X)$ is naturally identified with $X$, hence, it embeds into
$\overline{X}$. Next, for a pair $\{x, y\}$ of essential vertices, the basic path between them in
$\Gamma_1(X)$ can be mapped to some geodesic segment between $\varphi(x)$ and $\varphi(y)$.
Finally, for a pair $\{x, y\}$ of auxiliary vertices (whose images under $\varphi$ are already defined),
the bridge between them can also be mapped to a geodesic segment between $\varphi(x)$ and
$\varphi(y)$. Observe that $\varphi$ is not unique but it is well-defined by the construction.

\begin{lemma}
\label{le:gamma_1_1}
Let $v,w$ be vertices of $\Gamma_1(X)$.
\begin{enumerate}
\item[(i)] If $v$ and $w$ are essential then $d(v,w) = \bar{d}(\varphi(v), \varphi(w))$.
\item[(ii)] If $v$ and $w$ are auxiliary then
$$\bar{d}(\varphi(v), \varphi(w)) \leqslant d(v,w) \leqslant \bar{d}(\varphi(v), \varphi(w)) + 24\delta$$
\item[(iii)] If $v$ is essential and $w$ is auxiliary then
$$\bar{d}(\varphi(v), \varphi(w)) \leqslant d(v,w) \leqslant \bar{d}(\varphi(v), \varphi(w)) + 8\delta$$
\end{enumerate}
\end{lemma}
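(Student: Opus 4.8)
The plan is to read the statement as asserting that $\varphi$ is a quasi-isometric embedding of $\Gamma_1(X)$ into $\overline{X}$ with sharp additive constants depending on the types of the endpoints, and to prove all three parts by separating the two directions of each inequality. The lower bounds (and the equality in (i)) will come from the observation that $\varphi$ is $1$-Lipschitz as a map from the graph metric $d$ on $\Gamma_1(X)$ to $\bar d$. First I would record that on essential vertices $\varphi$ is the Bonk--Schramm isometric embedding, so $d(v,w)=\bar d(\varphi(v),\varphi(w))$ whenever $v,w$ are essential, which is exactly (i) and simultaneously shows that each basic path maps isometrically onto a geodesic of $\overline{X}$ of the matching length. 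For a bridge of length $\lceil 4\delta\rceil$ between auxiliary vertices $p,q$, I would note that $p,q$ lie on the sides of some essential triangle $\Delta(x,y,z)$ and project to the same point of $T(x,y,z)$; since $\varphi$ preserves the distances among $x,y,z$, the images $\varphi(p),\varphi(q)$ project to the same point of $T(\varphi x,\varphi y,\varphi z)$, and the $4\delta$-thinness of geodesic triangles in the $\delta$-hyperbolic space $\overline{X}$ (Proposition \ref{pr:1.2.9}, $(H1,\delta)\Rightarrow(H2,4\delta)$) gives $\bar d(\varphi(p),\varphi(q))\le 4\delta\le\lceil4\delta\rceil$. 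Hence every edge of $\Gamma_1(X)$ maps to a $\bar d$-step of length at most $1$, so $\bar d(\varphi(v),\varphi(w))\le d(v,w)$ for all vertices, which is the left-hand inequality in (ii) and (iii).

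For the upper bound in (iii), with $v$ essential and $w$ auxiliary lying on a basic path $[x,y]$, I would run a tripod--bridge projection. Working in the tripod $T(v,x,y)$ of the essential triple, the point $w$ on $[x,y]$ projects onto one of the legs incident to $v$, say the $x$-leg, and is matched by an auxiliary vertex $w'$ on $[v,x]$ at the same distance from $x$. By construction there is a bridge, so $d(w,w')\le\lceil4\delta\rceil=4\delta$, while $4\delta$-thinness of $\overline{X}$ gives $\bar d(\varphi(w),\varphi(w'))\le 4\delta$. Since $w'$ sits on the basic path $[v,x]$, which maps isometrically, $d(v,w')=\bar d(\varphi(v),\varphi(w'))\le\bar d(\varphi(v),\varphi(w))+4\delta$. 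Combining, $d(v,w)\le d(v,w')+d(w',w)\le\bar d(\varphi(v),\varphi(w))+8\delta$, which is (iii).

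For (ii), with $v$ on $[x,y]$ and $w$ on $[a,b]$, I would reduce to the previous situation in two stages. Stage one uses the tripod $T(x,a,b)$ to project $w$ to an auxiliary vertex $w_1$ on a side incident to $x$ (say $[x,a]$), paying $d(w,w_1)\le 4\delta$ and $\bar d(\varphi(w),\varphi(w_1))\le 4\delta$. Stage two applies the argument of (iii) inside $T(x,y,a)$ to the two vertices $v\in[x,y]$ and $w_1\in[x,a]$, both now on sides incident to the common essential vertex $x$: projecting the one further from $x$ onto the shared leg, inserting one bridge, and travelling the residual length along a basic path yields $d(v,w_1)\le\bar d(\varphi(v),\varphi(w_1))+8\delta$. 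Feeding back the stage-one estimate $\bar d(\varphi(v),\varphi(w_1))\le\bar d(\varphi(v),\varphi(w))+4\delta$ gives $d(v,w)\le\bar d(\varphi(v),\varphi(w))+16\delta$ in the principal case, with the remaining legroom up to $24\delta$ absorbing the cases in which the two projections fall on different legs and the path must be routed through the tripod centres.

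The main obstacle I expect is the bookkeeping in (ii): one must carefully enumerate the subcases according to which legs of the comparison tripods the projections of $v$ and $w$ land on, track the accumulation of the $4\delta$ errors at each bridge and at each application of thinness, and verify in every branch that the worst case stays within $24\delta$. A secondary point requiring care is the coherence of the identification $T(x,y,z)=T(\varphi x,\varphi y,\varphi z)$ and of the bridge-matching under $\varphi$, which relies throughout on part (i) together with the fact (Remark \ref{rem:gamma_1}, $\delta\in\Z$) that $\lceil4\delta\rceil=4\delta$ so that bridge lengths coincide exactly with the thinness constant of $\overline{X}$.
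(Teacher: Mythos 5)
Your proposal is correct and takes essentially the same route as the paper: the lower bound comes from noting that basic paths embed isometrically while bridge endpoints are forced within $4\delta$ of each other in $\overline{X}$ by thinness, (iii) is proved by one thinness/bridge projection of $w$ onto a side incident to $v$ at a cost of $4\delta + 4\delta$, and (ii) by chaining such projections through intermediate bridge-connected points with a case analysis kept inside the $24\delta$ budget. The only cosmetic difference is organizational: where the paper works with the geodesic quadrilateral $\{v_1, v_2, w_2, w_1\}$ and phrases the estimates as $(1, k\delta)$-quasi-geodesic bounds, you split the same configuration into two comparison tripods ($T(x,a,b)$, then $T(x,y,a)$) and do the bookkeeping by direct triangle inequalities — the same triangle-by-triangle thinness argument either way.
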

\begin{proof} First, notice that for any $v, w \in \Gamma_1(X)$ we have
$$d(v,w) \geqslant \bar{d}(\varphi(v), \varphi(w)).$$
Indeed, any edge of $\Gamma_1(X)$ belongs either to a basic path, or to a bridge. Basic paths are
embedded isometrically in $\overline{X}$ since $X$ is embedded isometrically in $\overline{X}$.
However, $\overline{X}$ is also $\delta$-hyperbolic, so for a pair of auxiliary vertices connected
by a bridge in $\Gamma_1(X)$, their images in $\overline{X}$ are also at a distance of at most
$4\delta$. It follows that $\varphi$ can only shorten distances.

\smallskip

(i) Obvious.

\smallskip

(iii) By the construction, $w$ is on the geodesic linking two essential vertices $w_1$ and $w_2$ in
$\Gamma_1(X)$. Consider the geodesic triangle $\Delta(v, w_1, w_2)$. Hence, $w$ is at a distance
of at most $4\delta$ from either $[v, w_1]$, or $[v, w_2]$. Let $\gamma$ be the path $[v, w'] \cup
[w', w]$, where $w' \in [v, w_i]$ for $i = 1,2$ and let $l(\gamma)$ the length of $\gamma$. Then
$[v, w']$ is isometrically embedded in $\overline{X}$ and $d(w,w') \leqslant 4\delta$, so
$\varphi|_\gamma$ is a $(1, 4\delta)$-quasi-isometry. Furthermore, $\varphi([v, w'])$ is a geodesic
and $\bar{d}(\varphi(w), \varphi(w')) \leqslant 4\delta$, so $\varphi(\gamma)$ is a
$(1,4\delta)$-quasi-geodesic. It follows that
$$d(v,w) \leqslant l(\gamma) \leqslant \bar{d}(\varphi(v),\varphi(w)) + 8\delta.$$

\begin{figure}[htbp]
\centering
\includegraphics[scale=1]{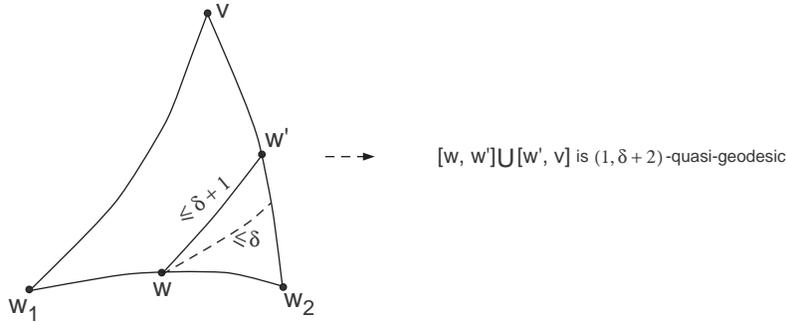}
\caption{Case (iii) in the proof of Lemma \ref{le:gamma_1_1}}
\label{pic2}
\end{figure}

(ii) By the construction, there exist essential vertices $v_1, v_2, w_1, w_2 \in \Gamma_1(X)$ such
that $v \in [v_1, v_2],\ w \in [w_1, w_2]$. Consider the geodesic square $\{v_1, v_2, w_2, w_1\}$
(linked together in the given order). Suppose $v$ is at a distance of $4\delta$ from $v' \in [v_1,
w_1]$. We can always assume this since $v$ must be at a distance of $4\delta$ from either $[v_1,
w_1]$, or $[v_2, w_1]$, and we can twist the square to fit this situation.

\begin{figure}[htbp]
\centering
\includegraphics[scale=0.8]{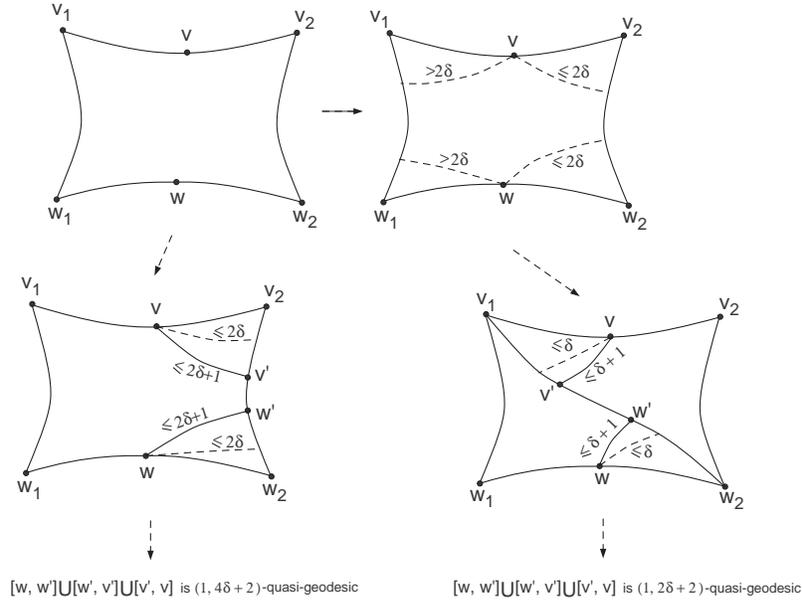}
\caption{Case (ii) in the proof of Lemma \ref{le:gamma_1_1}}
\label{pic1}
\end{figure}

If $w$ is at a distance $4\delta$ from $w' \in [v_1, w_1]$ then set $\gamma = [v, v'] \cup [v', w']
\cup [w', w]$. Otherwise, $w$ is at a distance of $4\delta$ from $w' \in [v_1, w_2]$ and $v'$ is at
a distance of $4\delta$ from either $v'' \in [v_1, w_2]$, or $v'' \in[w_1, w_2]$. Hence, we set
$\gamma = [v, v'] \cup [v', v''] \cup [v'',w'] \cup [w', w]$ in the former case, and $\gamma = [v, v']
\cup [v', v''] \cup [v'', w]$ in the latter one.

Suppose first that $\gamma = [v, v'] \cup [v', w'] \cup [w', w]$. Then $\varphi([v', w'])$ is an
isometrically embedded geodesic. Next, the other two segments of $\gamma$ have lengths of at most
$4\delta$, and so do their images. It follows that $\varphi|_\gamma$ is a $(1, 8\delta)$-quasi-isometry
and $\varphi(\gamma)$ is a $(1, 8\delta)$-quasi-geodesic. Hence,
$$d(v,w) \leqslant l(\gamma) \leqslant \bar{d}(\varphi(v), \varphi(w)) + 16\delta.$$
Now assume that $\gamma = [v, v'] \cup [v', v''] \cup [v'', w'] \cup [w', w]$. Then $\varphi([v'', 
w'])$ is an isometrically embedded geodesic. The other three segments of $\gamma$ have lengths of 
at most $4\delta$, and so do their images. It follows that $\varphi|_\gamma$ is a $(1, 
12\delta)$-quasi-isometry and $\varphi(\gamma)$ is a $(1, 12\delta)$-quasi-geodesic. Hence,
$$d(v,w) \leqslant l(\gamma) \leqslant \bar{d}(\varphi(v), \varphi(w)) + 24\delta.$$
Finally, suppose $\gamma = [v, v'] \cup [v', v''] \cup [v'', w]$. Then $\varphi([v'', w])$ is an
isometrically embedded geodesic. The other two segments of $\gamma$ have lengths of at most 
$4\delta$, and so do their images. Hence, $\varphi|_\gamma$ is a $(1, 8\delta)$-quasi-isometry and 
$\varphi(\gamma)$ is a $(1, 8\delta)$-quasi-geodesic. So
$$d(v,w) \leqslant l(\gamma) \leqslant \bar{d}(\varphi(v), \varphi(w)) + 16\delta.$$
\end{proof}

\begin{prop}
\label{pr:gamma_1_2}
$\Gamma_1(X)$ is $\delta'$-hyperbolic with $\delta '= 29\delta$.
\end{prop}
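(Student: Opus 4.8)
The plan is to import hyperbolicity from the Bonk--Schramm completion $\overline{X}$ through the comparison map $\varphi$, using the $4$-point characterization of hyperbolicity (Lemma \ref{le:1.2.6}). Recall that $\overline{X}$ is a geodesic $\delta$-hyperbolic $\R$-metric space by \cite{Bonk_Schramm:2000}, hence its points satisfy the $4$-point condition with constant $2\delta$. Since $\Gamma_1(X)$ is a $\Z$-metric space, by Lemma \ref{le:1.2.6} it suffices to verify the $4$-point condition on $\Gamma_1(X)$ with constant $2\delta' = 58\delta$, that is, with $\delta' = 29\delta$.

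The first step is to upgrade Lemma \ref{le:gamma_1_1} to a single two-sided estimate valid for \emph{all} vertices $v,w$ of $\Gamma_1(X)$, namely
\[
\bar{d}(\varphi(v),\varphi(w)) \leqslant d(v,w) \leqslant \bar{d}(\varphi(v),\varphi(w)) + 28\delta .
\]
The lower bound holds for every pair because $\varphi$ is non-expanding on each edge (basic paths embed isometrically, while each bridge, of length $4\delta$, maps onto a geodesic segment of $\bar{d}$-length at most $4\delta$), so any path in $\Gamma_1(X)$ maps to a path of no greater length in $\overline{X}$. For the upper bound, Lemma \ref{le:gamma_1_1} already furnishes the constant $24\delta$ when $v,w$ are essential or auxiliary. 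A negligible vertex lies on a bridge of length $4\delta$, hence within distance $2\delta$ of an auxiliary endpoint; replacing each negligible vertex by such a nearby auxiliary vertex and combining the triangle inequality with the non-expansion of $\varphi$ absorbs this correction and raises the uniform additive constant to $28\delta$.

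With the uniform estimate in hand, the transfer is routine. Given four vertices $v,w,y,z$ of $\Gamma_1(X)$, I would apply the upper bound to the two terms of $d(v,y)+d(w,z)$, then the $4$-point condition of $\overline{X}$ to the four points $\varphi(v),\varphi(w),\varphi(y),\varphi(z)$, and finally the lower bound to the two maximands. This chain yields
\[
d(v,y)+d(w,z) \leqslant \max\{d(v,w)+d(y,z),\ d(w,y)+d(v,z)\} + 2\delta + 2\cdot 28\delta ,
\]
so the $4$-point condition holds on $\Gamma_1(X)$ with constant $2(\delta+28\delta)=58\delta$, and Lemma \ref{le:1.2.6} delivers $\delta' = 29\delta$.

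The main obstacle is not the transfer but the bookkeeping of constants in the first step: extending the quasi-geodesic estimates of Lemma \ref{le:gamma_1_1} to negligible vertices while keeping the uniform additive constant at exactly $28\delta$, since a careless chaining through intermediate auxiliary vertices inflates it. The cleanest route is to construct, for an arbitrary pair $v,w$, a single path consisting of one isometrically embedded geodesic segment together with a controlled number of short segments (each of length at most $4\delta$, plus at most $2\delta$ at each end arising from negligible vertices), and to check that its $\varphi$-image is a $(1,c)$-quasi-geodesic with $2c \leqslant 28\delta$; the non-expansion of $\varphi$ then converts this quasi-geodesic estimate into the required upper bound.
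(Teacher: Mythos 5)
Your proposal follows the paper's proof essentially verbatim: both show that $\varphi : \Gamma_1(X) \to \overline{X}$ is a $(1, 28\delta)$-quasi-isometric embedding using Lemma \ref{le:gamma_1_1}, handling geodesics with negligible endpoints by replacing each such endpoint with an auxiliary vertex at distance at most $2\delta$, and then transferring hyperbolicity back through $\varphi$. The only difference is presentational: you make explicit the four-point-condition transfer ($2\delta' = 2\delta + 2 \cdot 28\delta = 58\delta$, via Lemma \ref{le:1.2.6}) that the paper compresses into ``the result follows,'' and you correctly identify the constant bookkeeping at negligible vertices as the one step requiring care.
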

\begin{proof}  By Lemma \ref{le:gamma_1_1}, the restriction of $\varphi$ to any geodesic of
$\Gamma_1(X)$, whose endpoints are essential or auxiliary vertices, is a $(1, 24\delta)$-quasi-isometry.
Suppose then that a geodesic has negligible vertices as endpoints. Negligible vertices have always
valency $2$ and they belong to paths of length of at most $2\delta + 1$ linking auxiliary vertices
together. Thus, if $a, b$ are the endpoints, there exist auxiliary vertices $a', b'$ such that
$d(a, a'), d(b, b') \leqslant 2\delta$. It follows that, since $[a, b] \to [a', b']$ is a $(1,
4\delta)$-quasi-isometry by Lemma \ref{le:gamma_1_1} and $[a',b'] \to \overline{X}$ is a $(1,
24\delta)$-quasi-isometry, then $[a,b] \to \overline{X}$ is a $(1, 28\delta)$-quasi-isometry. Thus,
the embedding of any geodesic of $\Gamma_1(X)$ in $\overline{X}$ is a $(1, 28\delta)$-quasi-isometric.
It follows that $\varphi$ is a $(1, 28\delta)$-quasi-isometric embedding, and the result follows.
\end{proof}

\begin{lemma}
\label{le:gamma_1_3}
Let $g$ be an isometry of $X$. Then there exists an unique isometry $\bar{g}$ of $\Gamma_1(X)$ such 
that $\bar{g}|_X = g$.
\end{lemma}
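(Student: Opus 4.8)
The plan is to exploit the fact (Remark~\ref{rem:gamma_1_unique}) that $\Gamma_1(X)$ is \emph{canonically} determined by the metric space $(X,d)$: every added vertex is produced by the construction from purely metric data attached to the essential vertices, so an isometry $g$ of $X$ carries that data to itself and thereby induces an isometry $\bar g$ of $\Gamma_1(X)$. Concretely, I would build $\bar g$ in three stages matching the three stages of the construction, at each stage forcing $\bar g$ to be an isometry, and then verify separately that the resulting map is a well-defined bijective isometry and that it is the only extension of $g$.

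First I would set $\bar g|_X = g$ on the essential vertices (identified with $X$). Since $g$ preserves $d$, a pair $\{x,y\}$ is a \emph{gap} (admits no intermediate $z$ with $d(x,y)=d(x,z)+d(z,y)$) if and only if $\{gx,gy\}$ is, so $g$ permutes the basic paths. I extend $\bar g$ to each basic path by sending the auxiliary vertex at distance $i$ from $x$ on the basic path of $\{x,y\}$ to the auxiliary vertex at distance $i$ from $gx$ on the basic path of $\{gx,gy\}$; this is legitimate because $d(gx,gy)=d(x,y)$, so the two basic paths have equal length, and it is the only choice compatible with $\bar g$ being an isometry.

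Next, because $g$ preserves all pairwise distances within a triple $\{x,y,z\}$, the comparison tripod $T(x,y,z)$ is isometric to $T(gx,gy,gz)$ compatibly with the action of $\bar g$ already defined on the basic paths forming $\Delta(x,y,z)$; hence two auxiliary vertices project to the same point of $T(x,y,z)$ if and only if their $\bar g$-images project to the same point of $T(gx,gy,gz)$. Since $\bar g$ is, so far, an isometry of the subgraph built in steps $(1)$--$(2)$, it also preserves the clause ``there already exists a path of length $\leqslant 4\delta$'', so $g$ maps bridged pairs to bridged pairs, and I extend $\bar g$ across each bridge by arc-length (each bridge having the canonical length, the least integer $\geqslant 4\delta$). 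This defines $\bar g$ on all negligible vertices. Applying the same recipe to $g^{-1}$ yields a two-sided inverse, so $\bar g$ is a bijection; and since $\bar g$ sends edges to edges preserving their integer lengths, it preserves path lengths and hence the path metric $d$ on $\Gamma_1(X)$, so $\bar g$ is an isometry.

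For uniqueness, any isometry $\bar g$ with $\bar g|_X=g$ is already determined on essential vertices; for an auxiliary vertex $p$ on the basic path of $\{x,y\}$ at distance $i$ from $x$ one has $d(gx,\bar g(p))=i$ and $d(gy,\bar g(p))=d(gx,gy)-i$, which should force $\bar g(p)$ to be the corresponding auxiliary vertex, and analogous distance bookkeeping pins down the negligible vertices on bridges. The step I expect to require the most care is exactly this metric rigidity: I must check that each added vertex is \emph{uniquely} characterised by its distances to the essential (and already-placed) vertices, i.e. that no two distinct vertices of $\Gamma_1(X)$ occupy the same position relative to the essential skeleton, and in tandem that the ``unless a short path already exists'' exception in step $(3)$ is genuinely $g$-equivariant. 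Granting this rigidity, both existence and uniqueness of $\bar g$ follow.
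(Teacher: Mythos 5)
Your existence construction is essentially the paper's: since $g$ preserves $d$, it preserves gaps, lengths of basic paths and the tripod data of triples (including the ``a path of length $\leqslant 4\delta$ already exists'' exception), so it permutes basic paths and bridges, and $\bar g$ is obtained by extending along them by arc-length, with bijectivity from applying the same recipe to $g^{-1}$. This half is sound and matches the paper's proof.

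The uniqueness half, however, contains a genuine gap --- precisely the one you flagged and then granted. You reduce uniqueness to the claim that each added vertex is pinned down by its distances to the essential (and previously placed) vertices, but this rigidity is not proved, and it is not a safe reduction: in a $\delta$-hyperbolic graph with $\delta > 0$ geodesics are far from unique, so knowing $d(gx,\bar g(p)) = i$ and $d(gy,\bar g(p)) = d(x,y) - i$ does not by itself force $\bar g(p)$ to be the auxiliary vertex at arc-length $i$ on the basic path of $\{gx,gy\}$; a negligible vertex on a bridge, or a vertex of another basic path running through the same thin region, can realize the same distance profile, and ruling this out is essentially as hard as the lemma itself. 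The paper closes this with a different, combinatorial invariant rather than metric rigidity: auxiliary vertices have valence $3$ or $4$ unless they lie within distance $2\delta$ of an essential vertex, while negligible vertices have valence $2$ and lie at distance greater than $2\delta$ from every essential vertex. Any isometry of $\Gamma_1(X)$ restricting to $g$ on $X$ preserves valences and distances to the essential set, hence preserves the three classes of vertices; it must therefore carry basic paths to basic paths and bridges to bridges, and so coincides with $\bar g$. If you want to salvage your distance-bookkeeping route, you would in effect have to re-derive this type-preservation statement; the valence argument is the missing idea.
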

\begin{proof} Define $\bar{g}$ simply by mapping basic paths to basic paths (since we know the action 
on their endpoints) and bridges to bridges. Since $g$ is an isometry of $X$, it preserves the length 
of basic paths and the sizes of triangles, so it also preserves the presence of bridges.

The uniqueness of $\bar{g}$ is pretty obvious from the construction. Since all auxiliary vertices 
have valence $3$ or $4$, unless they are at distance smaller than or equal to $2 \delta$ from an 
essential vertex, and all negligible vertices have valence $2$, and are at a distance greater than 
$2 \delta$ from any essential vertex, it follows that any isometry of $\Gamma_1(X)$ which preserves 
essential vertices must also preserve auxiliary and negligible vertices. Therefore, any extension 
of $g$ will map basic paths to basic paths and bridges to bridges, and so will be equivalent to 
$\bar{g}$.
\end{proof}

\begin{lemma}
\label{le:gamma_1_4}
Let $Y$ be a geodesic $\Delta$-hyperbolic metric space with $X$ isometrically embedded into $Y$.
Then $\Gamma_1(X)$ is quasi-isometrically embedded into $Y$ and the constants of the
quasi-isometry depend only on $\delta$ and $\Delta$
\end{lemma}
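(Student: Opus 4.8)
The plan is to reproduce the construction and the estimates behind the embedding $\varphi:\Gamma_1(X)\to\overline X$, but with the Bonk--Schramm completion $\overline X$ replaced by the given space $Y$. First I would fix an isometric embedding $\iota:X\hookrightarrow Y$ and define a map $\psi:\Gamma_1(X)\to Y$ exactly as $\varphi$ was defined: essential vertices go to $Y$ via $\iota$; a basic path between essential vertices $x,y$ is sent isometrically onto a chosen $Y$-geodesic from $\iota(x)$ to $\iota(y)$ (possible since $\iota$ is isometric, so $d_Y(\iota x,\iota y)=d(x,y)$ equals the length of the basic path and $Y$ is geodesic); and a bridge between auxiliary vertices $a,b$ is sent onto a chosen $Y$-geodesic from $\psi(a)$ to $\psi(b)$. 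The goal is to show that $\psi$ is a quasi-isometric embedding with multiplicative and additive constants that are functions of $\delta$ and $\Delta$ alone.

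The one place where hyperbolicity of the target entered the analysis of $\varphi$ was the claim that a bridge, of length $4\delta$ in $\Gamma_1(X)$, is carried to a segment of length at most $4\delta$ in $\overline X$; there it followed from $\delta$-hyperbolicity of $\overline X$ together with the fact that the two endpoints of a bridge map to the same point of a comparison tripod $T(x,y,z)$. The key observation for $Y$ is that these comparison tripods are unchanged: since $\iota$ is isometric, the Gromov products $(y\cdot z)_x$ of essential vertices computed in $Y$ coincide with those computed in $X$, so $T(\iota x,\iota y,\iota z)=T(x,y,z)$ and the two bridge endpoints $a,b$ still map to a single tripod point. Applying Proposition \ref{pr:1.2.9} (implication $(H1,\Delta)\Rightarrow(H2,4\Delta)$) to the $\Delta$-hyperbolic geodesic space $Y$, the triangle $\Delta(\iota x,\iota y,\iota z)$ is $4\Delta$-thin, whence $d_Y(\psi(a),\psi(b))\leqslant 4\Delta$. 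Thus each bridge of $\Gamma_1(X)$ is carried into a $Y$-geodesic of length at most $4\Delta$.

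With this in hand the two inequalities follow by elementary bookkeeping, patterned on Lemma \ref{le:gamma_1_1} and Proposition \ref{pr:gamma_1_2}. For the upper bound I would push a $\Gamma_1(X)$-geodesic $\sigma$ from $v$ to $w$ forward: basic edges keep their length while bridge length is stretched by a factor bounded in terms of $\Delta/\delta$, so $d_Y(\psi v,\psi w)\leqslant l(\psi(\sigma))\leqslant\lambda\,d_{\Gamma_1(X)}(v,w)$ for an integer $\lambda$ bounding $\max\{1,\Delta/\delta\}$. For the lower bound I would, for essential or auxiliary $v,w$, build the short auxiliary path $\gamma$ used in Lemma \ref{le:gamma_1_1}, a concatenation of one isometrically embedded sub-basic-path with a bounded number (at most three) of short detours, each of $\Gamma_1$-length at most $4\delta$ and $Y$-length at most $4\Delta$ by the previous paragraph; the elementary fact that a geodesic followed by boundedly many short segments is a quasi-geodesic shows $\psi|_\gamma$ is a $(1,c)$-quasi-isometric embedding of its parametrizing interval with $c=O(\delta+\Delta)$, whence $d_{\Gamma_1(X)}(v,w)\leqslant l(\gamma)\leqslant d_Y(\psi v,\psi w)+c$. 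Negligible vertices lie within $2\delta$ of an auxiliary vertex and map within $4\Delta$ of its image, so they are absorbed into the additive constant exactly as in Proposition \ref{pr:gamma_1_2}. Combining the two inequalities yields the quasi-isometric embedding with constants depending only on $\delta$ and $\Delta$.

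The main obstacle, and the only genuinely geometric step, is the bridge-shortness bound $d_Y(\psi(a),\psi(b))\leqslant 4\Delta$; everything else is the length accounting already carried out for $\varphi$. Its crux is that isometry of $\iota$ forces the comparison tripods — and hence the combinatorial data built into $\Gamma_1(X)$ from the $\delta$-hyperbolicity of $X$, namely which pairs of auxiliary vertices are bridge-joined — to be respected inside $Y$, so that $\Delta$-thinness of $Y$ applies to the very same tripod points. One should finally verify that $\lambda$ and the additive term $O(\delta+\Delta)$ depend on nothing but $\delta$ and $\Delta$, which is immediate from the explicit estimates above.
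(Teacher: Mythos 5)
Your proposal is correct and takes essentially the same route as the paper: the paper's proof likewise extends the embedding by sending basic paths isometrically and bridges to $Y$-geodesics, bounds bridge images by $4\Delta$ (your tripod-preservation argument via Proposition \ref{pr:1.2.9} is exactly what justifies the paper's terse claim), and then repeats the path bookkeeping of Lemma \ref{le:gamma_1_1}, with the same multiplicative distortion $\max\left\{1,\tfrac{\Delta}{\delta}\right\}$ from bridge stretching and an additive constant of order $\delta+\Delta$.
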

\begin{proof} Denote by $d^*$ the metric on $Y$ and let $\psi : X \to Y$ be the embedding of $X$
into $Y$. We can extend it to $\Gamma_1(X)$ in an obvious way by mapping geodesics to geodesics.
Basic paths are embedded isometrically, and the images of bridges cannot be longer than $4\Delta$.
If we have a path in $\Gamma_1(X)$, it maps to a path of length at least multiplied by $\min\left\{1,
\frac{\Delta}{\delta}\right\}$, so $\min\{1,\frac{\delta}{\Delta}\} d^*(\psi(x),\psi(y)) \leqslant d(x,y)$
for any $x,y \in \Gamma_1(X)$. On the other hand, if we have a path $\gamma$ which consists of $n$
bridges and only one segment of a basic path, since basic paths are isometrically embedded, we have
that $\psi|_\gamma$ is a $(1, n \cdot \max\{4\delta, 4(\Delta - \delta)\})$-quasi-isometric
embedding and $\psi(\gamma)$ is a $(1, n \cdot 4\Delta)$-quasi-geodesic. Using the same argument
as in the proof of Lemma \ref{le:gamma_1_1}, we have that $\psi$ extended to $\Gamma_1(X)$ is a
$(1, 12 (\Delta + \max\{\delta, \Delta - \delta\}))$-quasi-isometric embedding.
\end{proof}

\begin{lemma}
\label{le:gamma_1_5}
Let $Y$ be a $\Delta$-hyperbolic metric space with $X$ quasi-isometric to $Y$. Then $\Gamma_1(X)$ is
quasi-isometric to $\Gamma_1(Y)$ and the constants of the quasi-isometry depend only on $\delta$,
$\Delta$, and the constants of the quasi-isometry between $X$ and $Y$.
\end{lemma}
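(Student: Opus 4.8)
The plan is to extend the given quasi-isometry to the completions and then check that the extension is itself a quasi-isometry. Write $f\colon X\to Y$ for the given $(\lambda,c)$-quasi-isometry. By Lemma~\ref{le:gamma_1_1}(i) the essential vertices of $\Gamma_1(X)$ carry exactly the metric of $X$, and likewise for $Y$, so $X$ and $Y$ sit isometrically inside $\Gamma_1(X)$ and $\Gamma_1(Y)$ as their essential-vertex sets; by Proposition~\ref{pr:gamma_1_2} both completions are geodesic and hyperbolic, with constants $29\delta$ and $29\Delta$. I would define $F\colon\Gamma_1(X)\to\Gamma_1(Y)$ by $F(x)=f(x)$ on essential vertices, and extend it over each basic path and each bridge of $\Gamma_1(X)$ by sending it to a chosen geodesic of $\Gamma_1(Y)$ joining the images of its endpoints (such geodesics exist since $\Gamma_1(Y)$ is geodesic), distributing the intermediate vertices proportionally along that geodesic.

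First I would show that $F$ is a quasi-isometric embedding; this is the analogue of Lemma~\ref{le:gamma_1_4}, with the isometric embedding there replaced by $f$ and the target taken to be $\Gamma_1(Y)$. The composite $X\xrightarrow{f}Y\hookrightarrow\Gamma_1(Y)$ is again a $(\lambda,c)$-quasi-isometric embedding, because the inclusion is isometric on essential vertices. Consequently each basic path of $\Gamma_1(X)$, being a geodesic realizing an $X$-distance, maps to a $(\lambda,c)$-quasi-geodesic of $\Gamma_1(Y)$, while each bridge (of length at most $4\delta$) maps to a geodesic of length at most $4\Delta$. Applying the same local-to-global estimate as in the proofs of Lemma~\ref{le:gamma_1_4} and Proposition~\ref{pr:gamma_1_2}---a concatenation of one basic segment with finitely many bridges is a local quasi-geodesic and hence, by \cite[Theorem~3.1.4]{CoornaertDelzantPapadopoulos:1990}, a genuine quasi-geodesic of $\Gamma_1(Y)$---shows that $F$ is a $(\lambda',c')$-quasi-isometric embedding whose constants depend only on $\delta$, $\Delta$, $\lambda$ and $c$. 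The coarse-Lipschitz upper bound is immediate, since adjacent vertices of $\Gamma_1(X)$ map to boundedly close vertices of $\Gamma_1(Y)$.

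It remains to prove that $F$ is coarsely surjective, which I expect to be the main obstacle. Every essential vertex of $\Gamma_1(Y)$ lies within a bounded distance of $f(X)=F(X)$ since $f$ is coarsely dense, and every negligible vertex lies within $2\Delta$ of an auxiliary one (as in Proposition~\ref{pr:gamma_1_2}), so it is enough to reach the auxiliary vertices. Given an auxiliary vertex $p$ on a basic path of $\Gamma_1(Y)$ with essential endpoints $y_1,y_2$, I would choose $x_1,x_2\in X$ with $f(x_i)$ boundedly close to $y_i$, take a geodesic of $\Gamma_1(X)$ from $x_1$ to $x_2$, and apply $F$ to it. As $F$ is a quasi-isometric embedding, the image is a quasi-geodesic of $\Gamma_1(Y)$ whose endpoints are boundedly close to $y_1,y_2$; by stability of quasi-geodesics in the hyperbolic space $\Gamma_1(Y)$ (see \cite{CoornaertDelzantPapadopoulos:1990}) it stays within bounded Hausdorff distance of the basic path from $y_1$ to $y_2$, so $p$, and hence every vertex of $\Gamma_1(Y)$, is within a bounded distance of $F(\Gamma_1(X))$. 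All constants produced this way depend only on $\delta$, $\Delta$ and the quasi-isometry constants of $f$, which gives the asserted dependence. The delicate points will be keeping the additive constants uniform through the proportional reparametrization of basic paths and making the fellow-traveling estimate quantitative, both of which are routine given the hyperbolicity of the two completions.
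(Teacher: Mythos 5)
Your proposal is correct and follows essentially the same route as the paper: define the map as $f$ on essential vertices, extend it proportionally over basic paths and bridges, and obtain the two-sided bounds by reusing the decomposition arguments behind Lemmas \ref{le:gamma_1_1} and \ref{le:gamma_1_4} (the paper's proof does exactly this, with the cosmetic difference that it sends bridges to bridges of $\Gamma_1(Y)$ rather than to chosen geodesics joining the image endpoints). If anything, your write-up is more complete than the paper's, since you explicitly verify coarse surjectivity via stability of quasi-geodesics in the hyperbolic space $\Gamma_1(Y)$, a step the paper's proof leaves implicit even though it is needed for the conclusion ``quasi-isometric'' rather than merely ``quasi-isometrically embedded.''
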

\begin{proof} Let $\psi : X \to Y$ be a $(\lambda, k)$-quasi-isometry. Recall that $\bar{d}$ is the
metric on $\Gamma_1(X)$ and denote by $\overline{d^*}$ the metric on $\Gamma_1(Y)$. We can build a
map $\Psi : \Gamma_1(X) \to \Gamma_1(Y)$ as follows. Define $\Psi(x) = \psi(x)$ for $x \in X$. Next,
we can approximate how auxiliary vertices should be mapped based on how far they are from the
endpoints of the basic paths they lie on. In other words, if $z$ is on a basic path $[x, y]$ at a
distance of $d$ from $x$, then $\Psi(z)$ should be the auxiliary vertex on $[\Psi(x), \Psi(y)]$ at a
distance of $d \cdot \frac{\bar{d}(x,y)}{\overline{d^*}(\Psi(x), \Psi(y))}$ from $\Psi(x)$. Finally,
we map bridges to bridges approximating how integer distances should be mapped. To see that $\Psi$
is a quasi-isometry, notice that for any $x,y \in X$ we have that $\Psi|_{[x,y]}$ is a
$(\lambda, k+1)$-quasi-isometry, then reuse the same paths we have above to obtain bounds for
$\bar{d}(a,b)$ in terms of $\overline{d^*}(\Psi(a), \Psi(b))$ by using the fact that bridges have
length $4\delta$, their images have length $4\Delta$ and basic paths are quasi-isometrically mapped.
\end{proof}

\begin{corollary}
\label{co:gamma_1_6}
Let $Y$ be a geodesic $\Delta$-hyperbolic metric space such that $X$ is quasi-isometrically embedded
into $Y$. Then $\Gamma_1(X)$ is quasi-isometrically embedded into $Y$.
\end{corollary}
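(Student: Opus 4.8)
The plan is to factor the given quasi-isometric embedding through its image and then glue together the two preceding lemmas. Write $\psi : X \to Y$ for the quasi-isometric embedding and let $Z = \psi(X) \subseteq Y$ carry the metric induced from $Y$. By the very definition of a quasi-isometric embedding, $\psi$ is a quasi-isometry from $X$ \emph{onto} $Z$, so $X$ and $Z$ are quasi-isometric; this surjectivity onto $Z$ is the point one must be careful about, since only then can $Z$ serve as the target of Lemma~\ref{le:gamma_1_5}.

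First I would check that $Z$ is a legitimate input for both lemmas. Since $Z$ is a subset of $Y$ and the $\Delta$-hyperbolicity of a metric space is expressed by the $4$-point condition of Lemma~\ref{le:1.2.6}, which constrains only four points at a time, $Z$ inherits that condition with the same constant and is therefore $\Delta$-hyperbolic; moreover, being a subspace of the $\Z$-metric space $Y$, it is itself a $\Z$-metric space. Thus $Z$ is a $\Delta$-hyperbolic $\Z$-metric space that is isometrically contained as a subspace of the geodesic $\Delta$-hyperbolic space $Y$, while being quasi-isometric to $X$.

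Now I would invoke the two lemmas in turn. Applying Lemma~\ref{le:gamma_1_5} to the quasi-isometry $X \sim Z$ gives that $\Gamma_1(X)$ is quasi-isometric to $\Gamma_1(Z)$, with constants depending only on $\delta$, $\Delta$, and the constants of $\psi$. Applying Lemma~\ref{le:gamma_1_4} to the isometric inclusion $Z \hookrightarrow Y$ gives that $\Gamma_1(Z)$ is quasi-isometrically embedded into $Y$, with constants depending only on $\delta$ and $\Delta$. Composing the quasi-isometry $\Gamma_1(X) \to \Gamma_1(Z)$ with the quasi-isometric embedding $\Gamma_1(Z) \to Y$ yields a quasi-isometric embedding $\Gamma_1(X) \to Y$, which is exactly the assertion; the only bookkeeping is to combine the two pairs of constants.

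The step I expect to carry the real content is the verification that the image $Z$ is a valid input for both lemmas — in particular that it is genuinely $\Delta$-hyperbolic (immediate from the $4$-point formulation) and that $\psi$ is a surjective quasi-isometry onto $Z$ rather than merely a quasi-isometric map into $Y$. If one prefers to avoid passing through $Z$ altogether (for instance if $Y$ is allowed to be $\R$-valued, so that $\Gamma_1(Z)$ is not defined), the same conclusion can be obtained directly, along the lines of the proof of Lemma~\ref{le:gamma_1_4}: extend $\psi$ to $\Gamma_1(X)$ by sending each basic path and each bridge to a geodesic of $Y$ joining the images of its endpoints, observe that basic paths map to $(\lambda,k)$-quasi-geodesics and bridges to uniformly bounded segments, and then invoke the local-to-global stability of quasi-geodesics in the geodesic $\Delta$-hyperbolic space $Y$ (see \cite[Theorem 3.1.4]{CoornaertDelzantPapadopoulos:1990}) to conclude that the extension is a global quasi-isometric embedding. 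Since the composition argument is shorter, I would present it as the main line and record the direct construction only as an alternative.
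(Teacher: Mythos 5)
Your proposal is correct and is essentially the argument the paper intends: the corollary is stated without proof precisely because it follows by factoring the quasi-isometric embedding through its image $Z$ and composing Lemma~\ref{le:gamma_1_5} (giving $\Gamma_1(X)$ quasi-isometric to $\Gamma_1(Z)$) with Lemma~\ref{le:gamma_1_4} (embedding $\Gamma_1(Z)$ into $Y$), which is exactly your main line. Your verification that $Z$ inherits $\Delta$-hyperbolicity via the $4$-point condition of Lemma~\ref{le:1.2.6}, and your fallback direct construction for the case where $Y$ carries $\R$-valued distances (so that $\Gamma_1(Z)$ would not be defined), are sound refinements rather than departures.
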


\subsection{$\Gamma_2(X)$}
\label{subs:gamma_2}

The space $(\Gamma_1(X), d)$ introduced in the previous subsection is constructed so that geodesics
between any two essential vertices $x$ and $y$ almost never include any other essential vertices.
The only exception happens when there exists an essential vertex $z$ in $\Gamma_1(X)$ such that
$d(x,y) = d(x,z) + d(y,z)$. This property makes $\Gamma_1(X)$ an artificially ``thinned'' weighted
complete graph.

The goal of this subsection is to construct another completion $\Gamma_2(X)$ of $X$ in the case
when $X$ is {\em regular}, that is, when the following condition holds
\begin{enumerate}
\item [(RS)] $\forall\ x,y,z \in X,\ \exists\ v \in X$:
$$d(x, v) + d(v, y) \leqslant d(x, y) + 2\delta,\ \ \ \ d(x, v) + d(v, z) \leqslant d(x, z) + 2\delta,$$
$$d(y, v) + d(v, z) \leqslant d(y, z) + 2\delta.$$
\end{enumerate}
Any point $v$ from the definition above we call a {\em mid-point of $\{x,y,z\}$}.

\smallskip

The process of building $\Gamma_2(X)$ is considerably more involved but the graph itself appears
to be more natural than $\Gamma_1(X)$. At first, for $n \in \N$ we build an auxiliary graph
$\Gamma_2^n(X)$ using $\Gamma_1(X)$.

Recall that $\delta'$ is the hyperbolicity constant for $\Gamma_1(X)$ (see Proposition \ref{pr:gamma_1_2}).
Now, define $\mathcal{H}$ to be the maximal Hausdorff distance between a geodesic and a
$(4 \delta', 240 \delta'^3 +108 \delta'^2)$-quasi-geodesic in $\Gamma_1(X)$ and set $B = 2\mathcal{H}
+ 2\delta'$.

\smallskip

Set $\Gamma_2^n(X) = X$ and call all vertices of $X$ {\em essential}.
\begin{enumerate}
\item[(1)] For any essential vertices $x, y$ with $d(x,y) = n$ add to $\Gamma_2^n(X)$ a $\Z$-path
of length $n$ connecting $x$ and $y$. This path we call {\em basic} and its vertices {\em auxiliary}.

\item[(2)] For a pair of essential vertices $\{x, y\}$ such that $d(x,y) = n$ and another essential
vertex $z$, consider all mid-points $v$ for $\{x,y,z\}$. If there exists some $v$ such that $d(x,v),
d(y,v) \geqslant 2\delta$ then we remove the basic path $[x,y]$.

\item[(3)] Repeat step (2) for the pair $\{x, y\}$ and all essential vertices $z$.

\item[(4)] Repeat steps (2) and (3) for all pairs of essential vertices $\{x,y\}$ with $d(x,y) = n$.

\item[(5)] Repeat steps (1)-(4) for all integers smaller than $n$ in descending order.

\item[(6)] Finally, if the distance in $\Gamma_1(X)$ between an auxiliary vertex $x$ and a basic path
$p$, which has not been removed on previous steps, is smaller than $B$, add a $\Z$-path,
which we call a {\em bridge} and whose vertices we call {\em negligible}, connecting $x$ and the closest
to it vertex $y$ on $p$ (if there are two such vertices on $p$ then add a bridge for each one) in
$\Gamma_2^n(X)$. The length of the added bridge connecting $x$ and $y$ is equal to $d(x,y)$ in
$\Gamma_1(X)$. See Figure \ref{pic3}.

\begin{figure}[htbp]
\centering
\includegraphics[scale=0.9]{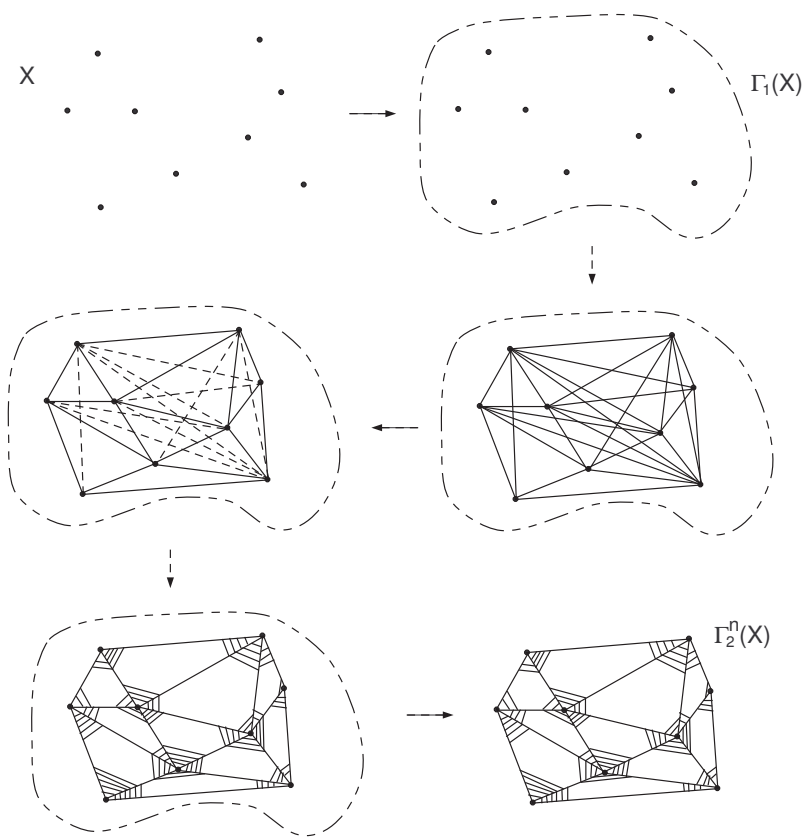}
\caption{Construction of $\Gamma_2^n(X)$}
\label{pic3}
\end{figure}

\item[(7)] We extend the metric $d : X \to \Z$ to the metric $d_2^n : \Gamma_2^n(X) \to
\Z$ as follows:
\begin{enumerate}
\item[(a)] the distance between two essential vertices is inherited from $X$,

\item[(b)] the distance between an auxiliary vertex to the adjacent essential vertices is defined
by construction, hence, the distance from an auxiliary vertex to any other either essential, or
auxiliary vertex is also defined (as the minimum of lengths of paths connecting them),

\item[(c)] the distance from a negligible vertex to the adjacent auxiliary vertices is defined
by construction, so, the distance from a negligible vertex to any other vertex of $\Gamma_2^n(X)$ 
is also defined.
\end{enumerate}
\end{enumerate}

\begin{lemma}
\label{le:gamma_2_1}
Suppose for some pair $x, y$ there exists $z$ such that there is a mid-point $v$ of $\{x,y,z\}$ 
chosen on step (2) above. Then $d(x,v) < d(x, y),\ d(y,v) < d(x,y)$.
\end{lemma}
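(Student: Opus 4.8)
The plan is to read the statement directly off the three mid-point inequalities defining $v$ together with the selection condition imposed in step (2), so that the bulk of the argument needs no hyperbolicity input at all. Recall that, by hypothesis, $v$ is a mid-point of $\{x,y,z\}$ for which the basic path $[x,y]$ was removed; this means precisely that
\[
d(x,v) \geqslant 2\delta, \qquad d(y,v) \geqslant 2\delta,
\]
while, by the definition (RS) of a mid-point, the first of its three defining inequalities reads $d(x,v) + d(v,y) \leqslant d(x,y) + 2\delta$. These are the only facts I expect to need for the non-strict bounds.

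First I would isolate $d(x,v)$ in $d(x,v)+d(v,y)\leqslant d(x,y)+2\delta$ and substitute the lower bound $d(v,y)\geqslant 2\delta$, obtaining
\[
d(x,v) \leqslant d(x,y) + 2\delta - d(v,y) \leqslant d(x,y) + 2\delta - 2\delta = d(x,y).
\]
Since the first mid-point inequality is symmetric in $x$ and $y$ (and $d(x,y)=d(y,x)$), the same computation isolating $d(y,v)$ and using $d(x,v)\geqslant 2\delta$ gives $d(y,v)\leqslant d(x,y)$. This already yields both inequalities of the lemma in their non-strict form, and I expect this to be entirely routine.

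The one delicate point — and the step I expect to be the main obstacle — is upgrading these to the strict inequalities in the statement. Tracing equality through the chain above, $d(x,v)=d(x,y)$ can occur only in the extremal configuration in which $d(v,y)=2\delta$ exactly and the mid-point inequality $d(x,v)+d(v,y)\leqslant d(x,y)+2\delta$ is tight (and symmetrically for $d(y,v)=d(x,y)$). I would therefore argue by contradiction: assume $d(x,v)=d(x,y)$, so that everything above collapses to equalities, and feed these tight relations back into the two remaining mid-point inequalities involving $z$ together with the $4$-point condition (Lemma \ref{le:1.2.6}) applied to $x,y,z,v$. The goal is to show that a $v$ sitting in this extremal position cannot be the mid-point that triggers the removal in step (2) — equivalently, that the threshold $2\delta$ in the selection condition is effectively strict for a genuine interior mid-point, forcing $d(v,y)>2\delta$ and hence $d(x,v)<d(x,y)$. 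Isolating exactly why this boundary case is incompatible with selection is where the real content of the lemma lies; the non-strict estimates are immediate, but the extremal analysis is the part that will require the most care and is the most likely place for a hidden hypothesis (the strict form of the $2\delta$-bound) to be needed.
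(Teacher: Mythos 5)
Your non-strict computation is, up to contraposition, exactly the paper's entire proof: the paper assumes $d(y,v) \geqslant d(x,y)$, feeds this into the first mid-point inequality $d(x,v)+d(v,y) \leqslant d(x,y)+2\delta$ to get $d(x,v) \leqslant 2\delta$, and concludes that ``by construction, $v$ could not be chosen on step (2).'' So the routine half of your argument matches the intended one, and you have correctly located the only non-trivial issue. The gap is that you never actually establish the strict inequalities, and the continuation you propose cannot succeed: the extremal configuration is metrically consistent, so no appeal to the $4$-point condition (Lemma \ref{le:1.2.6}) or to the remaining (RS) inequalities will rule it out. Concretely, work in a simplicial tree (geodesic, $0$-hyperbolic, hence $\delta$-hyperbolic, and regular with the median as mid-point): take $x,y$ with $d(x,y)=n>2\delta$, let $p\in[x,y]$ with $d(x,p)=\delta$, attach a branch at $p$, put $v$ on it with $d(p,v)=\delta$, and put $z$ beyond $v$ on the same branch. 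Then all three (RS) inequalities for $\{x,y,z\}$ hold at $v$ (the first with equality), $d(x,v)=2\delta$ and $d(y,v)=n=d(x,y)$, so the selection condition $d(x,v),d(y,v)\geqslant 2\delta$ as literally written is satisfied while the strict conclusion $d(y,v)<d(x,y)$ fails.

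The moral is that the strictness does not follow from the axioms at all; it follows only from reading the threshold in step (2) strictly, i.e.\ from deeming $v$ disqualified as soon as $d(x,v)\leqslant 2\delta$ --- which is precisely what the paper's one-line proof does implicitly when it passes from $d(x,v)\leqslant 2\delta$ to ``$v$ could not be chosen.'' Your suspicion of a ``hidden hypothesis (the strict form of the $2\delta$-bound)'' is therefore exactly right, but the correct resolution is to invoke that reading of the construction (a convention about the boundary case), not to attempt the extremal analysis you sketch: that analysis is a dead end, since the boundary case is realizable. Had you simply run your first computation contrapositively and then cited the strict reading of the selection criterion, you would have reproduced the paper's proof verbatim.
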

\begin{proof} Suppose on the contrary that $d(y,v) \geqslant d(x,y)$. From regularity of $X$ it
follows that $d(x,v) \leqslant 2\delta$, hence, by construction, $v$ could not be chosen on step 2,
a contradiction.
\end{proof}

\begin{remark}
\label{rem:gamma_2_2}
\begin{enumerate}
\item From Lemma \ref{le:gamma_2_1} it follows that the algorithm of constructing $\Gamma_2^n(X)$
is correct in the sense that there is no risk of re-adding a geodesic that was previously removed and the
process ends since we always split geodesics into geodesics of strictly shorter integer length.

\item Lemma \ref{le:gamma_2_1} also explains why a geodesic is removed in the process only if there
exists a mid-point of $\{x,y,z\}$ which is sufficiently far away from $x, y$and $z$. Unfortunately,
it also means that no geodesic shorter than $2 \delta$ is ever removed (since any path through an
acceptable mid-point has length of at most $4\delta$, so it is contained in the $2\delta$-neighborhood
of the two points), so, $\delta$-neighborhoods of points in $\Gamma_1(X)$ and $\Gamma_2^n(X)$ are
essentially the same.

\item Finally, from Lemma \ref{le:gamma_2_1} it follows that $\Gamma_2^n(X)$ is connected since a
basic path is removed only if there are two shorter paths connecting the same vertices which are
added later on.
\end{enumerate}
\end{remark}

Observe that $\Gamma_2^n(X),\ n \in \N$ can be viewed as a graph whose vertices $V(\Gamma_2^n(X))$ 
are points of $\Gamma_2^n(X)$ and edges $E(\Gamma_2^n(X))$ are pairs of points at distance $1$ from 
each other.

\begin{lemma}
\label{le:gamma_2_3}
If $n < m$ then
$$V(\Gamma_2^n(X)) \subseteq V(\Gamma_2^m(X))\ \ {\rm and}\ \ E(\Gamma_2^n(X)) \subseteq
E(\Gamma_2^m(X)).$$
\end{lemma}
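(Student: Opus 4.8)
The plan is to exploit the fact that the construction of $\Gamma_2^k(X)$ processes pairs of essential vertices by \emph{descending} distance, and that the decision made at each distance level depends only on the fixed data of $X$ (and of $\Gamma_1(X)$), not on the parameter $k$ used as the starting distance. First I would record the key monotonicity observation: for a pair of essential vertices $\{x,y\}$ with $d(x,y)=j$, the basic path $[x,y]$ is kept in steps (2)--(4) precisely when there is no essential $z$ admitting a mid-point $v$ of $\{x,y,z\}$ with $d(x,v),d(y,v)\geqslant 2\delta$. Since mid-points are by definition points of $X$ satisfying the inequalities of (RS), and distances between essential vertices are inherited from $X$, this criterion refers only to the metric of $X$ and to $j$; it is completely insensitive to the starting distance and to the state of the graph at other levels. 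Hence, whenever $j\leqslant n<m$, exactly the same basic paths at distance $j$ survive in $\Gamma_2^n(X)$ and in $\Gamma_2^m(X)$.

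Next I would observe that processing at distance $j$ does not disturb the outcome at any distance $j'<j$: step (2) only ever deletes the length-$j$ basic path currently under consideration, and step (1) only adds length-$j$ basic paths, so the levels are handled independently. Consequently the run of steps (1)--(5) producing $\Gamma_2^m(X)$ may be viewed as first performing the extra passes for distances $m,m-1,\dots,n+1$ and then performing, for distances $n,n-1,\dots,1$, \emph{exactly} the passes that produce $\Gamma_2^n(X)$. This immediately yields the inclusion for essential and auxiliary vertices and for the edges lying on basic paths: every essential vertex is a point of $X$ and hence common to both graphs, and every auxiliary vertex (resp. basic edge) of $\Gamma_2^n(X)$ sits on a basic path of length $\leqslant n$ which, by the previous paragraph, is also present in $\Gamma_2^m(X)$.

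It then remains to handle the bridges and the negligible vertices created in step (6). Here I would use that the bridge criterion is stated purely in terms of distances in the fixed graph $\Gamma_1(X)$ and of the surviving basic paths: a bridge joins an auxiliary vertex $x$ to the nearest vertex $y$ of a surviving basic path $p$ whenever the $\Gamma_1(X)$-distance from $x$ to $p$ is smaller than $B$, and the length of that bridge equals $d(x,y)$ computed in $\Gamma_1(X)$. For a bridge of $\Gamma_2^n(X)$ both $x$ and $p$ come from basic paths of length $\leqslant n$, which survive in $\Gamma_2^m(X)$ as well; the triggering inequality and the choice of the nearest vertex $y$ depend only on $\Gamma_1(X)$ and are therefore unchanged, so the identical bridge, together with its negligible vertices, is added in $\Gamma_2^m(X)$. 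Because step (6) adds bridges independently for each qualifying pair, the additional basic paths present in $\Gamma_2^m(X)$ can only create further bridges and never suppress an existing one. Combining the three paragraphs gives $V(\Gamma_2^n(X))\subseteq V(\Gamma_2^m(X))$ and $E(\Gamma_2^n(X))\subseteq E(\Gamma_2^m(X))$.

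The main obstacle I anticipate is the bookkeeping in step (6): one must make sure that an auxiliary vertex of $\Gamma_2^n(X)$ is identified with the same point of $\Gamma_1(X)$ inside $\Gamma_2^m(X)$, and that the phrase ``nearest vertex of $p$'' is well defined and identical in both runs. Once the identification of auxiliary vertices with their positions along basic paths is fixed (which is forced by the construction, each basic path being an isometric $\Z$-segment), this reduces to a routine check, and the independence of the bridge condition from the higher-distance passes closes the argument.
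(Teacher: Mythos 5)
Your proposal is correct and follows essentially the same route as the paper's proof: the paper likewise observes that the run producing $\Gamma_2^m(X)$ eventually performs the same passes as the one producing $\Gamma_2^n(X)$, so the surviving basic paths of $\Gamma_2^n(X)$ persist in $\Gamma_2^m(X)$, and the containment of bridges follows since the bridge criterion is stated in terms of the fixed space $\Gamma_1(X)$ and the surviving paths. Your write-up merely makes explicit two points the paper leaves implicit --- that the removal criterion in step (2) is intrinsic to $X$ and insensitive to the starting level, and that extra basic paths in $\Gamma_2^m(X)$ can only add bridges, never suppress one --- which is a welcome sharpening rather than a different argument.
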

\begin{proof} In the process of building $\Gamma_2^m(X)$ we eventually run steps (1)-(5) for $m$.
Thus, we add all the geodesics we would add and remove all the geodesics we would remove in the 
process of building $\Gamma_2^n(X)$ (since we always split geodesics into shorter ones). Since the 
set of geodesics between essential vertices in $\Gamma_2^n(X)$ is contained in the set of geodesics 
between essential vertices of $\Gamma_2^m(X)$, the same relation is going to hold for the sets of 
bridges in both graphs. The required statement follows.
\end{proof}

Observe that Lemma \ref{le:gamma_2_3} does not imply that $\Gamma_2^n(X)$ isometrically embeds 
into $\Gamma_2^m(X)$. At the same time, it implies that if $X$ is not bounded then the sequence
$\{\Gamma_2^n(X)\}_{n \in \N}$ converges to a graph $\Gamma_2(X)$ whose vertices and edges are 
defined by
$$V(\Gamma_2(X)) = \bigcup_{n \in \N} V(\Gamma_2^n(X),\ \ \ E(\Gamma_2(X)) = \bigcup_{n \in
\N} E(\Gamma_2^n(X)).$$
If $X$ is bounded and has diameter $d$ then we set $\Gamma_2(X) = \Gamma_2^d(X)$.

\begin{remark}
\label{rem:gamma_2_3a}
If $X$ is geodesic then $\Gamma_2(X) = X$. Indeed, if $X$ is geodesic then all basic paths are 
removed except those of length $1$, and all bridges already exist since $X$ is $\delta$-hyperbolic.
\end{remark}

\begin{remark}
\label{rem:gamma_2_unique}
Given a specific $X$, then there will exist an unique extension $\Gamma_2(X)$. This is less obvious 
than in the case of $\Gamma_1$, but still pretty easy to see. Notice that, in the construction of 
$\Gamma_2^n(X)$, steps (1) and (2) do not depend on the order in which we consider pairs of 
essential vertices, steps (3)--(5) are a repetition of steps (1) and (2), and step (6) depends 
entirely on the image of the graph built up until then in $\Gamma_1(X)$, which we have already 
shown to be unique.

$\Gamma_2(X)$, being an union of uniquely determined extensions of $X$, will itself be an unique 
extension of $X$.
\end{remark}

Suppose $n \in \N$. Let ${\cal N}$ be a set which initially contains only $n$ and we apply to all 
elements of ${\cal N}$ the following steps. For each $k \in {\cal N}$, if $k > 2 \delta$ then replace 
it by two new numbers $k_1$ and $k_2$ such that $k_1, k_2 < k$ and $k \leqslant k_1 + k_2 \leqslant 
k + 2\delta$. Continue this splitting process until all elements of ${\cal N}$ are smaller than $2 
\delta$. Denote by $\tau(n)$ the maximal sum obtained by the above algorithm starting from $n$.

\begin{lemma}
\label{le:gamma_2_4}
$\tau(n) \leqslant 4\delta n$.
\end{lemma}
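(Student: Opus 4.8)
The plan is to turn the informal ``splitting'' description of $\tau$ into a clean recursion and then prove a bound \emph{sharper} than the stated one by strong induction on $n$. Since a value $n > 2\delta$ must be split into a pair $k_1, k_2$ with $k_1, k_2 < n$ and $n \leqslant k_1 + k_2 \leqslant n + 2\delta$, and since the subsequent splitting of $k_1$ and of $k_2$ proceed independently, the maximal attainable sum satisfies
$$\tau(n) = \max\{\, \tau(k_1) + \tau(k_2) \mid k_1, k_2 < n,\ n \leqslant k_1 + k_2 \leqslant n + 2\delta \,\},$$
with base values $\tau(m) = m$ for every $m \leqslant 2\delta$ (such $m$ is never split). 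First I would record that for $m \leqslant 2\delta$ we trivially have $\tau(m) = m \leqslant 4\delta m$, so only the case $n > 2\delta$ needs work.

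The naive induction fails: assuming only $\tau(k) \leqslant 4\delta k$ for $k < n$ yields $\tau(n) \leqslant 4\delta(k_1 + k_2) \leqslant 4\delta(n + 2\delta) = 4\delta n + 8\delta^2$, which overshoots. The remedy is to prove the stronger statement
$$\tau(n) \leqslant 4\delta n - 8\delta^2 = 4\delta(n - 2\delta) \qquad \text{for all } n > 2\delta,$$
which is inductively stable: applying it to two children costs $16\delta^2$, whereas the admissible overshoot $k_1 + k_2 \leqslant n + 2\delta$ adds back only $8\delta^2$. The stated bound is then immediate, since $4\delta n - 8\delta^2 \leqslant 4\delta n$.

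The subtlety is that the sharpened bound is \emph{false} for leaves ($\tau(m) = m$ can exceed $4\delta m - 8\delta^2$, whose right-hand side may even be negative), so the induction step must be split according to how many of the two children exceed $2\delta$. When \emph{both} children exceed $2\delta$ I would apply the sharp inductive hypothesis to each and use $k_1 + k_2 \leqslant n + 2\delta$. When \emph{exactly one} child, say $k_1$, exceeds $2\delta$, I would apply the sharp hypothesis to $k_1$ (using the integrality bound $k_1 \leqslant n - 1$) and bound the leaf contribution of $k_2$ by $2\delta$; a short computation then gives $\tau(n) \leqslant 4\delta n - 8\delta^2 - 2\delta$. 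I expect the last case, when \emph{both} children are $\leqslant 2\delta$, to be the main obstacle: there the generic estimate $k_1 + k_2 \leqslant n + 2\delta$ is too weak, and one must instead use the tighter $\tau(n) = k_1 + k_2 \leqslant 4\delta$ (each child being a leaf of size at most $2\delta$), together with $n \geqslant 2\delta + 1$, to conclude $4\delta \leqslant 4\delta(n - 2\delta)$. Once these three cases are checked, the sharpened bound, and hence $\tau(n) \leqslant 4\delta n$, follows for all $n$.
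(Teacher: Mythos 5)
Your proposal is correct and takes essentially the same route as the paper: the paper also proves the sharpened bound $\tau(n) \leqslant 4\delta n - 8\delta^2$ for $n > 2\delta$ by strong induction, with base case $n = 2\delta + 1$ (where both children are forced to be leaves and $\tau(2\delta+1) = 4\delta$) and the same computation $4\delta(k_1+k_2) - 16\delta^2 \leqslant 4\delta(n+2\delta) - 16\delta^2 = 4\delta n - 8\delta^2$ in the main step. The only difference is cosmetic: where you dispose of the degenerate splits by an explicit three-case analysis (both, one, or no children exceeding $2\delta$), the paper appeals to monotonicity of $\tau$ to assume both children exceed $2\delta$ whenever $n > 2\delta+1$ -- your version is, if anything, slightly more careful, since that monotonicity is asserted rather than proved there.
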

\begin{proof} Observe that for $n \leqslant 2\delta$ we have $\tau(n) = n \leqslant 4\delta n$.
Next, for $n > 2\delta$ there exist $i, j \in \N$ such that $\tau(n) = \tau(i) + \tau(j)$ with 
$j \leqslant n - i + 2\delta$. We are gong to prove that for $n > 2\delta$
$$\tau(n) \leqslant 4\delta n - 8 \delta^2 < 4\delta n.$$
Assume without loss of generality that $j \geq i$.

\smallskip

First of all, for $n = 2\delta + 1$, any $n_1, n_2$ will be smaller than $2 \delta$, so the ideal
is to use $n_1 = n_2 = 2\delta$ giving us $\tau(2 \delta + 1) = 4\delta = 4\delta (2\delta + 1)
- 8\delta^2$.

\smallskip

Suppose now that $\tau(k) \leq 4\delta k - 8\delta^2$ for any $2\delta + 1 \leqslant k \leqslant
n - 1$. We have $\tau(n) = \tau(j) + \tau(i)$ with $n - 1 \geqslant j \geqslant i$. It is worth
noticing that $j \leqslant n-1$, so $j + 2 \delta + 1 \leqslant n + 2\delta$. Thus, if we can choose
$j > 2\delta$, we can choose $i > 2\delta$ as well. Since $\tau$ is an increasing function, if $n
> 2\delta + 1$ then we can assume that $i, j > 2\delta$. So, we have that
$$\tau(n) = \tau(i) + \tau(j) \leqslant 4\delta i + 4\delta j - 16\delta^2 \leqslant 4\delta
(n - j + 2\delta) + 4\delta j - 16 \delta^2$$
$$ = 4\delta n - 8 \delta^2 \leqslant 4\delta n.$$
\end{proof}

For any $v,w \in \Gamma_2(X)$ define $[v,w]_2$ to be a geodesic between $v$ and $w$ either in
$\Gamma_2(X)$ and $\varphi[v,w]_2$ its embedding into $\Gamma_1(X)$. To simplify notation, let
$d_2$ represent the graph metric on $\Gamma_2(X)$ and $d_1$ the metric on $\Gamma_1(X)$. Let $k =
30 \delta'^2$. If $[x,y]_2$ contains no other essential vertices so that it coincides with a geodesic in
$\Gamma_1(X)$, we say that $[x,y]_2$ is {\em long} if $d(x,y) > k$ and {\em short} otherwise.

\begin{lemma}
\label{le:gamma_2_5}
\begin{enumerate}
\item[(i)] Let $[x,y]_2$ be long and $z$ be an essential vertex. Then either $\varphi([x,y]_2 \cup 
[x,z]_2)$, or $\varphi([x,y]_2 \cup [y,z]_2)$ is a $(1, 6\delta)$-quasi-geodesic.

\item[(ii)] Let $[x,x']_2$ and $[y,y']_2$ be long. Then there exist $\chi \in \{x, x'\}$ and $\psi
\in \{y, y'\}$ such that $\varphi([x,x']_2 \cup [\chi,\psi]_2 \cup [y,y']_2)$ is a 
$(1,12\delta)$-quasi-geodesic.
\end{enumerate}
\end{lemma}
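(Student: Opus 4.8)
The plan is to treat each concatenation as a union of geodesic legs whose only potential backtracking occurs at the junction vertices, and to bound that backtracking through the mid-point inequality \textup{(RS)}. Throughout I would use that essential vertices retain their $X$-distances inside $\Gamma_1(X)$ (Lemma \ref{le:gamma_1_1}(i) together with step (4a) of the construction), so that a Gromov product $(y\cdot z)_x$ between essential vertices is the same whether computed in $X$ or in $\Gamma_1(X)$; this lets me do all estimates directly in $X$ with the original constant $\delta$.

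For (i), the hypothesis that $[x,y]_2$ is \emph{long} means the basic path $[x,y]$, of length $d(x,y)>k$, survived the removal procedure. By the removal rule in step (2), for the given essential vertex $z$ \emph{every} mid-point $v$ of $\{x,y,z\}$ must satisfy $\min\{d(x,v),d(y,v)\}<2\delta$, whereas regularity \textup{(RS)} guarantees at least one mid-point $v$ exists. Assume without loss of generality $d(x,v)<2\delta$ (the other case yields the $[y,z]_2$ alternative by symmetry). Then, using $d(y,v)\geqslant d(x,y)-d(x,v)$, $d(v,z)\geqslant d(x,z)-d(x,v)$ and the third mid-point inequality $d(y,z)\geqslant d(y,v)+d(v,z)-2\delta$, I get
$$d(x,y)+d(x,z)-d(y,z)\leqslant 2\,d(x,v)+2\delta<6\delta,$$
hence $(y\cdot z)_x<3\delta$. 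Since the backtracking of the path $\varphi([x,y]_2\cup[x,z]_2)$ at the junction $x$ is governed by $2(y\cdot z)_x<6\delta$, and the long leg $[x,y]_2$ maps isometrically to a genuine $\Gamma_1(X)$-geodesic, the concatenation is a $(1,6\delta)$-quasi-geodesic.

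For (ii), I would apply part (i) twice. First apply (i) to the long geodesic $[x,x']_2$ with $z:=y$; this selects $\chi\in\{x,x'\}$ with $(\chi^{c}\cdot y)_{\chi}<3\delta$, where $\chi^{c}$ is the endpoint of $[x,x']_2$ other than $\chi$. Then apply (i) to the long geodesic $[y,y']_2$ with $z:=\chi$; this selects $\psi\in\{y,y'\}$ with $(\psi^{c}\cdot\chi)_{\psi}<3\delta$. The $\psi$-junction of $[\chi,\psi]_2\cup[y,y']_2$ is controlled directly by the second estimate. For the $\chi$-junction of $[x,x']_2\cup[\chi,\psi]_2$ I must bound $(\chi^{c}\cdot\psi)_{\chi}$: when $\psi=y$ this is the first estimate, and when $\psi=y'$ it follows by feeding $(\chi^{c}\cdot y)_{\chi}<3\delta$ and $(y\cdot\chi)_{\psi}<3\delta$ into the four-point hyperbolicity inequality for $\chi^{c},\chi,\psi,y$, which forces $\chi$ to lie between $\chi^{c}$ and $\psi$ up to bounded error. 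Summing the two junction defects then yields a $(1,12\delta)$-quasi-geodesic.

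The main obstacle is constant bookkeeping rather than a conceptual one. In (i), the delicate point is that the appended leg $[x,z]_2$ need not be long, so one must ensure that $\varphi$ sends each $\Gamma_2(X)$-geodesic to a $\Gamma_1(X)$-quasi-geodesic and hence that no \emph{interior} backtracking exceeds the junction defect computed at $x$; only then is $2(y\cdot z)_x$ the worst case. In (ii), the subtle step is the $\chi$-junction in the case $\psi=y'$, where a single use of (i) aligns $\chi$ with $y$ and not with $\psi$, so the alignment has to be transported along $[\chi,\psi]_2$ by hyperbolicity — and it is exactly this transport, adding a second half of the defect, that forces the passage from $6\delta$ in part (i) to $12\delta$ in part (ii).
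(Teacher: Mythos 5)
Your part (i) is essentially the paper's argument in different clothing, and it is correct. The paper takes the surviving long path $[x,y]_2$, notes that every mid-point $m$ of $\{x,y,z\}$ must lie within $2\delta$ of $x$ or $y$ (otherwise step (2) would have removed $[x,y]$), and splices the detour $m \to x \to m$ into the efficient path $[z,m]\cup[m,y]$, getting $2\delta + 2\cdot 2\delta = 6\delta$. Your computation $d(x,y)+d(x,z)-d(y,z) \leqslant 2d(x,v)+2\delta < 6\delta$, i.e.\ $(y\cdot z)_x < 3\delta$, together with the (valid, purely metric) fact that a concatenation of two geodesics at $x$ is a $\bigl(1, 2(y\cdot z)_x\bigr)$-quasi-geodesic, is the same estimate. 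One caution you correctly flag yourself: treating $\varphi([x,z]_2)$ as having no interior backtracking is what Lemma \ref{le:gamma_2_6} asserts, and that lemma is proved \emph{using} the present one, so you cannot invoke it here without circularity.

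Part (ii) is where there is a genuine gap, and it is structural, not bookkeeping. In your incompatible case ($\psi = y'$ while $\chi$ was aligned with $y$), your transport argument gives, at best, $d(\chi^c, y') \geqslant d(\chi^c,\chi) + d(\chi,y') - 12\delta$, i.e.\ $(\chi^c\cdot y')_\chi \leqslant 6\delta$, hence a junction defect of $12\delta$ at $\chi$ on top of the $6\delta$ defect at $\psi$. Summing junction defects for a three-leg concatenation (which itself requires the four-point condition and contributes a further $+2\delta$, with a separate case when the middle leg is short) yields an additive constant of at least $18\delta$, not the stated $12\delta$. The paper avoids this entirely: it applies part (i) first with $z = y'$ and then with $z = \chi$, handles the compatible configurations by a replacement argument (splicing a $(1,6\delta)$-quasi-geodesic into a $(1,6\delta)$-quasi-geodesic gives $(1,12\delta)$), and then proves that the one incompatible configuration ($\chi = x'$, $\psi = y'$, $\psi' = y$) \emph{cannot occur at all}: using hyperbolicity of $\Gamma_1(X)$ with constant $\delta'$, the inscribed triangle $\Delta_I(x',y,x)$, regularity (mid-points must lie near $\Delta_I$), and crucially the longness threshold $k = 30\delta'^2$, it derives $d(x,y) \leqslant 15\delta'$ against $d_2(x,y) \geqslant 2k > 4\delta'\cdot 15\delta'$, a contradiction. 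That exclusion step is the missing idea in your proposal: longness must be used a second time, beyond its role inside part (i), to kill the bad case; estimating through the bad case, as you do, provably cannot reach the $12\delta$ in the statement.
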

\begin{proof} (i) Observe that $[x,y]_2$ is long and it has not been split into two segments in the
process of building the graph. It implies that for any essential vertex $v$, any mid-point of $\{x,
y, v\}$ is within a $2\delta$-ball centered either at $x$, or at $y$. Suppose, without loss of
generality, that there exists a mid-point $m$ of $\{x,y,z\}$ in the $2\delta$-neighborhood of $x$.
We have that $\varphi([z,m]_2 \cup [m,y]_2)$ is a $(1, 2\delta)$-quasi-geodesic and that $d(x,m) 
\leqslant 2\delta$, but this implies that $\varphi([z,m]_2 \cup [m,x]_2 \cup [x,m]_2 \cup [m,y]_2)$ 
is a $(1, 6\delta)$-quasi-geodesic. Hence, so is $\varphi([z,x]_2 \cup [x,y]_2)$.

\begin{figure}[htbp]
\centering
\includegraphics[scale=0.7]{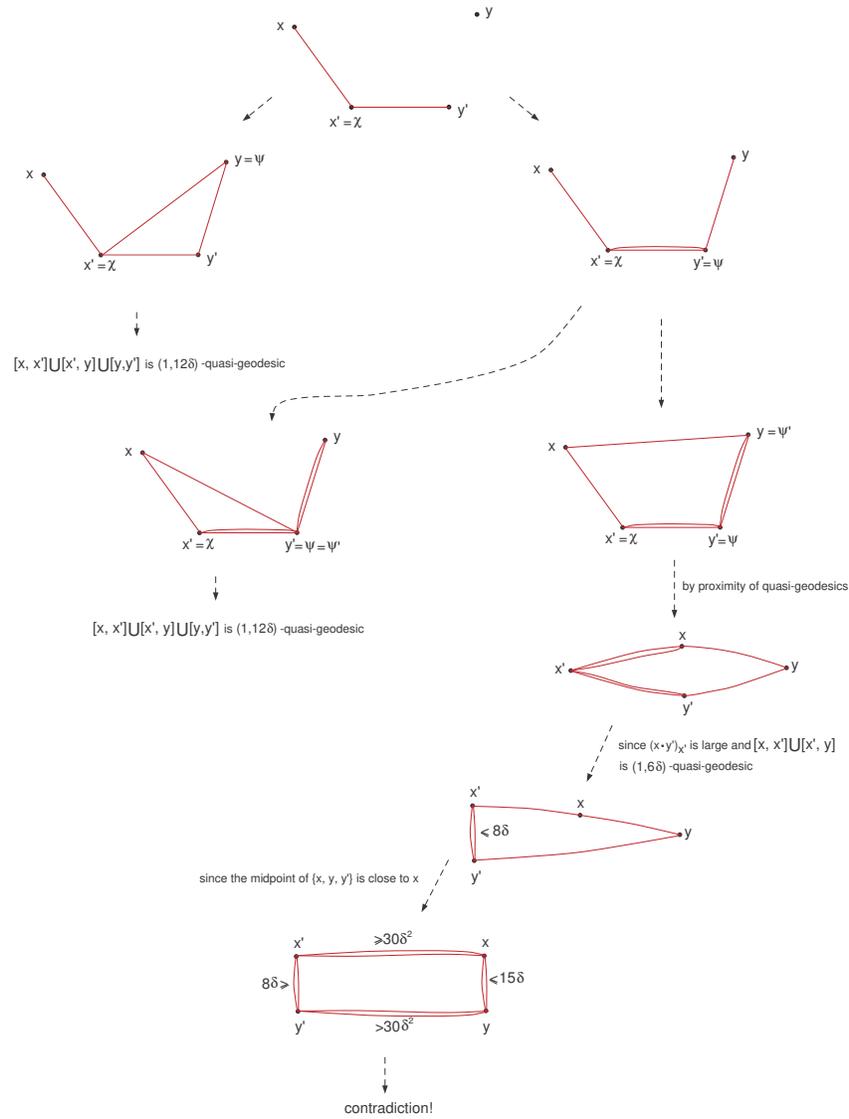}
\caption{Red curves here are $(1, 6\delta)$-quasi-geodesics.}
\label{pic4}
\end{figure}

(ii) By the above result, there exists $\chi \in \{x, x'\}$ such that $\varphi([y', \chi]_2 \cup 
[x, x']_2)$ is a $(1, 6\delta)$-quasi-geodesic. Suppose, without loss of generality, that $\chi = 
x'$. At the same time, similarly there exists $\psi \in \{y, y'\}$ such that $\varphi([x',\psi]_2 
\cup [y',y]_2)$ is a $(1, 6\delta)$-quasi-geodesic. If $\psi = y$ then the path $\varphi([x,x']_2 
\cup [x',y]_2 \cup [y,y']_2)$ is simply the $(1, 6\delta)$ quasi-geodesic $\varphi([y',x']_2 \cup 
[x',x]_2)$ with the segment $\varphi([y', x']_2)$ replaced by a $(1, 6\delta)$-quasi-geodesic. It 
follows that the path in question is a $(1, 12\delta)$ quasi-geodesic.

Suppose that $\chi = x$. There exists some $\psi'$ such that $\varphi([x, \psi']_2 \cup [y,y']_2)$ 
is a $(1, 6\delta)$-quasi-geodesic. If $\psi' = y'$ then the path $\varphi([x, x']_2 \cup [x',y']_2 
\cup [y',y]_2)$ is merely the $(1,6\delta)$-quasi-geodesic $\varphi([x,y']_2 \cup [y',y]_2)$ with 
the segment $\varphi([x, y']_2)$ replaced by a $(1, 6\delta)$-quasi-geodesic. It follows that the 
path in question is a $(1, 12\delta)$-quasi-geodesic.

Suppose that $\chi = x',\ \psi = y',\ \psi' = y$. Both $\varphi([y',y]_2 \cup [y,x]_2)$ and 
$\varphi([y',x']_2 \cup [x',x]_2)$ are $(1, 6\delta)$-quasi-geodesic, so, $c_{y'}(x',x) \geqslant 
d(y',x') - 3\delta$ and $c_{y'}(y,x) \geqslant d(y',y) - 3\delta$, which implies that $c_{y'}(y,x') 
\geqslant \min\{d(y',y), d(y',x')\} - 2\delta' - 3\delta\geqslant  \min\{d(y',y), d(y',x')\} -
5\delta'$. But $\varphi([y, y']_2 \cup [y',x']_2)$ is a $(1, 6\delta)$-quasi-geodesic, which means 
that $d(x',y) \geqslant d(x',y') + d(y',y) - 6\delta$. Now, $d(x',y') = d(x',y') + d(y',y) - 
2c_{y'}(x',y)$, so,
$$3\delta' \geqslant 3\delta \geqslant c_{y'}(x',y) \geqslant \min\{d(y',y), d(y',x')\} - 5\delta',$$
so,
$$8\delta'  \geqslant c_{y'}(x',y) \geqslant \min\{d(y',y), d(y',x')\}.$$
Since $[y,y']$ is long, it follows that $d(y',x') \leqslant 8\delta'$.

Consider $\Delta(x',y,x)$. We have that $\varphi([y',y]_2 \cup [y,x]_2)$ is a $(1, 6 
\delta)$-quasi-geodesic, so $y$ is at a distance of at most $3\delta$ from $\Delta_I(x',y,x)$, and 
so it is at a distance of at most $3\delta + \delta' \leqslant 4\delta'$ from a point $z \in 
\varphi([y',x]_2)$. Furthermore, since $d(x',y') \leqslant 8\delta'$, letting $z'$ be a point on 
$\varphi([y',x']_2 \cup [x',x]_2)$ at a distance of smaller than $\delta'$ from $z$, we get $z' \in 
[y',y]$ since $30 \delta'^2 \leqslant d(y,y') \leqslant d(y,z') + d(z',y') \leqslant 5\delta' + 
d(z',y')$ and $d(x',y') \leqslant 13\delta'$. This implies that $[x',y] \cup [y,x]$ is a 
$(1, 10\delta')$-quasi-geodesic. By the alternative definition of regularity  that any mid-point of 
$\{x',y,x\}$ must be within the $7\delta'$-neighborhood of $\Delta_I(x',y,x)$. But since $[x',y] 
\cup [y,x]$ is a $(1,10\delta')$-quasi-geodesic, it follows that the vertices of $\Delta_I(x',y,x)$ 
which lie on $[x',y]$ and $[x,y]$ are at a distance of at most $5\delta'$ from $y$. So, $\Delta_I(x', 
y,x)$ must be within the $6 \delta'$-ball around $y$, so any mid-point of $\{x',y,x\}$ is within the
$13 \delta'$-ball around $y$.

Now, from the fact that is $[x,x']$ is long it follows that any such mid-point must be within
$2\delta'$ of either $x'$, or $x$. It cannot be near $x'$ since $d(x'y) \geqslant k - 8 \delta' >
10 \delta'$, so, any such mid-point must be at a distance of at most $2 \delta'$ from $x$, and It
follows that $d(x,y) \leqslant 15 \delta'$. However, $d_2(x,y) \geqslant 2k > 4\delta' \cdot 15
\delta' \geqslant 4\delta' d(x,y)$, a contradiction. Thus, the only possible cases occur when there
exists one such path which is a $(1,12\delta)$-quasi-geodesic.
\end{proof}

\begin{lemma}
\label{le:gamma_2_6}
Let $v,w$ be essential vertices. The embedding of $[v,w]_2$ into $\Gamma_1$ is a $(4\delta, 48 
\delta^2 + (8 \delta + 2)k)$-quasi-geodesic.
\end{lemma}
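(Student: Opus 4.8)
The plan is to verify the two quasi-geodesic inequalities for $\varphi$ restricted to $[v,w]_2$, parametrized by its $\Gamma_2$-arclength; since $[v,w]_2$ is a $\Gamma_2$-geodesic, the arclength between two of its points $p,q$ equals $d_2(p,q)$. The upper inequality is immediate: on each basic path and each bridge $\varphi$ is an isometry onto a $\Gamma_1$-geodesic of the same length, so the $\Gamma_1$-length of $\varphi([p,q]_2)$ equals $d_2(p,q)$, whence $d_1(\varphi(p),\varphi(q))\le d_2(p,q)\le 4\delta\, d_2(p,q)+C$. All the work goes into the lower inequality, which is equivalent to showing that every subarc of $\varphi([v,w]_2)$ has $\Gamma_1$-length at most $4\delta$ times the $\Gamma_1$-distance of its endpoints plus $C=48\delta^2+(8\delta+2)k$.

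First I would record the essential vertices $v=u_0,u_1,\dots,u_m=w$ met by $[v,w]_2$ in order; between consecutive ones the geodesic coincides with a $\Gamma_1$-geodesic of length $d(u_i,u_{i+1})$, which I call \emph{long} if $d(u_i,u_{i+1})>k$ and \emph{short} otherwise. Grouping maximal runs of short segments and leaving long segments alone yields breakpoints $v=w_0,\dots,w_r=w$ so that each $[w_l,w_{l+1}]_2$ is either a single long segment or a maximal short run. The key per-piece estimate is $d_1(w_l,w_{l+1})\ge \tfrac{1}{4\delta}\,d_2(w_l,w_{l+1})$: for a long segment this holds with constant $1$ since $\varphi$ is isometric there, and for a short run it follows because the run is a $\Gamma_2$-geodesic, so $d_2(w_l,w_{l+1})$ is bounded by the total length produced by iterated midpoint splitting, which by Lemmas \ref{le:gamma_2_1} and \ref{le:gamma_2_4} is at most $\tau(d(w_l,w_{l+1}))\le 4\delta\,d_1(w_l,w_{l+1})$. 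This is exactly where the multiplicative constant $4\delta$ enters.

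Next I would show the pieces progress monotonically in $\Gamma_1$, so that their $\varphi$-images concatenate into a genuine $\Gamma_1$-quasi-geodesic and the per-piece spans add up to $d_1(v,w)\ge \tfrac{1}{4\delta}\sum_l d_2(w_l,w_{l+1})-(\text{bounded error})$. For adjacent long segments this alignment is supplied by Lemma \ref{le:gamma_2_5}(ii), which makes the relevant concatenations $(1,12\delta)$-quasi-geodesics and hence forces the Gromov products at the breakpoints to be $O(\delta)$, while Lemma \ref{le:gamma_2_5}(i) handles a long segment abutting the start or end of the arc. Using $\delta'$-hyperbolicity of $\Gamma_1$ (with $\delta'=29\delta$, by Proposition \ref{pr:gamma_1_2}) one converts these bounded Gromov products into the absence of backtracking, so the total span is at least the sum of spans minus $O(\delta^2)$; the alignment constant $12\delta$ scaled by the $4\delta$ factor produces the $48\delta^2$ term. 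Endpoints interior to a segment are then treated separately: inside a long segment $\varphi$ is isometric so the overhang self-certifies via the quasi-geodesic continuation of Lemma \ref{le:gamma_2_5}(i), while inside a short segment the overhang is at most $k$; scaling together with the two ends yields the $(8\delta+2)k$ term.

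The main obstacle is the alignment step for the short runs. Lemma \ref{le:gamma_2_5} is stated only for long segments, so a maximal short run—whose total length is not a priori bounded—must be shown not to backtrack relative to its neighbours; here one must argue directly, using that the run is a $\Gamma_2$-geodesic together with hyperbolicity of $\Gamma_1$, that its image stays within bounded Hausdorff distance of the $\Gamma_1$-geodesic joining its endpoints, and then patch it to the adjacent long pieces while controlling the accumulated Gromov products. Preventing these Hausdorff and Gromov-product errors from compounding along the whole arc, and matching them to the precise value $48\delta^2+(8\delta+2)k$, is the delicate bookkeeping; the local-to-global stability for quasi-geodesics in \cite[Theorem 3.1.4]{CoornaertDelzantPapadopoulos:1990} is the natural fallback tool should a direct estimate prove too unwieldy.
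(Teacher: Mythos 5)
There is a genuine gap, and it stems from under-using Lemma \ref{le:gamma_2_4}. You apply the splitting bound $\tau(n)\leqslant 4\delta n$ only \emph{within} a maximal run of short segments, which then forces you to decompose $[v,w]_2$ into many pieces and to sum their $\Gamma_1$-spans across all breakpoints. But the splitting argument gives $d_1(x,y)\leqslant d_2(x,y)\leqslant 4\delta\, d_1(x,y)$ for \emph{every} pair of essential vertices $x,y$, no matter how far apart: a removed basic path is always replaced, recursively, by paths through mid-points of total length at most $\tau(d_1(x,y))$. This single global inequality is the opening line of the paper's proof and makes your entire piecewise decomposition unnecessary. The paper instead takes arbitrary points $a,b$ on $[v,w]_2$, passes to the nearest essential vertices $v',w'$ (so $d_2(a,b)=d_2(a,v')+d_2(v',w')+d_2(w',b)$), bounds the middle term in one stroke by $4\delta\,d_1(v',w')$, and is left with only the \emph{two} end segments containing $a$ and $b$. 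These are handled by a three-case analysis (both short; one short, one long; both long), with Lemma \ref{le:gamma_2_5}(i) and (ii) supplying the $(1,6\delta)$- and $(1,12\delta)$-quasi-geodesic alignments in the long cases; the three cases give additive constants $(8\delta+2)k$, $24\delta^2+(1+4\delta)k$, and $48\delta^2$ respectively, whose maximum is exactly the constant in the statement.

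The step you yourself flag as the main obstacle is where your route actually fails. A $\Gamma_2$-geodesic between distant essential vertices typically passes through an unbounded number of essential vertices, so your decomposition has unboundedly many breakpoints; even with $O(\delta')$ Gromov products at each one, the additive errors accumulate linearly in the number of pieces and cannot be matched to a fixed constant $48\delta^2+(8\delta+2)k$. Your fallback---the local-to-global stability theorem of \cite[Theorem 3.1.4]{CoornaertDelzantPapadopoulos:1990}---does not rescue the statement either, since it converts a local quasi-geodesic into a global $(\lambda',c')$-quasi-geodesic with a strictly worse multiplicative constant, whereas the lemma asserts the multiplicative constant $4\delta$ itself. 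Note also that Lemma \ref{le:gamma_2_5} says nothing about short runs, precisely because no alignment statement for them is needed once the middle of the path is absorbed by the global $d_2\leqslant 4\delta\, d_1$ comparison; only the (at most two) overhangs at $a$ and $b$ ever require Lemma \ref{le:gamma_2_5}, and a short overhang is simply bounded by $k$.
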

\begin{proof} By Lemma \ref{le:gamma_2_4}, for any two essential vertices $v$ and $w$ we have
$d_1(v,w) \leqslant d_2(v,w) \leqslant 4\delta d_1(v,w)$. Suppose that $a$ and $b$ are vertices
such that $\varphi([v,a]_2 \cup [a,b]_2 \cup [b,w]_2) = \varphi([v,w]_2)$. We can assume that there 
exist no essential vertices between $a$ and $v$, or $b$ and $w$, and that $a \in [v,v']_2,\ b \in 
[w,w']_2$ with $v',w'$ being essential vertices. We have  three cases.

\smallskip

{\bf Case I.} If both $[v, v']$ and $[w, w']$ are short then we have
$$d_1(\varphi(a),\varphi(b)) \leqslant d_2(a,b) = d_2(a,v') + d_2(v',w') + d_2(w',b)$$
$$\leqslant d_1(\varphi(a),v') + 4\delta d_1(v',w') + d_1(w',\varphi(b))$$
$$\leqslant 4\delta(d_1(a,b) + 2k) + 2k = 4\delta d_1(a,b) + (8\delta + 2) k.$$

\smallskip

{\bf Case II.} If $[v,v']_2$ is short and $[w,w']_2$ is long (without loss of generality) then that 
there exists $\omega \in \{w,w'\}$ such that $\varphi([v', \omega]_2 \cup [w,w']_2)$ is a $(1, 
6\delta)$-quasi-geodesic. Thus, we have
$$d_2(a,b) \leqslant d_2(a, v') + d_2(v', \omega) + d_2(\omega, b) \leqslant d_1(a, \varphi(v')) + 
4\delta d_1(v', \omega) + d_1(\omega, \varphi(b))$$
$$\leqslant k + 4 \delta (d_1(v', \omega) + d_1(\omega, \varphi(b))) \leqslant k + 4 \delta(d_1(v',
\varphi(b)) + 6\delta)$$
$$\leqslant k + 4\delta(d(a,b) + k + 6\delta) = 4\delta d(a,b) + 24\delta^2 + (1 + 4\delta) k$$

\smallskip

{\bf Case III.} If both $[v,v']_2$ and $[w,w']_2$ are long then there exist $\phi \in \{v, v'\}$ and 
$\omega \in \{w, w'\}$ such that $\varphi([v, v']_2 \cup [\phi, \omega]_2 \cup [w,w']_2)$ is a 
$(1, 12\delta)$-quasi-geodesic. Thus, we have
$$d_2(a,b) \leqslant d_2(a, \phi) + d_2(\phi, \omega) + d_2(\omega, b) \leqslant d_1(\varphi(a), 
\phi) + 4\delta d_1(\phi, \omega) + d_1(\omega, \varphi(b))$$
$$\leqslant 4\delta(d_1(\varphi(a), \phi) + d_1(\phi, \omega) + d_1(\omega, \varphi(b))) \leqslant
4 \delta(d_1(\varphi(a), \varphi(b)) + 12 \delta)$$
$$= 4\delta d_1(\varphi(a), \varphi(b)) + 48 \delta^2.$$
\end{proof}

\begin{remark}
\label{rem:gamma_2_7}
Since for any essential vertices $x, y$, the embedding of $[x,y]_2$ into $\Gamma_1(X)$ is a $(4\delta, 
240 \delta'^3 + 60 \delta'^2 + 48 \delta^2)$-quasi-geodesic, there exists $\mathcal{H}$ such that
the Hausdorff distance between $\varphi[x,y]_2$ and $[x,y]_1$ is at most $\mathcal{H}$.
\end{remark}

\begin{lemma}
\label{le:gamma_2_8}
Let $v,w$ be vertices of $\Gamma_2(X)$.
\begin{enumerate}
\item[(i)] If $v$ and $w$ are both auxiliary, then $[v,w]_2$ is a $(4 \delta, 240 \delta'^3 + 60 
\delta'^2 + 48 \delta^2 + 2\mathcal{H} + \delta' + 1)$-quasi-geodesic.

\item[(ii)] If $v$ is essential and $w$ is auxiliary then $[v,w]_2$ is a $(4 \delta', 240 \delta'^3 
+ 60 \delta'^2 + 48 \delta^2 + 4\mathcal{H} + 4 \delta' + 2)$-quasi-geodesic.
\end{enumerate}
\end{lemma}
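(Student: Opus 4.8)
The plan is to bootstrap from the already-established essential-to-essential case. By Lemma~\ref{le:gamma_2_6}, sharpened in Remark~\ref{rem:gamma_2_7}, for any two \emph{essential} vertices $x,y$ the embedding $\varphi([x,y]_2)$ is a $(4\delta,\,240\delta'^3+60\delta'^2+48\delta^2)$-quasi-geodesic in $\Gamma_1(X)$, and moreover its Hausdorff distance from the genuine $\Gamma_1$-geodesic $[x,y]_1$ is at most $\mathcal H$. The conceptual point is that an auxiliary vertex is never a high-valence ``hub'': it lies in the interior of a unique basic path whose endpoints are essential and which, being basic, is isometrically embedded into $\Gamma_1(X)$, so its $\Gamma_1$-length equals its $\Gamma_2$-length exactly. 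Thus both parts reduce to the essential case by sliding each auxiliary endpoint a controlled distance along its basic path to an essential endpoint, applying Remark~\ref{rem:gamma_2_7}, and then cutting back, tracking the error through $\delta'$-hyperbolicity of $\Gamma_1(X)$ (Proposition~\ref{pr:gamma_1_2}).

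For part (ii) I would let $w$ lie on the basic path $[w_1,w_2]_2$ with $w_1,w_2$ essential, and compare the geodesic $[v,w]_2$ with the two essential-to-essential geodesics $[v,w_1]_2$ and $[v,w_2]_2$. Realizing $[v,w]_2$ as the subsegment of (a path close to) one of these obtained by deleting the terminal isometrically embedded stretch of the basic path, I would use $\delta'$-thinness in $\Gamma_1(X)$ together with the Hausdorff bound $\mathcal H$ to show that $\varphi([v,w]_2)$ stays uniformly close to a sub-quasi-geodesic of $\varphi([v,w_i]_2)$. The lower quasi-geodesic inequality (the one with content) then degrades by at most the stated additive amount $4\mathcal H+4\delta'+2$; the upper inequality is immediate since $\varphi$ does not increase $\Gamma_1$-distances. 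I expect the multiplicative constant to be forced up to $4\delta'$ precisely because the pinned endpoint $v$ is an essential hub, so geodesics reaching it may leave and re-enter basic paths and one must invoke the full $\delta'$-hyperbolicity stability.

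Part (i) is handled by the same mechanism applied at both ends: with $v\in[v_1,v_2]_2$ and $w\in[w_1,w_2]_2$ both auxiliary, I would anchor at essential endpoints $v_i,w_j$, apply the essential case to $[v_i,w_j]_2$, and cut back to $v$ and $w$ along their isometrically embedded basic paths. Because here \emph{both} endpoints sit on rigid basic paths rather than at hubs, the comparison is tighter: the multiplicative constant stays at $4\delta$, each cut contributes one Hausdorff correction (giving $2\mathcal H$), and a single use of thinness in $\Gamma_1(X)$ together with the integrality of the $\Z$-metric produces the residual $\delta'+1$.

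The main obstacle is the constant bookkeeping rather than the geometric idea. I must show precisely that restricting the essential-case quasi-geodesic to a subpath terminating at an auxiliary vertex, and re-anchoring there, inflates the additive constant by exactly the claimed $\mathcal H$- and $\delta'$-terms and no more. This requires a careful combination of (a) stability of quasi-geodesics in the $\delta'$-hyperbolic space $\Gamma_1(X)$, (b) the Hausdorff bound $\mathcal H$ of Remark~\ref{rem:gamma_2_7}, and (c) the isometric embedding of basic paths; and one must keep careful track of whether each endpoint is an essential hub (forcing the $4\delta'$ multiplicative constant in (ii)) or an interior auxiliary vertex on a basic path (permitting $4\delta$ in (i)), since this is exactly what separates the two constant regimes.
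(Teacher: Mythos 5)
There is a genuine gap: your reduction never invokes the bridges added at step (6) of the construction of $\Gamma_2^n(X)$, and these are precisely what makes the lemma true. The direction with content is the upper bound $d_2(a,b) \leqslant 4\delta\, d_1(\varphi(a),\varphi(b)) + c$ for points $a,b$ on $[v,w]_2$, and to get it you must exhibit a short connecting path \emph{in $\Gamma_2(X)$} from the auxiliary endpoint $w$ to a comparison geodesic. Both of your proposed mechanisms fail at exactly this step. Sliding $w$ along its basic path to an essential endpoint $w_1$ and ``cutting back'' forces the detour $d_2(v,w_1)+d_2(w_1,w)$, which exceeds the straight distance by twice the Gromov product $(v \cdot w)_{w_1}$; this is unbounded (take $w$ in the middle of a long surviving basic path and $v$ close to $w$ in $\Gamma_1(X)$ but far from $w_1$ and $w_2$), so no additive constant of the stated form can absorb it. And showing that $\varphi([v,w]_2)$ ``stays uniformly close to a sub-quasi-geodesic of $\varphi([v,w_i]_2)$'' in $\Gamma_1(X)$ bounds nothing: since $w$ does not lie on $[v,w_i]_2$, proximity measured in $\Gamma_1(X)$ gives no short path in $\Gamma_2(X)$ --- translating $\Gamma_1$-proximity into $\Gamma_2$-paths is exactly the problem, not a bookkeeping afterthought.

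The paper's proof closes this gap as follows. For (ii) it runs the thinness estimate in the triangle $\Delta(v,w_1,w_2)$ of $\Gamma_1(X)$, using the Hausdorff bound $\mathcal{H}$ of Remark \ref{rem:gamma_2_7} twice (once for the basic path through $w$, once for $[v,w_i]_2$), to conclude that $w$ lies within $2\mathcal{H}+\delta'$ of some $[v,w_i]_2$; since $B = 2\mathcal{H}+2\delta'$ was built into step (6) exactly for this purpose, there is an \emph{actual bridge} in $\Gamma_2(X)$ of length at most $2\mathcal{H}+\delta'+1$ from $w$ to an auxiliary vertex $w' \in [v,w_i]_2$, and concatenating $[w,w']$ with $[w',v]$ and applying Lemma \ref{le:gamma_2_6} yields the constants. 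Part (i) is the same trick applied at both ends inside a geodesic square $\{v_1,v_2,w_2,w_1\}$ (via one side or via two triangles through the diagonal $[v_1,w_2]$), hence two bridges and roughly doubled additive constants. Note also that your heuristic attributing the worse constants in (ii) to the essential ``hub'' endpoint is backwards relative to the actual mechanism: the larger term $4\mathcal{H}+4\delta'+2$ is the cost of the two-bridge (both-auxiliary) configuration, and the paper's proofs in fact derive the two sets of constants in the opposite pairing from the way the statement lists them.
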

\begin{proof} (ii) By the construction, $w$ is on the geodesic in $\Gamma_2(X)$ connecting two
essential vertices $w_1$ and $w_2$. Consider the geodesic triangle $\Delta(v, w_1, w_2)$. We have
that $w$ is at a distance of at most $H$ from some point on $[w_1, w_2]$ which is at a distance of
at most $\delta'$ from either $[v, w_1]$, or $[v, w_2]$, which itself is at a distance of at most
$\mathcal{H}$ from $[v, w_i]_2$. So, $w$ is at a distance of at most $2 \mathcal{H} + \delta'$ from
either $[v, w_1]_2$, or $[v, w_2]_2$. By the construction, we have a bridge in $\Gamma_2(X)$ of
length at most $2 \mathcal{H} + \delta' + 1$ between $w$ and some auxiliary vertex $w'$ on $[v,
w_i]_2$. It follows that the embedding of $[w, w']_1 \cup [w',v]_1$ is a $(4\delta, 240 \delta'^3 + 
60 \delta'^2 + 48 \delta^2 + 2 \mathcal{H} + \delta' + 1)$-quasi-geodesic.

\begin{figure}[htbp]
\centering
\includegraphics[scale=0.7]{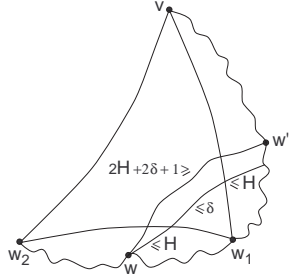}
\caption{Case (ii) in the proof of Lemma \ref{le:gamma_2_8}. Wavy lines are quasi-geodesics}
\label{pic6}
\end{figure}

(i) There exist essential vertices $v_1, v_2, w_1, w_2$ such that $v \in [v_1, v_2]_2,\ w \in [w_1,
w_2]_2$. Consider the geodesic square $\{v_1, v_2, w_2, w_1\}$ (linked together in the given order).
Thus, $v$ and $w$ must be within a distance of $\mathcal{H}$ from some points $v'$ and $w'$ on
$[v_1, v_2]$ and $[w_1, w_2]$ respectively. Furthermore, both $v'$ or $w'$ must be within a distance
of $2\delta'$ from either $[v_1, w_1]$, or $[v_2, w_2]$. If they are both within $2\delta'$ from
the same edge, say $[v_1, w_1]$, then they are within $2 \delta' + 2\mathcal{H}$ from $[v_1, w_1]_2$,
so that both $v$ and $w$ are within at most $2\mathcal{H} + 2\delta'$ from $[v_1,w_1]_2$. Hence,
there are bridges of length at most $2\delta' + 2\mathcal{H} + 1$ between them and that geodesic,
thus, $[v,w]_2$ is a $(4 \delta', 240 \delta'^3 + 108 \delta'^2 + 4 \mathcal{H} + 
4 \delta' + 2)$-quasi-geodesic.

Suppose on the contrary that, without loss of generality, $v'$ is within $2\delta'$ from $[v_1, w_1]$,
$w'$ is within $2\delta'$ from $[v_2, w_2]$, and neither one is within $2\delta'$ of the other edge.
Consider the triangles $\Delta(v_1, v_2, w_2)$ and $\Delta(v_1, w_1, w_2)$. By the assumption,
both $v'$ and $w'$ must be within $\delta'$ from $[v_1, w_2]$, which itself is at a Hausdorff distance
of at most $\mathcal{H}$ from $[v_1, w_2]_2$. Thus, there exist bridges of length at most
$2 \mathcal{H} + \delta' + 1$ from $v$ and $w$ to some $v''$ and $w''$ on $[v_1, w_2]_2$. It follows
that the embedding of $[v, w]_2$ is a $(4 \delta', 240 \delta'^3 + 108 \delta'^2 + 4 \mathcal{H} +
2 \delta' + 1)$-quasi-geodesic.
\end{proof}

\begin{figure}[htbp]
\centering
\includegraphics[scale=0.7]{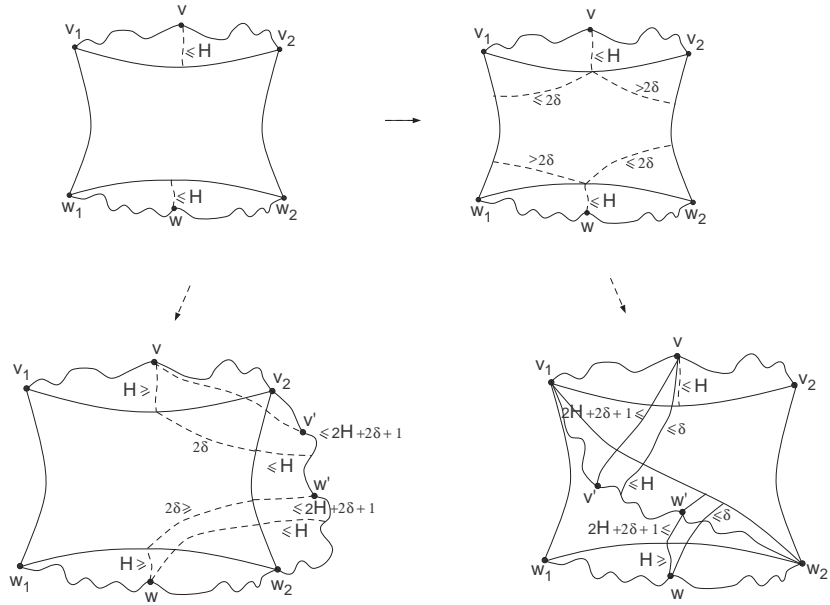}
\caption{Case (i) in the proof of Lemma \ref{le:gamma_2_8}. Wavy lines are quasi-geodesics}
\label{pic5}
\end{figure}

\begin{prop}
\label{pr:gamma_2_9}
$\Gamma_2(X)$ is $\delta''$-hyperbolic with $\delta'' = 240 \delta'^3 + 64 \delta'^2 + 48 \delta^2 
+ 8\mathcal{H} + 8 \delta' + 2$.
\end{prop}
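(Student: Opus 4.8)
The plan is to follow exactly the scheme of Proposition \ref{pr:gamma_1_2}: instead of verifying the four-point condition directly inside $\Gamma_2(X)$, I would transport the hyperbolicity of $\Gamma_1(X)$ back through the comparison map $\varphi$. Recall that $\Gamma_1(X)$ is a geodesic $\delta'$-hyperbolic space by Proposition \ref{pr:gamma_1_2}, and that Lemmas \ref{le:gamma_2_6} and \ref{le:gamma_2_8} already say that the $\varphi$-image of a geodesic of $\Gamma_2(X)$ is a quasi-geodesic of $\Gamma_1(X)$ with uniform constants. So the whole argument reduces to upgrading these piecewise statements into a single uniform quasi-isometric embedding $\varphi : \Gamma_2(X) \to \Gamma_1(X)$, and then reading off hyperbolicity of the source from hyperbolicity of the target, exactly as in the $\Gamma_1$ case.

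First I would assemble the quasi-geodesic estimate for \emph{all} pairs of vertices of $\Gamma_2(X)$. Lemma \ref{le:gamma_2_6} handles the case of essential endpoints, and Lemma \ref{le:gamma_2_8} handles the auxiliary--auxiliary and essential--auxiliary cases, each time producing a $(4\delta', C_0)$-quasi-geodesic with a common additive constant $C_0 = 240\delta'^3 + 60\delta'^2 + 48\delta^2 + 4\mathcal{H} + 4\delta' + 2$ (taking the worst of the three estimates, and using $\delta' \geqslant \delta$ so that $4\delta'$ dominates the multiplicative constants). The only remaining case, a negligible endpoint, is dispatched as in Proposition \ref{pr:gamma_1_2}: a negligible vertex has valence $2$ and lies on a bridge introduced in step (6), whose length is bounded by $B = 2\mathcal{H} + 2\delta'$, so it sits within distance $B$ of an auxiliary vertex; concatenating that short bridge with the quasi-geodesic to the auxiliary vertex increases the additive constant only by $B$. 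Evaluating a $(4\delta', C)$-quasi-geodesic $\varphi([v,w]_2)$ at its two endpoints, where $C = C_0 + B$, then gives the two-sided distance bound $\tfrac{1}{4\delta'} d_2(v,w) - C \leqslant d_1(\varphi(v),\varphi(w)) \leqslant 4\delta'\, d_2(v,w) + C$, so $\varphi$ is a uniform $(4\delta', C)$-quasi-isometric embedding sending geodesics to quasi-geodesics.

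Next I would invoke the stability of quasi-geodesics in the $\delta'$-hyperbolic geodesic space $\Gamma_1(X)$: each side of a geodesic triangle in $\Gamma_2(X)$ maps to within Hausdorff distance $\mathcal{H}$ of a genuine geodesic of $\Gamma_1(X)$ with the same endpoints, which is precisely the constant $\mathcal{H}$ fixed before the construction of $\Gamma_2^n(X)$. The comparison triangle $\Delta(\varphi(u),\varphi(v),\varphi(w))$ is $4\delta'$-thin in $\Gamma_1(X)$ by the implication $(H1,\delta') \Rightarrow (H2,4\delta')$ of Proposition \ref{pr:1.2.9}, so any point of one side of $\Delta(u,v,w)$ maps to within $4\delta' + 2\mathcal{H}$ of the other two sides in $\Gamma_1(X)$. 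Pulling this back through the two-sided distortion $d_1 \leqslant d_2 \leqslant 4\delta\, d_1$ of Lemma \ref{le:gamma_2_4} (refined by Lemmas \ref{le:gamma_2_6} and \ref{le:gamma_2_8}) shows that $\Delta(u,v,w)$ is $\delta''$-thin in $\Gamma_2(X)$, and the implication $(H2,\delta'') \Rightarrow (H1,\delta'')$ of Proposition \ref{pr:1.2.9} then gives $\delta''$-hyperbolicity.

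The main obstacle will be the bookkeeping of constants, i.e.\ arranging that the transferred thinness constant collapses \emph{exactly} to $\delta'' = 240\delta'^3 + 64\delta'^2 + 48\delta^2 + 8\mathcal{H} + 8\delta' + 2$. In particular, unlike Proposition \ref{pr:gamma_1_2}, here $\varphi$ is not an isometry but only a $(4\delta', C)$-quasi-isometric embedding, so the transfer of thinness must be controlled in both directions of the distortion, and the additive error coming from bridge lengths (through $B$ and $\mathcal{H}$) and from the cubic term $240\delta'^3$ must be tracked carefully so that the final estimate matches the stated $\delta''$ rather than merely some constant of the same order.
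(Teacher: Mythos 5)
Your proposal follows essentially the same route as the paper's proof: you use the comparison map $\varphi : \Gamma_2(X) \to \Gamma_1(X)$, combine Lemmas \ref{le:gamma_2_6} and \ref{le:gamma_2_8} into a uniform quasi-geodesic estimate, dispatch negligible endpoints by sliding along bridges of bounded length to nearby auxiliary vertices, and conclude that $\varphi$ is a quasi-isometric embedding through which hyperbolicity of $\Gamma_1(X)$ transfers. The only difference is cosmetic: you spell out the final transfer step (via quasi-geodesic stability and the thin-triangle equivalences of Proposition \ref{pr:1.2.9}) where the paper simply writes ``the result follows,'' and your caution about the exact constant is warranted, since the paper's own bookkeeping is loose at precisely that point.
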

\begin{proof} Let $\varphi : \Gamma_2(X) \to \Gamma_1(X)$ be the function sending vertices of
$\Gamma_2(X)$ to their embedding in $\Gamma_1(X)$. We have that the embedding of any geodesic of
$\Gamma_2(X)$ whose endpoints are either essential, or auxiliary vertices is a $(4 \delta, 240 
\delta'^3 + 60 \delta'^2 + 48 \delta^2 + 4\mathcal{H} + 4 \delta' + 2)$-quasi-geodesic. Suppose then 
that a geodesic has negligible vertices as endpoints. Negligible vertices always have valency $2$ 
and they belong to paths of length at most $2 \mathcal{H} + 2\delta' + 1$ connecting auxiliary 
vertices. Thus, if $a, b$ are the endpoints, there exist auxiliary vertices $a', b'$ such that $[a, 
b]_2 = [a, a']_2 \cup [a',b']_2 \cup [b',b]_2$ with $d(a,a'), d(b,b') \leqslant 2 \mathcal{H} + 
2\delta'$. Thus, the embedding of any geodesic of $\Gamma_2(X)$ into $\Gamma_1$ is a $(4 \delta', 
240 \delta'^3 + 108 \delta'^3 + 8 \mathcal{H} + 8 \delta' + 2)$-quasi-geodesic. It follows that 
$\varphi$ is a $(4 \delta, 240 \delta'^3 + 60 \delta'^2 + 48 \delta^2 + 8 \mathcal{H} + 8 \delta' + 
2)$-quasi-isometric embedding, and the result follows.
\end{proof}

\begin{remark}
\label{rem:gamma_2_10}
Since the maximal Hausdorff distance between a geodesic and a $(4\delta, 240 \delta'^3 + 60 
\delta'^2 + 48 \delta^2)$-quasi-geodesic in a $\delta'$-hyperbolic space is polynomial in $\delta'$ 
and $\delta$, and remembering that $\delta'$ is polynomial in $\delta$, it follows that $\delta''$ 
depends polynomially on $\delta$.
\end{remark}

\begin{corollary}
\label{co:gamma_2_11}
$\Gamma_2(X)$ is quasi-isometrically embedded into $\Gamma_1(X)$ and all universal properties of
$\Gamma_1(X)$ extend to $\Gamma_2(X)$.
\end{corollary}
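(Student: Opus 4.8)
The first assertion is, in effect, already in hand, so the plan for it is simply to record what the proof of Proposition \ref{pr:gamma_2_9} produces. There I built the map $\varphi : \Gamma_2(X) \to \Gamma_1(X)$ sending each vertex of $\Gamma_2(X)$ to its embedding in $\Gamma_1(X)$ and showed it to be a quasi-isometric embedding, with multiplicative constant $4\delta$ and additive constant polynomial in $\delta$ (see also Remark \ref{rem:gamma_2_10}). So for this half I would invoke $\varphi$ directly as the required quasi-isometric embedding; nothing beyond Lemmas \ref{le:gamma_2_5}--\ref{le:gamma_2_8} and Proposition \ref{pr:gamma_2_9} is needed.

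For the second assertion the guiding principle is that quasi-isometric embeddings compose, and that the ``universal properties'' of $\Gamma_1(X)$ recorded in Lemmas \ref{le:gamma_1_3}--\ref{le:gamma_1_5} and Corollary \ref{co:gamma_1_6} are functoriality-type statements about isometries, embeddings, and quasi-isometries of $X$. Most of them transfer to $\Gamma_2(X)$ by composing with $\varphi$. Concretely: given a geodesic $\Delta$-hyperbolic $Y$ into which $X$ embeds isometrically (resp.\ quasi-isometrically), Lemma \ref{le:gamma_1_4} (resp.\ Corollary \ref{co:gamma_1_6}) gives a quasi-isometric embedding $\Gamma_1(X) \to Y$; precomposing it with $\varphi$ yields a quasi-isometric embedding $\Gamma_2(X) \to Y$ whose constants depend only on $\delta$ and $\Delta$.

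The isometry-extension property (the analog of Lemma \ref{le:gamma_1_3}) I would instead prove directly, mirroring the argument for $\Gamma_1(X)$. Since $\Gamma_2(X)$ is canonically determined by $X$ (Remark \ref{rem:gamma_2_unique}), any isometry $g$ of $X$ preserves all the data used in the construction --- distances, the mid-point condition governing which basic paths are removed in steps (2)--(5), and the bridge condition of step (6) --- so it extends to an isometry $\bar g$ of $\Gamma_2(X)$ carrying retained basic paths to retained basic paths and bridges to bridges; uniqueness follows from the valence-and-distance distinction between essential, auxiliary, and negligible vertices, exactly as in Lemma \ref{le:gamma_1_3}.

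The quasi-isometry-invariance statement (the analog of Lemma \ref{le:gamma_1_5}) is where I expect the real difficulty to lie. The naive composition $\Gamma_2(X) \xrightarrow{\varphi_X} \Gamma_1(X) \xrightarrow{\Psi} \Gamma_1(Y) \xleftarrow{\varphi_Y} \Gamma_2(Y)$, using $\Psi$ from Lemma \ref{le:gamma_1_5}, only delivers a quasi-isometry once one has a coarse inverse of $\varphi_Y$, which is defined only on $\varphi_Y(\Gamma_2(Y))$; so the crux is to check that $\Psi \circ \varphi_X$ lands coarsely inside $\varphi_Y(\Gamma_2(Y))$. Equivalently, one must show that a quasi-isometry $\psi : X \to Y$ coarsely respects the $\Gamma_2$-construction, i.e.\ that the mid-point/removal condition of steps (2)--(5) is preserved up to bounded error under $\psi$. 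This is plausible because that condition is a coarse geometric statement on a regular hyperbolic space and quasi-isometries distort Gromov products and mid-points only by amounts bounded in terms of the quasi-isometry constants, $\delta$, and $\Delta$. The clean implementation is to build $\Gamma_2(X) \to \Gamma_2(Y)$ directly from $\psi$, in the style of the proof of Lemma \ref{le:gamma_1_5} (retained basic paths to retained basic paths, bridges to bridges), and then to bound distances using the quasi-geodesic estimates of Lemmas \ref{le:gamma_2_5}--\ref{le:gamma_2_8} in place of the simpler estimates that sufficed for $\Gamma_1(X)$.
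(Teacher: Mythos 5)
Your proposal is correct and follows essentially the route the paper intends: the paper states this corollary without any written proof, treating the quasi-isometric embedding as already delivered by the map $\varphi$ in the proof of Proposition \ref{pr:gamma_2_9}, the universal properties as transferring by composition with $\varphi$, and the isometry-extension property as a separate direct argument --- which the paper in fact carries out immediately afterwards as Lemma \ref{le:gamma_2_12}, in exactly the style you sketch (isometries preserve mid-points, hence which basic paths are removed, hence bridges; uniqueness via the extension to $\Gamma_1(X)$). One remark on the place where you anticipate ``real difficulty'': the coarse-inverse problem for $\varphi_Y$ dissolves, because $\varphi$ is in fact coarsely surjective --- by Remark \ref{rem:gamma_2_7} every basic path $[x,y]_1$ of $\Gamma_1(X)$ lies within Hausdorff distance $\mathcal{H}$ of $\varphi([x,y]_2)$, and negligible vertices of $\Gamma_1(X)$ lie within $2\delta$ of auxiliary ones, so every point of $\Gamma_1(X)$ is boundedly close to $\varphi(\Gamma_2(X))$. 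Hence $\varphi$ is a genuine quasi-isometry, a globally defined coarse inverse exists, and the naive composition $\Gamma_2(X) \to \Gamma_1(X) \to \Gamma_1(Y) \to \Gamma_2(Y)$ already works, with constants controlled by $\delta$, $\Delta$, and the constants of the quasi-isometry $X \to Y$; your more laborious direct construction is sound but unnecessary.
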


\begin{lemma}
\label{le:gamma_2_12}
Let $g$ be an isometry of $X$. There exists an unique isometry $\bar{g}$ of $\Gamma_2(X)$ such that
$\bar{g}|_X = g$
\end{lemma}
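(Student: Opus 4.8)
The plan is to follow the pattern of Lemma \ref{le:gamma_1_3}, extending $g$ blockwise over the three kinds of building pieces of $\Gamma_2(X)$ --- essential vertices, basic paths, and bridges --- and to lean on the already-established extension of $g$ to an isometry of $\Gamma_1(X)$ in order to handle the bridges, whose very definition is phrased in terms of distances computed in $\Gamma_1(X)$. First I would record that $g$ permutes the essential vertices isometrically, since these are exactly the points of $X$ and $g$ preserves $d$.

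Next I would show that the collection of \emph{surviving} basic paths is carried to itself by $g$. A basic path $[x,y]$ with $d(x,y)=n$ is deleted in step (2) of the construction precisely when there is an essential vertex $z$ and a mid-point $v$ of $\{x,y,z\}$ with $d(x,v),\,d(y,v)\geqslant 2\delta$. Since $g$ preserves every distance in $X$, the point $v$ is a mid-point of $\{x,y,z\}$ satisfying these inequalities if and only if $gv$ is such a mid-point of $\{gx,gy,gz\}$; hence $[x,y]$ survives if and only if $[gx,gy]$ survives. One then defines $\bar g$ on the auxiliary vertices of a surviving basic path $[x,y]$ by sending the vertex at distance $t$ from $x$ to the vertex at distance $t$ from $gx$ on the basic path $[gx,gy]$ (which has the same length $d(gx,gy)=n$). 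For the bridges I would invoke Lemma \ref{le:gamma_1_3}: $g$ extends to an isometry, also denoted $\bar g$, of $\Gamma_1(X)$, so every distance used in step (6) is preserved. A bridge joins an auxiliary vertex $x$ to its nearest vertex $y$ on a surviving basic path $p$, with length equal to the distance $d_1(x,y)$ measured in $\Gamma_1(X)$. Because $\bar g$ is an isometry of $\Gamma_1(X)$ carrying auxiliary vertices to auxiliary vertices and surviving basic paths to surviving basic paths, it carries this bridge to the bridge joining $\bar g x$ to $\bar g y$, of equal length. This shows $\bar g$ is a well-defined graph isometry of each $\Gamma_2^n(X)$; by Lemma \ref{le:gamma_2_3} these maps are compatible with the inclusions, so they assemble to an isometry of $\Gamma_2(X)=\bigcup_n \Gamma_2^n(X)$ whose restriction to $X$ is $g$. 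Here I would cite the order-independence of the construction (Remark \ref{rem:gamma_2_unique}) to guarantee that building $\Gamma_2$ and then applying $\bar g$ produces the same graph as applying $g$ to $X$ and then building $\Gamma_2$.

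Uniqueness follows the template of Lemma \ref{le:gamma_1_3}: the essential, auxiliary, and negligible vertices are distinguished intrinsically by their valences and their distances to the set of essential vertices (negligible vertices have valence $2$ and lie beyond the relevant threshold from every essential vertex, whereas auxiliary vertices sit on basic paths and have higher valence). Any isometry of $\Gamma_2(X)$ restricting to $g$ on $X$ must therefore preserve each of these three classes, hence map basic paths to basic paths and bridges to bridges, and so coincide with $\bar g$.

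The step I expect to be the main obstacle is the bridge construction. Unlike the basic paths, whose presence is governed entirely by the metric of $X$, the bridges are defined through distances in $\Gamma_1(X)$; consequently the argument genuinely requires the prior extension of $g$ to $\Gamma_1(X)$ together with the \emph{uniqueness} of that extension, so that the $\Gamma_1$-distances entering step (6) are transported correctly and no ambiguity is introduced when one passes to the limit $\Gamma_2(X)$.
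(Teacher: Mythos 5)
Your proposal is correct and follows essentially the same route as the paper's proof: mid-points are metric data in $X$, so $g$ carries removed basic paths to removed basic paths; bridges are handled via the (unique) extension of $g$ to $\Gamma_1(X)$ from Lemma \ref{le:gamma_1_3}, since bridge placement is governed by $\Gamma_1$-distances; and uniqueness comes from the fact that $\bar g$ is determined entirely by $g$ on $X$ together with the uniqueness of the $\Gamma_1$-extension. Your added details --- the compatibility of the extensions across the $\Gamma_2^n(X)$ and the valence-based classification of vertex types for uniqueness --- merely make explicit what the paper leaves implicit.
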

\begin{proof} If $g$ is an isometry of $X$ then it preserves mid-points of triangles. It follows that
if $[x,y]$ is removed, so is $g [x,y]$. Finally, since by Lemma \ref{le:gamma_1_3}, $g$ can be
extended to $\Gamma_1(X)$ and that bridges are placed based upon proximity in $\Gamma_1(X)$, then
if we have a bridge between $a$ and $b$, we also have a bridge between $g a$ and $g b$ defined by
extending the isometry on the essential vertices to the basic paths joining them.

Uniqueness of $\bar{g}$ follows easily from the fact that it is defined entirely by the action of 
$g$ on $X$ and the fact that the extension of $g$ to $\Gamma_1(X)$ is itself unique.
\end{proof}

\subsection{$\beta(X)$}
\label{subs:beta}

If we have a hyperbolic $\R$-metric space, it is possible to define an embedding into a geodesic 
hyperbolic $\R$-metric space analogous to the construction of $\Gamma_1(X)$ in the discrete case. 
Since the two constructions are very similar, the proofs will often be done by reference to the case
of $\Gamma_1(X)$. The same terminology will be used to make transferring those proofs easier. Notice 
that a continuous analog of $\Gamma_2(X)$ is not possible. There is no guarantee that our algorithm 
of breaking down geodesics ever stops since arbitrarily small distances can occur.

Let now $X$ be an $\R$-metric space. Define a band complex $\beta(X)$ as follows:
\begin{enumerate}
\item[(1)] Define first $\beta(X) = X$ and define these points as {\em essential points}.

\item[(2)] For any pair $(x,y)$ of essential points, add to our complex a copy of the interval $[0, 
d(x,y)]$ with endpoints $x$ and $y$ unless there exists some $z$ such that $d(x,y) = d(x,z) + d(z,
y)$. Let us call these lines {\em basic paths} and the points that are on them {\em auxiliary}. Take 
the completion of the weighted graph so obtained. Observe that after this step, for every essential 
points of $\beta(X)$ there exists an $\R$-geodesic segment (composed from auxiliary vertices) 
connecting them.

\item[(3)] For any triple $\{x, y, z\}$ of essential vertices, consider the projection of the triangle
$\Delta(x,y,z)$ onto the tripod $T(x,y,z)$. Attach bands of length $4 \delta$ to the basic paths 
linking together the points that are mapped together on the tripod, except for those at the distance 
less than $2 \delta$ from $x$, $y$, or $z$. Define the fibers of the bands to be {\em bridges} and 
the points that make them up {\em negligible}.

\item[(4)] We extend the metric $d : X \to \R$ to the metric $\widehat{d} : beta(X) \to \R$ as
follows:
\begin{enumerate}
\item[(a)] the distance between two essential points is inherited from $X$,

\item[(b)] the distance between an auxiliary point to the adjacent essential points is defined by 
construction, hence, the distance from an auxiliary point to any other either essential, or auxiliary 
point is also defined (as the minimum of lengths of paths connecting them),

\item[(c)] the distance from a negligible point to the adjacent auxiliary points is defined by 
construction, so, the distance from a negligible point to any other point of $\beta(X)$ is also 
defined.
\end{enumerate}
\end{enumerate}

\begin{remark}
\label{rem:beta_unique}
As in the case of $\Gamma_1$ and $\Gamma_2$, every given space $X$ has a unique extension $\beta(X)$. 
The reasoning is the same as for $\Gamma_1$.
\end{remark}

\begin{lemma}
\label{le:beta_1}
Let $v,w$ be points of $\beta(X)$.
\begin{enumerate}
\item[(i)] If $v$ and $w$ are essential then $d(v,w) = \widehat{d}(\varphi(v), \varphi(w))$.

\item[(ii)] If $v$ and $w$ are auxiliary then
$$\widehat{d}(\varphi(v), \varphi(w)) \leqslant d(v,w) \leqslant \widehat{d}(\varphi(v), \varphi(w)) 
+ 24\delta$$

\item[(iii)] If $v$ is essential and $w$ is auxiliary then
$$\widehat{d}(\varphi(v), \varphi(w)) \leqslant d(v,w) \leqslant \widehat{d}(\varphi(v), \varphi(w)) 
+ 8\delta$$
\end{enumerate}
\end{lemma}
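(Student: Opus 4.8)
The plan is to transcribe the proof of Lemma \ref{le:gamma_1_1} into the continuous setting, since $\beta(X)$ is built from $X$ in exactly the same way as $\Gamma_1(X)$, the only change being that basic paths are now isometric copies of real intervals and bridges (the fibers of the attached bands) have length exactly $4\delta$ rather than the least integer $\geq 4\delta$. As there, the map $\varphi : \beta(X) \to \overline{X}$ into the Bonk--Schramm completion $(\overline{X}, \widehat{d})$ sends each essential point to its image in $\overline{X}$ and each basic path and each bridge to a geodesic segment with the same endpoints. The first step is the universal lower bound $\widehat{d}(\varphi(v), \varphi(w)) \leqslant d(v,w)$ for all $v, w \in \beta(X)$: every arc of $\beta(X)$ lies on a basic path, which embeds isometrically into $\overline{X}$, or on a bridge, whose endpoints are auxiliary points co-projecting on a comparison tripod and hence map into $\overline{X}$ at distance at most $4\delta$ (by $4\delta$-thinness of triangles in the $\delta$-hyperbolic space $\overline{X}$, Proposition \ref{pr:1.2.9}). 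Thus $\varphi$ is distance nonincreasing and the left-hand inequalities in (i)--(iii) are automatic.

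Part (i) is immediate, since the essential points are identified with $X$ and $X$ embeds isometrically into $\overline{X}$. For (iii) I would take $w$ on the basic path between two essential points $w_1, w_2$ and apply step (3) of the construction to the triple $\{v, w_1, w_2\}$: the point $w$ is joined, through a band, to a co-projecting point $w'$ on $[v, w_1]$ or $[v, w_2]$ by a bridge of length at most $4\delta$, so that $d(w, w') \leqslant 4\delta$ in $\beta(X)$. Then $\gamma = [v, w'] \cup [w', w]$ has the property that $\varphi([v,w'])$ is a genuine geodesic and $\widehat{d}(\varphi(w), \varphi(w')) \leqslant 4\delta$, making $\varphi|_\gamma$ a $(1, 4\delta)$-quasi-isometry; hence $d(v,w) \leqslant \ell(\gamma) \leqslant \widehat{d}(\varphi(v), \varphi(w)) + 8\delta$.

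Part (ii) uses the same quadrilateral argument: with $v$ on a basic path between $v_1, v_2$ and $w$ on one between $w_1, w_2$, I form the geodesic quadrilateral on $\{v_1, v_2, w_2, w_1\}$ and route a path $\gamma$ from $v$ to $w$ through at most four consecutive segments, exactly one of which is embedded isometrically while the other three each have length at most $4\delta$. This makes $\varphi|_\gamma$ a $(1, 12\delta)$-quasi-isometry and yields $d(v,w) \leqslant \widehat{d}(\varphi(v), \varphi(w)) + 24\delta$, the intermediate configurations giving the sharper $+16\delta$, exactly as in the discrete case.

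The step I would check most carefully is the clause in step (3) of the $\beta(X)$ construction that omits bands for points within $2\delta$ of $x, y, z$; I must verify that the co-projecting point $w'$ used in (iii) and the vertices of the routing path in (ii) are still within $4\delta$ of their partners even when they fall in this exceptional region, which holds because two omitted points both lie within $2\delta$ of a common tripod endpoint and are therefore within $4\delta$ of each other through that vertex. Beyond this, the only conceptual difference from Lemma \ref{le:gamma_1_1} is that we work over $\R$ rather than $\Z$, which is harmless: Proposition \ref{pr:1.2.9} holds over any ordered abelian group, and the Bonk--Schramm theorem \cite{Bonk_Schramm:2000} provides a genuinely geodesic $\overline{X}$ in which actual geodesic segments exist, so every appeal to ``a point on a side'' is literally valid and the whole argument transfers.
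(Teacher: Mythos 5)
Your proposal is correct and follows exactly the route the paper takes: the paper's own proof of Lemma \ref{le:beta_1} consists of the single remark that it is a straightforward adaptation of Lemma \ref{le:gamma_1_1}, and your transcription of that argument (distance-nonincreasing $\varphi$ into the Bonk--Schramm completion, the triangle routing for (iii) with the $(1,4\delta)$-quasi-isometry, and the quadrilateral routing for (ii) with the $(1,12\delta)$-quasi-isometry) is precisely that adaptation. Your extra verification concerning the points within $2\delta$ of the corners, which step (3) of the $\beta(X)$ construction exempts from bands, is a detail the paper silently glosses over, and your resolution --- routing through the common nearby corner to keep the $4\delta$ bound --- is the right one.
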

\begin{proof} The proof is a straightforward adaptation of the proof of Lemma \ref{le:gamma_1_1}. 
All the arguments work in the exact same way.
\end{proof}

\begin{prop}
\label{pr:beta_2}
$\beta(X)$ is $\delta'$-hyperbolic with $\delta' = 29 \delta$.
\end{prop}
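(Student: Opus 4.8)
The plan is to run the argument of Proposition~\ref{pr:gamma_1_2} verbatim, with Lemma~\ref{le:beta_1} playing the role of Lemma~\ref{le:gamma_1_1}. By \cite[Theorem~4.1]{Bonk_Schramm:2000} the space $X$ embeds isometrically into a complete geodesic $\delta$-hyperbolic $\R$-metric space $\overline{X}$, with metric $\bar{d}$. I would extend this embedding to a map $\varphi : \beta(X) \to \overline{X}$ by sending each basic path to a geodesic segment joining the images of its essential endpoints, and each bridge (band fiber) to a geodesic segment joining the images of its auxiliary endpoints, exactly as in the map constructed before Lemma~\ref{le:gamma_1_1}. As there, one first records that $\varphi$ can only shorten distances: basic paths are mapped isometrically because $X \hookrightarrow \overline{X}$ is isometric, while the two auxiliary endpoints of any bridge have $\overline{X}$-images at distance at most $4\delta$, since the bands have length $4\delta$ and, by the implication $(H1,\delta) \Rightarrow (H2,4\delta)$ of Proposition~\ref{pr:1.2.9} applied in $\overline{X}$, the corresponding comparison triangle is $4\delta$-thin.

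The core step is to show that $\varphi$ is a $(1,28\delta)$-quasi-isometric embedding. First, Lemma~\ref{le:beta_1} shows that the restriction of $\varphi$ to any geodesic of $\beta(X)$ whose endpoints are essential or auxiliary is a $(1,24\delta)$-quasi-isometry, so its image is a $(1,24\delta)$-quasi-geodesic in $\overline{X}$. It then remains to absorb negligible endpoints: a negligible point lies on a band fiber of length $4\delta$, hence lies within $2\delta$ of an auxiliary point, so for arbitrary endpoints $a,b$ I would choose auxiliary points $a',b'$ with $\widehat{d}(a,a'),\,\widehat{d}(b,b') \leqslant 2\delta$ and compose the two estimates. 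This yields that the image under $\varphi$ of every geodesic of $\beta(X)$ is a $(1,28\delta)$-quasi-geodesic in $\overline{X}$, i.e.\ that $\varphi$ is a $(1,28\delta)$-quasi-isometric embedding.

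Finally, since $\overline{X}$ is $\delta$-hyperbolic and $\varphi$ is a $(1,28\delta)$-quasi-isometric embedding, geodesic triangles of $\beta(X)$ are carried to quasi-geodesic triangles of $\overline{X}$, and transferring the Gromov-product/thinness estimate back through $\varphi$ gives $\delta' = 29\delta = 28\delta + \delta$. I expect the only genuine difficulty to be the constant bookkeeping: verifying that the additive error contributed by the negligible points together with the $\delta$-hyperbolicity of $\overline{X}$ accumulates to exactly $28\delta + \delta$ rather than something larger, and confirming that in the continuous band-complex setting geodesics between arbitrary points of $\beta(X)$ are controlled well enough for Lemma~\ref{le:beta_1} to apply. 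Both points are handled line-for-line in the discrete case of Proposition~\ref{pr:gamma_1_2}, so no new idea is required, and the result follows.
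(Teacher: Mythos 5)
Your proposal is correct and follows essentially the same route as the paper: the paper's proof likewise extends the Bonk--Schramm embedding $\varphi$ over basic paths and band fibers, invokes Lemma \ref{le:beta_1} to make $\varphi$ a $(1,24\delta)$-quasi-isometry on geodesics with essential or auxiliary endpoints, absorbs negligible endpoints via auxiliary points at distance at most $2\delta$ to reach $(1,28\delta)$, and concludes $\delta' = 29\delta$ exactly as in Proposition \ref{pr:gamma_1_2}. Your constant bookkeeping and the treatment of band fibers match the paper's argument line for line.
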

\begin{proof}
Let $\varphi : \beta(X) \rightarrow \bar{X}$ be the function sending points of $X \subseteq \beta(X)$ 
to their embedding in $\bar{X}$ extended by mapping basic paths and fibers of bands to geodesics of 
$\bar{X}$.

The embedding of any geodesic of $\beta(X)$ whose endpoints are essential or auxiliary is a 
$(1,24\delta)$-quasi-isometry. Suppose then that a geodesic has endpoints which are on bands. Bands 
are always of length $4 \delta$ and link together essential and auxiliary points. Thus, if $a, b$ 
are the endpoints, there exist auxiliary points $a', b'$ such that $d(a, a'),\ d(b, b') \leqslant 
2 \delta$. It follows that, since $[a,b] \rightarrow [a',b']$ is a $(1, 4 \delta)$-quasi-isometry and 
$\varphi : [a',b'] \rightarrow \bar{X}$ is a $(1, 24 \delta)$-quasi-isometry, $\varphi : [a,b] 
\rightarrow\bar{X}$ is a $(1, 28 \delta)$-quasi-isometry.

Thus, the embedding of any geodesic of $\beta(X)$ into $\bar{X}$ is a $(1, 28 \delta)$-quasi-geodesic. 
It follows that $\varphi$ is a $(1, 28 \delta)$-quasi-isometric embedding, and the result follows easily.
\end{proof}

\begin{lemma}
\label{le:beta_3}
\begin{enumerate}
\item[(i)] Let $g$ be an isometry of $X$. There exists a unique isometry $\overline{g}$ of $\beta(X)$ 
such that $\overline{g}|_{X} = g$.

\item[(ii)] If $\delta = 0$, our construction is equivalent to the one given in \cite[Theorem 
2.4.4]{Chiswell:2001}.
\end{enumerate}
\end{lemma}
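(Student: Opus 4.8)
The plan is to prove both parts by transferring, essentially verbatim, the arguments already established for the discrete model: for part (i) I would follow the scheme of Lemma~\ref{le:gamma_1_3}, splitting the statement into existence and uniqueness of $\bar g$, while for part (ii) I would simply compare the two constructions step by step once $\delta = 0$ collapses all bands.

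\emph{Existence.} I would define $\bar g$ by declaring $\bar g|_X = g$ on essential points and then extending over the added structure. Since $g$ is an isometry of $X$, for any essential $x,y$ the relation ``there exists $z$ with $d(x,y) = d(x,z) + d(z,y)$'' is preserved by $g$; hence $g$ carries the pairs $\{x,y\}$ that receive a basic path exactly to such pairs, and I would send the basic path between $x$ and $y$ isometrically onto the one between $gx$ and $gy$, mapping the auxiliary point at distance $t$ from $x$ to the auxiliary point at distance $t$ from $gx$. Because the comparison tripod $T(x,y,z)$ depends only on the Gromov products of $x,y,z$, which $g$ preserves, $g$ respects the projection $\Delta(x,y,z) \to T(x,y,z)$, and therefore sends pairs of auxiliary points identified on the tripod to such pairs, preserving both the condition for attaching a band and its length; thus $\bar g$ maps bridges isometrically onto bridges. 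Finally, since $\widehat d$ is defined as the infimum of lengths of connecting paths and $\bar g$ is a length-preserving bijection on paths, $\bar g$ is an isometry of $(\beta(X), \widehat d)$ extending $g$.

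\emph{Uniqueness.} The hard part will be that the valence counting used in Lemma~\ref{le:gamma_1_3} does not literally apply to a band complex, so I would instead give a purely metric characterization of the three types of points. Essential and auxiliary points are precisely those lying on basic paths, whereas every negligible point lies in the interior of a band of length $4\delta$ joining two points each of which is at distance greater than $2\delta$ from every essential point; this separates negligible points from the rest, and auxiliary points are then singled out among the non-negligible, non-essential points as the interior points of geodesics between essential points. Any isometry of $\beta(X)$ restricting to $g$ on $X$ must preserve these metrically defined classes and must carry the basic path between $x,y$ onto a geodesic between $gx, gy$ whose interior consists of auxiliary points; the parametrization is then forced by distance to the endpoints, and the placement of bridges is forced by the proximity data in $\beta(X)$ (equivalently, via Proposition~\ref{pr:beta_2} and Lemma~\ref{le:beta_1}, in the ambient $\overline X$). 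Hence any such extension coincides with $\bar g$. Establishing the metric characterization rigorously, in particular the claim that negligible points stay uniformly far from essential ones, is the step I expect to require the most care.

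\emph{Part (ii).} When $\delta = 0$ every band prescribed in step~(3) has length $0$, so attaching it amounts to identifying the two points it would join; equivalently, points on distinct basic paths that project to the same point of $T(x,y,z)$ are glued together. The resulting space is therefore the completion of $X$ obtained by spanning geodesic segments between points and then identifying along overlaps, which is exactly the direct-limit construction of the $\R$-tree associated to a $0$-hyperbolic $\R$-metric space in \cite[Theorem~2.4.4]{Chiswell:2001}. I would conclude by checking that the gluing relation produced by our step~(3) coincides with Chiswell's identification relation, so that the two constructions yield the same space.
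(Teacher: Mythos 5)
Your proposal is correct, and for existence and for part (ii) it follows essentially the same route as the paper: the paper defines $\overline{g}$ by sending basic paths to basic paths and bands to bands, observing that $g$ preserves lengths of basic paths and sizes of triangles (hence the presence of bands), and for $\delta = 0$ it notes that bands have length $0$ so one simply identifies points as in the proof of \cite[Theorem 2.4.4]{Chiswell:2001} --- exactly your argument, which you merely spell out in more detail (e.g.\ making explicit that $g$ preserves the relation $d(x,y) = d(x,z) + d(z,y)$ governing step (2) and the tripod projection governing step (3)).

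The one place where you genuinely diverge is uniqueness, and your divergence is an improvement. The paper disposes of uniqueness in a single sentence, saying it ``is proven in a way analogous to Lemma \ref{le:gamma_1_3}''; but the argument of Lemma \ref{le:gamma_1_3} rests on counting vertex valences (auxiliary vertices have valence $3$ or $4$, negligible vertices valence $2$), a notion that has no literal meaning in the band complex $\beta(X)$. You correctly identify this obstruction and replace the valence count with a purely metric characterization of the three classes of points, from which preservation of the classes, and hence the forced parametrization of basic paths and placement of bridges, follows. This is precisely what is needed to make the paper's ``analogous'' rigorous in the continuous setting. The caveat you flag --- that negligible points must be shown to stay uniformly far (more than $2\delta$) from essential points --- is a real subtlety: the construction only excludes fiber endpoints within $2\delta$ of the three vertices $x, y, z$ of the triangle being processed, not of every essential point; note, however, that the paper asserts the corresponding claim without proof already in the discrete case inside Lemma \ref{le:gamma_1_3}, so your treatment is at least as careful as the paper's own. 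In short: same skeleton, with a necessary and correctly identified repair at the uniqueness step.
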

\begin{proof} Both statements are quite obvious. Define $\overline{g}$ simply by mapping basic paths 
to basic paths (since we know the action on their endpoints) and bands to bands. Since $g$ is an 
isometry of $X$, it preserves the length of basic paths and the size of triangles, so that it also 
preserves the presence of bands. Uniqueness is proven in a way analogous to Lemma \ref{le:gamma_1_3}. 

If $\delta = 0$, then bands have length $0$, so we simply identify points in the same way as shown
in the proof of \cite[Theorem 2.4.4]{Chiswell:2001}.
\end{proof}

\begin{lemma}
\begin{enumerate}
\item[(i)] Let $Y$ be a geodesic $\Delta$-hyperbolic metric space with $X$ isometrically embedded 
in $Y$. Then $\beta(X)$ is quasi-isometrically embedded in $Y$ and the constants of the quasi-isometry 
depend only on $\delta$ and $\Delta$.

\item[(ii)] Let $Y$ be a $\Delta$-hyperbolic metric space with $X \simeq Y$. Then $\beta(X) \simeq 
\beta(Y)$ and the constants of the quasi-isometry depend only on $\delta$, $\Delta$, and the 
constants of the quasi-isometry between them.

\item[(iii)] Let $Y$ be a geodesic $\Delta$-hyperbolic metric space and $X$ quasi-isometrically 
embedded in $Y$. Then $\beta(X)$ is quasi-isometrically embedded in $Y$.
\end{enumerate}
\end{lemma}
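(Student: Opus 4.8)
The plan is to prove all three parts by transcribing, essentially verbatim, the corresponding arguments for the discrete completion $\Gamma_1(X)$ --- namely Lemma \ref{le:gamma_1_4}, Lemma \ref{le:gamma_1_5}, and Corollary \ref{co:gamma_1_6} --- with the $\Z$-paths (basic paths and bridges) replaced by their continuous counterparts (the intervals and the fibers of the bands). The structural estimates needed to run these arguments in the $\R$-setting are already in place: Lemma \ref{le:beta_1} plays the role that Lemma \ref{le:gamma_1_1} plays for $\Gamma_1(X)$, and Proposition \ref{pr:beta_2} plays the role of the hyperbolicity statement for $\Gamma_1(X)$. If anything the continuous case is cleaner, since in $\beta(X)$ the bands have length exactly $4\delta$ rather than the smallest integer above $4\delta$, so no rounding terms enter the constants.

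For (i) I would fix the isometric embedding $\psi : X \to Y$ and extend it to $\beta(X)$ by sending each basic path and each band fiber to a geodesic of $Y$ with the same endpoints. Basic paths are embedded isometrically, and since $Y$ is $\Delta$-hyperbolic the image of a bridge (a band fiber of length $4\delta$) is a geodesic of length at most $4\Delta$; hence the extension can contract distances only by a controlled factor, which gives the lower bound $\min\{1, \delta/\Delta\}\, d^*(\psi(v), \psi(w)) \leqslant \widehat{d}(v,w)$, where $d^*$ denotes the metric on $Y$. For the upper bound one argues exactly as in Lemma \ref{le:gamma_1_4}: a path built from basic segments and bands maps to a $(1,c)$-quasi-geodesic with $c$ a fixed multiple of $\max\{\delta, \Delta-\delta\}$, and concatenating these shows that $\psi$ is a $(1, 12(\Delta + \max\{\delta, \Delta-\delta\}))$-quasi-isometric embedding, with constant depending only on $\delta$ and $\Delta$.

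For (ii), given a $(\lambda,k)$-quasi-isometry $\psi : X \to Y$, I would build $\Psi : \beta(X) \to \beta(Y)$ by setting $\Psi = \psi$ on essential points, sending an auxiliary point on a basic path to the auxiliary point at the proportional position on the image basic path, and mapping bands to bands. As in Lemma \ref{le:gamma_1_5}, the restriction of $\Psi$ to each basic path is then a $(\lambda, k+1)$-quasi-isometry, while bridges of length $4\delta$ in $\beta(X)$ correspond to fibers of length $4\Delta$ in $\beta(Y)$; reusing the connecting paths from the proof of Lemma \ref{le:beta_1} to bound $\widehat{d}$ in terms of the metric on $\beta(Y)$ yields a quasi-isometry whose constants depend only on $\delta$, $\Delta$, and $(\lambda,k)$. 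Finally, (iii) follows by combining (i) and (ii) exactly as Corollary \ref{co:gamma_1_6} follows from its two predecessors: writing $X'$ for the image of the quasi-isometric embedding of $X$ in $Y$, the subspace $X'$ carries the induced metric, hence is isometrically embedded in $Y$ and quasi-isometric to $X$; part (ii) gives $\beta(X) \simeq \beta(X')$, part (i) gives that $\beta(X')$ is quasi-isometrically embedded in $Y$, and composing yields the claim. The only point demanding care --- the same one as in the discrete case --- is verifying that the band (strip) structure of $\beta(X)$, rather than individual bridges, does not disturb the path-metric quasi-geodesic estimates; but this is precisely what Lemma \ref{le:beta_1} and Proposition \ref{pr:beta_2} secure, so no new difficulty arises, and I expect the tracking of quasi-isometry constants, rather than any conceptual step, to be the most tedious part.
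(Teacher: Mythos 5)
Your proposal is correct and matches the paper exactly: the authors' entire proof is the remark that the arguments of Lemma \ref{le:gamma_1_4} and Lemma \ref{le:gamma_1_5} transfer straightforwardly to $\beta(X)$, which is precisely the transcription you carry out, down to the same constants (the $\min\{1,\delta/\Delta\}$ lower bound, the $(1, 12(\Delta + \max\{\delta, \Delta-\delta\}))$ embedding, and the composition of (i) and (ii) for (iii) as in Corollary \ref{co:gamma_1_6}). Your observation that the continuous case is slightly cleaner because bands have length exactly $4\delta$ is also accurate.
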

\begin{proof} The proofs are similar to the proofs of Lemma \ref{le:gamma_1_4} and Lemma 
\ref{le:gamma_1_5}. Transfer of the proofs is very straightforward.
\end{proof}

\subsection{Boundaries $\partial \Gamma_1(X), \partial \Gamma_2(X)$ and $\partial \beta(X)$}
\label{subs:bound}

\begin{prop}
\label{pr:boundaries_1}
$\partial X = \partial \Gamma_1(X)$
\end{prop}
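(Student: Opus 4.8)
The plan is to prove that the natural inclusion of $X$ into $\Gamma_1(X)$ as the set of essential vertices induces a bijection $\iota:\partial X\to\partial\Gamma_1(X)$, and to read the asserted equality as this identification. Recall that $\Gamma_1(X)$ is $\delta'$-hyperbolic with $\delta'=29\delta$ by Proposition \ref{pr:gamma_1_2}, so its boundary is defined as in Lemma \ref{le:1.2.10}.

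First I would settle that $\iota$ is well defined and injective. By the construction (step (4a)), together with Lemma \ref{le:gamma_1_1}(i), the essential vertices of $\Gamma_1(X)$ carry exactly the metric inherited from $X$; that is, $X\hookrightarrow\Gamma_1(X)$ is an isometric embedding. Hence for a base-point $v\in X$ and any points of $X$ the Gromov product computed in $\Gamma_1(X)$ coincides with the one computed in $X$. Therefore a sequence in $X$ converges to infinity in $X$ if and only if it does so in $\Gamma_1(X)$, and two such sequences are close in $X$ if and only if they are close in $\Gamma_1(X)$. This makes $\iota$ well defined and injective.

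The main work is surjectivity, and it is also the main obstacle: auxiliary and negligible vertices may lie arbitrarily far from every essential vertex (a basic path has length $d(x,y)$, which is unbounded), so one cannot replace an arbitrary sequence by a bounded perturbation lying in $X$. Let $a\in\partial\Gamma_1(X)$ be represented by $\{v_i\}$ converging to infinity with respect to $v\in X$; note $d(v,v_i)=(v_i\cdot v_i)_v\to\infty$. Every negligible vertex lies on a bridge of length at most $4\delta+1$, hence within $2\delta+1$ of an auxiliary vertex, and replacing $v_i$ by such a neighbour changes every Gromov product by at most $2\delta+1$ (from $|(x\cdot y)_v-(x'\cdot y)_v|\leqslant d(x,x')$, a consequence of the triangle inequality); this preserves both convergence to infinity and the class $a$, so I may assume each $v_i$ is auxiliary.

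Each auxiliary $v_i$ lies on a basic path joining essential vertices $x_i,y_i$. The crucial observation is that this basic path has length $d_X(x_i,y_i)=d(x_i,y_i)$ in $\Gamma_1(X)$, hence is a geodesic, so $d(x_i,v_i)+d(v_i,y_i)=d(x_i,y_i)$, i.e. $(x_i\cdot y_i)_{v_i}=0$. The elementary identity $(v_i\cdot x_i)_v+(v_i\cdot y_i)_v=d(v,v_i)+(x_i\cdot y_i)_v\geqslant d(v,v_i)$ then forces $\max\{(v_i\cdot x_i)_v,(v_i\cdot y_i)_v\}\geqslant \tfrac12 d(v,v_i)\to\infty$. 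Choosing $z_i\in\{x_i,y_i\}$ realizing this maximum gives $(v_i\cdot z_i)_v\to\infty$, and two applications of $\delta'$-hyperbolicity yield $(z_i\cdot z_j)_v\geqslant\min\{(z_i\cdot v_i)_v,(v_i\cdot v_j)_v,(v_j\cdot z_j)_v\}-2\delta'$ and likewise $(v_i\cdot z_j)_v\to\infty$. Thus $\{z_i\}$ is a sequence of essential vertices that converges to infinity and is close to $\{v_i\}$, so $a=\iota([\{z_i\}])$. This establishes surjectivity and hence the bijection $\partial X=\partial\Gamma_1(X)$.
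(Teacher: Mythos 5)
Your proof is correct, and its skeleton is the same as the paper's: reduce from negligible to auxiliary vertices by a bounded perturbation, then pass from an auxiliary vertex $v_i$ to one of the essential endpoints $x_i,y_i$ of its basic path using the additivity $d(x_i,v_i)+d(v_i,y_i)=d(x_i,y_i)$, and finish with hyperbolicity of the Gromov product. Where you genuinely diverge is in the last step. The paper argues by a dichotomy and contradiction: either some subsequence satisfies $(x_{k_n}\cdot x'_{k_n})\geqslant n$ for the better endpoint $x'_{k_n}$, or both products are uniformly bounded by some $N$, in which case adding the two resulting inequalities yields $d(x,x_n)\leqslant 2N$, contradicting convergence to infinity. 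Your identity $(v_i\cdot x_i)_v+(v_i\cdot y_i)_v=d(v,v_i)+(x_i\cdot y_i)_v\geqslant d(v,v_i)$ is exactly the computation hiding inside the paper's contradiction, but deployed directly: it gives the explicit lower bound $\max\{(v_i\cdot x_i)_v,(v_i\cdot y_i)_v\}\geqslant\tfrac12 d(v,v_i)$, so you never need to extract subsequences or run a proof by contradiction, and you get a quantitative statement for free. You also make explicit two things the paper leaves implicit: the hyperbolicity bookkeeping showing $\{z_i\}$ both converges to infinity and is close to $\{v_i\}$ (the paper only asserts the replacement sequence ``converges at infinity towards the same point''), and the injectivity of $\partial X\to\partial\Gamma_1(X)$ coming from the isometric embedding. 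One small caveat common to both arguments: the claim that a basic path is geodesic in $\Gamma_1(X)$ (no shortcuts through bridges) deserves a word, and is most cleanly justified by noting that $\varphi:\Gamma_1(X)\to\overline{X}$ never increases distances (Lemma \ref{le:gamma_1_1}) while the basic path maps onto a geodesic of $\overline{X}$; the paper uses the same additivity without comment, so this is not a gap relative to its standard.
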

\begin{proof} First of all, since $X$ embeds isometrically into $\Gamma_1(X)$, it is obvious that 
$\partial X \subseteq \partial \Gamma_1(X)$.

Let then $\{x_n\}$ be a sequence in $\Gamma_1(X)$ converging at infinity, representing the point
$\alpha \in \partial \Gamma_1$. If $x_n$ is a negligible vertex, it is at distance at most $2 \delta$
from an auxiliary vertex $x'_n$, so we can replace $x_n$ by $x'_n$ without changing the behavior of
$\{x_n\}$ at infinity. If for any $n$ there would exist $k_n \geqslant n$ such that $x_{k_n}$ is an
essential vertex, we would have that $\{x_{k_n}\}$ is a sequence of essential vertices converging 
at infinity, so that $\alpha \in \partial X$.

We can therefore assume that all $x_n$ are auxiliary vertices. Hence, there exist $x_n^\alpha$ and
$x_n^\omega$ such that $x_n \in [x_n^\alpha, x_n^\omega]$. Let $x'_n \in \{x_n^\alpha, x_n^\omega\}$
such that $(x_n \cdot x_n')$ is maximal. If for any $n$ there exists $k_n$ such that $(x_{k_n} \cdot
x_{k_n}') \geqslant n$, then $\{x_{k_n}'\}$ converges at infinity towards the same point as
$\{x_{k_n}\}$, so we have that $\alpha \in \partial X$.

Let us then assume there exists some $N$ such that $(x_n \cdot x_n^\alpha), (x_n \cdot x_n^\omega)
\leqslant N$ for any $n$. Let $x$ be the basepoint. We have
$$d(x_n^\alpha, x_n) = d(x, x_n^\alpha) + d(x, x_n) - 2(x_n \cdot x_n^\alpha) \geqslant d(x, x_n) +
d(x, x_n^\alpha) - 2N.$$
By the same reasoning we have
$$d(x_n^\omega, x_n) \geqslant d(x, x_n^\omega) + d(x,x_n) - 2N.$$
However, $x_n \in [x_n^\alpha, x_n^\omega]$, so we have that
$$d(x_n^\alpha, x_n^\omega) = d(x_n^\alpha, x_n) + d(x_n^\omega, x_n) \geqslant d(x, x_n^\alpha)
+ d(x, x_n^\omega) + 2d(x,x_n) - 4N$$
$$ \geqslant d(x_n^\alpha, x_n^\omega) + 2d(x, x_n) - 4N.$$
This implies that $d(x, x_n) \leqslant 2N$ for any $n$ and we have a contradiction with the assumption
that $\{x_n\}$ converges at infinity.

Then it follows that for any sequence $\{x_n\}$ of vertices in $\Gamma_1(X)$ which converges at
infinity, there exists a subsequence $\{x_{k_n}\}$ and a sequence $\{x_{k_n}'\}$ in $X$ such that
$(x_{k_n} \cdot x_{k_n}') \geqslant n$ for any $n$. We have therefore that $\partial \Gamma_1(X)
\subseteq \partial X$.
\end{proof}

\begin{corollary}
\label{co:boundaries_1}
$\partial X = \partial \Gamma_2(X)$
\end{corollary}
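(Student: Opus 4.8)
The plan is to deduce the corollary from Proposition~\ref{pr:boundaries_1} together with Corollary~\ref{co:gamma_2_11}, using the standard fact that a quasi-isometric embedding of geodesic hyperbolic spaces induces an injective map of their boundaries. I set up three boundary maps and exploit that one composite among them is already known to be a bijection.

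First I would record the two easy injections. The set of essential vertices of $\Gamma_2(X)$ is canonically identified with $X$, and this identification is an isometric embedding $\iota_2 : X \to \Gamma_2(X)$; exactly as in the opening lines of the proof of Proposition~\ref{pr:boundaries_1}, a sequence in $X$ converging to infinity does so in $\Gamma_2(X)$ and equivalence is preserved, so $\iota_2$ induces an injection $\partial\iota_2 : \partial X \hookrightarrow \partial\Gamma_2(X)$. Next, by Corollary~\ref{co:gamma_2_11} the map $\varphi : \Gamma_2(X) \to \Gamma_1(X)$ is a quasi-isometric embedding between geodesic hyperbolic $\Z$-metric spaces (hyperbolicity of the two graphs is Propositions~\ref{pr:gamma_1_2} and~\ref{pr:gamma_2_9}); since the Gromov product is preserved up to a bounded error under such a map, $\varphi$ carries convergent-to-infinity sequences to convergent-to-infinity sequences and preserves both equivalence and \emph{non}-equivalence, so it induces an injection $\partial\varphi : \partial\Gamma_2(X) \hookrightarrow \partial\Gamma_1(X)$.

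The key observation tying these together is that $\varphi$ sends each essential vertex of $\Gamma_2(X)$ to its image in $\Gamma_1(X)$, i.e.\ $\varphi \circ \iota_2$ is precisely the isometric embedding of essential vertices $X \hookrightarrow \Gamma_1(X)$. Passing to boundaries, the composite $\partial\varphi \circ \partial\iota_2 : \partial X \to \partial\Gamma_1(X)$ is therefore the boundary map of that embedding, which Proposition~\ref{pr:boundaries_1} asserts to be a bijection. I would then finish by a purely formal diagram chase: a bijective composite forces $\partial\varphi$ to be surjective, and combined with its injectivity this makes $\partial\varphi$ a bijection; hence $\partial\iota_2 = (\partial\varphi)^{-1}\circ(\partial\varphi\circ\partial\iota_2)$ is a composition of bijections, so it is itself a bijection. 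This identifies $\partial X$ with $\partial\Gamma_2(X)$.

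I expect the only real content to be the justification that $\partial\varphi$ is well defined and injective, i.e.\ that the quasi-isometric embedding of Corollary~\ref{co:gamma_2_11} neither collapses distinct boundary points nor creates new ones over its image. In the geodesic $\Z$-metric (archimedean) setting this is the usual hyperbolic boundary theory, and it is exactly the ``universal properties extend'' clause of Corollary~\ref{co:gamma_2_11}. Should one prefer a self-contained route, the alternative is to re-run the argument of Proposition~\ref{pr:boundaries_1} verbatim inside $\Gamma_2(X)$: replace negligible endpoints (which lie within $2\mathcal{H}+2\delta'$ of auxiliary vertices) by nearby auxiliary vertices, reduce to a sequence of auxiliary vertices lying on basic paths $[x_n^\alpha,x_n^\omega]_2$, and run the same Gromov-product and triangle-inequality estimate to conclude that either a subsequence of endpoints converges to the same point of $\partial X$ or the sequence stays within $2N$ of the base-point, contradicting convergence to infinity. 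The main obstacle in that approach is the bookkeeping of basic paths and bridges in $\Gamma_2(X)$ rather than any new idea.
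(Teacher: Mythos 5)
Your argument is correct and is essentially the paper's own proof: the paper likewise sandwiches $\partial \Gamma_2(X)$ between $\partial X$ and $\partial \Gamma_1(X) = \partial X$, using the quasi-isometric embedding of $X$ into $\Gamma_2(X)$ (Lemma \ref{le:gamma_2_4}) and that of $\Gamma_2(X)$ into $\Gamma_1(X)$ (extracted from the proof of Proposition \ref{pr:gamma_2_9}, i.e.\ Corollary \ref{co:gamma_2_11}), together with Proposition \ref{pr:boundaries_1}. The only slip is your claim that $X \to \Gamma_2(X)$ is isometric --- by Lemma \ref{le:gamma_2_4} it is merely bi-Lipschitz with multiplicative constant $4\delta$ --- but a quasi-isometric embedding still induces an injection on boundaries, so nothing in your diagram chase breaks.
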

\begin{proof} By Lemma \ref{le:gamma_2_4}, $X$ is quasi-isometrically embedded in $\Gamma_2(X)$, so 
we have that $\partial X \subseteq \partial \Gamma_2(X)$. At the same time, from the proof of Lemma
\ref{pr:gamma_2_9} we have that $\Gamma_2(X)$ is quasi-isometrically embedded in $\Gamma_1(X)$, so
$\partial \Gamma_2(X) \subseteq \partial \Gamma_1(X) = \partial X$.
\end{proof}

\begin{corollary}
\label{co:boundaries_2}
$\partial X = \partial \beta(X)$
\end{corollary}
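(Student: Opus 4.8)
The plan is to prove Corollary \ref{co:boundaries_2} by adapting the proof of Proposition \ref{pr:boundaries_1} essentially verbatim, since the construction of $\beta(X)$ in Subsection \ref{subs:beta} is the exact continuous analog of $\Gamma_1(X)$ and employs the same three classes of points: essential, auxiliary, and negligible. The inclusion $\partial X \subseteq \partial \beta(X)$ is immediate, because by step (1) of the construction the essential points of $\beta(X)$ are identified with $X$ and this identification is an isometric embedding (Lemma \ref{le:beta_1}(i)), so any sequence in $X$ converging at infinity still does so in $\beta(X)$, and equivalence of sequences is preserved.

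For the reverse inclusion $\partial \beta(X) \subseteq \partial X$ I would take a sequence $\{x_n\}$ of points of $\beta(X)$ converging at infinity and representing $\alpha \in \partial \beta(X)$, and first reduce to the case that every $x_n$ is auxiliary. Indeed, a negligible point lies on a bridge, which is a fiber of a band of length $4\delta$, hence it is within $2\delta$ of an auxiliary point, and replacing it does not affect the behavior at infinity; and if infinitely many $x_n$ were essential we could pass to an essential subsequence and conclude $\alpha \in \partial X$ at once. For each auxiliary $x_n$ there are essential endpoints $x_n^\alpha, x_n^\omega$ of the basic path carrying it, and I would set $x_n'$ to be whichever of the two maximizes the Gromov product $(x_n \cdot x_n')$. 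If along some subsequence these products grow without bound, then the essential points $\{x_{k_n}'\}$ converge to the same boundary point as $\{x_{k_n}\}$, giving $\alpha \in \partial X$.

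The remaining case is the heart of the argument, and I would dispose of it exactly as in Proposition \ref{pr:boundaries_1}. Assuming a uniform bound $(x_n \cdot x_n^\alpha), (x_n \cdot x_n^\omega) \leqslant N$, one expands, for a fixed basepoint $x$,
$$d(x_n^\alpha, x_n) \geqslant d(x, x_n) + d(x, x_n^\alpha) - 2N, \qquad d(x_n^\omega, x_n) \geqslant d(x, x_n) + d(x, x_n^\omega) - 2N,$$
and then uses the fact that basic paths are isometrically embedded geodesics, so $x_n \in [x_n^\alpha, x_n^\omega]$ yields $d(x_n^\alpha, x_n^\omega) = d(x_n^\alpha, x_n) + d(x_n, x_n^\omega)$. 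Combining these forces $d(x, x_n) \leqslant 2N$, contradicting the assumption that $\{x_n\}$ converges at infinity. Hence this case cannot occur, and every class in $\partial \beta(X)$ is represented by a sequence of essential points, i.e. lies in $\partial X$.

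The main obstacle is not conceptual but bookkeeping: one must confirm that each metric estimate borrowed from the $\Gamma_1$ argument survives in the $\R$-metric setting. Concretely, the only facts used are that essential points embed isometrically, that auxiliary points sit on isometrically embedded basic paths with essential endpoints, and that negligible points are within $2\delta$ of auxiliary ones because bands have length $4\delta$; all three hold by construction and by Lemma \ref{le:beta_1}. Since $\beta(X)$ is moreover $29\delta$-hyperbolic by Proposition \ref{pr:beta_2}, the Gromov-product manipulations above are legitimate, and the transfer of the proof is routine. I would therefore simply state that the proof is identical to that of Proposition \ref{pr:boundaries_1}, replacing $\Gamma_1(X)$ by $\beta(X)$ and the word ``vertex'' by ``point'' throughout.
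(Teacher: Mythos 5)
Your proposal is correct and matches the paper's approach exactly: the paper's own proof of Corollary \ref{co:boundaries_2} simply states that the rationale of Proposition \ref{pr:boundaries_1} transfers and leaves the details to the reader, and the details you supply (reduction from negligible to auxiliary points via the $2\delta$ bound coming from bands of length $4\delta$, the dichotomy on the Gromov products $(x_n \cdot x_n^{\alpha})$, $(x_n \cdot x_n^{\omega})$, and the contradiction $d(x,x_n) \leqslant 2N$ in the bounded case) are precisely the intended adaptation. Your bookkeeping check that the only facts used are the isometric embedding of essential points, the geodesic basic paths, and the negligible-to-auxiliary proximity, all guaranteed by the construction and Lemma \ref{le:beta_1}, is exactly what justifies the paper's ``same rationale'' claim.
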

\begin{proof} It is again straightforward to use the same rationale as in the proof of Proposition 
\ref{pr:boundaries_1}. We leave the details to the reader.
\end{proof}

\bibliography{../main_bibliography}

\end{document}